\def\qed{\hfill $\Box$}
\def\@maketitle{
\begin{center}
{\huge\@title\par}
\vspace{5mm}
{\Large \@author \par}
\vspace{5mm}
\end{center}
\par\vskip 1.5em
}
\theoremstyle{plain}
\newtheorem*{thm}{Theorem}
\newtheorem{Th}{Theorem}[section]
\newtheorem{Prop}[Th]{Proposition}
\newtheorem{Cor}[Th]{Corollary}
\newtheorem{Lem}[Th]{Lemma}
\theoremstyle{definition}
\newtheorem{Def}[Th]{Definition}
\newtheorem{notation}[Th]{Notation}
\theoremstyle{remark}
\newtheorem{remark}[Th]{Remark}
\newcommand{\conv}{\mathop{\mathrm{Conv}}\nolimits}
\newcommand{\Newt}{\mathop{\mathrm{Newt}}\nolimits}
\newcommand{\Cay}{\mathop{\mathrm{Cay}}\nolimits}
\newcommand{\Subdiv}{\mathop{\mathrm{Subdiv}}\nolimits}
\newcommand{\Int}{\mathop{\mathrm{int}}\nolimits}
\newcommand{\MM}{\mathop{\mathrm{M}}\nolimits}
\renewcommand{\contentsname}{Contents}
\renewcommand{\refname}{References}
\renewcommand{\thefootnote}{\fnsymbol{footnote}}
\title{Smooth tropical complete intersection curves\\of genus $3$ in $\mathbb{R}^3$}
\author{Masayuki Sukenaga\footnote{
Department of Mathematics, Hiroshima University, 1-3-1 Kagamiyama, Higashihiroshima, 739-8526 JAPAN\\
\hspace{5.4mm}E-mail: d215394@hiroshima-u.ac.jp}}
\begin{document}

\setcounter{page}{1}

\maketitle

\begin{abstract}
We develop a method for describing the tropical complete intersection of a tropical hypersurface and a tropical plane in $\mathbb{R}^3$.
This involves a method for determining the topological type of the intersection of a tropical plane curve and $\mathbb{R}_{\leq 0}^2$ by using a polyhedral complex.
As an application, we study smooth tropical complete intersection curves of genus $3$ in $\mathbb{R}^3$.
In particular, we show that there are no smooth tropical complete intersection curves in $\mathbb{R}^3$ whose skeletons are the lollipop graph of genus $3$.
This gives a partial answer to a problem of Morrison in \cite{Morrison}.
\end{abstract}

{\flushleft{\hspace{5.4mm}\small{{\bf Keywords:} Tropical curve; Skeleton; Polyhedral complex}}}

\section{Introduction}

On the set $\mathbb{R} \cup \{ -\infty \}$, we define the sum $\oplus$ of two numbers as their maximum and the product $\odot$ as their usual sum.
Then ($\mathbb{R} \cup \{ -\infty \}, \oplus, \odot$) satisfies the axiom of semifields and is called the tropical algebra.
We can define tropical polynomials over $\mathbb{R} \cup \{ -\infty \}$ as polynomials with respect to these operations.
For a given tropical polynomial $f$, the tropical hypersurface $\mathbf{V}(f)$ is defined as the set of points where $f$ is not linear.
A tropical hypersurface in $\mathbb{R}^2$ is called a tropical curve.

More generally, a tropical curve is a vertex-weighted metric graph satisfying certain conditions.
For a given tropical curve $C$, we can define the skeleton of $C$ by contracting its rays (infinite edges) and this gives a coarse topological classification.
The skeleton of a tropical smooth plane curve defined by a tropical polynomial $f$ is a trivalent connected graph.
It is a point if $d:=\deg(f)=1$ or $2$, homeomorphic to a circle if $d=3$, and of genus $3$ if $d=4$.

There are exactly five trivalent connected graphs of genus (equal to the first Betti number of the graph as a topological space) $3$, as depicted in Figure \ref{gokonogurafu}.
The first four graphs are skeletons of some smooth tropical plane curves, but the fifth graph, called the lollipop graph of genus $3$, is not \cite{CDMY}.
\begin{figure}[H]
\centering
\begin{tikzpicture}
\draw (-0.8,0.5)--(0.8,0.5)--(0,-0.8)--cycle;
\draw (-0.8,0.5)--(0,0)--(0.8,0.5);
\draw (0,0)--(0,-0.8);

\draw (1.3,-0.5) to [out=135, in=225] (1.3,0.5);
\draw (1.3,-0.5) to [out=45, in=315] (1.3,0.5);
\draw (2.3,-0.5) to [out=135, in=225] (2.3,0.5);
\draw (2.3,-0.5) to [out=45, in=315] (2.3,0.5);
\draw (1.3,-0.5)--(2.3,-0.5);
\draw (1.3,0.5)--(2.3,0.5);

\draw (3,-0.5) to [out=135, in=225] (3,0.5);
\draw (3,-0.5) to [out=45, in=315] (3,0.5);
\draw (3,-0.5)--(3.6,0)--(3,0.5);
\draw (3.6,0)--(3.9,0);
\draw (4.2,0)circle(0.3);

\draw (5.4,0)--(5.7,0);
\draw (6.3,0)--(6.6,0);
\draw (5.1,0)circle(0.3);
\draw (6,0)circle(0.3);
\draw (6.9,0)circle(0.3);

\draw (8.2,0.07)--(8.2,-0.2)--(8.4,-0.4);
\draw (8.2,-0.2)--(8,-0.4);
\draw (8.2,0.37)circle(0.3);
\draw (8.615,-0.615)circle(0.3);
\draw (7.785,-0.615)circle(0.3);
\end{tikzpicture}
\caption{The five trivalent connected graphs of genus $3$.}
\label{gokonogurafu}
\end{figure}
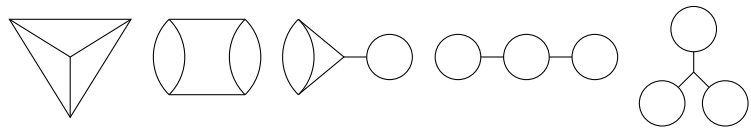
See \cite{JT} for other examples of tropical curves that cannot occur as tropical plane curves.

On the other hand, the lollipop graph of genus $3$ can be realized on a tropical plane in $\mathbb{R}^5$ \cite{HMRT}.
It is an open question whether it can be realized on a tropical plane in $\mathbb{R}^3$ or $\mathbb{R}^4$ \cite{Morrison}.
Let us consider the tropical plane $\mathbf{V}(f)$ in $\mathbb{R}^3$, where $f$ is the tropical polynomial $f=x\oplus y\oplus z\oplus 0$.
Explicitly, if we define
\begin{eqnarray*}
XY:=\{ (x, y, z)\in )\mathbb{R}^3\ |\ x\leq 0, y\leq 0, z=0\},\text{ and so on,}
\end{eqnarray*}
then $\mathbf{V}(f)$ is expressed as $\mathbf{V}(f)=XY\cup YZ\cup ZX\cup XW\cup YW\cup ZW$ (see Figure \ref{the_tropical_plane}).
In this paper, we develop a method for studying topological types of intersections of $\mathbf{V}(f)$ and a tropical hypersurface.
In particular, we ask whether the lollipop graph can be realized as the skeleton of a tropical curve on the tropical plane $\mathbf{V}(f)$.
More specifically, we consider the case where $C=\mathbf{V}(f)\cap \mathbf{V}(g)$ is a smooth tropical complete intersection curve \cite{MS} of the tropical plane $\mathbf{V}(f)$ and a tropical hypersurface $\mathbf{V}(g)$.
Our main theorem is as follows:

\begin{thm} (Theorem \ref{mainth})
Let $f$ and $g$ be tropical polynomials of degrees $d$ and $e$ in three variables.
We assume that
\begin{itemize}
\item $\Newt(f)=\conv(\{ (0, 0, 0), (d, 0, 0), (0, d, 0), (0, 0, d)\})$,
\item $\Newt(g)=\conv(\{ (0, 0, 0), (e, 0, 0), (0, e, 0), (0, 0, e)\})$,
\item $C=\mathbf{V}(f)\cap \mathbf{V}(g)$ is a smooth complete intersection curve.
\end{itemize}
Then the skeleton of $C$ is not the lollipop graph of genus $3$.
\end{thm}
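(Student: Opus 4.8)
Suppose, for contradiction, that the skeleton of $C$ is the genus-$3$ lollipop; then the first Betti number of $C$ is $3$. A smooth tropical complete intersection curve with the two Newton polytopes above has first Betti number equal to the classical genus $\tfrac12 de(d+e-4)+1$ of a smooth complete intersection of surfaces of degrees $d$ and $e$ in $\mathbb{P}^3$, and $\tfrac12 de(d+e-4)+1=3$ forces $\{d,e\}=\{1,4\}$. Swapping $f$ and $g$ if necessary, we may take $d=1$; then, after translating the coordinates, $f=x\oplus y\oplus z\oplus 0$, so $\mathbf{V}(f)$ is the tropical plane of Figure \ref{the_tropical_plane}: six two-dimensional cones $XY,YZ,ZX,XW,YW,ZW$, each affinely isomorphic to $\mathbb{R}_{\leq 0}^2$, glued along the four rays $\mathbb{R}_{\geq 0}(1,1,1)$ and $\mathbb{R}_{\geq 0}(-e_i)$ $(i=1,2,3)$, which share the origin as their common endpoint.

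I would then rebuild $C$ cone by cone. For each two-dimensional cone $F$, the affine isomorphism $F\cong\mathbb{R}_{\leq 0}^2$ identifies $C\cap F$ with the intersection of a tropical plane curve of degree at most $4$ (a suitable restriction of $\mathbf{V}(g)$) and $\mathbb{R}_{\leq 0}^2$, and the polyhedral-complex method from the previous sections applies: it yields a finite, explicit list of the possible combinatorial types of $C\cap F$ together with the cyclic pattern in which $C$ meets the two boundary rays of $F$, and smoothness of $C$ prunes this list to types with trivalent, unweighted vertices only. In parallel I would record how many points of $C$ lie on each of the four rays of $\mathbf{V}(f)$; these numbers --- and hence the way the six planar pieces are to be re-glued --- are rigidly fixed by the degree $4$ through the balancing condition (equivalently, by the intersection numbers of $C$ with the corresponding boundary strata). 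Re-gluing the pieces along these finitely many points reconstitutes $C$, and deleting its unbounded edges yields the skeleton.

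The final and hardest step is to carry out the resulting global enumeration and check that the lollipop never appears. Two structural inputs drive it. First, the link of the origin in $\mathbf{V}(f)$ is the complete graph $K_4$, so every cycle of $C$ either lies in the closure of a single cone $F$ or traverses at least two of the four rays of $\mathbf{V}(f)$; this limits how the genus can be distributed among the cones. Second, among the five trivalent graphs of genus $3$ the lollipop is the only one with three bridges --- equivalently, the only one with three ``pendant'' loops, each joined to the remainder by a single edge. A pendant loop in the skeleton of $C$ must come from a cycle of $C$ severed from the rest of $C$ by a single bounded edge, or by a single crossing on one of the rays of $\mathbf{V}(f)$; demanding three such cycles, pairwise disjoint, each cut off by one crossing, with the complementary genus-$2$ part of $C$ hanging from those three crossings, imposes incompatible constraints on the crossing data of the four rays --- the local types that the catalogue permits on the cones carrying those loops cannot meet the global crossing count dictated by the degree $4$ and balancing. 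This contradiction completes the proof. The main obstacle is precisely this last verification: keeping the interplay of the catalogue of planar pieces, the crossing multiplicities along the four rays, and the gluing finite and conclusive --- which is exactly what the tools of the earlier sections are for. (As a consistency check one should also verify that the remaining four genus-$3$ graphs do arise as such skeletons, so that the obstruction is genuinely specific to the lollipop.)
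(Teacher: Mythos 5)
Your outline follows the paper's strategy step for step: reduce to $(d,e)=(1,4)$ via the genus formula of Theorem \ref{kanzenkousa}, normalize $f=x\oplus y\oplus z\oplus 0$, describe $C\cap F$ for each of the six quarter planes by the dual-subdivision method, and then rule out the lollipop by analyzing how the pieces glue along the four rays. But the proposal stops exactly where the proof begins. The entire third paragraph --- ``demanding three such cycles \dots imposes incompatible constraints on the crossing data \dots This contradiction completes the proof'' --- is an assertion, not an argument; you yourself flag it as ``the main obstacle.'' Nothing in the proposal derives a contradiction in even one configuration. What is missing is precisely the content of Section~5: the case division by the exponent $(a_0,b_0,c_0,d_0)$ of the monomial of $g^h$ dominating at the origin (five main cases, each with several subcases), together with the combinatorial lemmas that make each case decidable --- e.g.\ that every $2$-cell of $K_{**}$ has area $1/2$ (Lemma \ref{1/2}), that two interior lattice points of $K_{**}$ joined by an edge, or lying in adjacent triangles, force two intersecting cycles (Lemma \ref{1112}), and the counting of components and cycles of $C'=C\setminus(X\cup Y\cup Z\cup W)$ (Lemma \ref{st}). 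Without these, ``the catalogue cannot meet the global crossing count'' is not something one can check.

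There is also a concrete error in the setup you propose. You claim that the number of points of $C$ on \emph{each} of the four rays is ``rigidly fixed by the degree $4$ through the balancing condition.'' It is not: only the total $|C\cap(X\cup Y\cup Z\cup W)|=4$ is fixed (Lemma \ref{koutenyonko}); the individual counts are $(a_0,b_0,c_0,d_0)$ with $a_0+b_0+c_0+d_0=4$, and they depend on $g$. Far from being determined, this distribution is exactly the parameter over which the case analysis runs (from $(1,1,1,1)$ to $(4,0,0,0)$), and different distributions require genuinely different arguments (compare Case~$1$, which is immediate from Lemma \ref{saikurunashi}, with Case~$3.6$ or Case~$4.2.1$, which need the fine structure of $K_{XY}$). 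Finally, your characterization of the lollipop by its three bridges is correct but is not what drives the paper's argument; the working criterion is that in the lollipop any two distinct cycles are disjoint, so exhibiting two intersecting cycles of $C$ suffices --- and exhibiting them, case by case, is the proof.
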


We study complete intersections $\mathbf{V}(f)\cap \mathbf{V}(g)$ in the following way.
First, as we explained above, $\mathbf{V}(f)$ is a union of quarter planes.
Each quarter plane can be identified with $\mathbb{R}_{\leq 0}^2$.
So we study the intersection of a tropical plane curve and $\mathbb{R}_{\leq 0}^2$.
The tropical hypersurface defined by a tropical polynomial $p$ is dual to a polyhedral complex whose support is the Newton polytope of $p$.
We show that if a tropical plane curve $C'$ satisfies certain conditions, the restriction of $C'$ to $\mathbb{R}^2_{\leq 0}$ is dual to a connected subcomplex of the dual polyhedral complex of $C'$.
We apply this method to give a topological description of smooth complete intersection curves of genus $3$ on the tropical plane $\mathbf{V}(f)=\mathbf{V}(x\oplus y\oplus z\oplus 0)$.
We will mainly deal with skeletons of tropical curves in this paper, but we can also describe where the cycles and rays of $C=\mathbf{V}(f)\cap \mathbf{V}(g)$ are, etc.

This paper is organized as follows.
Section $2$ gives definitions concerning tropical varieties and the duality theorem for tropical hypersurfaces.
In Section $3$, we define the skeleton of a tropical curve and show several lemmas used in the proof of the main theorem.
In Section $4$, we describe the restriction of a tropical plane curve to $\mathbb{R}^2_{\leq 0}$ and show that the intersection of $\mathbf{V}(g)$ and $XY$ is dual to a subset surrounded by certain paths in a lattice subdivision of $\Delta_4=\conv(\{(0, 0), (4, 0), (0, 4)\})$.
In the last Section $5$, we prove the main theorem by dividing into cases according to the monomial of $g$ that corresponds to the domain of $\mathbb{R}^3\setminus \mathbf{V}(g)$ containing the origin $(0, 0, 0)$.

Directions for future research are as follows.
Beyond resolving the genus $3$ question, this paper presents framework that could be adapted for other open problems.
For instance, which genus $4$ skeletons occur in $\mathbb{R}^3$, say from the intersections of quadric surfaces with cubic surfaces?
And what skeletons can appear on tropical planes in higher dimensions (e.g. can the lollipop graph appear on a tropical plane in $\mathbb{R}^4$)?

\section*{Acknowledgement}
I would like to express my gratitude to my advisor Nobuyoshi Takahashi for giving a lot of advice.

\section{Tropical algebra and tropical curves}
First we give basic definitions and facts on tropical geometry. For details, see \cite{MS} and \cite{Morrison}. 
\begin{Def}
(Tropical algebra and tropical hypersurfaces). 
We define the \textit{tropical algebra} ($\mathbb{R} \cup \{ -\infty \}$, $\oplus$, $\odot$) by
\begin{eqnarray*}
x\oplus y&:=&\max(x, y),\\
x\odot y&:=&x+y.
\end{eqnarray*}
The tropical product $x\odot y$ will be also written as $xy$.
We use the notation $\{\!\{\,\}\!\}$ for multisets.
For a given tropical polynomial $f=\bigoplus_{i=1}^{k} \alpha_i x_1^{a_{i1}}\cdots x_n^{a_{in}}\neq -\infty$, where a monomial which does not appear is regarded as having the coefficient $-\infty$, we define the \textit{tropical hypersurface} $\mathbf{V}(f)$ by
\begin{eqnarray*} 
\mathbf{V}(f)=\left\{ (t_1, \dots, t_n)\in \mathbb{R}^n \middle| 
\begin{array}{l}
\text{the maximum in}\\
\mbox{$\{\!\{ \alpha_1+a_{11}t_1+\dots +a_{1n}t_n, \dots, \alpha_k+a_{k1}t_1+\dots +a_{kn}t_n\}\!\}$}\\
\text{is achieved at least twice}
\end{array}
\right\}.
\end{eqnarray*}
We define $\mathbf{V}(-\infty):=\mathbb{R}^n$.
When $n=2$, we call $\mathbf{V}(f)$ a tropical plane curve.
The \textit{Newton polytope} Newt$(f)$ of $f$ is defined to be the convex hull of $\{ (a_{i1}, \ldots, a_{in}) \ | \ \alpha_i\neq -\infty \}$.
In the case of $n=2$, we call the Newton polytope the ``\textit{Newton polygon}.''
\end{Def}

We will see in a moment that we can regard $\mathbf{V}(f)$ as a polyhedral complex, whose cells are the loci where a fixed set of terms are maximal and there is a lattice subdivision of Newt$(f)$ that is dual to $\mathbf{V}(f)$.

\begin{Def}
(Dual subdivision).
For $f=\bigoplus_{i=1}^{k} \alpha_i x_1^{a_{i1}}\cdots x_n^{a_{in}}$, we define a convex polyhedral set $A_f$ as follows:
\[
A_f=\conv(\{ (a_{i1}, \dots, a_{in}, \alpha)\in \mathbb{Z}^n\times \mathbb{R}\ \mid \alpha \leq \alpha_i\}).
\]
Bounded faces of $A_f$ are called \textit{upper faces} of $A_f$.
The projections of upper faces of $A_f$ give a lattice subdivision of $\Newt(f)$.
Thus $f$ determines a polyhedral complex supported on $\Newt(f)$, which we denote by $\Subdiv_{f}$.
\end{Def}

\begin{Th}\label{dualitytheorem}
(The Duality Theorem, \cite[Proposition 3.11]{Mik}). 
Let $f$ be a tropical polynomial in $n$ variables. 
Then the tropical hypersurface $\mathbf{V}(f)$ is the support of a polyhedral complex $\Sigma_f$ of dimension $n-1$ in $\mathbb{R}^n$.
It is dual to the subdivision $\Subdiv_f$ in the following sense:
\begin{itemize}
  \item (Closures of) domains of $\mathbb{R}^n\setminus \mathbf{V}(f)$ correspond to lattice points in $\Subdiv_f$.
  \item $d$-dimensional cells in $\Sigma_f$ $(0\leq d\leq n-1)$ correspond to $(n-d)$-dimensional cells in $\Subdiv_f$.
  \item These correspondences are inclusion-reversing. 
\end{itemize}
\end{Th}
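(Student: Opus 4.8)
The plan is to deduce the statement from the standard duality between a convex piecewise-linear function and the polyhedron under its graph, i.e.\ from the normal-fan formalism applied to $A_f$. Write $f=\bigoplus_{i=1}^{k}\alpha_i x^{a_i}$ and regard it as the function $f(t)=\max_{1\le i\le k}(\alpha_i+\langle a_i,t\rangle)$ on $\mathbb{R}^{n}$; it is convex and piecewise-linear, and $\mathbf{V}(f)$ is its corner (non-differentiability) locus. The first step is the identification
\[
f(t)=\max_{p\in A_f}\langle p,(t,1)\rangle,
\]
so that $f$ is the support function of $A_f\subseteq\mathbb{R}^{n}\times\mathbb{R}$ restricted to the affine slice $\{u_{n+1}=1\}$ of the dual space. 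This holds because $A_f$ is precisely the region below the graph of the concave piecewise-linear function $g$ on $\Newt(f)$ whose graph is the upper hull of the points $(a_i,\alpha_i)$: maximizing $\langle a,t\rangle+\beta$ over $A_f$ forces $\beta$ onto $g$ and then reduces to the maximum over the exposed terms $a_i$, which is $f(t)$.

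The second step is the normal-fan bookkeeping. As $F$ ranges over the faces of $A_f$, the normal cones $N_F\subseteq\mathbb{R}^{n+1}$ form a polyhedral fan supported on the half-space $\{u_{n+1}\ge 0\}$ (the polar of the recession ray of $A_f$), with $\dim N_F=(n+1)-\dim F$ and $F\subseteq F'$ if and only if $N_{F'}\subseteq N_F$. A face $F$ is bounded precisely when its normal cone meets $\{u_{n+1}>0\}$, i.e.\ when $F$ is an upper face; by construction these bounded faces are in inclusion-preserving bijection with the cells $\tau=\pi(F)$ of $\Subdiv_f$, where $\pi$ forgets the last coordinate. Intersecting the fan with $\{u_{n+1}=1\}$ and using $t$ as coordinate on it, each bounded face $F$ contributes a polyhedron of dimension $(n+1-\dim F)-1=n-\dim\tau$, namely the closure of the set $C_\tau$ of points $t\in\mathbb{R}^{n}$ at which the maximum defining $f(t)$ is attained exactly on the terms $a_i\in\tau$; these closures are the cells of a polyhedral complex $\Gamma_f$ with support $\mathbb{R}^{n}$, and $\tau\subseteq\tau'$ in $\Subdiv_f$ translates into $\overline{C_{\tau'}}\subseteq\overline{C_\tau}$.

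The last step is to read off the three bullet points. A cell $\tau$ of $\Subdiv_f$ has $\dim\tau\ge 1$ if and only if its bounded face has at least two vertices, i.e.\ if and only if at the points of $C_\tau$ the maximum in $f$ is attained at least twice; hence $\bigcup_{\dim\tau\ge 1}\overline{C_\tau}=\mathbf{V}(f)$, and we let $\Sigma_f$ be the subcomplex of $\Gamma_f$ on these cells; its top-dimensional cells come from the edges of $\Subdiv_f$, so $\dim\Sigma_f=n-1$. The $0$-cells of $\Subdiv_f$, its lattice-point vertices, correspond to the $n$-dimensional cells $C_\tau$, which are exactly the connected components of $\mathbb{R}^{n}\setminus\mathbf{V}(f)$ (each is convex, being cut out by strict linear inequalities, and their disjoint union is the whole complement); a $d$-cell of $\Sigma_f$ is an $\overline{C_\tau}$ with $\dim\tau=n-d$; and every correspondence reverses inclusion by the final sentence of the previous paragraph. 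I expect the only real work to lie in the middle step: checking that slicing the normal fan at $\{u_{n+1}=1\}$ meets the normal cone of each upper face in a polyhedron of the predicted dimension, misses the cones of the unbounded faces, and produces pieces that patch into a genuine polyhedral complex. This reduces to two elementary points — a face of $A_f$ is bounded exactly when it is exposed by some functional of the form $(t,1)$; and for such a face the equations $\langle a_i-a_j,\,t\rangle=\alpha_j-\alpha_i$ with $a_i,a_j\in\tau$ cut out an affine subspace of $\mathbb{R}^{n}$ of codimension $\dim\tau$, on which the strict inequalities $\langle a_i-a_m,\,t\rangle>\alpha_m-\alpha_i$ ($a_m\notin\tau$) carve out the nonempty, relatively open set $C_\tau$.
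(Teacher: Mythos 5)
The paper does not prove this statement; it is imported verbatim as a citation (\cite[Proposition 3.11]{Mik}, cf.\ also \cite{MS}), so there is no internal proof to compare against. Your argument is the standard Legendre/normal-fan duality proof of exactly this result, and it is correct: identifying $f$ with the support function of $A_f$ restricted to the slice $\{u_{n+1}=1\}$, using $\dim N_F=(n+1)-\dim F$ together with the fact that $\pi$ maps each upper face $F$ affinely isomorphically onto $\tau=\pi(F)$ (so $\dim F=\dim\tau$), and observing that exactly the bounded faces have normal cones meeting $\{u_{n+1}>0\}$, gives all three bullet points at once, with inclusion-reversal inherited from the normal fan. Two small points are worth tightening. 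First, ``the maximum is attained exactly on the terms $a_i\in\tau$'' should read ``on the terms whose lifted points $(a_i,\alpha_i)$ lie on $F$'': a lattice point of $\tau$ whose coefficient sits strictly below the upper hull of $A_f$ does not achieve the maximum anywhere on $C_\tau$, and conflating the two would also break the claim that $t\in\mathbf{V}(f)$ iff $\dim F_t\ge 1$. Second, the dimension assertions ($\dim\Sigma_f=n-1$, existence of edges in $\Subdiv_f$) presuppose that $f$ has at least two terms on the upper hull; the monomial case, where $\mathbf{V}(f)=\emptyset$, should be excluded or noted as vacuous. Neither point affects the substance, and the ``real work'' you flag in the middle step is handled correctly by the two elementary facts you state at the end.
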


Next, we will explain the balancing condition (See \cite{MS} Section $3. 3$ for details).
Let $f$ be a tropical polynomial.
Then, by Theorem \ref{dualitytheorem}, the subdivision $\Subdiv_f$ is dual to the polyhedral complex $\Sigma_f$.
Let $\tau$ be a $(n-2)$-dimensional cell of $\Sigma_f$ and $\sigma$ a $(n-1)$-dimensional cell of $\Sigma_f$ containing $\tau$ as its face.
We know that $\tau$ corresponds to a $2$-dimensional cell $\Delta_{\tau}$ in $\Subdiv_f$ and $\sigma$ corresponds to a facet $e(\sigma)$ of $\Delta_{\tau}$.
We define $m(\sigma)$ to be the lattice length of $e(\sigma)$.
Here, a lattice vector in $\mathbb{Z}^n$ is primitive if its coefficients are relatively prime.
Each rational vector can be rescaled to integer coordinates, and dividing by the greatest common divisor of the coefficients gives a primitive lattice vector.
For the facet $e(\sigma)$ of $\Delta_{\tau}$, there is a unique outward pointing normal vector $\mathbf{v}_{\sigma}$ which is primitive.

\begin{Def}
Let $f$ be a tropical polynomial.
The tropical hypersurface $\mathbf{V}(f)$ is \textit{balanced} if for each $(n-2)$-dimensional cell $\tau$ in $\Sigma_f$, we have
\[
\sum m(\sigma)\mathbf{v}_{\sigma}=0,
\]
where the sum is taken over every $(n-1)$-dimensional cell $\sigma$ containing $\tau$ as its facet.
\end{Def}

\begin{Prop}\label{balancing_condition}
\cite[Proposition 3.3.2]{MS}.
The $(n-1)$-dimensional polyhedral complex $\mathbf{V}(f)$ goven by a tropical polynomial $f$ in $n$ unknowns is balanced for the weights $m(\sigma)$ defined above.
\end{Prop}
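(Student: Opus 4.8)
The plan is to reduce the balancing identity, one cell at a time, to the elementary fact that the edge vectors of a convex polygon sum to zero.

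Fix an $(n-2)$-dimensional cell $\tau$ of $\Sigma_f$. By the Duality Theorem (Theorem \ref{dualitytheorem}), $\tau$ is dual to a $2$-dimensional cell $\Delta_\tau$ of $\Subdiv_f$ — a bounded convex lattice polygon — and the $(n-1)$-dimensional cells $\sigma$ containing $\tau$ as a facet are in inclusion-reversing bijection with the edges $e(\sigma)$ of $\Delta_\tau$. Write $e(\sigma)=[a,a']$, where $x^a$ and $x^{a'}$ are the two monomials of $f$ whose terms coincide along $\sigma$, so $m(\sigma)$ is the lattice length of $a-a'$, say $a-a'=m(\sigma)\widehat{e}_\sigma$ with $\widehat{e}_\sigma$ primitive. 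I would first record the orthogonality built into the duality: along $\sigma$ the two terms are equal, so $\sigma$ lies in a hyperplane with normal $a-a'$ and $\operatorname{lin}(\sigma)=(a-a')^\perp$; since $\tau$ is a facet of every such $\sigma$, we get $\operatorname{lin}(\tau)\subseteq(a-a')^\perp$ for all edges of $\Delta_\tau$, and hence every edge direction $a-a'$ lies in $\operatorname{lin}(\tau)^\perp$. As $\dim\Delta_\tau=2=n-(n-2)$, this shows that $\Delta_\tau$ spans (a translate of) the plane $P:=\operatorname{lin}(\tau)^\perp$ and that the normal line to $e(\sigma)$ inside $P$ is precisely $\operatorname{lin}(\sigma)\cap P=\mathbb{R}\mathbf{v}_\sigma$.

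Now I would run the polygon argument inside $P$. Traversing the boundary of $\Delta_\tau$ once, the consistently oriented edge vectors telescope, i.e.\ $\sum_\sigma \pm m(\sigma)\widehat{e}_\sigma=0$ with signs chosen to follow a fixed cyclic orientation. The linear automorphism of $P$ given by a $90^\circ$ rotation carrying each consistently oriented edge vector to the outward normal direction of its edge sends $\widehat{e}_\sigma$ to a positive multiple of $\mathbf{v}_\sigma$, so by linearity $\sum_\sigma m(\sigma)\mathbf{v}_\sigma=0$. To identify the scalars correctly one passes through the canonical isomorphism $P=\operatorname{lin}(\tau)^\perp\cong\mathbb{R}^n/\operatorname{lin}(\tau)$: under it $\mathbf{v}_\sigma$ corresponds to the primitive generator (in the image of the lattice $\mathbb{Z}^n$) of the ray along which $\sigma$ leaves $\tau$, the $90^\circ$ rotation becomes exactly the duality map, and the vanishing of $\sum_\sigma m(\sigma)\widehat{e}_\sigma$ turns into the balancing relation $\sum_\sigma m(\sigma)\mathbf{v}_\sigma=0$ at $\tau$. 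Since $\tau$ was arbitrary, $\mathbf{V}(f)$ is balanced.

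The step I expect to be the main obstacle is the normalization hidden in that last paragraph: a Euclidean $90^\circ$ rotation of $P$ is a lattice automorphism only when $P$ is a coordinate-type plane, so one must be careful about which lattice ``primitive'' refers to for $\mathbf{v}_\sigma$, and must check that $m(\sigma)\mathbf{v}_\sigma$ really is the image of the edge vector $a-a'$ under one and the same linear map for every $\sigma$ attached to $\tau$. Working in the quotient $\mathbb{R}^n/\operatorname{lin}(\tau)$ with its induced lattice $\mathbb{Z}^n/(\mathbb{Z}^n\cap\operatorname{lin}(\tau))$ is the clean way to keep this uniform across all edges; the remaining bookkeeping — fixing a cyclic orientation of $\partial\Delta_\tau$ so that ``outward'' corresponds to a single sign of the rotation throughout — is routine.
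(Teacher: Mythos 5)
The paper gives no proof of this proposition at all; it is imported from \cite[Proposition 3.3.2]{MS} as a citation. So your write-up should be judged on its own, and it is essentially correct: it is the standard argument, reducing balancing at $\tau$ to the fact that the oriented edge vectors of the convex polygon $\Delta_\tau$ telescope to zero around $\partial\Delta_\tau$, and then transporting that identity through the (linear) duality. The one point I want to stress is that the ``normalization'' you flag at the end is not optional hygiene but is exactly where the content of the statement sits, and your proposed fix (work in $\mathbb{R}^n/\operatorname{lin}(\tau)$ with the induced lattice $\mathbb{Z}^n/(\mathbb{Z}^n\cap\operatorname{lin}(\tau))$, which is canonically dual to $\mathbb{Z}^n\cap\operatorname{lin}(\Delta_\tau)$) is the correct and necessary one. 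If instead one reads $\mathbf{v}_\sigma$ literally as the paper phrases it --- the Euclidean outward normal to $e(\sigma)$ inside the plane of $\Delta_\tau$, rescaled to be primitive in $\mathbb{Z}^n$ --- the identity can genuinely fail: for $f=0\oplus x\oplus yz$ the dual cell is $\Delta_\tau=\conv\{(0,0,0),(1,0,0),(0,1,1)\}$, all three edges have lattice length $1$, and the primitive Euclidean outward normals are $(0,-1,-1)$, $(-1,0,0)$ and $(2,1,1)$, which sum to $(1,0,0)\neq 0$. In the quotient lattice, by contrast, the map sending a primitive edge direction to its primitive annihilator is, in dual bases, the unimodular rotation $(d_1,d_2)\mapsto(-d_2,d_1)$; it carries the full edge vector $a-a'$ to $\pm m(\sigma)\mathbf{v}_\sigma$ uniformly over all edges of $\Delta_\tau$, and your telescoping identity then yields $\sum_\sigma m(\sigma)\mathbf{v}_\sigma=0$. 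With that reading made explicit (and the routine check that ``outward normal of $e(\sigma)$'' is indeed the direction in which $\sigma$ leaves $\tau$, which you use implicitly), the proof is complete.
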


\begin{Def}
A tropical plane curve is \textit{smooth} if its dual subdivision is a unimodular triangulation, meaning that every polygon in the subdivision is a triangle with no lattice points besides its vertices.
\end{Def}

Next, we will define smooth complete intersection curves in $\mathbb{R}^3$.
See \cite{MS} Section $4. 6$ for details on the following contents.

\begin{Def}
(Cayley polytope). 
Let $p=\bigoplus_{i=1}^k \alpha_i x^{a_i} y^{b_i} z^{c_i}$, $q=\bigoplus_{j=1}^l \beta_j x^{a'_j} y^{b'_j} z^{c'_j}$, $P=\Newt(p)$, and $Q=\Newt(q)$. 
We define the \textit{Cayley polytope} of $P$ and $Q$ as follows:
\[
\Cay(P, Q)=\conv((P\times \{ 0\}) \cup (Q\times \{ 1\}))\subset \mathbb{R}^4.
\]
The tropical polynomials $p$ and $q$ also determine a convex polyhedral set
\[
\conv(\{(a_i, b_i, c_i, 0, \alpha) \mid \alpha \leq \alpha_i \} \cup \{(a'_j, b'_j, c'_j, 1, \beta) \mid \beta \leq \beta_j \}) \subset \mathbb{R}^5.
\]
The projections of upper faces of this polyhedral set give a subdivision of $\Cay(P, Q)$, and we call it the \textit{Cayley subdivision} associated to $p$ and $q$.
A \textit{mixed cell} of the Cayley subdivision is a cell $A$ with $\dim(A\cap \{(x, y, z, i, u) \mid x, y, z, u\in \mathbb{R} \})\geq 1$ for $i=0, 1$.
\end{Def}

\begin{Def}
(Smooth tropical complete intersection curve). 
Let $p$ and $q$ be tropical polynomials in three variables and  let $C=\mathbf{V}(p)\cap \mathbf{V}(q)$. 
Write $P=\Newt(p)$ and $Q=\Newt(q)$. 
If all $4$-dimensional cells in the Cayley subdivision of $\Cay(P, Q)$ associated to $p$ and $q$ have the minimum possible volume $1/24$, we call $C$ a \textit{smooth complete intersection curve}.
If all $4$-dimensional mixed cells in the Cayley subdivision have the volume $1/24$, we call $C$ a \textit{weakly smooth complete intersection curve}.
\end{Def}

\begin{Th}\label{kanzenkousa}
(\cite[Theorem 8]{Morrison}). 
Let $f$ and $g$ be  tropical polynomials of degrees $d$ and $e$ in three variables.
We assume that
\begin{eqnarray*}
\Newt(f)=\conv(\{ (0, 0, 0), (d, 0, 0), (0, d, 0), (0, 0, d)\}),\\
\Newt(g)=\conv(\{ (0, 0, 0), (e, 0, 0), (0, e, 0), (0, 0, e)\}),\hspace{1mm}
\end{eqnarray*}
and the tropical curve $C=\mathbf{V}(f)\cap \mathbf{V}(g)$ is a smooth complete intersection.
Then $C$ is a trivalent graph and has
\begin{itemize}
  \item $d^2e+de^2$ vertices,
  \item $(3/2)d^2e+(3/2)de^2-2de$ edges (bounded one-dimensional cells),
  \item $4de$ rays (unbounded one-dimensional cells).
\end{itemize}
The genus of the graph $C$ equals $(1/2)d^2e+(1/2)de^2-2de+1$. 
\end{Th}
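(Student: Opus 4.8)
The plan is to read off all the combinatorics of $C=\mathbf{V}(f)\cap\mathbf{V}(g)$ from the Cayley subdivision $\mathcal{S}$ of $\Cay(P,Q)$, where $P=\Newt(f)$ and $Q=\Newt(g)$; by hypothesis $P$ and $Q$ are the $d$-th and $e$-th dilates of the standard simplex $\Delta=\conv(\{(0,0,0),(1,0,0),(0,1,0),(0,0,1)\})$, which has volume $1/6$. The essential input is that $C$ is a \emph{smooth} complete intersection, so every $4$-dimensional cell of $\mathcal{S}$ has volume $1/24$; in particular $\mathcal{S}$ restricts to unimodular triangulations $\Subdiv_f$ and $\Subdiv_g$ on the facets $P\times\{0\}$ and $Q\times\{1\}$. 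By the Cayley trick (see \cite{MS}), each $4$-cell of $\mathcal{S}$ has the form $\conv((F_0\times\{0\})\cup(F_1\times\{1\}))$ with $F_0\in\Subdiv_f$, $F_1\in\Subdiv_g$ and $\dim F_0+\dim F_1=3$, and it is a mixed cell exactly when $(\dim F_0,\dim F_1)\in\{(1,2),(2,1)\}$. The first task is to show that the vertices of $C$ correspond bijectively to the mixed $4$-cells and that $C$ is trivalent: near the vertex dual to a mixed cell with $(\dim F_0,\dim F_1)=(1,2)$, the surface $\mathbf{V}(f)$ is locally a single $2$-plane (dual to the unimodular edge $F_0$) while $\mathbf{V}(g)$ is locally three half-planes meeting along a line with primitive balancing normals (dual to the unimodular triangle $F_1$); the plane meets the line in the vertex and each half-plane in one edge-germ, so three edges emanate and they balance. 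The case $(2,1)$ is symmetric, and smoothness rules out any other local picture, so the set-theoretic intersection has exactly the claimed local structure.

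Granting this, I would count the $4$-cells of $\mathcal{S}$ by volume: since each has volume $1/24$, their number is $24\,\mathrm{Vol}_4(\Cay(P,Q))$. The slice of $\Cay(P,Q)$ at height $t\in[0,1]$ is the Minkowski combination $(1-t)P+tQ$, so $\mathrm{Vol}_4(\Cay(P,Q))=\int_0^1\mathrm{Vol}_3((1-t)P+tQ)\,dt$; expanding the integrand via Minkowski's theorem into mixed volumes and using that every mixed volume of dilates of $\Delta$ equals $d^{3-i}e^i\,\mathrm{Vol}_3(\Delta)=d^{3-i}e^i/6$, one obtains $\mathrm{Vol}_4(\Cay(P,Q))=\tfrac{1}{24}(d^3+d^2e+de^2+e^3)$, hence $d^3+d^2e+de^2+e^3$ cells in all. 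Among them, the non-mixed cells have type $(3,0)$, one over each tetrahedron of $\Subdiv_f$ — and there are $6\,\mathrm{Vol}_3(P)=d^3$ of those — or type $(0,3)$, of which there are likewise $e^3$. Subtracting gives $d^2e+de^2$ mixed cells, i.e.\ $C$ has $d^2e+de^2$ vertices.

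For the rays I would analyze $C$ near infinity. An inspection of $\mathbf{V}(f)$ and $\mathbf{V}(g)$ far from the origin shows that every ray of $C$ points in one of the four directions $-(1,0,0)$, $-(0,1,0)$, $-(0,0,1)$, $(1,1,1)$ dual to the four facets of $P$, and that the rays pointing in the direction dual to a facet $\Phi$ are in bijection with the points of $\mathbf{V}(f|_\Phi)\cap\mathbf{V}(g|_\Phi)$, where $f|_\Phi$ and $g|_\Phi$ are the restrictions of $f$ and $g$ to the corresponding face of the Newton polytope; these are plane tropical curves of degrees $d$ and $e$, which smoothness of $C$ forces to be smooth and to meet transversally. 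By the same volume computation one dimension lower — equivalently, by tropical B\'ezout — there are exactly $de$ such points, so running over the four facets yields $4de$ rays. Finally, the handshake identity $3V=2E+R$ for the trivalent graph $C$ gives $E=\tfrac32(d^2e+de^2)-2de$ bounded edges, and since a smooth tropical complete intersection curve is connected — which one can deduce from connectedness of the mixed part of $\mathcal{S}$ or import from the classical statement — its first Betti number is $E-V+1=\tfrac12(d^2e+de^2)-2de+1$.

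The main obstacle is the first step: making the Cayley-trick duality between $C$ and $\mathcal{S}$ precise enough that the vertices of $C$ genuinely correspond to the mixed $4$-cells with the stated trivalent local model, with nothing lost to degenerate incidences — this is precisely where the smoothness hypothesis (all $4$-cells of volume $1/24$) is indispensable, and it is also what makes the boundary analysis in the ray count work. The connectedness needed for the genus formula is a secondary point best dispatched on its own.
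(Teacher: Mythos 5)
The paper does not prove this theorem: it is imported verbatim as \cite[Theorem 8]{Morrison}, so there is no in-paper argument to compare yours against. Your reconstruction is the standard proof via the Cayley trick and mixed cells (essentially the route of \cite{MS}, Section 4.6, and of Morrison), and it is sound. The volume computation checks out: the height-$t$ slice of $\Cay(P,Q)$ is $((1-t)d+te)\Delta$, so $\mathrm{Vol}_4(\Cay(P,Q))=\int_0^1((1-t)d+te)^3/6\,dt=(d^3+d^2e+de^2+e^3)/24$, the unmixed cells of types $(3,0)$ and $(0,3)$ number $d^3$ and $e^3$, and the leftover $d^2e+de^2$ mixed cells are the vertices; the ray count via the four boundary directions and planar B\'ezout, and the handshake $3V=2E+R$, are all correct.

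Two small points. First, where you say ``smoothness rules out any other local picture,'' the clean way to close this is the estimate $\mathrm{Vol}_4(\Cay(F_0,F_1))=\int_0^1\mathrm{Vol}_3((1-t)F_0+tF_1)\,dt\geq\int_0^1(1-t)^{\dim F_0}\mathrm{Vol}(F_0)\,dt$ and its analogue for $F_1$: any $4$-cell with $\dim F_0=3$ and $\dim F_1\geq 1$ (i.e.\ a vertex of $\mathbf{V}(f)$ sitting on $\mathbf{V}(g)$, or vice versa) would have volume strictly greater than $1/24$, and a mixed cell of type $(1,2)$ has volume $1/24$ exactly when $F_0$ is primitive, $F_1$ is unimodular and their spans are complementary, which is precisely the trivalent local model you describe. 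Second, the connectedness worry at the end is moot for the statement as the paper phrases it: the paper defines the genus of a graph to be $E-V+1$ outright, so the genus formula follows from the vertex, edge and ray counts alone; connectedness is only needed if one insists that this number equal the first Betti number, and for smooth complete intersections it does hold but need not be proved here.
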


\section{Skeletons of tropical curves and the lollipop graph}
First, we define the skeleton of a tropical curve according to \cite{Morrison}.

\begin{Def}
(Skeletons of tropical curves).
Let $C$ be a tropical curve.
We delete all rays from $C$, and then successively remove any vertices incident to exactly one edge, along with such edges.
We remove the vertices incident to two edges and fuse the two edges into one.
The resulting collection of edges and vertices is called the \textit{skeleton} of the tropical curve $C$.
We define the \textit{genus} of the graph as $E-V+1$ for a graph with $E$ edges and $V$ vertices.
\end{Def}

\begin{Def}
(Troplanar graph).
A graph that is the skeleton of some smooth tropical plane curve is called tropically planer, or \textit{troplanar} for short.
\end{Def}

Next, we introduce a tropical plane in $\mathbb{R}^3$.

\begin{Def}
(Tropical plane in $\mathbb{R}^3$).
We define a \textit{tropical plane} in $\mathbb{R}^3$ as a tropical hypersurface defined by a tropical polynomial of degree $1$ in three variables.
We call the specific one defined by $x\oplus y\oplus z\oplus 0$ ``the tropical plane'' (see Figure \ref{the_tropical_plane}).
\end{Def}

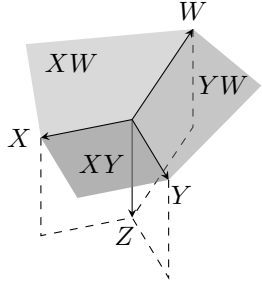
\begin{figure}[H]
\centering
\begin{tikzpicture}
\coordinate (O) at (0,0);
\coordinate (X) at (-1.2,-0.24);
\coordinate (Y) at (0.48,-0.8);
\coordinate (Z) at (0,-1.3);
\coordinate (W) at (0.8,1.2);
\coordinate (XW) at (-1.4,1);
\coordinate (YW) at (1.7,0.45);
\coordinate (XY) at (-0.72,-1.04);
\coordinate (XZ) at (0,-1.414);
\coordinate (YZ) at (1.2,0.7);
\draw[dashed] (W)--(0.8,-0.1)--(Z);
\draw[dashed] (Y)--(0.48,-2.1)--(Z);
\draw[dashed] (X)--(-1.2,-1.54)--(Z);
\draw[->,>=stealth] (O)--(Z);
\coordinate [label=below:$Z$] (a) at (-0.1,-1.3);
\fill[gray, opacity=0.3] (X)--(XW)--(W)--(O)--cycle;
\fill[gray, opacity=0.6] (O)--(X)--(XY)--(Y)--cycle;
\fill[gray, opacity=0.45] (O)--(Y)--(YW)--(W)--cycle;
\coordinate [label=below:$XY$] (A) at (-0.4,-0.35);
\coordinate [label=below:$XW$] (B) at (-0.8,1);
\coordinate [label=below:$YW$] (C) at (1.2,0.7);
\draw[->,>=stealth] (O)--(X) node[left] {$X$};
\draw[->,>=stealth] (O)--(Y);
\coordinate [label=below:$Y$] (b) at (0.65,-0.75);
\draw[->,>=stealth] (O)--(W) node[above] {$W$};
\end{tikzpicture}
\caption{The tropical plane in $\mathbb{R}^3$.}
\label{the_tropical_plane}
\end{figure}

\begin{notation}
Let
\[
\mathbf{x}:=(-1, 0, 0),\ \mathbf{y}:=(0, -1, 0),\ \mathbf{z}:=(0, 0, -1),\ \mathbf{w}:=(1, 1, 1).
\]
We write the four rays that are the $1$-dimensional cells of the tropical plane in $\mathbb{R}^3$ as
\[
X:=\mathbb{R}_{\geq 0}\mathbf{x},\ Y:=\mathbb{R}_{\geq 0}\mathbf{y},\ Z:=\mathbb{R}_{\geq 0}\mathbf{z},\ W:=\mathbb{R}_{\geq 0}\mathbf{w},
\]
and write six copies of quarter planes as follows:
\begin{eqnarray*}
XY:=\mathbb{R}_{\geq 0}\mathbf{x}+\mathbb{R}_{\geq 0}\mathbf{y},\hspace{4mm} YZ:=\mathbb{R}_{\geq 0}\mathbf{y}+\mathbb{R}_{\geq 0}\mathbf{z},\hspace{4mm} ZX:=\mathbb{R}_{\geq 0}\mathbf{z}+\mathbb{R}_{\geq 0}\mathbf{x},\\
XW:=\mathbb{R}_{\geq 0}\mathbf{x}+\mathbb{R}_{\geq 0}\mathbf{w},\hspace{2mm} YW:=\mathbb{R}_{\geq 0}\mathbf{y}+\mathbb{R}_{\geq 0}\mathbf{w},\hspace{2mm} ZW:=\mathbb{R}_{\geq 0}\mathbf{z}+\mathbb{R}_{\geq 0}\mathbf{w}.
\end{eqnarray*}
\end{notation}
Then, we have $\mathbf{V}(x\oplus y\oplus z\oplus 0)=XY\cup YZ\cup ZX\cup XW\cup YW\cup ZW$.
\medbreak
We ask which graphs can be realized as the skeleton of a tropical curve.
There are exactly five trivalent connected graphs of genus $3$ (see Figure \ref{gokonogurafu}).
The first four graphs are troplanar, in particular they are skeletons of some smooth tropical plane quartic curves, but the fifth graph, called the lollipop graph of genus $3$, is not \cite{CDMY}.

On the other hand, the lollipop graph can be realized on a tropical plane in $\mathbb{R}^5$ \cite{HMRT}.
However, it is an open question if the lollipop graph can be realized on a tropical plane in $\mathbb{R}^3$ or $\mathbb{R}^4$.

Starting in the next section, we will consider smooth tropical complete intersection curves of tropical hypersurfaces and the tropical plane $\mathbf{V}(f)$ in $\mathbb{R}^3$, and show that the lollipop graph cannot be realized under certain conditions.

\begin{Lem}\label{d=4}
Let $f$ and $g$ be tropical polynomials of degrees $d$ and $e$ ($0<d\leq e$) in three variables.
We assume that
\begin{itemize}
\item $\Newt(f)=\conv(\{ (0, 0, 0), (d, 0, 0), (0, d, 0), (0, 0, d)\})$,
\item $\Newt(g)=\conv(\{ (0, 0, 0), (e, 0, 0), (0, e, 0), (0, 0, e)\})$,
\item $C=\mathbf{V}(f)\cap \mathbf{V}(g)$ is a smooth complete intersection curve of genus $3$.
\end{itemize}
Then $(d, e)=(1, 4)$ and $C$ is a trivalent graph with $20$ vertices, $22$ edges (bounded one-dimensional cells), and $16$ rays (unbounded one-dimensional cells).
\end{Lem}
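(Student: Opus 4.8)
The plan is to use the genus formula from Theorem~\ref{kanzenkousa}, namely $g(C) = \tfrac12 d^2 e + \tfrac12 d e^2 - 2de + 1$, and to determine all pairs $(d,e)$ with $0 < d \le e$ for which this equals $3$. Setting $\tfrac12 d^2 e + \tfrac12 d e^2 - 2de + 1 = 3$ and clearing denominators gives $d^2 e + d e^2 - 4de = 4$, i.e. $de(d + e - 4) = 4$. The first step is to analyze this Diophantine equation. Since $d, e \ge 1$ and the left-hand side must be the positive number $4$, we need $d + e - 4 \ge 1$, so $d + e \ge 5$; combined with $d \le e$ this forces $e \ge 3$. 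Then $de \ge 3$ and $d+e-4 \ge 1$ with their product equal to $4$ leaves only finitely many possibilities: either $de = 4, d+e-4 = 1$ (so $d+e=5$, $de=4$, giving $\{d,e\}=\{1,4\}$, consistent with $d \le e$), or $de = 2, d+e-4 = 2$ (so $d+e=6$, $de=2$, which has no integer solutions since the discriminant $36-8=28$ is not a perfect square), or $de = 1, d+e-4=4$ (so $d=e=1$ and $d+e=8$, a contradiction). Hence $(d,e) = (1,4)$ is the unique solution.

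Once $(d,e) = (1,4)$ is established, the second step is simply to substitute these values into the cell counts of Theorem~\ref{kanzenkousa}: the number of vertices is $d^2 e + d e^2 = 4 + 16 = 20$, the number of edges is $\tfrac32 d^2 e + \tfrac32 d e^2 - 2de = 6 + 24 - 8 = 22$, and the number of rays is $4de = 16$. Trivalence is immediate from the same theorem. As a consistency check, $E - V + 1 = 22 - 20 + 1 = 3$, matching the hypothesized genus.

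I do not anticipate a serious obstacle here: the argument is a short finiteness check on a factored Diophantine equation followed by direct substitution. The only point requiring a little care is to make sure the case analysis on the factorization $de(d+e-4) = 4$ is genuinely exhaustive — one must rule out the sporadic possibilities such as $d+e-4 \in \{1,2,4\}$ cleanly, and in particular confirm that $de = 2$ forces $\{d,e\} = \{1,2\}$, which gives $d + e - 4 = -1 \ne 2$, so that branch dies immediately, and similarly that no factor can be negative since all of $d$, $e$, and (as shown) $d+e-4$ are positive. With that pinned down, the lemma follows.
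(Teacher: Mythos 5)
Your proposal is correct and follows essentially the same route as the paper: apply the genus formula of Theorem \ref{kanzenkousa} to get $(1/2)d^2e+(1/2)de^2-2de+1=3$, deduce $(d,e)=(1,4)$, and then read off the vertex, edge, and ray counts from the same theorem. The paper simply asserts the solution of the equation without detail, whereas you spell out the (correct and exhaustive) Diophantine case analysis of $de(d+e-4)=4$; the arithmetic in both steps checks out.
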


\begin{proof}
Assume that $C$ has genus $3$, then by Theorem \ref{kanzenkousa}, we have 
\[
(1/2)d^2e+(1/2)de^2-2de+1=3.
\]
Thus we have  $(d, e)=(1, 4)$.
Again by Theorem \ref{kanzenkousa}, $C$ is a trivalent graph and has $20$ vertices, $22$ edges, and $16$ rays.
\end{proof}

\section{The restrictions of tropical plane curves to $\mathbb{R}^2_{\leq 0}$}
In this section, we assume that 
\begin{eqnarray*}
f&=&x\oplus y\oplus z\oplus 0,\\
g&=&\bigoplus_{a+b+c\leq m}\alpha _{abc}x^a y^b z^c,\\
\Newt(g)&=&\conv(\{ (0, 0, 0), (m, 0, 0), (0, m, 0), (0, 0, m)\}), 
\end{eqnarray*}
and that $C=\mathbf{V}(f)\cap \mathbf{V}(g)$ is a smooth tropical complete intersection curve.

First, we introduce a notation for the restriction of a tropical plane curve to $\mathbb{R}^2_{\leq 0}$.

\begin{notation}
For a tropical polynomial $p$ in $n$ variables, we define
\[
\mathbf{V}_{\!-}(p)=\mathbf{V}(p)\cap \mathbb{R}^n_{\leq 0}.
\] 
\end{notation}

We consider the case where $C=\mathbf{V}(f)\cap \mathbf{V}(g)\subset \mathbb{R}^3$ is a smooth complete intersection curve for $f=x\oplus y\oplus z\oplus 0$ and a tropical polynomial $g$ of degree $m$.

\begin{Lem}\label{genten}
Let $f=x\oplus y\oplus z\oplus 0$ and $g$ be a tropical polynomial.
We assume that $C=\mathbf{V}(f)\cap \mathbf{V}(g)$ is a weakly smooth tropical complete intersection curve.
Then, we have
\[
(0, 0, 0)\notin \mathbf{V}(g).
\]
\end{Lem}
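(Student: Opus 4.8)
The plan is to argue by contradiction: suppose $(0,0,0)\in\mathbf{V}(g)$, and derive a violation of the weak smoothness hypothesis at the point $(0,0,0)$, which lies on the tropical plane $\mathbf{V}(f)$ precisely because $f=x\oplus y\oplus z\oplus 0$ attains its maximum (namely $0$) four times there. The key observation is that $(0,0,0)$ is the unique point of $\mathbf{V}(f)$ where all four linear terms of $f$ coincide; in the language of the duality theorem it is the vertex of $\Sigma_f$ dual to the full tetrahedron $\Newt(f)=\conv(\{(0,0,0),(1,0,0),(0,1,0),(0,0,1)\})$ (in the unit-simplex normalization used later in the paper). If in addition $g$ is non-linear at $(0,0,0)$, then $(0,0,0)$ is a point of the curve $C=\mathbf{V}(f)\cap\mathbf{V}(g)$ that sits at this very special vertex of $\mathbf{V}(f)$.

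First I would pass to the Cayley picture. The point $(0,0,0)\in\mathbf{V}(f)\cap\mathbf{V}(g)$ corresponds, via the Cayley subdivision of $\Cay(\Newt(f),\Newt(g))$ associated to $f$ and $g$, to a cell $A$ that projects onto the whole bottom facet $\Newt(f)\times\{0\}$ (because all four terms of $f$ are simultaneously maximal) and onto a face of $\Newt(g)\times\{1\}$ of dimension $\ge 1$ (because at least two terms of $g$ are maximal). Thus $A$ contains $\conv(\{(0,0,0,0),(1,0,0,0),(0,1,0,0),(0,0,1,0)\})$ together with at least one extra vertex at height $1$, so $A$ is a $4$-dimensional mixed cell of volume at least that of this $4$-simplex, which is $4!/24=1/6$ times... — more precisely, $\conv$ of the bottom tetrahedron with any single lattice point at height $1$ has $4$-dimensional normalized volume equal to the normalized area... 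I would compute: the bottom tetrahedron has $3$-volume $1/6$, so coning over it to a point at height $1$ gives $4$-volume $\tfrac14\cdot\tfrac16=\tfrac1{24}$ only if that point lies directly above a vertex; in general the $4$-volume of $\conv(\{(0,0,0,0),(1,0,0,0),(0,1,0,0),(0,0,1,0),(a,b,c,1)\})$ is $\tfrac1{6}\cdot\tfrac14=\tfrac1{24}$ independent of $(a,b,c)$, so a single extra apex still only gives volume $1/24$. Hence the real content is that $A$ must contain \emph{two} non-collinear points of $\Newt(g)\times\{1\}$ (since at $(0,0,0)\in\mathbf{V}(g)$ the maximum of $g$ is attained at least twice, the dual face of $g$ has dimension $\ge1$, and in fact for the contradiction I need it to force a larger cell), whence $A\supseteq\conv(\text{bottom tetrahedron}\cup\{\text{two points at height }1\})$, a cell of $4$-volume $\ge 2/24>1/24$, contradicting weak smoothness.

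The main obstacle I anticipate is making the last step airtight: a priori the maximum of $g$ at $(0,0,0)$ could be attained by exactly two monomials whose exponent vectors span only a $1$-dimensional face, in which case the mixed cell $A$ is the join of the bottom tetrahedron with a segment and has $4$-volume exactly $2/24$, still $>1/24$ — good — but I should double-check the degenerate possibility that this ``cell'' is not actually a single cell of the Cayley subdivision but is split further; however it cannot be split, because the bottom tetrahedron $\Newt(f)\times\{0\}$ is unimodular and appears as a face of every cell lying over $(0,0,0)$, so the portion of the subdivision over $(0,0,0)$ is an honest cell containing that tetrahedron. A cleaner route, which I would actually write up, bypasses volumes: by the complete-intersection duality the component of $C$ through $(0,0,0)$ is dual to a mixed cell whose bottom face is all of $\Newt(f)$; since $\Newt(f)$ has four vertices, this cell has at least four vertices from the bottom plus at least two from the top, hence at least six vertices, so it is not a $4$-simplex, so it is not of minimal volume $1/24$, contradicting the definition of weak smoothness (which requires every $4$-dimensional mixed cell to have volume $1/24$, equivalently to be a unimodular $4$-simplex). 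This gives $(0,0,0)\notin\mathbf{V}(g)$. \qed
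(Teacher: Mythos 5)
Your proposal is correct and follows essentially the same route as the paper: pass to the Cayley subdivision, observe that the cell over $(0,0,0)$ must contain the entire bottom tetrahedron $\Newt(f)\times\{0\}$ together with at least two distinct lattice points at height $1$, and conclude that this $4$-dimensional mixed cell has more than $5$ vertices and hence volume strictly greater than $1/24$, contradicting weak smoothness. The only difference is cosmetic: where you justify that these six points lie in a single cell by appealing to duality, the paper exhibits an explicit supporting linear functional $\mathbf{p}=(0,0,0,\alpha_{a_1b_1c_1},-1)$ on the lifted point configuration.
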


\begin{proof}
Let $P=\Newt(f)$ and $Q=\Newt(g)$.
If $(0, 0, 0)\in \mathbf{V}(g)$, there exist terms $\alpha_{a_1b_1c_1}x^{a_1}y^{b_1}z^{c_1}$ and $\alpha_{a_2b_2c_2}x^{a_2}y^{b_2}z^{c_2}$ of $g$ such that for all terms $\alpha_{abc}x^ay^bz^c$ of $g$
\[
\alpha_{a_1b_1c_1}=\alpha_{a_2b_2c_2}\geq \alpha_{abc}.
\]
We define
\begin{eqnarray*}
T_f&=&\left\{ \begin{pmatrix} a \\ b \\ c \\ 0 \\ 0 \end{pmatrix} \middle| x^ay^bz^c\text{ is a term of $f$}\right\}=\left\{ \begin{pmatrix} 0 \\ 0 \\ 0 \\ 0 \\ 0 \end{pmatrix}, \begin{pmatrix} 1 \\ 0 \\ 0 \\ 0 \\ 0 \end{pmatrix}, \begin{pmatrix} 0 \\ 1 \\ 0 \\ 0 \\ 0 \end{pmatrix}, \begin{pmatrix} 0 \\ 0 \\ 1 \\ 0 \\ 0 \end{pmatrix} \right\},\\
T_g&=&\left\{ \begin{pmatrix} a \\ b \\ c \\ 1 \\ \alpha_{abc} \end{pmatrix} \middle| \alpha_{abc}x^ay^bz^c\text{ is a term of $g$}\right\},\\
\mathbf{p}&=&(0, 0, 0, \alpha_{a_1b_1c_1}, -1),\quad \mathbf{w}_1=\begin{pmatrix} a_1 \\ b_1 \\ c_1 \\ 1 \\ \alpha_{a_1b_1c_1} \end{pmatrix},\quad \mathbf{w}_2=\begin{pmatrix} a_2 \\ b_2 \\ c_2 \\ 1 \\ \alpha_{a_2b_2c_2}\end{pmatrix}.
\end{eqnarray*}
Then, we have $\mathbf{w}_1, \mathbf{w}_2\in T_g$ and
\[
\forall \mathbf{v}\in T_f,\ \forall \mathbf{w}\in T_g,\ 0=\mathbf{p}\cdot \mathbf{w}_1=\mathbf{p}\cdot \mathbf{w}_2=\mathbf{p}\cdot \mathbf{v}\leq \mathbf{p}\cdot \mathbf{w}.
\]
Hence there exists a four-dimensional cell in the Cayley subdivision of $\Cay(P, Q)$ containing $\conv \left\{ \begin{pmatrix} 0 \\ 0 \\ 0 \\ 0 \end{pmatrix}, \begin{pmatrix} 1 \\ 0 \\ 0 \\ 0 \end{pmatrix}, \begin{pmatrix} 0 \\ 1 \\ 0 \\ 0 \end{pmatrix}, \begin{pmatrix} 0 \\ 0 \\ 1 \\ 0 \end{pmatrix}, \begin{pmatrix} a_1 \\ b_1 \\ c_1 \\ 1 \end{pmatrix}, \begin{pmatrix} a_2 \\ b_2 \\ c_2 \\ 1  \end{pmatrix}
\right\}$.
Since this cell has more than $5$ vertices, its volume is strictly greater than $1/24$.
This contradicts to the assumption that $C$ is a weakly smooth complete intersection curve.
\end{proof}

\begin{Lem}\label{koutenyonko}
Let $f=x\oplus y\oplus z\oplus 0$ and $g$ be a tropical polynomial with $\Newt(g)=\conv(\{ (0, 0, 0)$, $(m, 0, 0), (0, m, 0), (0, 0, m)\})$.
We assume that $C=\mathbf{V}(f)\cap \mathbf{V}(g)$ is a weakly smooth complete intersection curve.
Then, we have
\[
\lvert \mathbf{V}(g)\cap (X\cup Y\cup Z\cup W)\rvert=m.
\]
\end{Lem}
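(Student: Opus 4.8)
The plan is to identify the $1$-skeleton $X\cup Y\cup Z\cup W$ of $\mathbf{V}(f)$ with the standard tropical line in $\mathbb{R}^3$ and to count its intersection points with $\mathbf{V}(g)$ ray by ray. By Lemma \ref{genten} the origin, which is the unique vertex of this fan, does not lie on $\mathbf{V}(g)$, so $\mathbf{V}(g)\cap(X\cup Y\cup Z\cup W)$ is the disjoint union of the four sets $\mathbf{V}(g)\cap R$, $R\in\{X,Y,Z,W\}$. For each ray $R=\mathbb{R}_{\geq 0}\mathbf{r}$ (with $\mathbf{r}\in\{\mathbf{x},\mathbf{y},\mathbf{z},\mathbf{w}\}$), the restriction $h_R(t):=g(t\mathbf{r})$ is a convex piecewise-linear function of $t\in[0,\infty)$ whose local slopes are integers, so the number of its breakpoints in $(0,\infty)$, counted with multiplicity (the increment of the slope), equals $s_R^{\infty}-s_R^{0}$, where $s_R^{0}=h_R'(0^{+})$ and $s_R^{\infty}=\lim_{t\to\infty}h_R'(t)$. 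Thus the first step is to reduce the statement to the identity $\sum_{R}\bigl(s_R^{\infty}-s_R^{0}\bigr)=m$, together with the claim that each breakpoint of each $h_R$ has slope increment exactly $1$ and corresponds to a distinct point of $\mathbf{V}(g)\cap R$, and conversely.

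Next I would evaluate the two boundary slopes. Since $(0,0,0)\notin\mathbf{V}(g)$ by Lemma \ref{genten}, $g$ is affine near the origin, say with gradient $\mathbf{a}_{\ast}\in\mathbb{Z}^3$; hence $s_R^{0}=\mathbf{a}_{\ast}\cdot\mathbf{r}$, and using $\mathbf{x}+\mathbf{y}+\mathbf{z}+\mathbf{w}=\mathbf{0}$ we get $\sum_{R}s_R^{0}=\mathbf{a}_{\ast}\cdot(\mathbf{x}+\mathbf{y}+\mathbf{z}+\mathbf{w})=0$. On the other hand $s_R^{\infty}$ is the value at $\mathbf{r}$ of the support function of $\Newt(g)$, that is, $s_R^{\infty}=\max\{\mathbf{a}\cdot\mathbf{r}\mid \mathbf{a}\in\Newt(g)\}$. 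Because $\Newt(g)=\conv(\{(0,0,0),(m,0,0),(0,m,0),(0,0,m)\})$, this maximum is $0$ for $\mathbf{r}\in\{\mathbf{x},\mathbf{y},\mathbf{z}\}$ and $m$ for $\mathbf{r}=\mathbf{w}$. Therefore $\sum_{R}\bigl(s_R^{\infty}-s_R^{0}\bigr)=m$, which gives the desired count once the multiplicity-one and bijectivity claims are settled.

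The main obstacle is precisely that last point: showing that along each ray $R$ the hypersurface $\mathbf{V}(g)$ meets $R$ transversally, in finitely many points, each in the relative interior of a single facet and crossed with ``index $1$''; equivalently, that the points of $\mathbf{V}(g)\cap R$ are exactly the breakpoints of $h_R$ and each such breakpoint has slope increment $1$. This is where the weak smoothness of $C$ enters, through the Cayley subdivision of $\Cay(\Newt(f),\Newt(g))$ and the correspondence between vertices of $C$ and its mixed cells. First, a point of $\mathbf{V}(g)\cap R$ lying on an edge or a vertex of $\mathbf{V}(g)$, or a facet of $\mathbf{V}(g)$ containing a segment of $R$, would produce a cell of the Cayley subdivision with at least six vertices, hence of volume strictly larger than $1/24$, contradicting weak smoothness; so $\mathbf{V}(g)$ meets $R$ transversally and each point $p\in\mathbf{V}(g)\cap R$ is an ordinary trivalent vertex of $C$ at which a single facet $F$ of $\mathbf{V}(g)$ crosses $R$. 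The mixed cell dual to $p$ is then $\conv\bigl((\Delta_{R}\times\{0\})\cup(e\times\{1\})\bigr)$, where $\Delta_{R}\subset\Newt(f)=\conv(\{(0,0,0),(1,0,0),(0,1,0),(0,0,1)\})$ is the triangular face dual to $R$ and $e=\conv\{\mathbf{u},\mathbf{v}\}\subset\Subdiv_g$ is the edge dual to $F$; a short determinant computation gives that its normalized volume equals $\lvert(\mathbf{v}-\mathbf{u})\cdot\mathbf{r}\rvert$, which is precisely the jump of the slope of $h_R$ at $p$. Weak smoothness forces this volume to be $1/24$, so the jump is $1$. Hence for each ray the breakpoints of $h_R$ are in bijection with $\mathbf{V}(g)\cap R$ and all have multiplicity $1$, and combining this with the slope identity of the previous step yields $\lvert\mathbf{V}(g)\cap(X\cup Y\cup Z\cup W)\rvert=\sum_{R}\bigl(s_R^{\infty}-s_R^{0}\bigr)=m$.
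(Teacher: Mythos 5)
Your proposal is correct, and its engine is the same as the paper's: at each intersection point of $\mathbf{V}(g)$ with a ray $R$, the dual mixed cell of the Cayley subdivision is the join of the triangle of $\Newt(f)$ dual to $R$ with an edge of $\Subdiv_g$, its normalized volume equals the slope jump of $g|_R$ there, and weak smoothness forces that jump to be $1$; the count then telescopes to the total slope change along $R$. Two of your sub-arguments differ from the paper's in a way worth noting. First, you rule out non-transversal intersections (segments of $R$ in $\mathbf{V}(g)$, or points of $R$ on lower-dimensional cells of $\mathbf{V}(g)$) by exhibiting a mixed $4$-cell with at least six vertices, whereas the paper argues via the balancing condition and trivalence of $C$ that such a segment would have to contain the origin, contradicting Lemma \ref{genten}; both work, and yours has the advantage of reusing the same Cayley-volume mechanism, though you should spell out that the offending configuration sits inside a \emph{mixed} $4$-cell (so that weak smoothness applies) and that a lattice $4$-polytope with six or more vertices has volume at least $2/24$. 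Second, you treat the four rays uniformly: the boundary slopes at infinity are the support function of $\Newt(g)$ at $\mathbf{x},\mathbf{y},\mathbf{z},\mathbf{w}$ (giving $0,0,0,m$), and the slopes at the origin sum to $\mathbf{a}_{*}\cdot(\mathbf{x}+\mathbf{y}+\mathbf{z}+\mathbf{w})=0$ by Lemma \ref{genten}; this replaces the paper's separate computations $\lvert\mathbf{V}(g)\cap X\rvert=a_0$, etc., and its ``appropriate coordinate transformation'' for $W$, and it cleanly isolates where the hypothesis on $\Newt(g)$ is used. The trade-off is that the paper's route also yields Corollary \ref{XYZW} (the individual counts $a_0,b_0,c_0,d_0$ per ray), which is needed later; your argument gives these too, but only after unpacking the per-ray identity $\lvert\mathbf{V}(g)\cap R\rvert=s_R^{\infty}-s_R^{0}$ rather than just the sum.
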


\begin{proof}
If $\mathbf{V}(g)\cap X$ contains a line segment $L$, $C$ cannot branch at a point in $L$ other than the origin by $C\subset \mathbf{V}(f)$, the balancing condition and trivalence.
Therefore, if $\mathbf{V}(g)\cap X$ contains a line segment, it contains the origin and this contradicts Lemma \ref{genten}.
Thus, $\mathbf{V}(g)\cap X$ does not contain a line segment.
The same is true for the intersections of $\mathbf{V}(g)$ and $Y$, $Z$, $W$, and therefore we have $\lvert \mathbf{V}(g)\cap (X\cup Y\cup Z\cup W)\rvert<\infty$.

Let $P=\Newt(f)$ and $Q=\Newt(g)$.
We assume that $(t, 0, 0)\in \mathbf{V}(g)\cap X$.
Since the intersection points of $\mathbf{V}(g)$ and $X$ are isolated, there exist two terms $\alpha_{a_1b_1c_1}x^{a_1}y^{b_1}z^{c_1}$ and $\alpha_{a_2b_2c_2}x^{a_2}y^{b_2}z^{c_2}$ $(a_1\neq a_2)$ of $g$ such that for all terms $\alpha_{abc}x^ay^bz^c$ of $g$, we have
\[
\alpha_{a_1b_1c_1}+a_1t=\alpha_{a_2b_2c_2}+a_2t\geq \alpha_{abc}+at.
\]
Let
\begin{eqnarray*}
T_1&=&\left\{ \begin{pmatrix} 0 \\ 0 \\ 0 \\ 0 \\ 0 \end{pmatrix}, \begin{pmatrix} 0 \\ 1 \\ 0 \\ 0 \\ 0 \end{pmatrix}, \begin{pmatrix} 0 \\ 0 \\ 1 \\ 0 \\ 0 \end{pmatrix} \right\},\quad T_2=\left\{ \begin{pmatrix} a \\ b \\ c \\ 1 \\ \alpha_{abc} \end{pmatrix} \middle| \alpha_{abc}x^ay^bz^c\text{ is a term of $g$}\right\} \cup \left\{ \begin{pmatrix} 1 \\ 0 \\ 0 \\ 0 \\ 0 \end{pmatrix}\right\},\\
\mathbf{p}&=&(-t, 0, 0, \alpha_{a_1b_1c_1}+a_1t, -1),\quad \mathbf{w}_1=\begin{pmatrix} a_1 \\ b_1 \\ c_1 \\ 1 \\ \alpha_{a_1b_1c_1} \end{pmatrix},\quad \mathbf{w}_2=\begin{pmatrix} a_2 \\ b_2 \\ c_2 \\ 1 \\ \alpha_{a_2b_2c_2}
\end{pmatrix}.
\end{eqnarray*}
Then, we have $\mathbf{w}_1, \mathbf{w}_2\in T_2$ and
\[
\forall \mathbf{v}\in T_1,\ \forall \mathbf{w}\in T_2,\ 0=\mathbf{p}\cdot \mathbf{w}_1=\mathbf{p}\cdot \mathbf{w}_2=\mathbf{p}\cdot \mathbf{v}\leq \mathbf{p}\cdot \mathbf{w}.
\]
Hence there exists a four-dimensional cell in the Cayley subdivision of $\Cay(P, Q)$ containing $\conv \left\{ \begin{pmatrix} 0 \\ 0 \\ 0 \\ 0 \end{pmatrix}, \begin{pmatrix} 0 \\ 1 \\ 0 \\ 0 \end{pmatrix}, \begin{pmatrix} 0 \\ 0 \\ 1 \\ 0 \end{pmatrix}, \begin{pmatrix} a_1 \\ b_1 \\ c_1 \\ 1 \end{pmatrix}, \begin{pmatrix} a_2 \\ b_2 \\ c_2 \\ 1  \end{pmatrix}
\right\}$.
The volume of this convex polygon is $\lvert a_1-a_2 \rvert /24$ because $\left| \det \begin{pmatrix} 0 & 0 & a_1 & a_2 \\ 1 & 0 & b_1 & b_2 \\ 0 & 1 & c_1 & c_2 \\ 0 & 0 & 1 & 1 \end{pmatrix} \right|=\lvert a_1-a_2 \rvert$ $(\neq0)$.
Here, $C$ is a weakly smooth tropical complete intersection curve and $a_1\neq a_2$, and so we have $\lvert a_1-a_2 \rvert =1$.
Let $\alpha_{a_0b_0c_0}x^{a_0}y^{b_0}z^{c_0}$ be the term of $g$ which is the maximum term at the origin.
The exponent of $x$ of the term of $g$ which is the maximum term at $(-s, 0, 0)$ for $s$ sufficiently large is $0$.
Since the maximum terms of $g$ on $X$ change only on $\mathbf{V}(g)\cap X$ and the exponents of $x$ change by $1$, we have $\lvert \mathbf{V}(g)\cap X\rvert=a_0$.
Similar equalities also hold for the intersections of $\mathbf{V}(g)$ and $Y$, $Z$.
We perform appropriate coordinate transformation when dealing with $W$, and we have
\[
\lvert \mathbf{V}(g) \cap W\rvert =m-a_0-b_0-c_0.
\]
Consequently, we have
\[
\lvert \mathbf{V}(g)\cap (X\cup Y\cup Z\cup W)\rvert=a_0+b_0+c_0+(m-a_0-b_0-c_0)=m.
\]
\end{proof}

From the proof of Lemma \ref{koutenyonko}, we have the following.
\begin{Cor}\label{XYZW}
We have
\begin{eqnarray*}
\lvert \mathbf{V}(g)\cap X\rvert =a_0,\quad \lvert \mathbf{V}(g)\cap Y\rvert =b_0,\quad \lvert \mathbf{V}(g)\cap Z\rvert =c_0,\quad \lvert \mathbf{V}(g)\cap W\rvert =d_0.
\end{eqnarray*}
\end{Cor}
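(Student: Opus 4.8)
The plan is to read the four equalities directly off the argument already carried out in the proof of Lemma \ref{koutenyonko}, after setting $d_0:=m-a_0-b_0-c_0$. First I would recall the local fact established there: if $(t,0,0)\in\mathbf{V}(g)\cap X$, then the two terms of $g$ attaining the maximum at that point differ in their $x$-exponents by exactly $1$, since otherwise a cell of volume $\lvert a_1-a_2\rvert/24>1/24$ would appear in the Cayley subdivision, contradicting weak smoothness. The same determinant computation applies verbatim to $Y$ and $Z$ (by the symmetric roles of $x,y,z$), and to $W$ after the linear change of coordinates used in that proof. Consequently, along each of the rays $X,Y,Z,W$, the relevant exponent of the maximal term of $g$ --- the $x$-exponent on $X$, the $y$-exponent on $Y$, the $z$-exponent on $Z$, and the total degree $a+b+c$ on $W$ --- is locally constant off $\mathbf{V}(g)$ and jumps by $1$ at each point of $\mathbf{V}(g)$ lying on that ray. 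Since Lemma \ref{koutenyonko} also guarantees these intersections are finite, counting jumps is legitimate.

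Next I would pin down the boundary values of this monitored exponent. At the origin the maximal term of $g$ is $\alpha_{a_0b_0c_0}x^{a_0}y^{b_0}z^{c_0}$ by the definition of $(a_0,b_0,c_0)$, so the monitored exponent equals $a_0$ on $X$, $b_0$ on $Y$, $c_0$ on $Z$, and $a_0+b_0+c_0$ on $W$ near that endpoint. Far out along each ray, the maximal term is the one extremizing the monitored linear functional over $\Newt(g)=\conv(\{(0,0,0),(m,0,0),(0,m,0),(0,0,m)\})$: the $x$-, $y$-, and $z$-exponents are then minimized, each with minimum $0$ attained at the vertex $(0,0,0)$, while the functional $(a,b,c)\mapsto a+b+c$ is maximized with maximum $m$ attained at the three vertices $(m,0,0),(0,m,0),(0,0,m)$. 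Counting the jumps of the monitored exponent between its value at the origin and its value at the far end yields $\lvert\mathbf{V}(g)\cap X\rvert=a_0-0=a_0$, and likewise $\lvert\mathbf{V}(g)\cap Y\rvert=b_0$, $\lvert\mathbf{V}(g)\cap Z\rvert=c_0$, and $\lvert\mathbf{V}(g)\cap W\rvert=m-(a_0+b_0+c_0)=d_0$.

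There is no genuine obstacle here, as the corollary is essentially a restatement of the intermediate conclusions of Lemma \ref{koutenyonko}; the only point needing a little care is the bookkeeping for $W$, where one must verify that the coordinate change employed in that proof preserves $\lvert\mathbf{V}(g)\cap W\rvert$ and turns the total-degree monitoring along $\mathbf{w}=(1,1,1)$ into honest coordinate-exponent monitoring along a standard ray, so that the jump-by-$1$ normalization is the same one used for $X,Y,Z$.
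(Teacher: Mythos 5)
Your proposal is correct and follows exactly the route the paper intends: the paper gives no separate proof of this corollary but states that it follows ``from the proof of Lemma \ref{koutenyonko},'' whose intermediate steps are precisely the jump-by-$1$ count of exponents along each ray, the boundary values at the origin and at infinity, and the coordinate change for $W$ that you spell out. Your added care about the $W$ bookkeeping is a faithful elaboration of the paper's one-line appeal to ``appropriate coordinate transformation,'' not a deviation.
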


By Lemma \ref{genten} and \ref{koutenyonko}, we have
\begin{eqnarray*}
(0, 0, 0)\notin \mathbf{V}(g),\quad \lvert \mathbf{V}(g)\cap (X\cup Y\cup Z\cup W)\rvert=m.
\end{eqnarray*}
Let $x^{a_0}y^{b_0}z^{c_0}$ be the monomial of $g$ which corresponds to the domain of $\mathbb{R}^3 \setminus \mathbf{V}(g)$ containing $(0, 0, 0)$ and $d_0=m-a_0-b_0-c_0$.
We write the homogenization of $g$ as
\[
g^h=\bigoplus_{a+b+c+d=m}\alpha _{abc}x^a y^b z^c w^d.
\]
Note that there is a one to one correspondence between the terms of $g$ and the terms of $g^h$.
Here, the monomial of $g^h$ which correspondes to the domain of $\mathbb{R}^3 \setminus \mathbf{V}(g)$ containing $(0, 0, 0)$ is $x^{a_0}y^{b_0}z^{c_0}w^{d_0}$.

\begin{remark}
In the following, we expand the definition of trivalent graph.
For the tropical hypersurface $\mathbf{V}(g)$, there exist a $t<0$ such that $\mathbf{V}(g)\cap (X\cup Y)\subset [t, 0]\times [t, 0]\subset \mathbb{R}^2$.
Let $A$ be $\mathbb{R}_{\leq 0}^2$ or $\mathbb{R}_{< 0}^2$.
If $A=\mathbb{R}_{\leq 0}^2$, we set $B=[t, 0]\times [t, 0]$ and if $A=\mathbb{R}_{<0}^2$, we set $B=[t, -\epsilon]\times [t, -\epsilon]$, where $\epsilon >0$ is sufficiently small.
Then, we can naturally regard $\mathbf{V}(g)\cap B$ as a bounded graph by adding the vertices on the intersection of $\mathbf{V}(g)$ and the boundary of $B$.
When each vertex of $\mathbf{V}(g)\cap B$ has valence $1$ or $3$, we call $\mathbf{V}(g)\cap A$ a trivalent graph.
\end{remark}

When we think about the intersection of $\mathbf{V}(f)$ and $\mathbf{V}(g)$, we have to restrict $\mathbf{V}(g)$ to the six copies of quarter planes.
For simplicity of notation, we write $(z=0)$ for $\{ (x, y, 0) \in \mathbb{R}^3\mid x, y \in \mathbb{R}\}$.
The intersection of $\mathbf{V}(g)$ and $XY\setminus (X\cup Y)$ is a trivalent graph, but the intersection of $\mathbf{V}(g)$ and $(z=0)$ is not always one-dimensional.
The two-dimensional parts of the intersection come from the two-dimensional parts of $\mathbf{V}(g)$, i.e. cells of $\mathbf{V}(g)$ defined by two terms of $g$.
We write the tropical polynomial in two variables defined by substituting $z=0$ to $g$ as $g|_{z=0}$.
We want to compare the tropical plane curve $\mathbf{V}(g|_{z=0})$ in $\mathbb{R}^2$ and the intersection of $\mathbf{V}(g)$ and $(z=0)$.
Recall that we set $g= \bigoplus_{a+b+c \leq m}\alpha _{abc}x^a y^b z^c$.
Then we have
\[
g|_{z=0}=\bigoplus_{a+b\leq m}\alpha_{ab}x^a y^b,\ \alpha_{ab}=\max(\alpha_{ab0}, \ \alpha_{ab1}, \ldots, \ \alpha_{a, b, m-a-b}).
\]
Note also that $g|_{z=0}=g^h|_{z=w=0}$.

Recall that for a given tropical polynomial $h$ and a domain of $\mathbb{R}^n\setminus \mathbf{V}(h)$, there is a term of $h$ that attains the only maximum among the terms of $h$ on this domain.
Thus the domains of $\mathbb{R}^n\setminus \mathbf{V}(h)$ correspond to some of the terms of $h$.

\begin{Lem}\label{a_0+b_0}
The following holds.
\begin{itemize}
\item {$\lvert \mathbf{V}(g|_{z=0})\cap (X\cup Y)\rvert =a_0+b_0$.}
\item {The monomial of $g|_{z=0}$ which corresponds to the domain of $\mathbb{R}^2 \setminus \mathbf{V}(g|_{z=0})$ containing $(0, 0)$ is $x^{a_0}y^{b_0}$.}
\item {There are no vertices of $\mathbf{V}(g|_{z=0})$ on $X\cup Y$ and $\mathbf{V}_{\!-}(g|_{z=0})=\mathbf{V}(g|_{z=0})\cap \mathbb{R}^2_{\leq 0}$ is a trivalent graph.}
\end{itemize}
\end{Lem}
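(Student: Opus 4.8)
The plan is to start from the elementary observation that underlies everything. Since $\alpha_{ab}=\max_c\alpha_{abc}$, for every $(x,y)\in\mathbb{R}^2$ one has $g|_{z=0}(x,y)=g(x,y,0)$, and moreover the set of exponent vectors $(a,b)$ of the monomials of $g|_{z=0}$ attaining its maximum at $(x,y)$ coincides with the set of $(a,b)$-projections of the exponent vectors of the terms of $g$ attaining its maximum at $(x,y,0)$: indeed each maximizing monomial $x^ay^b$ of $g|_{z=0}$ lifts to a maximizing term $x^ay^bz^c$ of $g$ with $\alpha_{abc}=\alpha_{ab}$, and conversely every maximizing term of $g$ at $(x,y,0)$ projects to a maximizing monomial of $g|_{z=0}$. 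The second bullet is then immediate: by Lemma \ref{genten} we have $(0,0,0)\notin\mathbf{V}(g)$, so $\alpha_{a_0b_0c_0}$ is the \emph{strict} maximum among all $\alpha_{abc}$; hence $\alpha_{a_0b_0}=\alpha_{a_0b_0c_0}$ is the strict maximum among all $\alpha_{ab}$, so $(0,0)\notin\mathbf{V}(g|_{z=0})$ and $x^{a_0}y^{b_0}$ is the monomial of the domain of $\mathbb{R}^2\setminus\mathbf{V}(g|_{z=0})$ containing the origin.

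For the first bullet I would use the observation to see that a point $(t,0)$ with $t<0$ lies in $\mathbf{V}(g|_{z=0})$ if and only if at least two terms of $g$ with distinct $(a,b)$ attain the maximum of $g$ at $(t,0,0)$. On the one hand, the argument in the proof of Lemma \ref{koutenyonko} shows that at every point of $\mathbf{V}(g)\cap X$ two terms of $g$ with distinct $x$-exponents (hence distinct $(a,b)$) attain the maximum; on the other hand, if two terms of $g$ with distinct $(a,b)$ attain the maximum at $(t,0,0)$ then certainly $(t,0,0)\in\mathbf{V}(g)\cap X$. Thus $\mathbf{V}(g|_{z=0})\cap X=\mathbf{V}(g)\cap X$, which has $a_0$ elements by Corollary \ref{XYZW}; the same argument with $x$ replaced by $y$ gives $\lvert\mathbf{V}(g|_{z=0})\cap Y\rvert=b_0$, and since $X\cap Y=\{(0,0)\}\notin\mathbf{V}(g|_{z=0})$ the two sets are disjoint, giving $a_0+b_0$.

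For the third bullet I would argue as in the proofs of Lemmas \ref{genten} and \ref{koutenyonko}, via the Cayley subdivision. Suppose $v=(v_1,v_2)$ with $v_1\leq 0$, $v_2\leq 0$, $(v_1,v_2)\neq(0,0)$, is a vertex of $\mathbf{V}(g|_{z=0})$; then $v$ is dual to a lattice polygon $P'$ in $\Subdiv_{g|_{z=0}}$ with some $k\geq 3$ vertices $(a_1,b_1),\dots,(a_k,b_k)$, and lifting these I obtain terms $x^{a_i}y^{b_i}z^{c_i}$ of $g$ (with $\alpha_{a_ib_ic_i}=\alpha_{a_ib_i}$) all attaining the maximum of $g$ at $P=(v_1,v_2,0)$. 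I would then write down the linear functional $\mathbf{p}=(-v_1,-v_2,0,g(P),-1)$ (respectively $\mathbf{p}=(-v_1,0,0,g(P),-1)$ when $v_2=0$), which vanishes on the lifts of these $k$ terms of $g$ and on the lifts of exactly the monomials of $f$ attaining the maximum of $f$ at $P$, and is nonnegative on the lift of every monomial of $f$ and every term of $g$; this exhibits a $4$-dimensional cell $K$ of the Cayley subdivision of $\Cay(\Newt(f),\Newt(g))$ containing the corresponding lattice points. If $v_1<0$ and $v_2<0$, then $P$ is interior to the quarter plane $XY$, which is dual to the edge $\conv\{(0,0,0),(0,0,1)\}$ of $\Newt(f)$, so $K\supseteq\conv(\{(0,0,0,0),(0,0,1,0)\}\cup\{(a_i,b_i,c_i,1):1\leq i\leq k\})$; one checks this polytope is $4$-dimensional, that it has at least $6$ vertices when $k\geq 4$ (so $\mathrm{vol}(K)\geq 2/24$, contradicting smoothness of $C$), and that when $k=3$ it is a $4$-simplex of volume $\frac{1}{24}$ times the lattice area of $P'$ (computing the determinant exactly as in Lemma \ref{koutenyonko}), whence smoothness forces that lattice area to be $1$, i.e.\ $P'$ is a unimodular triangle and $v$ is trivalent. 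If instead $v_2=0$ (so $v\in X$ and $v_1<0$), then $P$ is interior to the ray $X$, dual to the triangle $\conv\{(0,0,0),(0,1,0),(0,0,1)\}$ of $\Newt(f)$, and $K$ contains a $4$-dimensional polytope with at least $3+k\geq 6$ vertices, again contradicting smoothness; hence $\mathbf{V}(g|_{z=0})$ has no vertex on $X$, and symmetrically none on $Y$. Finally, choosing $B=[t,0]\times[t,0]$ with $t<0$ sufficiently negative and generic --- so that $B$ contains the finite set $\mathbf{V}(g|_{z=0})\cap(X\cup Y)$ together with the bounded part of $\mathbf{V}_{\!-}(g|_{z=0})$, and no vertex of $\mathbf{V}(g|_{z=0})$ meets $\partial B$ --- every vertex of the bounded graph $\mathbf{V}(g|_{z=0})\cap B$ is either an interior vertex of $\mathbf{V}(g|_{z=0})$ (trivalent, by the above) or a transversal crossing of an edge with $\partial B$ (valence $1$); hence $\mathbf{V}_{\!-}(g|_{z=0})$ is a trivalent graph in the sense of the remark above expanding that notion.

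The main obstacle is the third bullet: the geometry of the Cayley polytope must be handled with care. One has to verify that the listed lattice points really are vertices of the cell $K$ (equivalently, of the convex hull they span), that this convex hull is genuinely $4$-dimensional rather than degenerate, and that a lattice $4$-polytope with at least $6$ vertices has volume strictly greater than $1/24$ (any triangulation uses at least two $4$-simplices, each of volume at least $1/24$). The remaining pieces --- the determinant computation identifying the volume with the lattice area of $P'$, and the bookkeeping reducing ``trivalent graph'' in the sense of the remark to the two facts ``no vertices on $X\cup Y$'' and ``interior vertices are trivalent'' --- proceed along the lines already present in Lemmas \ref{genten} and \ref{koutenyonko}.
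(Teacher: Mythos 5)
Your proof is correct, and for the first two bullets it coincides with the paper's: the identification $\mathbf{V}(g|_{z=0})\cap X=\mathbf{V}(g)\cap X$ together with Corollary \ref{XYZW} gives the count $a_0+b_0$, and the strict maximality of $\alpha_{a_0b_0c_0}$ (forced by Lemma \ref{genten}) passes to $\alpha_{a_0b_0}$ and identifies the domain of the origin. Where you genuinely diverge is the third bullet. The paper gets interior trivalence for free by quoting Theorem \ref{kanzenkousa} ($\mathbf{V}(g|_{z=0})\cap\mathbb{R}^2_{<0}=\mathbf{V}(g)\cap(XY\setminus(X\cup Y))$ is trivalent because $C$ is), and excludes vertices on $X$ by a short arithmetic argument: a third maximizing term at $(-t,0)$ would have $x$-exponent $a_3$ with $a_2<a_3<a_1$, while the proof of Lemma \ref{koutenyonko} forces $a_1=a_2+1$, leaving no room for an integer in between. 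You instead run the Cayley-subdivision volume argument directly in both cases: for an interior vertex you recover unimodularity of the dual triangle (essentially anticipating the paper's later Lemma \ref{1/2}, with the same determinant computation), and for a vertex on $X$ you produce a $4$-dimensional mixed cell containing at least $3+k\geq 6$ lattice points, hence of volume exceeding $1/24$. Your route is more uniform and self-contained --- it does not lean on the citation of Theorem \ref{kanzenkousa} --- at the price of the non-degeneracy checks you correctly flag (four-dimensionality of the spanned polytope, which in the boundary case requires noting that two of the maximizing terms have distinct $x$-exponents, exactly as supplied by Lemma \ref{koutenyonko}); the paper's route is shorter because it reuses the exponent-gap fact already established there. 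Both arguments are sound.
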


\begin{proof}
By Corollary \ref{XYZW} we have $\lvert \mathbf{V}(g|_{z=0})\cap (X\cup Y)\rvert =a_0+b_0$.
Because $x^{a_0}y^{b_0}z^{c_0}$ is the monomial of $g$ which corresponds to the domain of $\mathbb{R}^3 \setminus \mathbf{V}(g)$ containing $(0, 0, 0)$, $\alpha_{a_0b_0c_0}$ is the only maximum of the multiset $\{\!\{ \alpha \mid \text{$\alpha$ is the coefficient of a term of $g$}\}\!\}$.
Therefore, $\alpha_{a_0b_0}=\alpha_{a_0b_0c_0}$ is the only maximum of the multiset $\{\!\{ \alpha \mid \text{$\alpha$ is the coefficient of a term of $g|_{z=0}$}\}\!\}$ and it is the monomial of $g|_{z=0}$ which corresponds to the domain of $\mathbb{R}^2 \setminus \mathbf{V}(g|_{z=0})$ containing $(0, 0)$.
We only have to show that the vertices of $\mathbf{V}(g|_{z=0})$ are not on $X\cup Y$ because $\mathbf{V}(g|_{z=0})\cap \mathbb{R}^2_{<0}=\mathbf{V}(g)\cap (XY\setminus (X\cup Y))$ is a trivalent graph by Theorem \ref{kanzenkousa}.
Assume that there is a vertex $(-t, 0)$ of $\mathbf{V}(g|_{z=0})$ (with valence $> 2$) on $X$.
Let $\alpha_{a_1b_1}x^{a_1}y^{b_1}$ be the term of $\mathbf{V}(g|_{z=0})$ which corresponds to the domain containing $(-t+\delta, 0)$ and $\alpha_{a_2b_2}x^{a_2}y^{b_2}$ the term of $\mathbf{V}(g|_{z=0})$ which corresponds to the domain containing $(-t-\delta, 0)$ for sufficiently small $\delta >0$.
Since the valence of the vertex $(-t, 0)$ is greater than $2$, there is a term $\alpha_{a_3b_3}x^{a_3}y^{b_3}$ such that $\alpha_{a_1b_1}-a_1t=\alpha_{a_2b_2}-a_2t=\alpha_{a_3b_3}-a_3t$.
Here, we have
\begin{eqnarray*}
&\alpha_{a_1b_1}-a_1(t-\delta)>\alpha_{a_3b_3}-a_3(t-\delta),& \\
&\alpha_{a_2b_2}-a_2(t+\delta)>\alpha_{a_3b_3}-a_3(t+\delta),& 
\end{eqnarray*}
and hence we have $a_2<a_3<a_1$, and by Corollary \ref{XYZW}, $a_1=a_2+1$.
This is a contradiction.
Thus there are no vertices of $\mathbf{V}(g|_{z=0})$ which have valence more than two on $X$.
The same holds for $Y$.
\end{proof}

Next, we set
\begin{eqnarray*}
A&=&\{ \alpha_{ab}x^ay^b \mid \text{$\alpha_{ab}$ is the only maximum of the multiset $\{\!\{ \alpha_{ab0}, \ \alpha_{ab1}, \ldots, \ \alpha_{a, b, m-a-b}\}\!\}$}\,\},\\
B&=&\left\{ \alpha_{ab}x^ay^b \middle| 
\begin{array}{l}
\text{There are two or more terms that equal to $\alpha_{ab}$}\\
\text{in the multiset $\{\!\{\alpha_{ab0}, \ \alpha_{ab1}, \ldots, \ \alpha_{a, b, m-a-b}\}\!\}$}
\end{array} \right\},\\
A'&=&\left\{ \alpha_{ab}x^ay^b\in A \ \middle| 
\begin{array}{l}
\text{There exists $(s, t)\in \mathbb{R}^2$ such that for all terms $\alpha_{cd}x^cy^d$ of $g|_{z=0}$}\\
\text{other than $\alpha_{ab}x^ay^b$, $\alpha_{ab}+as+bt>\alpha_{cd}+cs+dt$}
\end{array} \right\},\\
B'&=&\left\{ \alpha_{ab}x^ay^b\in B \ \middle| 
\begin{array}{l}
\text{There exists $(s, t)\in \mathbb{R}^2$ such that for all terms $\alpha_{cd}x^cy^d$ of $g|_{z=0}$}\\
\text{other than $\alpha_{a, b}x^ay^b$, $\alpha_{ab}+as+bt>\alpha_{cd}+cs+dt$}
\end{array} \right\}.
\end{eqnarray*}
Then, the set of terms of $g|_{z=0}$ that correspond to domains of $\mathbb{R}^2\setminus \mathbf{V}(g|_{z=0})$ equal to $A'\sqcup B'$.
We write $D$ for the union of the domains of $\mathbb{R}^2\setminus \mathbf{V}(g|_{z=0})$ that correspond to the terms of $g|_{z=0}$ which are contained in $B'$.
Note that $\mathbf{V}(g|_{z=0})\cap D=\emptyset$.
We have the following lemma when we identify $(x, y)\in \mathbb{R}^2$ with $(x, y, 0)\in \mathbb{R}^3$.

\begin{Lem}
We have
\[
\mathbf{V}(g)\cap (z=0)=\mathbf{V}(g|_{z=0}) \sqcup D.
\]
Therefore, the $2$-dimensional part of $\mathbf{V}(g)\cap (z=0)$ is $D$ and all the terms of $g|_{z=0}$ that correspond to the domains of $\mathbb{R}^2 \setminus \mathbf{V}(g|_{z=0})$ which intersect $\mathbb{R}^2_{\leq 0}$ are contained in  $A'$.
\end{Lem}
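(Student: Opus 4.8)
The plan is to evaluate the terms of $g$ at a point $(x,y,0)$ of the plane $(z=0)$ and compare them with the terms of $g|_{z=0}$. The term $\alpha_{abc}x^ay^bz^c$ of $g$ takes the value $\alpha_{abc}+ax+by$ at $(x,y,0)$, which is independent of $c$; so, grouping the terms of $g$ by the pair $(a,b)$, the largest value among those with a fixed $(a,b)$ is $\alpha_{ab}+ax+by$, which is precisely the value at $(x,y)$ of the term $\alpha_{ab}x^ay^b$ of $g|_{z=0}$. In particular $g$ and $g|_{z=0}$ take the same value there, and analysing when the maximum is attained at least twice reduces to keeping track of whether two distinct maximizing terms of $g$ have the same $(a,b)$-part or not.

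First I would check the inclusions $\mathbf{V}(g|_{z=0})\subseteq\mathbf{V}(g)\cap(z=0)$ and $D\subseteq\mathbf{V}(g)\cap(z=0)$. If the maximum of $g|_{z=0}$ at $(x,y)$ is attained by distinct terms $\alpha_{a_1b_1}x^{a_1}y^{b_1}$ and $\alpha_{a_2b_2}x^{a_2}y^{b_2}$, choose $c_i$ with $\alpha_{a_ib_ic_i}=\alpha_{a_ib_i}$; then the two distinct terms $\alpha_{a_ib_ic_i}x^{a_i}y^{b_i}z^{c_i}$ of $g$ attain the maximum of $g$ at $(x,y,0)$, so $(x,y,0)\in\mathbf{V}(g)$. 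If instead $(x,y)\in D$, it lies in a domain of $\mathbb{R}^2\setminus\mathbf{V}(g|_{z=0})$ whose term $\alpha_{ab}x^ay^b$ lies in $B'$, hence in $B$, so $\alpha_{ab}=\alpha_{abc_1}=\alpha_{abc_2}$ for some $c_1\neq c_2$; since $\alpha_{ab}+ax+by$ is the maximum of $g|_{z=0}$ at $(x,y)$, the two distinct terms $\alpha_{abc_i}x^ay^bz^{c_i}$ attain the maximum of $g$ at $(x,y,0)$, so again $(x,y,0)\in\mathbf{V}(g)$.

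For the reverse inclusion, suppose $(x,y,0)\in\mathbf{V}(g)$, so two distinct terms of $g$ attain its maximum there. If their $(a,b)$-parts differ, the corresponding terms of $g|_{z=0}$ attain the maximum of $g|_{z=0}$ at $(x,y)$ and $(x,y)\in\mathbf{V}(g|_{z=0})$. If their $(a,b)$-parts agree, then either $\alpha_{ab}+ax+by$ is attained at least twice among the terms of $g|_{z=0}$, and again $(x,y)\in\mathbf{V}(g|_{z=0})$, or it is attained only once, in which case $(x,y)$ is interior to the domain of $\alpha_{ab}x^ay^b$; this term lies in $B$ (it has at least two maximal exponents $c$), and $(x,y)$ itself witnesses membership in $B'$, so $(x,y)\in D$. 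Thus $\mathbf{V}(g)\cap(z=0)=\mathbf{V}(g|_{z=0})\cup D$, and this union is disjoint since $D\subseteq\mathbb{R}^2\setminus\mathbf{V}(g|_{z=0})$ by construction.

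For the remaining assertions, $\mathbf{V}(g|_{z=0})$ is a tropical plane curve, hence one-dimensional, while $D$ is a union of (open) two-dimensional domains, so the two-dimensional part of $\mathbf{V}(g)\cap(z=0)$ is $D$. If a domain $\sigma$ of $\mathbb{R}^2\setminus\mathbf{V}(g|_{z=0})$ corresponding to a term of $B'$ met $\mathbb{R}^2_{\leq 0}$, then, since $\sigma$ is open and $\mathbb{R}^2_{<0}$ is dense in $\mathbb{R}^2_{\leq 0}$, the set $\sigma\cap\mathbb{R}^2_{<0}$ would be a two-dimensional subset of $D\cap\mathbb{R}^2_{<0}\subseteq\mathbf{V}(g)\cap(XY\setminus(X\cup Y))$, contradicting the fact that the latter is one-dimensional (it is the restriction to a quarter plane of $\mathbf{V}(f)$ of the trivalent curve $C$; see Theorem \ref{kanzenkousa}). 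Hence the term of every domain of $\mathbb{R}^2\setminus\mathbf{V}(g|_{z=0})$ meeting $\mathbb{R}^2_{\leq 0}$ belongs to $A'$. The only genuine subtlety is tracking ``attained at least twice'' for $g$ versus for $g|_{z=0}$ across the grouping by $(a,b)$, together with invoking the one-dimensionality of $C$ for the last statement; the rest is a direct unwinding of the definitions of $A$, $B$, $A'$, $B'$, and $D$.
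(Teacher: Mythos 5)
Your proof is correct and follows essentially the same route as the paper: group the terms of $g$ by their $(x,y)$-exponent, observe that $g(x,y,0)=g|_{z=0}(x,y)$, and split the reverse inclusion according to whether the two maximizing terms of $g$ share the same $(a,b)$-part, landing in $\mathbf{V}(g|_{z=0})$ or in $D$ accordingly. You additionally spell out the final assertion (that domains meeting $\mathbb{R}^2_{\leq 0}$ correspond to terms of $A'$) via the one-dimensionality of $C$ on the quarter plane, a step the paper leaves implicit; this is the intended justification and is welcome detail.
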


\begin{proof}
Assume that $(s, t, 0)\in \mathbf{V}(g)\cap (z=0)$.
By the definition of $\mathbf{V}(g)$, there exist two terms $\alpha _{a_1b_1c_1}x^{a_1} y^{b_1} z^{c_1}, \alpha _{a_2b_2c_2}x^{a_2} y^{b_2} z^{c_2}$ of $g$ such that for all $(a, b, c)\neq(a_1, b_1, c_1), (a_2, b_2, c_2)$,
\[
g(s, t,0)=\alpha _{a_1b_1c_1}+a_1s+b_1t=\alpha _{a_2b_2c_2}+a_2s+b_2t\geq \alpha _{abc}+as+bt.
\]
In particular, we have
\[
\forall c\in \mathbb{Z},\ \alpha_{a_1b_1c_1}+a_1s+b_1t\geq \alpha_{a_1b_1c}+a_1s+b_1t,
\]
and therefore $\alpha_{a_1b_1}=\alpha_{a_1b_1c_1}$.
We have $\alpha_{a_2b_2}=\alpha_{a_2b_2c_2}$ in the same way.
We also have
\[
\forall (a, b)\neq(a_1, b_1), (a_2, b_2),\ \alpha _{a_1b_1}+a_1s+b_1t=\alpha _{a_2b_2}+a_2s+b_2t\geq \alpha _{ab}+as+bt.
\]
We assume that $(s, t)\notin \mathbf{V}(g|_{z=0})$.
This does not contradict the previous inequality only if $\alpha_{a_1b_1}x^{a_1}y^{b_1}$ and $\alpha_{a_2b_2}x^{a_2}y^{b_2}$ are the same term, i.e. $(a_1, b_1)=(a_2, b_2)$.
Therefore, we have
\[
\alpha_{a_1b_1c_1}=\alpha_{a_2b_2c_2}=\max(\{\!\{ \alpha_{a_1b_10}, \ \alpha_{a_1b_11}, \ldots, \ \alpha_{a_1, b_1, m-a_1-b_1} \}\!\}).
\]
Thus, $\alpha_{a_1b_1}x^{a_1}y^{b_1}$ belongs to $B$.
From $(s, t)\notin \mathbf{V}(g|_{z=0})$, $\alpha_{a_1b_1}x^{a_1}y^{b_1}$ belongs to $B'$, and we have $(s, t)\in D$.
Hence, we have
\[
\mathbf{V}(g)\cap (z=0)\subset \mathbf{V}(g|_{z=0}) \sqcup D.
\]

Conversely, assume that $(s, t)\in \mathbf{V}(g|_{z=0})\sqcup D$.
If $(s, t)\in \mathbf{V}(g|_{z=0})$, there exist two terms $\alpha _{a_1b_1}x^{a_1} y^{b_1}$, $\alpha _{a_2b_2}x^{a_2} y^{b_2}$ of $g|_{z=0}$ such that
\begin{eqnarray*}
g(s, t,0)=\alpha _{a_1b_1}+a_1s+b_1t=\alpha _{a_2b_2}+a_2s+b_2t\geq \alpha _{ab}+as+bt\\
\text{ for any term $\alpha_{ab}x^a y^b$ of $g|_{z=0}$.}
\end{eqnarray*}
In particular, there exist two terms $\alpha _{a_1b_1c_1}x^{a_1} y^{b_1} z^{c_1}, \alpha _{a_2b_2c_2}x^{a_2} y^{b_2} z^{c_2}$ of $g$ such that
\begin{eqnarray*}
\alpha_{a_1b_1c_1}+a_1s+b_1t=\alpha _{a_2b_2c_2}+a_2s+b_2t\geq \alpha_{abc}+as+bt\\\text{ for any term $\alpha_{abc}x^ay^bz^c$ of $g$.}
\end{eqnarray*}
If $(s, t)\in D$, there exists a term $\alpha_{ab}x^a y^b$ in $B'$ such that
\[
\alpha_{ab}+as+bt>\alpha_{a'b'}+a's+b't \text{ for any term $\alpha_{a'b'}x^{a'} y^{b'}$ other than }\alpha_{ab}x^ay^b.
\]
Since $\alpha_{ab}x^a y^b\in B'$, we have $\alpha_{ab}x^a y^b\in B$, and there are two or more terms that equal to $\alpha_{ab}$ in $\{\!\{\alpha_{ab0}, \ \alpha_{ab1}, \ldots, \ \alpha_{a, b, m-a-b}\}\!\}$.
Thus there exist two terms $\alpha _{abc}x^a y^b z^c, \alpha _{abd}x^a y^b z^{d}$ of $g$ such that
\[
\alpha_{abc}+as+bt=\alpha _{abd}+as+bt\geq \alpha_{a'b'c'}+a's+b't \text{ for any term $\alpha_{a'b'c'}x^{a'} y^{b'} z^{c'}$ of $g$.}
\]
Hence, we have
\[
\mathbf{V}(g)\cap (z=0)\supset \mathbf{V}(g|_{z=0}) \sqcup D.
\]
\end{proof}

Note that $\Newt(g|_{z=0})=\conv(\{(0, 0), (m, 0), (0, m)\})=:\Delta_m$.
Let $K:=\Subdiv_{g|_{z=0}}$ be the dual subdivision of $\Delta_m$ given by $g|_{z=0}$.
The set $\mathbb{R}^2 \setminus \mathbf{V}(g|_{z=0})$ is a union of domains which correspond to some of the monomials of $g|_{z=0}$, and these monomials correspond to vertices $(\in \mathbb{Z}^2)$ of $K$.
The set of the vertices and edges of $\mathbf{V}(g|_{z=0})$ and the closures of the domains of $\mathbb{R}^2 \setminus \mathbf{V}(g|_{z=0})$ constitutes a polyhedral complex by Theorem \ref{dualitytheorem}.
We write $\tilde{K}$ for this polyhedral complex.
We mark all the simplices of $K$ which correspond to the simplices of $\tilde{K}$ which intersect with $\mathbb{R}^2_{\leq 0}$, or equivalently, with $\mathbb{R}_{<0}^2$ by Lemma \ref{a_0+b_0}.

\begin{Lem}
Each face of a marked simplex is marked and the union of all marked simplices is connected.
\end{Lem}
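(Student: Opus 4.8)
The plan is to prove the two assertions separately, both by exploiting the duality between $\tilde K$ and $K$ together with the geometry of $\mathbb{R}^2_{\leq 0}$.

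\textbf{Marked faces.} First I would show that if a simplex $\Delta$ of $K$ is marked, then every face of $\Delta$ is marked. Recall $\Delta$ is marked exactly when the dual cell $\Delta^\vee$ in $\tilde K$ meets $\mathbb{R}^2_{\leq 0}$. The duality in Theorem \ref{dualitytheorem} is inclusion-reversing, so a face $\Delta'$ of $\Delta$ corresponds to a cell $(\Delta')^\vee$ of $\tilde K$ \emph{containing} $\Delta^\vee$. Hence $(\Delta')^\vee \supseteq \Delta^\vee$ meets $\mathbb{R}^2_{\leq 0}$ as well, so $\Delta'$ is marked. This is the easy half.

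\textbf{Connectedness.} For the connectedness of the union $M$ of marked simplices, the natural route is to transport the question back to $\mathbb{R}^2_{\leq 0}$. The union of the cells of $\tilde K$ dual to marked simplices is, by definition of ``marked'', exactly the union of those closed cells of $\tilde K$ that meet $\mathbb{R}^2_{\leq 0}$; call this region $R\subseteq\mathbb{R}^2$. I would first argue $R$ is connected: $R$ is a finite union of closed convex cells (vertices, edges, and closed domains of $\mathbb{R}^2\setminus\mathbf V(g|_{z=0})$), and $R\cap\mathbb{R}^2_{\leq 0}$ is all of $\mathbb{R}^2_{\leq 0}$ because $\mathbb{R}^2_{\leq 0}$ is covered by the cells of $\tilde K$ it meets; since $\mathbb{R}^2_{\leq 0}$ is connected and each cell of $\tilde K$ is connected and meets $\mathbb{R}^2_{\leq 0}$ (that is what marking means), $R$ is connected. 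Then I would use the fact that a finite polyhedral complex and its dual have the same nerve-type connectivity: explicitly, $M=\bigcup\{\Delta\in K : \Delta^\vee \text{ meets } \mathbb{R}^2_{\leq 0}\}$ is the ``dual'' of the subcomplex of $\tilde K$ consisting of cells meeting $\mathbb{R}^2_{\leq 0}$, and passing to the dual of a connected subcomplex of $\tilde K$ (one that is closed under taking faces in $\tilde K$ in the appropriate sense, which follows from the previous paragraph applied in $\tilde K$) yields a connected subcomplex of $K$. The cleanest way to make this rigorous is to build a path: given two marked simplices $\Delta_1,\Delta_2$ with a chosen vertex each, connect the corresponding domains/cells $\Delta_1^\vee, \Delta_2^\vee$ inside $R$ by a path, perturb it to cross $\mathbf V(g|_{z=0})$ transversally only through edges (not vertices) and to pass through a sequence of cells of $\tilde K$ each meeting $\mathbb{R}^2_{\leq0}$; consecutive cells in this sequence share a face, and under duality their marked duals share a face in $K$, giving a connected chain of marked simplices from $\Delta_1$ to $\Delta_2$.

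The main obstacle I expect is the bookkeeping in this last step: ensuring the perturbed path stays within cells that genuinely meet $\mathbb{R}^2_{\leq 0}$ (rather than wandering into unmarked cells), and handling the transitions at vertices of $\mathbf V(g|_{z=0})$ cleanly. Here Lemma \ref{a_0+b_0} helps, since it guarantees $\mathbf V(g|_{z=0})$ has no vertices on $X\cup Y$ and that $\mathbf V_{\!-}(g|_{z=0})$ is a genuine (trivalent) graph, so the boundary behaviour along the axes is controlled and one can keep the path in a neighbourhood of $\mathbb{R}^2_{\leq 0}$. An alternative that sidesteps the path construction: show directly that $R$ deformation retracts onto $\mathbb{R}^2_{\leq0}$ or at least that $R$ is connected, and then invoke the general principle that for a polyhedral complex $\Sigma$ dual to a subdivision $S$, and any connected union $R$ of closed cells of $\Sigma$ closed under passing to larger cells, the union of the dual cells in $S$ is connected — this is essentially because both are homotopy equivalent to the nerve of the covering by maximal-cell stars. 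I would present whichever of these is shorter given the conventions already set up in Section 2.
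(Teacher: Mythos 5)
Your first claim is handled essentially as in the paper: marking is inherited by faces because the duality of Theorem \ref{dualitytheorem} reverses inclusions, so the dual cell of a face of a marked simplex $A$ contains the dual cell of $A$ and therefore still meets $\mathbb{R}^2_{\leq 0}$. (The paper is slightly more careful, using Lemma \ref{a_0+b_0} to see that the relevant dual vertices and edges even meet the open quadrant, but your containment argument suffices for the statement as given.)

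The connectedness half has a gap exactly where you flag it. A path that merely stays in $R=\bigcup\{C\in\tilde K \mid C\cap\mathbb{R}^2_{\leq 0}\neq\emptyset\}$ is not good enough: at a point of such a path, the unique cell of $\tilde K$ whose relative interior contains that point may be a proper face of a marked dual cell which itself misses $\mathbb{R}^2_{\leq 0}$ (for instance, an edge on the far side of a closed domain that only clips the quadrant in a corner). The chain of cells traversed by the path then contains a cell whose dual simplex is unmarked, and the corresponding chain in $K$ is broken; keeping the path ``in a neighbourhood of $\mathbb{R}^2_{\leq 0}$'' does not repair this. The missing idea is to run the path inside $\mathbb{R}^2_{\leq 0}$ itself rather than inside $R$: since $D$ and $E$ are marked one may choose $p\in D'\cap\mathbb{R}^2_{\leq 0}$ and $q\in E'\cap\mathbb{R}^2_{\leq 0}$, and the straight segment from $p$ to $q$ lies in $\mathbb{R}^2_{\leq 0}$ because that set is convex. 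Every cell of $\tilde K$ met by this segment then meets $\mathbb{R}^2_{\leq 0}$ by construction, so all the dual simplices in the resulting chain are marked and consecutive ones are incident. This is the paper's one-line argument, and no perturbation or transversality is needed. Finally, the ``general principle'' you offer as an alternative is false as stated: in a complex containing two closed $2$-cells meeting only in a common vertex (which occurs at a four-valent vertex of a non-smooth tropical curve), the up-closed collection consisting of those two $2$-cells has connected union, yet the two dual vertices in the subdivision are not joined by any cell of the dual subcomplex.
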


\begin{proof}
Let $A$ be a marked simplex and $B$ a face of $A$.
Let $A'\in \tilde{K}$ be the simplex corresponding to $A$ and $B'\in \tilde{K}$ the simplex corresponding to $B$.
Then $A'$ intersects $\mathbb{R}^2_{\leq 0}$ and is a face of $B'$.
If $A'$ is a vertex, we have $A'\in \mathbb{R}^2_{<0}$ by Lemma \ref{a_0+b_0}, and hence $B'$ intersects $\mathbb{R}^2_{\leq 0}$.
If $A'$ is an edge, we have $A'\cap \mathbb{R}^2_{<0}\neq \emptyset$ by Lemma \ref{a_0+b_0}, and hence $B'$ intersects $\mathbb{R}^2_{\leq 0}$.
Thus $B$ is marked.

Next, let $D$ and $E$ be marked simplices, and $D'$ and $E'$ be the simplices of $\tilde{K}$ corresponding to $D$ and $E$.
Let $p\in D'\cap \mathbb{R}^2_{\leq 0}$ and $q\in E'\cap \mathbb{R}^2_{\leq 0}$.
Note that the line segment $p$-$q$ is contained in $\mathbb{R}^2_{\leq 0}$.
Thus $D$ and $E$ are connected through the marked simplices corresponding to the simplices of $\tilde{K}$ which intersect the line segment $p$-$q$.
\end{proof}

Thus, we have a subcomplex $K'$ of $K$ whose support is connected.
This complex $K'$ consists of $0$, $1$ and $2$-simplices.

\begin{notation}
Let $K'_{i}$ denote the set of $i$-simplices of $K'$ and $|K'_i|$ the set underlying the complex $K'_i$, and so on .
\end{notation}

\begin{Lem}\label{markedorigin}
The origin $(0, 0)$ is in $K'_0$, i.e. it is marked.
\end{Lem}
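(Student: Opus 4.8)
The plan is to produce a single point of $\mathbb{R}^2_{<0}$ lying in the domain of $\mathbb{R}^2\setminus\mathbf{V}(g|_{z=0})$ that is dual to the vertex $(0,0)$ of $K$; the closure of that domain is then a $2$-simplex of $\tilde K$ meeting $\mathbb{R}^2_{\leq0}$, so the corresponding simplex $(0,0)$ of $K$ is marked, which is exactly the claim. The point to use is $(-N,-N)$ with $N\gg 0$: since every exponent of $g|_{z=0}$ lies in the first quadrant, moving far into the third quadrant forces the constant term of $g|_{z=0}$ to dominate, and the constant term corresponds precisely to the vertex $(0,0)\in\mathbb{Z}^2$ of $K$.

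The key steps, in order, are as follows. First I would record that $g$ has a term with exponent $(0,0,0)$: indeed $(0,0,0)$ is a vertex of $\Newt(g)=\conv(\{(0,0,0),(m,0,0),(0,m,0),(0,0,m)\})$, and a vertex of the convex hull of the exponents of the terms of $g$ must itself be such an exponent; hence $\alpha_{000}\neq-\infty$ and so $\alpha_{00}=\max(\alpha_{000},\dots,\alpha_{00m})\neq-\infty$, i.e.\ $\alpha_{00}x^0y^0$ is a genuine term of $g|_{z=0}$ and $(0,0)$ (a corner of $\Delta_m$) is a vertex of $K$. Next, choose $N>0$ with $N>\alpha_{ab}-\alpha_{00}$ for each of the finitely many terms $\alpha_{ab}x^ay^b$ of $g|_{z=0}$ with $(a,b)\neq(0,0)$; for all such terms $a,b\geq0$ and $a+b\geq1$, so $\alpha_{ab}-(a+b)N\leq\alpha_{ab}-N<\alpha_{00}$, which shows that at $(-N,-N)$ the constant term $\alpha_{00}$ is the strict maximum among the terms of $g|_{z=0}$. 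Consequently $(-N,-N)\notin\mathbf{V}(g|_{z=0})$, and $(-N,-N)$ lies in the domain $U$ of $\mathbb{R}^2\setminus\mathbf{V}(g|_{z=0})$ whose associated monomial is $x^0y^0$, i.e.\ which corresponds to the vertex $(0,0)\in\mathbb{Z}^2$ of $K$. Finally, $\overline U$ is the simplex of $\tilde K$ corresponding to the vertex $(0,0)$ of $K$, and $(-N,-N)\in\overline U\cap\mathbb{R}^2_{<0}\subseteq\overline U\cap\mathbb{R}^2_{\leq0}$, so by the definition of the marking the vertex $(0,0)$ of $K$ is marked; that is, $(0,0)\in K'_0$.

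I do not expect a real obstacle here: the whole content is the elementary geometric fact that the cell dual to a corner of the Newton polygon is an unbounded region extending arbitrarily far in the direction opposite to that corner, hence in the present situation into $\mathbb{R}^2_{<0}$, and everything else is just unwinding the correspondences between $K$, $\tilde K$, and the marking. The one point requiring mild care is that the marking is phrased in terms of $\mathbb{R}^2_{\leq0}$ while the exhibited point lies in $\mathbb{R}^2_{<0}$, but this is harmless since $\mathbb{R}^2_{<0}\subseteq\mathbb{R}^2_{\leq0}$ (and the two conditions define the same marking by Lemma \ref{a_0+b_0}).
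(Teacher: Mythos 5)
Your proposal is correct and is essentially the paper's own argument: both exhibit the point $(-t,-t)$ (your $(-N,-N)$) for $t$ large, observe that the constant term of $g|_{z=0}$ strictly dominates there since all other exponents satisfy $a+b\geq 1$, and conclude that the domain dual to $(0,0)$ meets $\mathbb{R}^2_{\leq 0}$, so $(0,0)$ is marked. The only addition is your explicit justification that $\alpha_{00}\neq-\infty$ via the vertex $(0,0,0)$ of $\Newt(g)$, which the paper takes for granted.
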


\begin{proof}
Since $\alpha_{00}\neq -\infty$, for sufficiently large $t>0$, we have
\[
\alpha_{00}+0\times(-t)+0\times(-t)=\alpha_{00}>\alpha_{ab}+a(-t)+b(-t)\text{ for any nonzero }(a, b)\in \mathbb{Z}_{\geq 0}^2. 
\]
Here, we have $(-t, -t)\in \mathbb{R}^2_{\leq 0}$.
This means that the domain corresponding to $(0, 0)$ is nonempty and intersects $\mathbb{R}_{\leq 0}^2$.
\end{proof}

\begin{Lem}\label{renketsuseibun}
The set of connected components of $\lvert K'\rvert \setminus \lvert K'_0 \rvert$ is in one-to-one correspondence with the set of connected components of $\mathbf{V}_{\!-}(g|_{z=0})=\mathbf{V}(g|_{z=0})\cap \mathbb{R}^2_{\leq 0}$.
\end{Lem}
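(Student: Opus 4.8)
The plan is to build an explicit bijection between $\pi_0\big(\mathbf{V}_{\!-}(g|_{z=0})\big)$ and $\pi_0\big(|K'|\setminus|K'_0|\big)$ via the duality of Theorem \ref{dualitytheorem}. First I record a combinatorial model of the right‑hand side. By the previous lemma every face of a marked simplex is marked, so each $2$‑simplex of $K'$ has all of its sides in $K'_1$ and all of its vertices in $K'_0$; hence deleting $|K'_0|$ leaves precisely the relative interiors of the $1$‑ and $2$‑simplices of $K'$, and two such open cells lie in one connected component of $|K'|\setminus|K'_0|$ exactly when they are joined by a chain $e_0,T_0,e_1,T_1,\dots,T_{k-1},e_k$ in which each $T_j\in K'_2$ has $e_j$ and $e_{j+1}$ among its sides. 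Dually, $K'_1$ corresponds to the set of edges of $\mathbf{V}(g|_{z=0})$ that meet $\mathbb{R}^2_{\leq 0}$ and $K'_2$ to the set of vertices of $\mathbf{V}(g|_{z=0})$ lying in $\mathbb{R}^2_{\leq 0}$; by Lemma \ref{a_0+b_0} the latter in fact lie in $\mathbb{R}^2_{<0}$, so each such vertex is a trivalent vertex of $\mathbf{V}_{\!-}(g|_{z=0})$ and carries a whole tripod‑neighborhood inside $\mathbb{R}^2_{\leq 0}$, and in particular the corresponding $2$‑simplex of $K'$ is a triangle.

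Next I define the map. Given a connected component $\Gamma$ of $\mathbf{V}_{\!-}(g|_{z=0})$, note that $\Gamma$ meets the relative interior of some edge $\varepsilon$ of $\mathbf{V}(g|_{z=0})$: a point of $\mathbf{V}(g|_{z=0})$ that is a vertex lies, by Lemma \ref{a_0+b_0}, in $\mathbb{R}^2_{<0}$ together with an adjacent arc, so $\Gamma$ is not contained in the finite vertex set. Let $e\in K'_1$ be dual to $\varepsilon$, and let $\Phi(\Gamma)$ be the component of $|K'|\setminus|K'_0|$ containing the relative interior of $e$. Independence of the choice: if $\varepsilon'$ is another such edge, a path in $\Gamma$ from a point of $\varepsilon$ to a point of $\varepsilon'$ traverses a sequence $\varepsilon=\varepsilon_0,\varepsilon_1,\dots,\varepsilon_k=\varepsilon'$ of edges of $\mathbf{V}(g|_{z=0})$ whose consecutive distinct members share a vertex $v_j$ lying on the path, hence in $\Gamma\subseteq\mathbb{R}^2_{<0}$, hence dual to a $2$‑simplex $T_j\in K'_2$ having $e_j$ and $e_{j+1}$ among its sides; this is exactly a chain of the kind above, so the open simplices $e_0,\dots,e_k$ all lie in one component of $|K'|\setminus|K'_0|$.

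For surjectivity, a component $\mathcal{P}$ of $|K'|\setminus|K'_0|$ contains the relative interior of some $e\in K'_1$ (if it met only a $2$‑simplex, it would also contain the relative interiors of that simplex's sides, which lie in $K'_1$); its dual edge $\varepsilon$ meets $\mathbb{R}^2_{\leq 0}$, and since no vertex of $\mathbf{V}(g|_{z=0})$ lies on $X\cup Y$ the set $\varepsilon\cap\mathbb{R}^2_{\leq 0}$ has nonempty relative interior in $\varepsilon$, so a relative‑interior point $p$ of $\varepsilon$ in $\mathbb{R}^2_{\leq 0}$ has $\Phi(\Gamma)=\mathcal{P}$ for $\Gamma$ the component of $\mathbf{V}_{\!-}(g|_{z=0})$ through $p$. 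For injectivity, if $\Phi(\Gamma)=\Phi(\Gamma')$ the witnessing open edges $e,e'$ are joined by a chain $e_0,T_0,\dots,T_{k-1},e_k$ in $K'$; dualizing, the vertices $v_j$ dual to $T_j$ lie in $\mathbb{R}^2_{<0}$ and on both edges $\varepsilon_j,\varepsilon_{j+1}$ dual to $e_j,e_{j+1}$, so $\bigcup_j\big((\varepsilon_j\cap\mathbb{R}^2_{\leq 0})\cup\{v_j\}\big)$ is a connected subset of $\mathbf{V}_{\!-}(g|_{z=0})$ meeting both $\Gamma$ and $\Gamma'$, whence $\Gamma=\Gamma'$.

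The main obstacle is the bookkeeping along the boundary $\partial\mathbb{R}^2_{\leq 0}=X\cup Y$: everything relies on Lemma \ref{a_0+b_0}, which tells us that no vertex of $\mathbf{V}(g|_{z=0})$ sits on $X\cup Y$ and that $\mathbf{V}_{\!-}(g|_{z=0})$ is trivalent. These are what let us pass freely between "meets $\mathbb{R}^2_{\leq 0}$" and "meets $\mathbb{R}^2_{<0}$", guarantee that an $\mathbb{R}^2_{\leq 0}$‑vertex of $\mathbf{V}(g|_{z=0})$ has a full neighborhood inside the quadrant (used for both well‑definedness and injectivity), ensure the relevant $2$‑simplices of $K'$ are genuine triangles matching the trivalent local picture, and make the edge $\varepsilon\cap\mathbb{R}^2_{\leq 0}$ genuinely one‑dimensional in the surjectivity step. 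The remaining point—that the components of the complement of the $0$‑skeleton of a finite regular cell complex are read off the incidence graph of its higher cells—is routine, and here it is immediate because faces of marked simplices are marked.
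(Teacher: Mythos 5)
Your proposal is correct and takes essentially the same approach as the paper: both arguments rest on the duality matching of incidences (a vertex of $\mathbf{V}_{\!-}(g|_{z=0})$ is an endpoint of an edge if and only if the corresponding $2$-simplex of $K'$ contains the corresponding $1$-simplex as a face), and your write-up simply makes the resulting bijection on connected components explicit, with well-definedness, injectivity and surjectivity spelled out. One cosmetic slip: $\Gamma$ is contained in $\mathbb{R}^2_{\leq 0}$ rather than $\mathbb{R}^2_{<0}$, but the fact you actually use --- that the vertices $v_j$ lie in $\mathbb{R}^2_{<0}$ --- is exactly Lemma \ref{a_0+b_0}, so nothing is affected.
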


\begin{proof}
Note that since a simplex $A$ of $\tilde{K}$ is a convex set, the intersection $A\cap \mathbb{R}^2_{\leq 0}$ is a connected convex set if it is nonempty.
Let $V$ and $E$ be a vertex and an edge of $\mathbf{V}_{\!-}(g|_{z=0})$.
Let $E'$ and $\Delta$ be the elements of $K'_1$ and $K'_2$ corresponding to $E$ and $V$.
The vertex $V$ is an endpoint of $E$ if and only if $\Delta$ contain $E'$ as its face.
Therefore, the set of connected components of $\lvert K'\rvert \setminus \lvert K'_0 \rvert$ is in one-to-one correspondence with the set of connected components of $\mathbf{V}_{\!-}(g|_{z=0})$.
\end{proof}

To describe what $K'$ looks like (Figure \ref{|K'|}), we start with a number of lemmas.
There are functions $u\colon \{0, 1, \dots, a_0\} \to \{0, 1, \dots, m\}$ and $r\colon \{0, 1, \dots, b_0\} \to \{0, 1, \dots, m\}$ such that for any $i\in \{0, 1, \dots, a_0\}$ and $j\in \{0, 1, \dots, b_0\}$, $(i, u(i))$ is a vertex of $K'$ which corresponds to a domain which intersects $X$ and $(r(j), j)$ is a vertex of $K'$ which corresponds to a domain which intersects $Y$.
Note that the vertices of $K'$ correspond to the domains of $\mathbb{R}^2_{\leq 0} \setminus \mathbf{V}(g|_{z=0})$.
Hence by Corollary \ref{XYZW}, the functions $u$ and $r$ are well-defined.
We remark that edges $(i, u(i))$-$(i-1, u(i-1))$ and $(r(j), j)$-$(r(j-1), j-1)$ can coincide, in which case they are of slope $1$.

\begin{Lem}\label{uenitotsu}
We have
\begin{eqnarray*}
\alpha_{0u(0)}&<&\alpha_{1u(1)}<\dots <\alpha_{a_0-1, u(a_0-1)}<\alpha_{a_0u(a_0)}=\alpha_{a_0b_0},\\
\alpha_{r(0)0}&<&\alpha_{r(1)1}<\dots <\alpha_{r(b_0-1), b_0-1}<\alpha_{r(b_0)b_0}=\alpha_{a_0b_0}.
\end{eqnarray*}
If we define a piecewise linear function on the interval $[0, a_0]$ by connecting the points $(0, \alpha_{0u(0)})$, $(1,\alpha_{1u(1)}), \dots, (a_0, \alpha_{a_0u(a_0)})$ and a piecewise linear function on the interval $[0, b_0]$ by connecting the points $(0, \alpha_{r(0)0})$, $(1,\alpha_{r(1)1}), \dots, (b_0, \alpha_{r(b_0)b_0})$, then these functions are concave downward.
\end{Lem}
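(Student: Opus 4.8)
The plan is to extract every assertion of the lemma from the way the maximal term of $g|_{z=0}$ varies along the ray $X=\mathbb{R}_{\ge 0}\mathbf{x}$, and then repeat the argument verbatim along $Y$. For $t>0$ let $\varphi(t)$ be the exponent of $x$ in the term of $g|_{z=0}$ that is maximal at $(-t,0)$; this term is unique away from the finitely many crossing points, and by Lemma~\ref{a_0+b_0} there are no vertices of $\mathbf{V}(g|_{z=0})$ on $X$, so $\varphi$ is a step function whose jumps occur exactly at the $a_0=\lvert\mathbf{V}(g)\cap X\rvert$ points of $\mathbf{V}(g|_{z=0})\cap X$ (Corollary~\ref{XYZW}). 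By the computation in the proof of Lemma~\ref{koutenyonko}, each such jump changes $\varphi$ by exactly $1$. Since $X$ meets the domain of $x^{a_0}y^{b_0}$ near the origin (Lemma~\ref{a_0+b_0}) and the domain with $x$-exponent $0$ for $t$ large, the net change of $\varphi$ is $-a_0$ over $a_0$ jumps of size $\pm1$, forcing every jump to be $-1$; thus $\varphi$ decreases from $a_0$ to $0$ in unit steps. Writing the crossing points as $0<s_1<\dots<s_{a_0}$ and $s_0:=0$, the vertex $(i,u(i))$ of $K'$ is the one whose domain is met by $X$ on $(s_{a_0-i},s_{a_0-i+1})$; in particular $u(a_0)=b_0$, so $\alpha_{a_0u(a_0)}=\alpha_{a_0b_0}$.

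I would then read off the coefficients from the crossing equations. At $t=s_{a_0-i+1}$ the domains of $(i,u(i))$ and $(i-1,u(i-1))$ abut, so the terms $\alpha_{iu(i)}x^{i}y^{u(i)}$ and $\alpha_{i-1,u(i-1)}x^{i-1}y^{u(i-1)}$ both attain the maximum of $g|_{z=0}$ at $(-s_{a_0-i+1},0)$; evaluating there (the point lies on $X$, so $y=0$ and the $y$-exponents drop out) gives
\[
\alpha_{iu(i)}-i\,s_{a_0-i+1}=\alpha_{i-1,u(i-1)}-(i-1)\,s_{a_0-i+1},
\]
i.e. $\alpha_{iu(i)}-\alpha_{i-1,u(i-1)}=s_{a_0-i+1}>0$ for $i=1,\dots,a_0$. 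Chaining these identities yields the first displayed chain $\alpha_{0u(0)}<\alpha_{1u(1)}<\dots<\alpha_{a_0u(a_0)}=\alpha_{a_0b_0}$.

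For the concavity, observe that the slope of the piecewise-linear function $\phi$ with $\phi(i)=\alpha_{iu(i)}$ on the subinterval $[i-1,i]$ equals $\phi(i)-\phi(i-1)=s_{a_0-i+1}$. As $i$ runs from $1$ to $a_0$ the index $a_0-i+1$ runs from $a_0$ down to $1$, so the successive slopes are $s_{a_0}>s_{a_0-1}>\dots>s_1$, strictly decreasing; hence $\phi$ is (strictly) concave downward. The statements about $r$ and the interval $[0,b_0]$ follow by the identical argument applied to the ray $Y$, using $\lvert\mathbf{V}(g)\cap Y\rvert=b_0$ (Corollary~\ref{XYZW}) and $r(b_0)=a_0$.

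The main obstacle — really the only step that uses a hypothesis rather than bookkeeping — is the claim that $\varphi$ decreases \emph{monotonically} along $X$: this rests on each crossing changing the $x$-exponent by exactly $1$, which is the (weak) smoothness input taken from the proof of Lemma~\ref{koutenyonko}, combined with the count $\lvert\mathbf{V}(g)\cap X\rvert=a_0$ and the two boundary values of $\varphi$. I would also check the harmless edge case noted just before the lemma, where consecutive edges $(i,u(i))$-$(i-1,u(i-1))$ coincide (which forces $u(i)=u(i-1)+1$ and slope $1$); it affects none of the inequalities above, since those involve only the coefficients $\alpha_{iu(i)}$.
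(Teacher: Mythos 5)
Your proof is correct and takes essentially the same route as the paper's: both identify the crossing points of $\mathbf{V}(g|_{z=0})$ with $X$ as the successive differences $\alpha_{iu(i)}-\alpha_{i-1,u(i-1)}$ (via the equality of the two maximal terms at each crossing) and then read off both the chain of strict inequalities and the concavity directly from the ordering of those crossing points along the ray. The only difference is that you explicitly re-derive the monotone unit-step ordering of the domains along $X$, which the paper treats as already contained in the well-definedness of $u$ and the computation in Lemma \ref{koutenyonko}; this is harmless extra care.
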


\begin{proof}
We set
\[
\mathbf{V}(g|_{z=0}) \cap X=\{ (s_1, 0), (s_2, 0), \dots, (s_{a_0}, 0)\} \ (s_1<s_2<\dots <s_{a_0}<0).
\]
Since the values $s_i\ (1\leq i\leq a_0)$ are obtained as solutions of $\alpha_{i-1, u(i-1)}+(i-1)x=\alpha_{iu(i)}+ix$, we have
\[
s_i=\alpha_{i-1, u(i-1)}-\alpha_{iu(i)}.
\]
Because of the inequalities $s_1<s_2<\dots <s_{a_0}<0$, we have
\begin{eqnarray*}
&\alpha_{0u(0)}<\alpha_{1u(1)}<\dots <\alpha_{a_0-1, u(a_0-1)}<\alpha_{a_0u(a_0)},&\\
&\alpha_{a_0u(a_0)} -\alpha_{a_0-1, u(a_0-1)} <\dots <\alpha_{2u(2)} -\alpha_{1u(1)}<\alpha_{1u(1)} -\alpha_{0u(0)}.&
\end{eqnarray*}
Therefore, the function on the interval $[0, a_0]$ obtained by connecting the points $(0, \alpha_{0u(0)})$, $(1,\alpha_{1u(1)})$, $\dots$, $(a_0, \alpha_{a_0u(a_0)})$ is concave downward.

The statement on $\alpha_{r(0)0}, \dots, \alpha_{r(b_0)b_0}$ follows in the same way.
\end{proof}

\begin{Lem}\label{yuiitsunosaidaichi}
For all $i\in \{0, \dots, a_0\}$, the term $\alpha_{iu(i)}$ is the only maximum of $\{\!\{ \alpha_{xy}\mid 0\leq x\leq i,\ (x, y)\in \Delta_m\}\!\}$.
Similarly, for all $j\in \{0, \dots, b_0\}$, the term $\alpha_{r(j)j}$ is the only maximum of $\{\!\{ \alpha_{xy}\mid 0\leq y\leq j, \ (x, y)\in \Delta_m\}\!\}$.
\end{Lem}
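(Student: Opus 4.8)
The plan is, for each index $i$, to exhibit a single point on the ray $X$ that lies in the domain of $\mathbb{R}^2\setminus\mathbf{V}(g|_{z=0})$ dual to the vertex $(i,u(i))$, and then to read off the required strict inequalities directly from the defining inequalities of that domain. So fix $i\in\{0,1,\dots,a_0\}$ and let $D_i$ be the domain of $\mathbb{R}^2\setminus\mathbf{V}(g|_{z=0})$ dual to $(i,u(i))$; by Theorem \ref{dualitytheorem} this assignment is bijective, $D_i$ is open, by the choice of $u$ it meets $X=\{(-s,0)\mid s\ge 0\}$, and it is precisely the set of $(s,t)$ at which the term $\alpha_{iu(i)}x^iy^{u(i)}$ of $g|_{z=0}$ strictly dominates all other terms. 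The first step is to find a point $(-s,0)\in D_i$ with $s>0$. If $i<a_0$, then $(i,u(i))\ne(a_0,b_0)$, so by Lemma \ref{a_0+b_0} $D_i$ is not the domain containing the origin; hence $(0,0)\notin D_i$, and since $D_i\cap X$ is nonempty, open in $X$, and avoids $(0,0)$, it contains a point $(-s,0)$ with $s>0$. If $i=a_0$, then by Lemma \ref{a_0+b_0} $D_{a_0}$ is the open domain containing the origin, so $(-\epsilon,0)\in D_{a_0}$ for all sufficiently small $\epsilon>0$.

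Having such a point $(-s,0)\in D_i$ with $s>0$, evaluate the defining inequalities of $D_i$ there: for every lattice point $(x,y)\in\Delta_m$ with $(x,y)\ne(i,u(i))$ and $\alpha_{xy}\ne-\infty$ we obtain $\alpha_{iu(i)}-is>\alpha_{xy}-xs$. Restricting to the $(x,y)$ with $0\le x\le i$ and rearranging gives $\alpha_{iu(i)}-\alpha_{xy}>(i-x)s\ge 0$, whence $\alpha_{iu(i)}>\alpha_{xy}$; the comparison is trivial for lattice points with $\alpha_{xy}=-\infty$. Since $(i,u(i))$ is a lattice point of $\Delta_m$ with $0\le i$, the value $\alpha_{iu(i)}$ is one of the entries of $\{\!\{\alpha_{xy}\mid 0\le x\le i,\ (x,y)\in\Delta_m\}\!\}$, and we have just seen it strictly exceeds every other entry, so it is the unique maximum. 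The assertion about $\alpha_{r(j)j}$ and the ray $Y$ is proved the same way with the two coordinates interchanged.

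I do not expect a genuine obstacle. The one point that needs care is guaranteeing that the witnessing point on $X$ has strictly negative first coordinate, so that the factor $(i-x)s$ in the last estimate has the right sign; this is exactly the purpose of the case split $i<a_0$ versus $i=a_0$ and of invoking Lemma \ref{a_0+b_0} (alternatively, one can use the strict chain $\alpha_{0u(0)}<\dots<\alpha_{a_0u(a_0)}=\alpha_{a_0b_0}$ of Lemma \ref{uenitotsu} to see directly that $D_i$ misses the origin when $i<a_0$).
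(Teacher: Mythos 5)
Your proof is correct, and it takes a slightly different route from the paper's. The paper also starts from a witnessing point $(-t,0)\in X$ lying in the domain dual to $(i,u(i))$, but it only uses that point to compare terms \emph{within the single column} $x=i$ (where the $-it$ contributions cancel, so no sign information about $t$ is needed); it then extends to all columns $0\le x\le i$ by invoking the strict chain $\alpha_{0u(0)}<\alpha_{1u(1)}<\dots<\alpha_{a_0u(a_0)}$ from Lemma \ref{uenitotsu}. You instead extract the full conclusion from the single inequality $\alpha_{iu(i)}-is>\alpha_{xy}-xs$ by observing that $(i-x)s\ge 0$ for $0\le x\le i$, which makes Lemma \ref{uenitotsu} unnecessary here; this is a genuinely more self-contained argument. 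One small remark: the case split $i<a_0$ versus $i=a_0$ that you flag as the delicate point is actually superfluous. Every point of $X$ is of the form $(-s,0)$ with $s\ge 0$, and $s\ge 0$ already suffices for $(i-x)s\ge 0$; even the degenerate witness $s=0$ (which can only occur for $i=a_0$) gives $\alpha_{a_0b_0}>\alpha_{xy}$ directly. So any point of $D_i\cap X$ works and no appeal to Lemma \ref{a_0+b_0} is needed. This does not affect correctness, only economy.
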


\begin{proof}
First, we show that for all $i\in \{0, \dots, a_0\}$, $\alpha_{iu(i)}$ is the only maximum of $\{\!\{ \alpha_{i0}, \dots, \alpha_{i, m-i}\}\!\}$.
Since the domain which corresponds to $(i, u(i))$ has an intersection with $X$, there exists a point $(-t, 0)\in X$ such that
\[
\forall (a, b)\neq (i, u(i)),\ \alpha_{iu(i)}+i(-t)+u(i)\times 0>\alpha_{ab}+a(-t)+b\times 0.
\]
Therefore, we have 
\begin{eqnarray*}
\forall j\in \{0, \dots, m-i\} \setminus \{u(i)\},\ &\alpha_{iu(i)}+i(-t)>\alpha_{ij}+i(-t),&\\
\text{i.e.}&\alpha_{iu(i)}>\alpha_{ij}.&
\end{eqnarray*}
Thus, $\alpha_{iu(i)}$ is the only maximum of $\{\!\{ \alpha_{i0}, \dots, \alpha_{i, m-i}\}\!\}$.
By Lemma \ref{uenitotsu}, we have
\[
\alpha_{0u(0)}<\alpha_{1u(1)}<\dots <\alpha_{a_0-1, u(a_0-1)}<\alpha_{a_0u(a_0)},
\]
and $\alpha_{iu(i)}$ is the only maximum of $\{\!\{ \alpha_{xy}\mid 0\leq x\leq i,\ (x, y)\in \Delta_m\}\!\}$.
The second claim is proven in the same way.
\end{proof}

\begin{Lem}\label{ur}
Assume that $0\leq i\leq a_0$ and $0\leq j\leq b_0$.
Then, we have
\begin{eqnarray*}
r(j)\leq i&\Rightarrow& j\leq u(i), \\
u(i)\leq j&\Rightarrow& i\leq r(j), \\
r(j)\leq i,\ u(i)=j &\Rightarrow& (r(j), j)=(i, u(i)), \\
u(i)\leq j,\ r(j)=i &\Rightarrow& (r(j), j)=(i, u(i)). 
\end{eqnarray*}
\end{Lem}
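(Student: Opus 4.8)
The plan is to prove the four implications essentially by a single comparison-of-coefficients argument, using Lemma \ref{yuiitsunosaidaichi} as the main engine. Recall that $\alpha_{iu(i)}$ is the unique maximum of $\{\!\{\alpha_{xy}\mid 0\le x\le i,\ (x,y)\in\Delta_m\}\!\}$, and symmetrically $\alpha_{r(j)j}$ is the unique maximum of $\{\!\{\alpha_{xy}\mid 0\le y\le j,\ (x,y)\in\Delta_m\}\!\}$. So first I would prove the contrapositive of the first implication: suppose $u(i) < j$ and try to derive $r(j) > i$. Since $r(j)\le b_0$, the point $(r(j),j)$ lies in $\Delta_m$; if we had $r(j)\le i$, then $(r(j),j)$ would be a lattice point with first coordinate $\le i$, so $\alpha_{r(j)j}\le \alpha_{iu(i)}$ by maximality in Lemma \ref{yuiitsunosaidaichi}. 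On the other hand $(i,u(i))$ is a lattice point with second coordinate $u(i)<j$, hence with second coordinate $\le j$ (indeed $\le j-1$), so $\alpha_{iu(i)}\le \alpha_{r(j)j}$, again by maximality. Combining, $\alpha_{iu(i)}=\alpha_{r(j)j}$; but each is the \emph{unique} maximum of its respective set, and $(i,u(i))$ lies in the set defining $\alpha_{r(j)j}$'s maximality (since $u(i)\le j$) while $(r(j),j)$ lies in the set defining $\alpha_{iu(i)}$'s (since $r(j)\le i$), forcing $(i,u(i))=(r(j),j)$. But then $u(i)=j$, contradicting $u(i)<j$. Hence $r(j)>i$. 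This is exactly the contrapositive of ``$r(j)\le i\Rightarrow j\le u(i)$'', and by symmetry (swapping the roles of the two coordinates and of $u,r$) it also gives ``$u(i)\le j\Rightarrow i\le r(j)$''.

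For the third implication, assume $r(j)\le i$ and $u(i)=j$. From $r(j)\le i$ the first implication already gives $j\le u(i)$, which combined with $u(i)=j$ is consistent; the point is to upgrade this to equality of the two lattice points. Now $(r(j),j)$ has first coordinate $r(j)\le i$, so it lies in the set $\{(x,y)\mid 0\le x\le i\}$ over which $\alpha_{iu(i)}$ is the unique maximum, whence $\alpha_{r(j)j}\le \alpha_{iu(i)}$ with equality only if $(r(j),j)=(i,u(i))$. Conversely $(i,u(i))=(i,j)$ has second coordinate $j$, so it lies in the set over which $\alpha_{r(j)j}$ is the unique maximum, whence $\alpha_{iu(i)}\le\alpha_{r(j)j}$ with equality only if $(i,u(i))=(r(j),j)$. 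Since both inequalities hold, we get $\alpha_{iu(i)}=\alpha_{r(j)j}$ and therefore $(r(j),j)=(i,u(i))$. The fourth implication follows by the same argument with the coordinates exchanged.

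I expect the only real subtlety to be bookkeeping: making sure at each step that the relevant lattice point actually lies in $\Delta_m$ (which needs $i\le a_0$, $j\le b_0$, and the fact that $(i,u(i)),(r(j),j)\in\Delta_m$ by construction of $u,r$) and that the strict-versus-weak inequality on second coordinates is handled correctly when invoking uniqueness of the maximum. There is no hard analytic content; the whole proof is a finite comparison driven by Lemma \ref{yuiitsunosaidaichi}, and the main obstacle is simply organizing the four cases so that the symmetry between the $x$- and $y$-directions is exploited rather than repeated verbatim.
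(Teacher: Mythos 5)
Your proposal is correct and follows essentially the same route as the paper: both arguments sandwich $\alpha_{iu(i)}$ and $\alpha_{r(j)j}$ between each other using the unique-maximum statements of Lemma \ref{yuiitsunosaidaichi} and then invoke uniqueness to force $(r(j),j)=(i,u(i))$, with the third and fourth implications read off from the equality case. The only cosmetic difference is that you compare $\alpha_{r(j)j}\le\alpha_{iu(i)}$ directly over the set $\{0\le x\le i\}$, whereas the paper inserts the intermediate term $\alpha_{r(j)u(r(j))}$; this changes nothing of substance.
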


\begin{proof}
We will show the first claim.
Assume that $r(j)\leq i$ and $j\geq u(i)$.
Since $\alpha_{iu(i)}\in \{ \alpha_{xy}\mid y\leq j\}$, by Lemma \ref{yuiitsunosaidaichi}, we have
\[
\alpha_{iu(i)} \leq \alpha_{r(j)j}.
\]
We have $\alpha_{r(j)j} \leq \alpha_{r(j)u(r(j))}$ by $\alpha_{r(j)j}\in \{ \alpha_{xy}\mid x\leq r(j)\}$.
Since $r(j)\leq i$, we have $\alpha_{r(j)u(r(j))}\leq \alpha_{iu(i)}$.
Therefore, we have
\[
\alpha_{r(j)j} \leq \alpha_{iu(i)},
\]
hence
\[
\alpha_{r(j)j}= \alpha_{iu(i)}. 
\]
Here, by Lemma \ref{yuiitsunosaidaichi}, $\alpha_{r(j)j}$ is the only maximum of $\{\!\{ \alpha_{xy}\mid y\leq j\}\!\}$, hence we have $(r(j), j)=(i, u(i))$. 
The second claim holds in the same way.
The third and fourth claims hold from the above proof.
\end{proof}

The path obtained by connecting the lattice points which correspond to the domains which have intersections with $X$ and the path obtained by connecting the lattice points which correspond to the domains which have intersections with $Y$ are as in Figure \ref{kousitennnozu}.
\begin{figure}[H]
\centering
\begin{equation*}
\begin{array}{lccrr}
(0, u(0))\leftarrow (1, u(1))\leftarrow \dots \leftarrow (a_0-1, u(a_0-1))\leftarrow (a_0, u(a_0))=&(a_0, b_0)&\\
&||&\\
&(r(b_0), b_0)&\\
&\downarrow&\\
&(r(b_0-1), b_0-1)&\\
&\downarrow&\\
&\vdots&\\
&\downarrow&\\
&(r(1), 1)&\\
&\downarrow&\\
&(r(0), 0)&
\end{array}
\end{equation*}
\caption{The lattice points corresponding to the domains which have intersections with $X\cup Y$.}
\label{kousitennnozu}
\end{figure}

Let $G$ be the line graph obtained by connecting the points $(i, u(i))$, $0\leq i\leq a_0$ and $G'$ the line graph obtained by connecting the points $(r(j), j)$, $0\leq j\leq b_0$.
We also write the function on the interval $[0, a_0]$ obtained by $G$ as $u$ and the function on the interval $[0, b_0]$ obtained by $G'$ as $r$.
Let $U$ be the upper part of $G$, $\{ (x, y)\mid 0\leq x\leq a_0,\ y>u(x)\}$, and $R$ the right part of $G'$, $\{ (x, y)\mid 0\leq y\leq b_0,\ x>r(y)\}$ (see Figure \ref{|K'|}).

Let us see that the graphs $G$ and $G'$ do not ``cross'':
\begin{Lem}\label{sakaime}
The intersection of $U$ and $R$ is empty.
\end{Lem}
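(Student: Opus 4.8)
The plan is to argue by contradiction: suppose there is a point $(x,y)$ lying in both $U$ and $R$. Since $U=\{(x,y)\mid 0\le x\le a_0,\ y>u(x)\}$ and $R=\{(x,y)\mid 0\le y\le b_0,\ x>r(y)\}$, such a point gives integers (after rounding down to a lattice point, or working directly with a lattice point of $K$ that must exist in the nonempty open region) $i$ with $0\le i\le a_0$ and $j$ with $0\le j\le b_0$ satisfying $j>u(i)$ and $i>r(j)$, hence in particular $u(i)\le j$ and $r(j)\le i$. First I would reduce to this purely combinatorial situation, taking care that $U\cap R\neq\emptyset$ forces the existence of such an integer pair (the regions are open and defined by linear inequalities with integer data, so if they meet they meet at a lattice point, or one can directly use the lattice points $(i,u(i))$, $(r(j),j)$ of $K'$).

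Next I would feed this into Lemma \ref{ur}. From $r(j)\le i$ the first implication of Lemma \ref{ur} gives $j\le u(i)$; combined with $u(i)\le j$ (which we have, indeed strictly $j>u(i)$) this is already a contradiction. Alternatively, from $u(i)\le j$ the second implication gives $i\le r(j)$, contradicting $i>r(j)$. Either route closes the argument, so the proof is essentially a two-line deduction once the setup is in place.

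The step I expect to require the most care is the first one: translating the geometric non-emptiness of $U\cap R$ into the arithmetic statement "there exist integers $i\in\{0,\dots,a_0\}$, $j\in\{0,\dots,b_0\}$ with $u(i)\le j$ and $r(j)\le i$." One must be slightly careful because $U$ and $R$ are defined with strict inequalities and real coordinates, while Lemma \ref{ur} is stated for integers $i,j$. I would handle this by noting that $G$ and $G'$ are piecewise-linear graphs over the integer interval, so the closed regions $\overline{U}$ and $\overline{R}$ are polyhedral; if their interiors (or the open sets $U$, $R$) intersect, then since all defining data are integral, there is a lattice point $(i,j)\in\overline{U}\cap\overline{R}$, i.e. $j\ge u(i)$ and $i\ge r(j)$ with $0\le i\le a_0$, $0\le j\le b_0$. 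Then Lemma \ref{ur} (first implication: $r(j)\le i\Rightarrow j\le u(i)$) forces $j=u(i)$, and the third implication ($r(j)\le i$, $u(i)=j\Rightarrow (r(j),j)=(i,u(i))$) forces $(i,j)=(r(j),j)=(i,u(i))$ to lie on both $G$ and $G'$, not in the open regions $U$ or $R$, which contradicts $(i,j)\in U\cap R$. Hence $U\cap R=\emptyset$.
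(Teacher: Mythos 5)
There is a genuine gap in the first (and, as you yourself anticipated, hardest) step: the passage from $U\cap R\neq\emptyset$ to the existence of a lattice point $(i,j)$ with $0\le i\le a_0$, $0\le j\le b_0$, $j>u(i)$ and $i>r(j)$. Integrality of the defining data does not give this. Two piecewise-linear graphs with integer breakpoints can cross so that the resulting overlap region is a thin lens containing no lattice point, and neither of your suggested fixes repairs this: rounding $(x,y)$ down to $(\lfloor x\rfloor,\lfloor y\rfloor)$ need not preserve the inequalities $y>u(x)$, $x>r(y)$ (the functions $u$ and $r$ are not monotone a priori), and retreating to a lattice point of $\overline{U}\cap\overline{R}$ makes the statement vacuous --- the common endpoint $(a_0,b_0)=(a_0,u(a_0))=(r(b_0),b_0)$ always lies in $\overline{U}\cap\overline{R}$ and satisfies $j=u(i)$, $i=r(j)$. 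This also breaks your final deduction: from $(i,j)\in\overline{U}\cap\overline{R}$ you conclude via Lemma \ref{ur} that $(i,j)$ lies on both $G$ and $G'$, but that is not a contradiction, since you never established $(i,j)\in U\cap R$ with the strict inequalities. Your two-line combinatorial core (feeding $r(j)\le i$ and $u(i)<j$ into the first implication of Lemma \ref{ur}) is correct, but it is fed with data you have not produced.

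The paper closes exactly this gap with two ingredients you do not use. First, the edges $(i,u(i))$--$(i+1,u(i+1))$ and $(r(j),j)$--$(r(j+1),j+1)$ are $1$-simplices of the polyhedral complex $K'$, so two such edges of $G$ and $G'$ can meet only by coinciding or sharing an endpoint; this is what ultimately forces relevant intersections to occur at lattice points. Second, a connectedness argument: if $U\cap R\neq\emptyset$ but $G\cap R=\emptyset$, then $R=(R\cap\Int(U))\sqcup(R\setminus\overline{U})$ would disconnect $R$, so $G$ must actually enter the open set $R$; one then takes $i=\inf\{x\mid(x,u(x))\in R\}$, which is an integer by the first ingredient, sets $j=u(i)$, and derives $r(j)=i$, $u(i+1)\ge j+1$ and $r(j+1)\le i$, the last of which contradicts Lemma \ref{ur}. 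Some argument of this kind (or another way of exploiting that $G$ and $G'$ are made of edges of the same lattice subdivision) is needed; without it the reduction to the arithmetic of Lemma \ref{ur} does not go through.
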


\begin{proof}
Note that the edges $(i, u(i))$-$(i+1, u(i+1))$ and $(r(j), j)$-$(r(j+1), j+1)$ belong to $K'$.
Therefore, two such edges of $G$ and $G'$ intersect only when they are the same or share one of the endpoints.
Assume that $U\cap R\neq \emptyset$.
We first show that $G\cap R\neq \emptyset$. 
We have $R=(R\cap \bar{U})\sqcup (R\setminus \bar{U})$.
The latter set $R\setminus \bar{U}$ is nonempty and open in $R$.
If $G\cap R=\emptyset$, we have $R\cap \bar{U}=R\cap (\Int(U)\cup G\cup \{(0, y)\mid y\geq u(0)\} \cup \{(a_0, y)\mid y\geq b_0\})=R\cap \Int(U)\neq \emptyset$, and hence we have $R=(R\cap \Int(U))\sqcup (R\setminus \bar{U})$, and this contradicts the connectedness of $R$.
Therefore, we have $G\cap R\neq \emptyset$.
Let $i=\inf \{x\mid (x, u(x))\in R\}$.
This is an integer by the remark at the beginning.
Let $j=u(i)$.
Then, we have $r(j)=i$.
Here, $u(i+1)\leq j$ is impossible by the second inequality of Lemma \ref{ur}, hence we have $u(i+1)\geq j+1$.
Since the line segment which connects $(i, j)$ and $(i+1, u(i+1))$ has an intersection with the interior of $R$, we have $r(j+1)\leq i$ and this contradicts Lemma \ref{ur}.
Therefore, we have $U\cap R= \emptyset$.
\end{proof}

The complex $K'$ corresponds to the part surrounded by the path
\begin{eqnarray*}
(r(0), 0)\to (r(1), 1)\to \dots \to (r(b_0-1), b_0-1)\to (r(b_0), b_0)=(a_0, b_0)=(a_0, u(a_0))\to \\
\to (a_0-1, u(a_0-1))\to \dots \to (1, u(1))\to (0, u(0))\to \dots \to (0, 0)\to \dots \to (r(0), 0).
\end{eqnarray*}

Rigorously, we have the following.

\begin{Prop}\label{K'nokatachi}
\[
|K'|=\mathbb{R}^2_{\geq 0}\setminus \left\{ (x, y)\in \mathbb{R}^2 \middle| 
\begin{array}{l}
0\leq x\leq a_0,\ y>u(x)\ \rm{ or}\\
0\leq y\leq b_0,\ x>r(y)\ \rm{ or}\\
x>a_0,\ y>b_0
\end{array} \right\}=\Delta_m\setminus (U\cup R).
\]
\begin{figure}[H]
\centering
\begin{tikzpicture}
 \coordinate (A1) at (0,0);
 \coordinate (A2) at (0.5,0);
 \coordinate (A3) at (3,0);
 \coordinate (A4) at (1.5,0.5);
 \coordinate (A5) at (1,1);
 \coordinate (A6) at (1.5,1.5);
 \coordinate (A7) at (0.5,2.5);
 \coordinate (A8) at (0,1.5);
 \coordinate (A9) at (0,3);
 \fill[pattern=north east lines] (A1)--(A2)--(A4)--(A5)--(A7)--(A8)--cycle;
 \draw[thin,->,>=stealth] (A1)--(3.5,0) node[right] {$x$};
 \draw[thin,->,>=stealth] (A1)--(0,3.5) node[above] {$y$};
 \draw[thick, dotted] (A3)--(A9);
 \draw (A6)--(3.5,1.5);
 \draw (A6)--(1.5,3.5);
 \draw (A8)--(A1)--(A2);
 \draw[thick] (A2)--(A4)--(A5)--(A7)--(A8);
 \draw[thick] (A5)--(A6);
 \foreach \t in {2,4,5,6,7,8} \fill[black] (A\t) circle (0.06);
 \fill[black] (A3) circle (0.045);
 \fill[black] (A9) circle (0.045);
 \coordinate [label=below:$m$] (b1) at (3,0);
 \coordinate [label=left:$m$] (b2) at (0,3); 
 \coordinate [label=below:$U$] (a1) at (0.75,3.15);
 \coordinate [label=below:$R$] (a2) at (2.65,1);
 \coordinate [label=below:$|K'|$] (a3) at (-0.5,1.25);
 \coordinate [label=above:$\text{$(a_0, b_0)$}$] (a4) at (2.15,1.5);
\end{tikzpicture}
\caption{The set underlying the complex $K'$.}
\label{|K'|}
\end{figure}
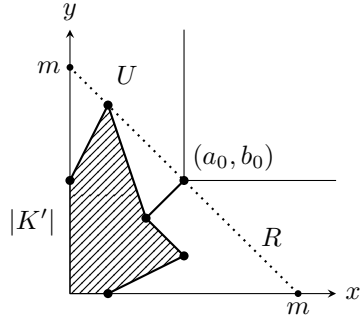
\end{Prop}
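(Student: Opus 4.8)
The plan is to prove the two inclusions $|K'|\subseteq \Delta_m\setminus(U\cup R)$ and $\Delta_m\setminus(U\cup R)\subseteq |K'|$ separately, working throughout with the set cut out by the three displayed inequalities; one first checks that this set equals $\Delta_m\setminus(U\cup R)$, using that $i+u(i)\le m$ and $j+r(j)\le m$ for the lattice points $(i,u(i)),(r(j),j)\in\Delta_m$, so that any point of $\mathbb{R}^2_{\geq 0}$ with $x+y>m$ already lies in one of the three removed regions.

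For $|K'|\subseteq \Delta_m\setminus(U\cup R)$: since every face of a marked simplex is marked, it is enough to rule out marked lattice points in $U$, in $R$, and in $\{x>a_0,\ y>b_0\}$, and then to note that a simplex of $K$ whose vertices all avoid these three relatively closed regions cannot meet them either — here one uses that $G$ and $G'$ are subcomplexes of $K$ (the edges $(i,u(i))$-$(i+1,u(i+1))$ and $(r(j),j)$-$(r(j+1),j+1)$ are edges of $K'$), so every simplex of $K$ lies on one side of each path. The three non-markedness claims are direct coefficient comparisons: if $(a,b)\in U$ then $0\le a\le a_0$ and $u(a)<b\le m-a$, so by Lemma \ref{yuiitsunosaidaichi} we have $\alpha_{ab}<\alpha_{a\,u(a)}$, hence $\alpha_{ab}+as+bt<\alpha_{a\,u(a)}+as+u(a)t$ for every $(s,t)$ with $t\le 0$, so the domain of $x^ay^b$ misses $\mathbb{R}^2_{\leq 0}$; the case $(a,b)\in R$ is symmetric via $r$; and if $a>a_0$, $b>b_0$ then $\alpha_{a_0b_0}$ is the strict global maximum of the coefficients of $g|_{z=0}$ (as shown in the proof of Lemma \ref{a_0+b_0}), so $x^ay^b$ is dominated by $x^{a_0}y^{b_0}$ on all of $\mathbb{R}^2_{\leq 0}$.

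For $\Delta_m\setminus(U\cup R)\subseteq |K'|$: the lattice points on the boundary path of Figure \ref{kousitennnozu} are marked by the very definition of $u$ and $r$, and $(0,0)$ is marked by Lemma \ref{markedorigin}; it then remains to show that every lattice point strictly inside the region is marked, i.e.\ to exhibit $(s,t)\in\mathbb{R}^2_{\leq 0}$ at which $x^ay^b$ strictly dominates all other terms of $g|_{z=0}$. I would obtain this by starting from a functional realizing $x^ay^b$ as a vertex of $K$ and pushing it toward the third quadrant, using Lemmas \ref{uenitotsu}--\ref{ur} and \ref{sakaime} to control which terms it can first tie with; Lemma \ref{sakaime} (that $G$ and $G'$ do not cross) is what guarantees the two boundary paths enclose a single region. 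Once all the required lattice points are known to be marked, Lemma \ref{renketsuseibun} together with the connectedness of $|K'|$ identifies $|K'|$ with the closed region bounded by those paths, including the slope-$1$ ``spike'' that appears when an edge of $G$ coincides with an edge of $G'$.

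The main obstacle is this second inclusion, and precisely the witness construction for the interior lattice points: in the first inclusion one only needs a lower bound forcing domination, whereas here one must produce an explicit dominating functional subject to the sign constraints $s,t\le 0$ and verify that, as it is moved toward $\mathbb{R}^2_{\leq 0}$, it never loses to a term whose exponent lies outside the enclosed region — which is exactly the phenomenon that the combinatorics of $u$ and $r$ in Lemmas \ref{uenitotsu}--\ref{sakaime} was set up to govern. A secondary nuisance is the set-theoretic bookkeeping near the corner $(a_0,b_0)$ and along the lower-dimensional spikes, where ``$\Delta_m\setminus(U\cup R)$'' has to be read as the set defined by the three displayed inequalities.
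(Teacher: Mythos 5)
Your first inclusion, $|K'|\subseteq \Delta_m\setminus(U\cup R)$, is correct and in fact more explicit than anything in the paper: the coefficient comparisons via Lemma \ref{yuiitsunosaidaichi} (for $U$ and $R$) and via the strict maximality of $\alpha_{a_0b_0}$ (for the corner region), combined with the observation that cells of $K$ cannot cross the edges of $G$ and $G'$, do establish that no marked simplex meets the three removed regions. You are also right to flag that the final expression ``$\Delta_m\setminus(U\cup R)$'' must be read as the set cut out by the three displayed inequalities; when $\Delta_m\cap\{x>a_0,\ y>b_0\}\neq\emptyset$ (e.g.\ $(a_0,b_0)=(1,1)$, $m=4$) the literal second equality fails, so your reading is the correct one.

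The genuine gap is the reverse inclusion, and it is not only unexecuted but aimed at the wrong target. You propose to show that every lattice point strictly inside the region admits a witness $(s,t)\in\mathbb{R}^2_{\leq 0}$ at which its monomial strictly dominates. First, such a lattice point need not be a vertex of the subdivision $K$ at all (its monomial may never achieve the unique maximum anywhere), so no such witness can exist; such points lie in $|K'|$ only as interior points of higher simplices. Second, and more seriously, even if every vertex of $K$ inside the region were shown to be marked, this would not yield $\Delta_m\setminus(U\cup R)\subseteq|K'|$: a $2$-simplex of $K$ is marked precisely when the dual \emph{vertex of the curve} $\mathbf{V}(g|_{z=0})$ lies in $\mathbb{R}^2_{\leq 0}$, which is a condition on the tropical curve and is not implied by the markedness of the triangle's corners, so an unmarked $2$-cell (a ``hole'') is not excluded by your plan. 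The paper's proof avoids witness constructions entirely: it combines Lemma \ref{markedorigin} with the earlier connectedness lemma (any two marked cells are joined through the chain of marked cells dual to the cells of $\tilde{K}$ met by a line segment inside the convex set $\mathbb{R}^2_{\leq 0}$), the fact that the paths $G$ and $G'$ consist of marked edges, and Lemma \ref{sakaime}; here the convexity of $\mathbb{R}^2_{\leq 0}$ does the work that your dominating functionals were meant to do. To repair your argument you would need either to adopt that duality/connectedness route, or to prove directly that every $2$-cell of $K$ contained in the region has its dual curve-vertex in $\mathbb{R}^2_{<0}$ --- which is exactly the step your proposal leaves open.
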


\begin{proof}
By Lemma \ref{markedorigin}, we have $(0, 0)\in K'$.
A lattice point is in $|K'|$ if and only if it is connected with the origin by marked edges which do not intersect $U$ and $R$.
\end{proof}

We saw how to describe the intersection of $\mathbf{V}(g)$ and $XY$.
We can concider the intersections of $\mathbf{V}(g)$ and the other quarter planes in the same way.

\begin{Def}
We define the $6$ linear maps from $\mathbb{R}^2$ to $\mathbb{R}^3$ as follows:
\begin{eqnarray*}
&&\iota_{XY}\colon \mathbb{R}^2\to \mathbb{R}^3;\ (x, y)\mapsto (x, y, 0),\hspace{20.3mm}\iota_{YZ}\colon \mathbb{R}^2\to \mathbb{R}^3;\ (y, z)\mapsto (0, y, z),\\
&&\hspace{0.4mm}\iota_{ZX}\colon \mathbb{R}^2\to \mathbb{R}^3;\ (z, x)\mapsto (x, 0, z),\hspace{19.2mm}\iota_{XW}\colon \mathbb{R}^2\to \mathbb{R}^3;\ (x, w)\mapsto (x-w, -w, -w),\\
&&\iota_{YW}\colon \mathbb{R}^2\to \mathbb{R}^3;\ (y, w)\mapsto (-w, y-w, -w),\hspace{5mm}\iota_{ZW}\colon \mathbb{R}^2\to \mathbb{R}^3;\ (z, w)\mapsto (-w, -w, z-w).
\end{eqnarray*}
\end{Def}

For these maps, the following holds.

\begin{Prop}
\begin{eqnarray*}
\iota_{XY}(\mathbf{V}_{\!-}(g^h|_{z=w=0}))=\mathbf{V}(g)\cap XY, &\quad& \iota_{YZ}(\mathbf{V}_{\!-}(g^h|_{x=w=0}))=\mathbf{V}(g)\cap YZ, \\
\iota_{ZX}(\mathbf{V}_{\!-}(g^h|_{y=w=0}))=\mathbf{V}(g)\cap ZX, &\quad& \iota_{XW}(\mathbf{V}_{\!-}(g^h|_{y=z=0}))=\mathbf{V}(g)\cap XW, \\
\iota_{YW}(\mathbf{V}_{\!-}(g^h|_{x=z=0}))=\mathbf{V}(g)\cap YW, &\quad& \iota_{ZW}(\mathbf{V}_{\!-}(g^h|_{x=y=0}))=\mathbf{V}(g)\cap ZW.
\end{eqnarray*}
\end{Prop}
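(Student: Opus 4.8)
The plan is to prove the first equality $\iota_{XY}(\mathbf{V}_{\!-}(g^h|_{z=w=0})) = \mathbf{V}(g)\cap XY$ in detail and then observe that the other five follow by the symmetry of the situation under permutations of the homogeneous coordinates $x, y, z, w$. For the $XY$ case, recall that $g^h|_{z=w=0} = g|_{z=0}$ (noted right after the definition of $g|_{z=0}$), so $\mathbf{V}_{\!-}(g^h|_{z=w=0}) = \mathbf{V}(g|_{z=0})\cap \mathbb{R}^2_{\leq 0} = \mathbf{V}(g|_{z=0})$, where the last equality uses that $\mathbf{V}(g|_{z=0})$ is already contained in $\mathbb{R}^2_{\leq 0}$ — wait, that is not literally true, so more care is needed. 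Actually what we want is: $\iota_{XY}$ maps $\mathbb{R}^2_{\leq 0}$ onto the quarter plane $XY = \mathbb{R}_{\geq 0}\mathbf{x} + \mathbb{R}_{\geq 0}\mathbf{y} = \{(x, y, 0)\mid x \leq 0, y\leq 0\}$, so that $\iota_{XY}(\mathbf{V}_{\!-}(g|_{z=0})) = \iota_{XY}(\mathbf{V}(g|_{z=0})\cap \mathbb{R}^2_{\leq 0})$ lands inside $XY$, and we must match it with $\mathbf{V}(g)\cap XY$.

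\textbf{Key steps for the $XY$ case.} First I would note that $XY \subseteq (z=0)$, so $\mathbf{V}(g)\cap XY = (\mathbf{V}(g)\cap (z=0))\cap XY$. By the previous lemma, $\mathbf{V}(g)\cap (z=0) = \mathbf{V}(g|_{z=0}) \sqcup D$ (identifying $(x,y)$ with $(x,y,0)$), where $D$ is the union of certain domains of $\mathbb{R}^2\setminus \mathbf{V}(g|_{z=0})$ corresponding to monomials in $B'$. Intersecting with $XY$, i.e. with $\{(x,y,0)\mid x\leq 0,\, y\leq 0\}$, gives
\[
\mathbf{V}(g)\cap XY = \bigl(\mathbf{V}(g|_{z=0})\cap \mathbb{R}^2_{\leq 0}\bigr) \sqcup \bigl(D\cap \mathbb{R}^2_{\leq 0}\bigr)
= \mathbf{V}_{\!-}(g|_{z=0}) \sqcup \bigl(D\cap \mathbb{R}^2_{\leq 0}\bigr),
\]
identified with its image under $\iota_{XY}$. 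So I must show $D\cap \mathbb{R}^2_{\leq 0} = \emptyset$. By the lemma right after the definitions of $A, B, A', B'$, all terms of $g|_{z=0}$ whose domains intersect $\mathbb{R}^2_{\leq 0}$ lie in $A'$; since $B'$ and $A'$ are disjoint (as $A$ and $B$ are disjoint by construction), the domains corresponding to $B'$-terms — whose union is $D$ — do not meet $\mathbb{R}^2_{\leq 0}$. Hence $D\cap\mathbb{R}^2_{\leq 0} = \emptyset$ and $\mathbf{V}(g)\cap XY = \iota_{XY}(\mathbf{V}_{\!-}(g|_{z=0})) = \iota_{XY}(\mathbf{V}_{\!-}(g^h|_{z=w=0}))$.

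\textbf{The other five cases and the main obstacle.} For the remaining quarter planes I would argue by symmetry: permuting the four homogeneous variables $x, y, z, w$ carries $\mathbf{V}(g^h)$ to $\mathbf{V}$ of a permuted polynomial, carries the tropical plane $\mathbf{V}(f)$ to itself (since $f^h = x\oplus y\oplus z\oplus w$ is symmetric), and matches the six quarter planes with the six pairs of rays; the affine charts $\iota_{\bullet}$ are exactly the inverse parametrizations of these quarter planes, e.g. $\iota_{XW}$ is the standard chart $w\neq 0$ composed with the identification of the $(x,w)$-coordinates, and $\mathbf{V}_{\!-}(g^h|_{y=z=0})$ is the restriction of $\mathbf{V}(g^h)$ to that chart's nonpositive orthant. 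So each of the other five identities is the image of the $XY$ identity under a coordinate permutation, once one checks that the dehomogenization $g^h|_{y=z=0}$ in the $w$-chart plays the role that $g|_{z=0}$ played in the $(z=0)$-chart — which is true because $g^h$ is the homogenization and restricting to a coordinate subspace of the homogeneous picture is symmetric in all four variables. The step I expect to need the most care is making the symmetry argument fully rigorous for the three ``$W$'' charts $\iota_{XW}, \iota_{YW}, \iota_{ZW}$: here the chart is $w\neq 0$ rather than $w = 0$, so one must verify that the linear map $\iota_{XW}\colon (x,w)\mapsto (x-w, -w, -w)$ really does parametrize $XW = \mathbb{R}_{\geq 0}\mathbf{x} + \mathbb{R}_{\geq 0}\mathbf{w}$ with $\mathbb{R}^2_{\leq 0}$ as source, and that under this parametrization $\mathbf{V}(g)$ pulls back to $\mathbf{V}(g^h|_{y=z=0})$ — a short computation with the defining tropical-linear forms, but one that must be done correctly for each of the three $W$-charts.
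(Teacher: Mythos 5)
Your proof takes essentially the same route as the paper's, whose entire argument is the remark that the $XY$ case was already established (via the decomposition $\mathbf{V}(g)\cap (z=0)=\mathbf{V}(g|_{z=0})\sqcup D$ together with $D\cap \mathbb{R}^2_{\leq 0}=\emptyset$, which is exactly the content of the preceding lemma you cite) and that the other five cases follow by identical arguments. Your additional care with the $W$-charts --- checking that $\iota_{XW}$ maps $\mathbb{R}^2_{\leq 0}$ onto $XW$ and that the terms of $g$ pull back, up to the common additive constant $-mw$, to those of $g^h|_{y=z=0}$ --- correctly fills in what the paper leaves implicit.
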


\begin{proof}
We have already argued the first claim holds; the other five follow by identical arguments.
\end{proof}

\section{Proof of the main theorem}
Assume that $f$ and $g$ satisfy the same assumptions as in the previous section.

\begin{notation}
For a subset $A$ of $\mathbb{R}^n$, let $\pi_0(A)$ denote the set of its connected components.
\end{notation}

\begin{notation}
For a tropical polynomial $p$ in $n$ variables, we set
\[
\mathbf{D}(p):=\mathbb{R}^n\setminus \mathbf{V}(p).
\]
\end{notation}

Each element of $\pi_0 (\mathbf{D}(p))$ corresponds to a $0$-simplex of $\text{Subdiv}_p$.

\begin{notation}
For a tropical polynomial $q$ in two variables, let $\mathcal{P}_2(q)$ denote the set of all elements of $\pi_0(\mathbf{D}(q))$ which have nonempty intersections with $\mathbb{R}^2_{\leq 0}$, $\mathcal{P}_1(q)$ the set of all edges of $\mathbf{V}(q)$ which have nonempty intersections with $\mathbb{R}^2_{\leq 0}$, $\mathcal{P}_0(q)$ the set of all vertices of $\mathbf{V}(q)$ which have nonempty intersections with $\mathbb{R}^2_{\leq 0}$.
\end{notation}

Note that the elements of $\mathcal{P}_i(g|_{z=0})$ in fact intersect with $\mathbb{R}_{<0}^2$ by Lemma \ref{a_0+b_0}.
Recall that in the previous section, we wrote $K$ for the dual lattice subdivision of $\Delta_m$ which corresponds to $g|_{z=0}=g^h|_{z=w=0}$ and $K'$ for the subcomplex of $K$ which consists of $(2-i)$-simplices, $i=0, 1, 2$, corresponding to elements of $\mathcal{P}_i(g^h|_{z=w=0})$.

\begin{notation}
We write $K_{XY}$ for the subcomplex of $\text{Subdiv}_{g^h|_{z=w=0}}$ which consists of $(2-i)$-simplices, $i=0, 1, 2$, corresponding to elements of $\mathcal{P}_i(g^h|_{z=w=0})$.
We define $K_{YZ}, \dots, K_{ZW}$ in a similar way.
Let $K_{**}$ denote one of the complexes $K_{XY}, \dots, K_{YW}$ and $K_{ZW}$.
\end{notation}

In this section, we give a concrete description of the skeleton of $C=\mathbf{V}(f)\cap \mathbf{V}(g)$.
The next lemma is useful.

\begin{Lem}\label{cycle}
A domain $D\in \mathcal{P}_2(g^h|_{z=w=0})$ is enclosed by a cycle in $\mathbf{V}_{\!-}(g^h|_{z=w=0})$ if and only if the $0$-simplex of $K_{XY}$ corresponding to $D$ is contained in the interior of $|K_{XY}|$.
The same is true for $g^h|_{x=w=0}, \dots, g^h|_{x=y=0}$ and $K_{YZ}, \dots, K_{ZW}$.
\end{Lem}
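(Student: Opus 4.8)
The plan is to translate the topological statement about cycles in $\mathbf{V}_{\!-}(g^h|_{z=w=0})$ into a combinatorial statement about the position of the corresponding lattice point in $K_{XY}$, using the duality theorem (Theorem \ref{dualitytheorem}) together with Proposition \ref{K'nokatachi}, which already gives the precise shape of $|K_{XY}|=|K'|$. By symmetry (i.e. the linear maps $\iota_{**}$ and appropriate coordinate changes as in Lemma \ref{koutenyonko}), it suffices to treat the case of $g^h|_{z=w=0}=g|_{z=0}$ and $K_{XY}=K'$; I would state this reduction explicitly at the start and then work only with $K'$.

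First I would set up the dictionary. A domain $D\in \mathcal{P}_2(g|_{z=0})$ corresponds to a vertex $v_D\in K'_0$. The edges of $\mathbf{V}(g|_{z=0})$ on the boundary of $D$ correspond to the edges of $K$ emanating from $v_D$, and the vertices of $\mathbf{V}(g|_{z=0})$ on $\partial D$ correspond to the triangles of $K$ containing $v_D$; this is the inclusion-reversing duality. Now $D$ is enclosed by a cycle of $\mathbf{V}_{\!-}(g|_{z=0})$ precisely when $\partial D$ is entirely contained in $\mathbf{V}_{\!-}(g|_{z=0})=\mathbf{V}(g|_{z=0})\cap \mathbb{R}^2_{\leq 0}$, equivalently when every edge and vertex of $\mathbf{V}(g|_{z=0})$ incident to $D$ meets $\mathbb{R}^2_{\leq 0}$ (using that $D$ is a bounded domain iff $v_D$ is an interior lattice point of $\Delta_m$, and that a bounded domain's boundary is a single cycle). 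By the definition of $K'$ (marking exactly the simplices of $\tilde K$ meeting $\mathbb{R}^2_{\leq 0}$), this says precisely that every edge of $K$ and every triangle of $K$ containing $v_D$ lies in $K'$, i.e. the full star of $v_D$ in $K$ is contained in $K'$. So the claim reduces to: $v_D$ has its entire star in $K'$ if and only if $v_D\in \Int|K'|$.

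The main content, and the step I expect to be the real obstacle, is this last equivalence, and here I would lean on Proposition \ref{K'nokatachi}: $|K'|=\Delta_m\setminus(U\cup R)$, a region whose topological boundary (within $\Delta_m$) consists of the polygonal paths $G$ and $G'$ together with the relevant portions of the axes. The $(\Leftarrow)$ direction is essentially immediate: if $v_D$ is in the interior of $|K'|$ then a small disc around it lies in $|K'|$, and since $K$ is a triangulation covering $\Delta_m$ that disc is covered by simplices of $K$ through $v_D$, all of which must then be in $K'$. For $(\Rightarrow)$ I would argue the contrapositive: if $v_D\in |K'|$ but $v_D\notin \Int|K'|$, then $v_D$ lies on one of the boundary paths — on $G$, on $G'$, or on an axis. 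A vertex on $G$ is of the form $(i,u(i))$ and by construction corresponds to a domain meeting $X$; the triangle of $\tilde K$ on the far side of $G$ (dual to a vertex of $\mathbf{V}(g|_{z=0})$ in the region $U$, which does not meet $\mathbb{R}^2_{\leq 0}$) gives a simplex of the star of $v_D$ not in $K'$. Similarly for $G'$ and $R$; and for a vertex on the boundary of the first quadrant, the "missing" simplices are those corresponding to domains or cells outside $\mathbb{R}^2_{\leq 0}$, again by Proposition \ref{K'nokatachi} and Lemma \ref{a_0+b_0}. I would phrase this carefully, checking the edge cases where $G$ and $G'$ meet at $(a_0,b_0)$ and where they touch the axes, since those are exactly the spots where the boundary-path description is most delicate. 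Finally I would note that the other five complexes $K_{YZ},\dots,K_{ZW}$ are handled by the identical argument after the coordinate change used to normalize $W$, so no separate proof is needed.
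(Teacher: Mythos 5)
Your proposal is correct and follows essentially the same route as the paper: the inclusion-reversing duality identifies the boundary cells of $D$ with the star of $v_D$ in $K_{XY}$, and ``enclosed by a cycle in $\mathbf{V}_{\!-}$'' becomes ``$D$ is bounded and its star is fully marked, i.e.\ covers a neighbourhood of $v_D$'', which is exactly $v_D\in\Int|K_{XY}|$. The paper settles that last equivalence by the purely local observation that a marked star of an interior lattice point of $\Delta_m$ already covers a neighbourhood of it, so your detour through Proposition \ref{K'nokatachi} and the boundary paths $G$, $G'$ for the forward direction is sound but not needed.
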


\begin{proof}
First, note that a domain $D\in \pi_0 (\mathbf{D}(g^h|_{z=w=0}))$ is enclosed by a cycle in $\mathbf{V}(g^h|_{z=w=0})$ if and only if the corresponding $0$-simplex $P$ of $\text{Subdiv}_{g^h|_{z=w=0}}$ is in the interior of $|\text{Subdiv}_{g^h|_{z=w=0}}|$.
This is also equivalent to the condition that the point $P$ is in the interior of the union of the $2$-simplices of $\text{Subdiv}_{g^h|_{z=w=0}}$ which contain $P$ as their cell.
Here, the $1$-simplices which are adjacent to $P$ correspond to the edges which enclose $D$.
Therefore, a domain $D\in \mathcal{P}_2(g^h|_{z=w=0})$ is enclosed by a cycle in $\mathbf{V}_{\!-}(g^h|_{z=w=0})$ if and only if the $0$-simplex of $K_{XY}$ corresponding to $D$ is contained in the interior of $|K_{XY}|$.
\end{proof}

\begin{Lem}\label{1/2}
If $C$ is a weakly smooth complete intersection, the area of each $2$-cell of $K_{**}$ is $1/2$.
In particular, $K_{XY}$, etc., contain all lattice points in their supports.
\end{Lem}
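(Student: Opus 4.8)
The plan is to reduce the equality to the volume condition that defines weak smoothness of a complete intersection. By the symmetry of the six quarter planes (and, when $W$ occurs, after the linear coordinate change already used in the proof of Lemma~\ref{koutenyonko}) it suffices to treat $K_{XY}$. Recall that $K_{XY}=K'$ and that its $2$-simplices are exactly the $2$-cells of $K=\Subdiv_{g|_{z=0}}$ dual to the vertices of $\mathbf{V}_{\!-}(g|_{z=0})$. Since $\mathbf{V}_{\!-}(g|_{z=0})$ is a trivalent graph by Lemma~\ref{a_0+b_0}, every such vertex $v$ has valence $3$, so the corresponding $2$-cell $T$ is a genuine (positive-area) lattice triangle, say with vertices $(a_1,b_1)$, $(a_2,b_2)$, $(a_3,b_3)$. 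Thus it is enough to prove $\mathrm{area}(T)=\tfrac12$.

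Next I would write $v=\iota_{XY}(s,t)$ with $(s,t)\in\mathbb{R}^2_{<0}$ and record which monomials of $f$ and $g$ attain the maximum at $v=(s,t,0)$: from $f=x\oplus y\oplus z\oplus 0$ exactly the monomials $z$ and $0$, i.e.\ the lattice points $(0,0,1)$ and $(0,0,0)$; from $g$ exactly three monomials $\alpha_{a_ib_ic_i}x^{a_i}y^{b_i}z^{c_i}$ ($i=1,2,3$), one for each vertex $(a_i,b_i)$ of $T$, the exponent $c_i$ being uniquely determined since $(a_i,b_i)\in A'$ (the three domains around $v$ all meet $\mathbb{R}^2_{<0}$). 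Just as in the proofs of Lemmas~\ref{genten} and~\ref{koutenyonko}, one produces a functional $\mathbf{p}\in\mathbb{R}^5$ of the shape $(s,t,0,\ast,-1)$ which is maximized over the lifted monomials of $f$ and $g$ precisely at these five points, so that there is a cell $A$ of the Cayley subdivision of $\Cay(\Newt(f),\Newt(g))$ with
\[
\conv\{(0,0,1,0),\ (0,0,0,0),\ (a_1,b_1,c_1,1),\ (a_2,b_2,c_2,1),\ (a_3,b_3,c_3,1)\}\subset A\subset\mathbb{R}^4 .
\]
Because $(a_1,b_1),(a_2,b_2),(a_3,b_3)$ are not collinear, the $4\times4$ determinant formed by the differences of these five points has absolute value $2\,\mathrm{area}(T)\neq0$, so this convex hull — and hence $A$ — is $4$-dimensional; moreover $A$ is mixed, since it meets both Cayley slices $\{i=0\}$ and $\{i=1\}$ in dimension $\ge1$. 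Weak smoothness then forces $\mathrm{vol}(A)=\tfrac1{24}$, and the containment above gives $\tfrac1{24}\cdot2\,\mathrm{area}(T)=\mathrm{vol}\big(\conv\{\dots\}\big)\le\mathrm{vol}(A)=\tfrac1{24}$, whence $\mathrm{area}(T)\le\tfrac12$; since $T$ is a nondegenerate lattice triangle, $\mathrm{area}(T)\ge\tfrac12$, and therefore $\mathrm{area}(T)=\tfrac12$.

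For the last sentence: a lattice triangle of area $\tfrac12$ contains no lattice point besides its three vertices, and every edge of $K_{**}$ is primitive — for an edge lying in some $2$-cell of $K_{**}$ this is immediate, and for an edge not lying in any $2$-cell one runs the same Cayley argument one dimension lower, using that the $3$-dimensional cell of the Cayley subdivision dual to the corresponding $1$-cell of $C$ is a facet of a $4$-dimensional mixed, hence unimodular, cell, so that the lattice length of the edge equals $1$. Consequently no lattice point of $|K_{**}|$ lies in the relative interior of a $1$- or $2$-cell of $K_{**}$, so every lattice point of $|K_{**}|$ is a $0$-simplex of $K_{**}$. I expect the main obstacle to be the middle paragraph: rigorously exhibiting the five active monomials as the vertices of an honest $4$-dimensional mixed cell of the Cayley subdivision (not merely a subset of the vertices of a larger cell), which is exactly where the explicit supporting functional $\mathbf{p}$ and the non-collinearity of the vertices of $T$ are needed; once that cell is pinned down, the determinant evaluation and the final chain of inequalities are routine.
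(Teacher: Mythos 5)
Your proof is correct and follows essentially the same route as the paper's: pick the trivalent vertex of $C$ dual to the $2$-cell, exhibit the supporting functional that cuts out the mixed $4$-dimensional Cayley cell spanned by $(0,0,0,0)$, $(0,0,1,0)$ and the three lifted monomials of $g$, and read off $2\,\mathrm{area}(T)$ from the $4\times 4$ determinant, which weak smoothness forces to equal $1$. You are in fact slightly more careful than the paper in two places --- deducing $\mathrm{area}(T)=1/2$ from the inequality $\mathrm{area}(T)\leq 1/2$ together with nondegeneracy of the lattice triangle rather than asserting equality outright, and supplying a primitivity argument for edges of $K_{**}$ not contained in any $2$-cell, which the paper's ``in particular'' leaves implicit.
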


\begin{proof}
Let $\Delta$ be a $2$-cell of $K_{XY}$, $P=\Newt(f)$ and $Q=\Newt(g)$.
There is a vertex $V=(s, t, 0)$ of $C\cap (XY\setminus (X\cup Y))$ corresponding to $\Delta$.
Since $V\in \mathbf{V}_{\!-}(g)$ and $V$ is a trivalent vertex of $C$, there exist three terms $\alpha_{a_1b_1c_1}x^{a_1}y^{b_1}z^{c_1}$, $\alpha_{a_2b_2c_2}x^{a_2}y^{b_2}z^{c_2}$ and $\alpha_{a_3b_3c_3}x^{a_3}y^{b_3}z^{c_3}$ of $g$ such that $\Delta=\conv(\{(a_1, b_1), (a_2, b_2), (a_3, b_3)\})$, and that for all terms $\alpha_{abc}x^ay^bz^c$ of $g$, we have
\[
v:=\alpha_{a_1b_1c_1}+a_1s+b_1t=\alpha_{a_2b_2c_2}+a_2s+b_2t=\alpha_{a_3b_3c_3}+a_3s+b_3t\geq \alpha_{abc}+as+bt.
\]
Let
\begin{eqnarray*}
T_1&=&\left\{ \begin{pmatrix} 0 \\ 0 \\ 0 \\ 0 \\ 0 \end{pmatrix}, \begin{pmatrix} 0 \\ 0 \\ 1 \\ 0 \\ 0 \end{pmatrix} \right\},\quad T_2=\left\{ \begin{pmatrix} a \\ b \\ c \\ 1 \\ \alpha_{abc} \end{pmatrix} \middle| \alpha_{abc}x^ay^bz^c\text{ is a term of $g$}\right\} \cup \left\{ \begin{pmatrix} 1 \\ 0 \\ 0 \\ 0 \\ 0 \end{pmatrix}, \begin{pmatrix} 0 \\ 1 \\ 0 \\ 0 \\ 0 \end{pmatrix}\right\},\\
\mathbf{p}&=&(-s, -t, 0, v, -1),\quad \mathbf{w}_i=\begin{pmatrix} a_i \\ b_i \\ c_i \\ 1 \\ \alpha_{a_ib_ic_i} \end{pmatrix} (i=1, 2, 3).
\end{eqnarray*}
Then, we have $\mathbf{w}_i\in T_2$ ($i=1, 2, 3$) and
\[
\forall \mathbf{v}\in T_1,\ \forall \mathbf{w}\in T_2,\ 0=\mathbf{p}\cdot \mathbf{v}=\mathbf{p}\cdot \mathbf{w}_1=\mathbf{p}\cdot \mathbf{w}_2=\mathbf{p}\cdot \mathbf{w}_3\leq \mathbf{p}\cdot \mathbf{w}.
\]
Hence there is a four-dimensional cell in the Cayley subdivision of $\Cay(P, Q)$ which contains $\conv \left\{ \begin{pmatrix} 0 \\ 0 \\ 0 \\ 0 \end{pmatrix}, \begin{pmatrix} 0 \\ 0 \\ 1 \\ 0 \end{pmatrix}, \begin{pmatrix} a_1 \\ b_1 \\ c_1 \\ 1 \end{pmatrix}, \begin{pmatrix} a_2 \\ b_2 \\ c_2 \\ 1  \end{pmatrix}, \begin{pmatrix} a_3 \\ b_3 \\ c_3 \\ 1  \end{pmatrix}\right\}$.
Since $C$ is a weakly smooth tropical complete intersection curve, we have $1=\left| \det \begin{pmatrix} 0 & a_1 & a_2 & a_3\\ 0 & b_1 & b_2 &b_3\\ 1 & c_1 & c_2 & c_3\\ 0 & 1 & 1 & 1 \end{pmatrix} \right|=\left| \det \begin{pmatrix} a_2-a_1 & a_3-a_1\\ b_2-b_1 & b_3-b_1 \end{pmatrix} \right|=2\lvert \Delta \rvert$.
Thus we have $\lvert \Delta \rvert=1/2$.
The cases  of $K_{YZ}, \dots, K_{YW}$ and $K_{ZW}$ are similar.
\end{proof}

The following is our main theorem.

\begin{Th}\label{mainth}
Let $f$ and $g$ be tropical polynomials of degrees $d$ and $e$ in three variables.
We assume the following:
\begin{itemize}
\item $\Newt(f)=\conv(\{ (0, 0, 0), (d, 0, 0), (0, d, 0), (0, 0, d)\})$,
\item $\Newt(g)=\conv(\{ (0, 0, 0), (e, 0, 0), (0, e, 0), (0, 0, e)\})$,
\item $C=\mathbf{V}(f)\cap \mathbf{V}(g)$ is a smooth complete intersection curve.
\end{itemize}
Then the skeleton of $C$ is not the lollipop graph of genus $3$.
\end{Th}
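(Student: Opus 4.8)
The plan is to combine the structural results of Sections~3 and~4 with a reduction to a short list of combinatorial cases. By Lemma~\ref{d=4} (after possibly interchanging $f$ and $g$), the hypothesis that the skeleton of $C$ is the genus-$3$ lollipop forces $(d,e)=(1,4)$; then $f$ is a degree-$1$ tropical polynomial with Newton polytope $\conv(\{(0,0,0),(1,0,0),(0,1,0),(0,0,1)\})$, so after an affine change of coordinates on $\mathbb{R}^3$ we may take $f=x\oplus y\oplus z\oplus 0$ and $g$ of degree $m=4$, putting us exactly in the situation of Sections~4 and~5. Let $x^{a_0}y^{b_0}z^{c_0}$ be the monomial of $g$ attached to the domain of $\mathbb{R}^3\setminus\mathbf{V}(g)$ containing the origin and set $d_0=4-a_0-b_0-c_0$. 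The tropical plane $\mathbf{V}(f)$ and the homogenization $g^h$ transform equivariantly under the $S_4$-action permuting the rays $X,Y,Z,W$ (equivalently, the variables $x,y,z,w$ of $g^h$), so we may assume $a_0\ge b_0\ge c_0\ge d_0$, which leaves the five cases
\[
(a_0,b_0,c_0,d_0)\in\{(4,0,0,0),\ (3,1,0,0),\ (2,2,0,0),\ (2,1,1,0),\ (1,1,1,1)\}.
\]

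The idea is then to pin down, in each case, the combinatorial type of the genus-$3$ skeleton of $C$, using the characterization of the lollipop graph among the five trivalent graphs of genus $3$ as the \emph{unique one possessing a trivalent vertex all of whose three incident edges are bridges} --- equivalently, a vertex that lies on no cycle (in the other four graphs, in particular in any troplanar graph \cite{CDMY}, every vertex lies on a cycle). The main structural inputs are: (i) by Lemma~\ref{koutenyonko} and Corollary~\ref{XYZW} the curve $C$ meets $X\cup Y\cup Z\cup W$ in exactly $m=4$ points, with $a_0$ on $X$, $b_0$ on $Y$, $c_0$ on $Z$ and $d_0$ on $W$, so $C$ is glued from its (at most six) pieces in the open quarter-planes along these four ray-crossings, and connectedness forces every nonempty piece to attach to the rest through one of them; (ii) by Lemma~\ref{cycle}, every cycle of $C$ not passing through one of the four ray-crossings lies in a single quarter-plane $K_{**}$ and corresponds to a lattice point in the interior of $|K_{**}|$; and (iii) by Proposition~\ref{K'nokatachi} each $|K_{**}|$ is the triangle $\Delta_4$ with two ``staircase'' regions deleted, while by Lemma~\ref{1/2} each $2$-cell of $K_{**}$ has area $1/2$, hence $K_{**}$ contains all lattice points of its support; since $\Delta_4$ has only the three interior lattice points $(1,1),(1,2),(2,1)$, this sharply bounds how many cycles a quarter-plane can carry on its own.

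With these facts I would run the five cases. In $(4,0,0,0)$ all four crossings lie on $X$, so the pieces in $YZ$, $YW$, $ZW$ are empty and $C$ is the union of pieces in $XY$, $ZX$, $XW$ glued at four points of $X$; in $(3,1,0,0)$ and $(2,2,0,0)$ only the quarter-planes meeting $X$ and $Y$ are nonempty; in $(2,1,1,0)$ only those meeting $X$, $Y$, $Z$; and in $(1,1,1,1)$ the four crossings lie on distinct rays and each $|K_{**}|$ is so small (the formula of Proposition~\ref{K'nokatachi} leaves it with no interior lattice point among $(1,1),(1,2),(2,1)$) that no piece of $C$ carries a cycle internally at all. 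In each configuration one reads off, from the shapes of the $|K_{**}|$ and from local balancing/trivalence at the at most four ray-crossings, how the three cycles are distributed among the quarter-planes and how the pieces are joined, and checks that the resulting skeleton never acquires a trivalent vertex separated off by three bridges --- in the sub-cases where all three cycles sit inside one quarter-plane one may simply invoke non-troplanarity of the lollipop, since there $C$ inherits the skeleton of (a piece of) a smooth tropical plane quartic. The outcome in every case should be that the skeleton is one of the first four graphs of Figure~\ref{gokonogurafu}.

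The hard part will be exactly this case analysis, and within it the most delicate point is controlling the gluing at the (at most four) ray-crossings: deciding which of them are genuine trivalent branch points of $C$ and how they wire together the quarter-plane pieces, so as to determine the bridge structure of the skeleton and exclude the possibility that a single vertex is joined to three genus-$1$ pieces by three bridges. Ruling out cycles that straddle a ray, rather than living inside one quarter-plane, is part of this same difficulty. The symmetry reduction to five cases, together with the rigidity of Proposition~\ref{K'nokatachi} and the area-$\tfrac12$ constraint of Lemma~\ref{1/2}, is what keeps the bookkeeping finite and should make each case tractable.
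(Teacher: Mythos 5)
Your plan reproduces the paper's strategy almost exactly: reduce to $(d,e)=(1,4)$ and $f=x\oplus y\oplus z\oplus 0$, use the symmetry among $x,y,z,w$ to reduce to the five values of $(a_0,b_0,c_0,d_0)$, and then analyze the subcomplexes $K_{**}$ via Proposition \ref{K'nokatachi}, Lemma \ref{1/2} and Lemma \ref{cycle}, with interior lattice points accounting for the cycles internal to a quarter-plane. The only methodological difference is the exclusion criterion: you propose to look for a trivalent vertex all of whose incident edges are bridges, whereas the paper uses the fact that in the lollipop the three independent cycles are pairwise disjoint and not joined by an unbranched edge, and in each case exhibits either two intersecting cycles or two cycles joined by a non-branching path (Lemma \ref{1112}). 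Both criteria are valid, though the paper's is easier to certify locally.

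The gap is that the proposal stops exactly where the proof begins. Knowing $(a_0,b_0,c_0,d_0)$ does not determine the shapes of the $|K_{**}|$: by Proposition \ref{K'nokatachi} these depend on the boundary paths $u$ and $r$, i.e.\ on $\MM_X(g^h),\dots,\MM_W(g^h)$, so each of your five cases splits into many further sub-cases (the paper needs $2.1$--$2.6$, $3.1$--$3.8$, and $4.1.1$--$4.4.6$), and in each one must argue concretely which interior lattice points occur, which cycles straddle a ray, and how the gluing at the at most four ray-crossings forces two cycles to meet or a non-branching connection between them; none of this is carried out. Two specific shortcuts also need repair. First, when all three cycles lie in one quarter-plane you cannot simply invoke non-troplanarity of the lollipop from \cite{CDMY}: $C_{XY}$ is dual only to a unimodular triangulation of the subpolygon $|K_{XY}|\subseteq\Delta_4$, not to a smooth plane quartic, so the paper instead proves the needed statement directly (Lemma \ref{3in1}) from Lemmas \ref{1112} and \ref{1/2}. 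Second, in the case $(1,1,1,1)$ (and more generally whenever $b_1(\overline{C'})=0$) the absence of internal cycles is not yet a contradiction; one still needs the pigeonhole argument of Lemma \ref{saikurunashi}, that three cycles each passing through at least two of the four ray-crossings must share a point.
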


From here on, we will prove this theorem.
First, by Lemma \ref{d=4}, we may assume that $(d, e)=(1, 4)$.
We may also assume that $f=x\oplus y\oplus z\oplus 0$ by translation. 

The lollipop graph of genus $3$ has three homologically independent cycles.
Here, by a cycle we will mean a subgraph homeomorphic to $S^1$.
Note that these cycles do not have intersections if the skeleton is the lollipop graph.
Thus if two distinct cycles of $C$ have a nonempty intersection, the skeleton of $C$ is not the lollipop graph.

\begin{notation}
We write $\MM_X(g^h)$ for the set of exponents of monomials of $g^h$ which correspond to the elements of $\pi_0(\mathbf{D}(g))$ which have nonempty intersections with $X$.
We define $\MM_Y(g^h)$, $\MM_Z(g^h)$ and $\MM_W(g^h)$ in a similar way.
When $\MM_X(g^h)=\{(a_0, b_0, c_0, d_0)$, $(a_0-1, b_1, c_1, d_1), \dots$, $(0, b_{a_0}, c_{a_0}, d_{a_0})\}$, we write as follows:
\[
(a_0, b_0, c_0, d_0)\rightarrow (a_0-1, b_1, c_1, d_1)\rightarrow \cdots.
\]
\end{notation}

We write $C_{XY}$ for the restriction of $C$ to $XY$, and similarly for $YZ, \dots, YW$ and $ZW$. 
We also write $ZX$ as $XZ$.
In the following, we consider skeletons of $C_{XY}$ relative to $X\cup Y$, and so on: We do not move the points on $X\cup Y$ when we transform $C_{XY}$ to its skeleton, and similarly for $C_{YZ}, \dots, C_{ZW}$.
After glueing $3$ quarter planes along $X$, for example, we allow the skeleton to move past $X$.
Note that $\lvert C\cap (X\cup Y\cup Z\cup W)\rvert=\lvert \mathbf{V}(g)\cap (X\cup Y\cup Z\cup W)\rvert=4$ by Lemma \ref{koutenyonko}.

\begin{notation}
Let $C'$ denote $C\setminus (X\cup Y\cup Z\cup W)$, and $\overline{C'}$ the graph obtained by adding a vertex to each open edge of $C'$.
We write $b_1(\overline{C'})$ for the first Betti number of $\overline{C'}$, which is equal to the sum of the numbers of interior lattice points in the supports of the complexes $K_{XY}, \dots, K_{YW}$ and $K_{ZW}$.
\end{notation}

\begin{Lem}\label{st}
Let $s$ be the number of connected components of $C'$ and $t=b_1(\overline{C'})$.
Then the pair $(s, t)$ is one of the following: $(6, 0)$, $(7, 1)$, $(8, 2)$ and $(9, 3)$.
\end{Lem}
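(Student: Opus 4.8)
The plan is to pass from $C$ to $\overline{C'}$ by deleting the four points $P:=C\cap(X\cup Y\cup Z\cup W)$ and capping the resulting open edges, and then to read off $(s,t)$ from a vertex--edge count together with the genus of $C$. By Lemma \ref{koutenyonko} we have $|P|=4$; write $P=\{p_1,p_2,p_3,p_4\}$. Recall that $(d,e)=(1,4)$, so by Theorem \ref{kanzenkousa} the graph $C$ is connected and trivalent, has $20$ vertices, $22$ bounded edges and $16$ rays, and $b_1(C)=\text{genus}(C)=3$.

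First I would show that each $p_i$ is a trivalent vertex of $C$ and that no ray of $C$ is incident to any $p_i$. Say $p=p_i$ lies on $X$; then $p$ lies in the three quarter planes $XY$, $ZX$, $XW$ of $\mathbf V(f)$, whose directions transverse to $X$ are the primitive vectors $(0,-1,0),(0,0,-1),(0,1,1)$. These sum to $0$, span pairwise distinct rays, and have $0$ in the interior of their convex hull, so any vanishing nonnegative combination of them must use all three with positive coefficients. By the proof of Lemma \ref{koutenyonko}, $\mathbf V(g)\cap X$ contains no segment, hence no edge of $C$ runs along $X$, and therefore the transverse part of every edge of $C$ at $p$ is a positive multiple of one of the three vectors above. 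The balancing condition (Proposition \ref{balancing_condition}) at $p$ then forces an edge of $C$ into each of $XY$, $ZX$, $XW$, and trivalence of $C$ makes this exactly one edge into each; in particular $p$ is a vertex of valence $3$. The same argument handles $p_i\in Y,Z,W$ after the appropriate coordinate change. Finally, by Lemma \ref{a_0+b_0} the plane curves $\mathbf V(g|_{z=0})$, etc., have no vertices on their two boundary rays, while their unbounded edges point in the three directions $(0,-1),(-1,0),(1,1)$, of which only $(0,-1)$ and $(-1,0)$ stay in $\mathbb R^2_{\leq 0}$; hence no such unbounded edge, and thus no ray of $C$, is based at any $p_i$.

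Next I would run the Euler characteristic bookkeeping. Deleting the four trivalent vertices $p_1,\dots,p_4$ from $C$ removes $4$ vertices and exactly $12$ edge-ends, and each of these ends is capped in $\overline{C'}$ by a fresh vertex; so $\overline{C'}$ has $20-4+12=28$ vertices, still $22$ bounded edges (no bounded edge of $C$ is destroyed, at most an endpoint of one is relabelled), and still $16$ rays. Rays are contractible and do not affect $b_1$, so a graph with $c$ connected components, $v$ vertices and $b$ bounded edges has $b_1=b-v+c$; hence
\[
t=b_1(\overline{C'})=22-28+s=s-6 .
\]
On the other hand, capping an open edge with a vertex is a homotopy equivalence, so $t=b_1(\overline{C'})=b_1(C')$ with $C'=C\setminus\{p_1,\dots,p_4\}$; and deleting a single point from a graph never increases the first Betti number (deleting an interior point of an edge is, up to homotopy, deleting that open edge; deleting a vertex is, up to homotopy, deleting its open star), so $t=b_1(C')\le b_1(C)=3$. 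Since also $t\ge 0$, and $s=t+6$, the pair $(s,t)$ must be one of $(6,0)$, $(7,1)$, $(8,2)$, $(9,3)$.

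The only genuinely nontrivial step is the second paragraph: one must rule out the possibility that some $p_i$ is a mere valence-$2$ point where $C$ passes from one quarter plane into the next rather than a true trivalent vertex, and the balancing condition is exactly what forbids this. Everything after that — the count $s=t+6$ and the bound $t\le 3$ (which is just the genus of $C$) — is routine.
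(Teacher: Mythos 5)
Your proof is correct and follows essentially the same route as the paper's: the paper reglues $\overline{C'}$ at the four trivalent boundary points and tracks the Euler characteristic (each gluing identifies three cap vertices to one, so $e(\overline{C'})=e(C)+8=6=s-t$), which is exactly your direct count $t=22-28+s$, and the bound $0\le t\le 3$ comes from $b_1(C')\le b_1(C)=3$ in both versions, with your balancing argument for trivalence at the $p_i$ supplying a detail the paper leaves implicit. One remark: your side-claim that no ray of $C$ is based at a point of $C\cap(X\cup Y\cup Z\cup W)$ is not actually established by the argument you give and can fail (if an edge $(j,0)$-$(j+1,0)$ of $K'$ lies on the bottom boundary of $\Delta_4$, a configuration the paper allows, its dual cell of $\mathbf{V}(g|_{z=0})$ is an unbounded vertical ray whose portion in $\mathbb{R}^2_{\le 0}$ is a ray of $C$ based at the corresponding crossing point of $X$), but this is harmless, since your counts of vertices, bounded edges and rays of $\overline{C'}$, and hence the identity $t=s-6$, do not depend on whether the twelve capped edge-ends belong to bounded edges or to rays.
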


\begin{proof}
Let $p$ be a point in $C\setminus C'$.
If we glue $\overline{C'}$ along $p$ and write $D$ for the resulting graph, the number of vertices of $D$ is $2$ less than that of $\overline{C'}$, and the number of edges does not change because $C$ is trivalent at the point $p$.
Let $e(G)$ denote the Euler characteristic of a graph $G$.
Then $e(D)$ is $2$ less than $e(\overline{C'})$.
Here, $e(C)=-2$, thus $e(\overline{C'})=6$.
Therefore, we have $s-t=6$.
Since $0\leq t\leq 3$, we have $(s, t)=(6, 0), (7, 1), (8, 2)$ or $(9, 3)$.
\end{proof}

\begin{Lem}\label{saikurunashi}
When none of the supports of the complexes $K_{**}$ contain a lattice point in its interior, i.e., $b_1(\overline{C'})=0$, the skeleton of $C$ is not the lollipop graph.
\end{Lem}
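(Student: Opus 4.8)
The plan is to start from Lemma~\ref{st}: the hypothesis $b_1(\overline{C'})=0$ gives $t=0$, hence $s=6$, so $\overline{C'}$ is a disjoint union of exactly six trees. Since $C'$ is the disjoint union of the six open pieces $C\cap\bigl(XY\setminus(X\cup Y)\bigr)$ and its five analogues, and $C$ is connected, a piece $C_{AB}$ is empty precisely when neither of its two bounding rays carries a point of $C$ (otherwise that open piece would be a union of connected components of $C$ disjoint from the rest, hence empty). Consequently \emph{every nonempty piece is a single tree}, and I would organize the proof according to the vector $(a_0,b_0,c_0,d_0)$, where $a_0=|C\cap X|$, etc.; by Lemma~\ref{koutenyonko} and Corollary~\ref{XYZW} one has $a_0+b_0+c_0+d_0=4$.

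\emph{The case $(a_0,b_0,c_0,d_0)=(1,1,1,1)$.} Here all six pieces are nonempty, hence each is a single tree, and each carries exactly two cut points, namely the unique points of $C$ on its two bounding rays. Deleting the rays and reducing, the piece $C_{AB}$ collapses to a single edge joining $q_A:=C\cap A$ to $q_B:=C\cap B$, and each $q_A$ survives the reduction because it has valence $3$ in $C$. Therefore the skeleton of $C$ is the complete graph on $\{q_X,q_Y,q_Z,q_W\}$, that is $K_4$, the first graph of Figure~\ref{gokonogurafu}, which is not the lollipop graph.

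\emph{The remaining cases.} Now some ray, say $X$, carries two distinct points $p_1\ne p_2$ of $C$. At each of $p_1,p_2$ exactly three edges of $C$ emanate, and trivalence together with the balancing condition (just as in the proof of Lemma~\ref{koutenyonko}) forces one of them to enter each of the quarter planes $XY$, $XZ$, $XW$ and forbids a segment of $C$ along the line through $X$. When the pieces $C_{XY},C_{XZ},C_{XW}$ are single trees, the severed germs at $p_1$ and at $p_2$ lie in a common tree and are joined by a unique simple path inside it; writing $P^{XY},P^{XZ},P^{XW}$ for the three resulting simple paths from $p_1$ to $p_2$, the unions $\gamma:=P^{XY}\cup P^{XZ}$ and $\gamma':=P^{XY}\cup P^{XW}$ are embedded circles in $C$ (the open quarter planes are pairwise disjoint), they are distinct, and they meet at $p_1$. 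Since, as noted at the start of the proof of Theorem~\ref{mainth}, two distinct cycles of $C$ with nonempty intersection preclude the lollipop, the skeleton of $C$ is then not the lollipop graph.

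\emph{The main obstacle.} What remains is the configurations in which some nonempty piece around the chosen ray fails to be a single tree, so that the required path may not exist; by the count $s=6$ this can occur only when at least two of the four rays carry no point of $C$, i.e.\ for $(a_0,b_0,c_0,d_0)$ a permutation of $(2,2,0,0)$, $(3,1,0,0)$ or $(4,0,0,0)$. I expect this bookkeeping to be the crux: using that there are only six components in all (and, in a few borderline splittings, the explicit description of the pieces from Proposition~\ref{K'nokatachi}), one must show that one can always either choose the ray and the pair of points so that two of the three surrounding pieces are single trees---recovering the argument above---or else identify the reduction of $C$ directly as $K_4$ or as the ``two bigons joined by two edges'' graph, in either case not the lollipop. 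A convenient reformulation to keep at hand is that the lollipop is the unique trivalent genus‑$3$ graph with three bridges (equivalently, with a valence‑$3$ cut vertex lying on no cycle), so any skeleton exhibiting a bigon, an intersecting pair of cycles, or no bridge at all is already excluded.
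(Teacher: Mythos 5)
Your proposal is not a complete proof: the paragraph you label ``The main obstacle'' is a plan, not an argument. Precisely in the distributions $(a_0,b_0,c_0,d_0)$ that are permutations of $(2,2,0,0)$, $(3,1,0,0)$ and $(4,0,0,0)$ --- which is most of the non-trivial territory --- some nonempty piece adjacent to your chosen ray may split into two trees, your paths $P^{XY},P^{XZ},P^{XW}$ from $p_1$ to $p_2$ need not all exist, and you explicitly defer the bookkeeping needed to recover the argument or to identify the skeleton directly. Until that case analysis is carried out, the lemma is not proved. There is also a secondary flaw in the cases you do treat: when $|C\cap X|\geq 3$, the unique path from $p_1$ to $p_2$ inside the tree $\overline{C_{XY}}$ may pass through a third point $p_3\in C\cap X$, and so may the path inside $\overline{C_{XZ}}$; the union is then not an embedded circle as claimed (the paths are not contained in the \emph{open} quarter planes). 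The conclusion can be salvaged, since the resulting theta-like subgraph still contains two intersecting cycles, but the assertion as written is false.

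The deeper issue is that this whole machinery is unnecessary. The hypothesis gives you, via Lemma~\ref{cycle}, that no closed piece $C\cap XY,\dots,C\cap ZW$ contains a cycle. Hence any cycle $\sigma$ of $C$ must meet $C\cap(X\cup Y\cup Z\cup W)$, and it must do so in at least \emph{two} points: if it met it in a single point $p$, then $\sigma\setminus\{p\}$ would be a connected subset of $C'$, hence contained in a single open piece, forcing $\sigma$ to lie in one closed quarter plane and contradicting the absence of interior lattice points in the corresponding $K_{**}$. Since $|C\cap(X\cup Y\cup Z\cup W)|=4$ by Lemma~\ref{koutenyonko}, three pairwise disjoint cycles would require six point-incidences among four points, so by pigeonhole two of the three cycles share a point, and the skeleton cannot be the lollipop graph. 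This is the paper's proof; it is uniform in $(a_0,b_0,c_0,d_0)$ and avoids both your case analysis and the embeddedness problem. I recommend you abandon the constructive route and adopt this counting argument.
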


\begin{proof}
Under the assumption, $C'$ does not have cycles.
On the other hand, $C$ has three homologically independent cycles.
Each of them passes through at least two points among $C\cap (X\cup Y\cup Z\cup W)$.
Thus, at least one point of the four points is shared by two cycles in $C$.
Therefore, the skeleton of $C$ cannot be the lollipop graph.
\end{proof}

\begin{Lem}\label{1112}
Assume that $K_{**}$ contains two $0$-simplices $p$ and $q$ in its interior.
If there is a $1$-simplex containing $p$ and $q$ or there are two $2$-simplices $\Delta_1$ containing $p$ and $\Delta_2$ containing $q$ that have a common edge, the skeleton of $C$ is not the lollipop graph of genus $3$.
\end{Lem}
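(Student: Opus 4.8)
The plan is to produce, under either hypothesis, two \emph{distinct} cycles of $C$ with nonempty intersection; since the skeleton of $C$ cannot be the lollipop graph once $C$ has two distinct cycles that meet (the three cycles of the lollipop being pairwise disjoint), this suffices. The dictionary I will use is Lemma \ref{cycle}: an interior $0$-simplex $v$ of $K_{**}$ corresponds to a domain enclosed by a cycle $\gamma_v$ of $\mathbf{V}_{\!-}$ of the relevant restriction of $g^h$, hence by a cycle of $C$ under the identification of the six copies of $\mathbb{R}^2_{\leq 0}$ with the quarter planes of $\mathbf{V}(f)$; moreover distinct interior $0$-simplices give distinct cycles, since a bounded planar region has a unique enclosing cycle. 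I will also use two further consequences of the duality theorem: a $1$-simplex of $K_{**}$ joining two interior $0$-simplices $v$ and $w$ is dual to an edge of $\mathbf{V}_{\!-}$ lying on $\gamma_v\cap\gamma_w$; and a $2$-simplex of $K_{**}$ among whose vertices are two interior $0$-simplices $v$ and $w$ is dual to a vertex of $\mathbf{V}_{\!-}$ lying on $\gamma_v\cap\gamma_w$. With this, the first hypothesis is immediate: a $1$-simplex containing $p$ and $q$ has its dual edge on $\gamma_p\cap\gamma_q$.

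For the second hypothesis, let $\Delta_1\ni p$ and $\Delta_2\ni q$ be $2$-simplices of $K_{**}$ sharing an edge $\sigma$, and I would split on whether $\sigma$ meets $\{p,q\}$. If $\sigma\ni p$, write $\sigma=\conv\{p,x\}$; since $\sigma$ is a face of $\Delta_2$ and $q$ is a vertex of $\Delta_2$, either $x=q$, which reduces to the first hypothesis, or $\Delta_2=\conv\{p,x,q\}$ is a $2$-simplex containing both $p$ and $q$, and the second duality consequence applies; the case $\sigma\ni q$ is symmetric. In the remaining case $\sigma=\conv\{a,b\}$ with $a,b\notin\{p,q\}$, so $\Delta_1=\conv\{p,a,b\}$ and $\Delta_2=\conv\{q,a,b\}$, the cycles $\gamma_p$ and $\gamma_q$ need not meet; instead I would claim that at least one of $a$ and $b$, say $a$, is an interior $0$-simplex of $K_{**}$. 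Granting this, $\gamma_a$ is a cycle of $C$ and $\Delta_1=\conv\{p,a,b\}$ is a $2$-simplex containing the two interior $0$-simplices $p$ and $a$, so $\gamma_p$ and $\gamma_a$ share a vertex by the second duality consequence; as they are distinct, we are done.

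To prove the claim I would argue by contradiction, supposing $a$ and $b$ both lie on the boundary of $|K_{**}|$. Since $\Delta_1$ and $\Delta_2$ are unimodular (Lemma \ref{1/2}), the segment $\sigma=\conv\{a,b\}$ is primitive and carries a $2$-cell of $K_{**}$ on each side, so its relative interior lies in the interior of $|K_{**}|$ while both endpoints lie on the boundary. By Proposition \ref{K'nokatachi}, $|K_{**}|=\Delta_m\setminus(U\cup R)$, with $U$ above the concave lattice path $G$ and $R$ to the right of the concave lattice path $G'$; hence the boundary of $|K_{**}|$ is a union of a segment of the $x$-axis, a segment of the $y$-axis, $G$ and $G'$. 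I would then enumerate the possibilities for which of these four pieces contain $a$ and $b$, and eliminate each one using the concavity of $G$ and $G'$ (Lemma \ref{uenitotsu}) together with the fact that the apex $p$ of $\Delta_1$ lies in $|K_{**}|$ at lattice distance $1$ from $\sigma$ on the side away from $\Delta_2$. As a partial shortcut, note that if \emph{both} $a$ and $b$ were interior then the complexes $K_{**}$ would contain at least four interior lattice points $p,q,a,b$, forcing $b_1(\overline{C'})=t\geq 4$ against Lemma \ref{st}; the geometric step is what upgrades ``not both interior'' to ``at least one interior''.

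The main obstacle is precisely this last claim. The first hypothesis and the first two sub-cases of the second are direct transcriptions of the duality theorem, and I expect no difficulty there; but showing that a shared edge of $\Delta_1$ and $\Delta_2$ avoiding $p$ and $q$ must have an interior endpoint is where the argument has to invoke the global shape of the restriction region (Proposition \ref{K'nokatachi}) and the concavity of the boundary paths (Lemma \ref{uenitotsu}), rather than relying on local duality alone.
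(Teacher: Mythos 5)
Your treatment of the first hypothesis, and of the sub-cases of the second hypothesis in which the shared edge $\sigma$ contains $p$ or $q$, is correct and consistent with the duality dictionary. The gap is in the remaining sub-case $\sigma=\conv(\{a,b\})$ with $a,b\notin\{p,q\}$: the claim that at least one of $a$, $b$ must be an interior $0$-simplex of $K_{**}$ is false. Take $p=(1,1)$, $q=(1,2)$, $a=(0,1)$, $b=(2,2)$; then $\Delta_1=\conv(\{(1,1),(0,1),(2,2)\})$ and $\Delta_2=\conv(\{(1,2),(0,1),(2,2)\})$ are unimodular and share the primitive edge $\sigma$ lying on the line $x-2y=-2$, which separates $p$ from $q$ with each at lattice distance $1$, while $a$ lies on the edge $x=0$ of $\Delta_4$ and $b$ on the hypotenuse $x+y=4$, so both are boundary points of $\lvert K_{**}\rvert\subseteq\Delta_4$ whatever $\lvert K_{**}\rvert$ is. This is not a contrived configuration: it is exactly the one fed into this lemma in the proof of Lemma \ref{3in1} (the case $E=(0,1)$-$(2,2)$), where $(1,1)$ and $(1,2)$ are interior but $(0,1)$ and $(2,2)$ are not. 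So the elimination you plan via the concavity of $G$, $G'$ and Proposition \ref{K'nokatachi} cannot succeed, and in this sub-case the cycles $\gamma_p$ and $\gamma_q$ genuinely need not intersect, nor need there be a third interior vertex of $\sigma$ supplying a cycle that meets one of them.

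The missing idea is that a pair of intersecting cycles is not the only obstruction used here. The paper's proof observes that $\Delta_1$ and $\Delta_2$ are dual to two trivalent vertices $V_1\in\gamma_p$ and $V_2\in\gamma_q$ of $C$, and the common edge $\sigma$ is dual to a single edge of $C$ joining $V_1$ to $V_2$ with no branching in its interior. Hence the skeleton of $C$ contains two distinct cycles joined by one unsubdivided edge, which is impossible in the lollipop graph of genus $3$: there, any two of the three loops are joined only by a path of two edges through the central trivalent vertex. Your argument covers only the ``theta''-type configuration (two cycles sharing a vertex or an edge); you need this second ``dumbbell''-type configuration to close the remaining sub-case.
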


\begin{proof}
We can assume that $K_{XY}$ contains two $0$-simplices $p$ and $q$ in its interior.
Note that two points $p$ and $q$, or more precisely, edges adjacent to them, correspond to cycles of $C_{XY}$.
If there is a $1$-simplex containing $p$ and $q$ as in Figure \ref{p,qnozu} (a), the two cycles have a common edge and the skeleton of $C_{XY}$ contains the shape as in Figure \ref{p,qnozu} (a') and the skeleton of $C$ is not the lollipop graph.
If there are two $2$-simplices $\Delta_1$ containing $p$ and $\Delta_2$ containing $q$ that have a common edge as in Figure \ref{p,qnozu} (b), the two cycles are connected by an edge that does not branch on the way and the skeleton of $C_{XY}$ contains the shape as in Figure \ref{p,qnozu} (b'), where the middle edge does not branch on the way, and the skeleton of $C$ is not the lollipop graph.
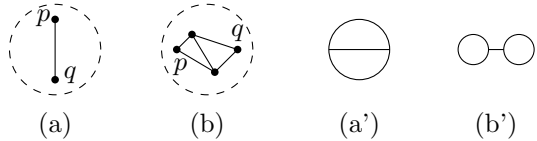
\begin{figure}[H]
\centering
\begin{tikzpicture}
\draw[dashed] (-2,0) circle (0.6);
\draw (-2,0.4)--(-2,-0.4);
\draw[dashed] (0,0) circle (0.6);
\draw (-0.4,0)--(-0.2,0.2)--(0.4,0)--(0.1,-0.3)--cycle;
\draw (-0.2,0.2)--(0.1,-0.3);
\draw (3.5,0) circle (0.2);
\draw (4.1,0) circle (0.2);
\draw (3.7,0)--(3.9,0);
\draw (2,0) circle (0.4);
\draw (1.6,0)--(2.4,0);
\fill[black] (-2,0.4) circle (0.05);
\fill[black] (-2,-0.4) circle (0.05);
\fill[black] (-0.4,0) circle (0.05);
\fill[black] (-0.2,0.2) circle (0.05);
\fill[black] (0.4,0) circle (0.05);
\fill[black] (0.1,-0.3) circle (0.05);
\coordinate [label=below:(a)] (a) at (-2,-0.7);
\coordinate [label=below:(b)] (b) at (0,-0.7);
\coordinate [label=below:(a')] (a) at (2,-0.7);
\coordinate [label=below:(b')] (b) at (3.8,-0.7);
\coordinate [label=below:$p$] (a) at (-2.15,0.65);
\coordinate [label=below:$q$] (b) at (-1.8,-0.1);
\coordinate [label=below:$p$] (a) at (-0.35,0);
\coordinate [label=below:$q$] (b) at (0.4,0.45);
\end{tikzpicture}
\caption{The two points $p$ and $q$ and the cycles of $C_{XY}$ discussed in the proof of Lemma \ref{1112}.}
\label{p,qnozu}
\end{figure}
\end{proof}

\begin{Lem}\label{3in1}
If the support of the complex $K_{**}$ contains the three points $(1, 1)$, $(1, 2)$ and $(2, 1)$ in its interior, the skeleton of $C$ is not the lollipop graph.
\end{Lem}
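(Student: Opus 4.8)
The plan is to reduce everything to Lemma~\ref{1112}. Assume without loss of generality that $K_{**}=K_{XY}$ and that $(1,1)$, $(1,2)$, $(2,1)$ all lie in the interior of $|K_{XY}|$. By Lemma~\ref{1/2} the $2$-cells of $K_{XY}$ are unimodular triangles and $K_{XY}$ contains every lattice point of its support, so $(1,1),(1,2),(2,1)$ are interior $0$-simplices of $K_{XY}$; by Lemma~\ref{cycle} each of them is enclosed by a cycle of $\mathbf{V}_{\!-}(g^h|_{z=w=0})$, i.e.\ of $C_{XY}$. Consider the triangle $T:=\conv(\{(1,1),(1,2),(2,1)\})$; since $(d,e)=(1,4)$ its vertices are exactly the interior lattice points of $\Newt(g|_{z=0})=\Delta_4$ and $T$ contains no other lattice point. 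As all three vertices of $T$ are interior to $|K_{XY}|$, we have $T\subset\Int|K_{XY}|$, and near $T$ the complex $K_{XY}$ is an honest unimodular triangulation whose vertices are lattice points of $\Delta_4$.

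The key step is the combinatorial claim: \emph{either one of the three edges of $T$ (the segments $(1,1)$--$(1,2)$, $(1,1)$--$(2,1)$, $(1,2)$--$(2,1)$) is a $1$-simplex of $K_{XY}$, or two of the three points $(1,1),(1,2),(2,1)$ are contained in two $2$-simplices of $K_{XY}$ sharing a common edge.} If $T$ is itself a $2$-cell of $K_{XY}$, the first alternative holds. Otherwise there is a $1$-simplex $\epsilon$ of $K_{XY}$ whose relative interior meets $\Int T$; since the only lattice points in $T$ are its vertices, $\epsilon$ crosses at least one edge of $T$ transversally, and either $\epsilon$ emanates from a vertex of $T$ or both its endpoints lie outside $T$. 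In each case the endpoints of $\epsilon$ and the third vertices of the two $2$-cells of $K_{XY}$ adjacent to $\epsilon$ are lattice points of $\Delta_4$; because $\Delta_4$ has only the three interior lattice points $(1,1),(1,2),(2,1)$, these vertices are forced to lie among $\{(1,1),(1,2),(2,1)\}$ and the boundary lattice points of $\Delta_4$. Going through the resulting short list (organized by which edge(s) of $T$ the simplex $\epsilon$ crosses, and whether $\epsilon$ passes through a vertex of $T$) shows that one always lands in one of the two alternatives — for instance, if $\epsilon=(1,1)$--$(2,0)$ and the triangle $\conv(\{(1,1),(2,0),(0,3)\})$ occurs, then the $2$-cell through $(1,2)$ adjacent to the edge $(2,0)$--$(0,3)$ must be $\conv(\{(1,2),(2,0),(0,3)\})$, and these two $2$-cells share the edge $(2,0)$--$(0,3)$.

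Granting the claim, the conclusion is immediate. In the first alternative, say $(1,1)$--$(1,2)$ is a $1$-simplex, Lemma~\ref{1112} applied with $p=(1,1)$, $q=(1,2)$ (its first case) shows the skeleton of $C$ is not the lollipop graph; similarly for the other two edges of $T$. In the second alternative, Lemma~\ref{1112} applied to the two points contained in $2$-simplices with a common edge (its second case) gives the same conclusion. Hence the skeleton of $C$ is not the lollipop graph of genus~$3$.

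The main obstacle is the combinatorial claim in the second paragraph: verifying that the tightness of $\Delta_4$ really forces one of the two alternatives. Concretely one must control the possible links of the vertices $(1,1),(1,2),(2,1)$ inside a unimodular triangulation contained in $\Delta_4$ — showing, say, that if none of the edges of $T$ is a $1$-simplex then the $2$-cell of $K_{XY}$ through a generic interior point of $T$ and the $2$-cells adjacent to it are pinned (by remaining inside $\Delta_4$) so as to produce two $2$-cells meeting along an edge, one through one of the three points and one through another. This is a finite check, but a delicate one, and it is precisely where the hypothesis $\deg g = 4$ (equivalently $(d,e)=(1,4)$) enters.
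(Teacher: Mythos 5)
Your overall strategy --- reduce to Lemma \ref{1112} by analyzing the local structure of the unimodular triangulation near the three interior lattice points of $\Delta_4$ --- is exactly the paper's. But the proof is not complete: the ``key combinatorial claim'' in your second paragraph \emph{is} the entire content of the lemma, and you assert it, verify one configuration, and then explicitly defer the general verification (``the main obstacle \dots a finite check, but a delicate one''). That deferred check is a genuine gap, not a detail: without it nothing has been proved. A secondary slip: your illustrative $1$-simplex $(1,1)$--$(2,0)$ does not meet $\Int T$ at all (it points away from $T$); the crossing edge in that configuration is $(2,0)$--$(0,3)$.

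The claim is true, and the check is shorter than your organization (classifying all edges of $K_{**}$ that cross $\Int T$) suggests. The paper does it by fixing a pair, say $p=(1,1)$ and $q=(1,2)$: assuming no segment joining two of the three points is a $1$-simplex, the $2$-simplex of $K_{**}$ with vertex $(1,1)$ containing $(1,1+\epsilon)$ has area $1/2$ by Lemma \ref{1/2}, its other two vertices are lattice points of $\Delta_4$ distinct from $(1,2)$ and $(2,1)$, and these constraints leave exactly three possibilities for the opposite edge $E$: $(0,1)$--$(2,2)$, $(0,2)$--$(3,0)$, $(0,3)$--$(2,0)$ (for instance $(0,1)$--$(3,2)$ is ruled out because $(3,2)\notin\Delta_4$, and $(0,2)$--$(3,1)$ by the area condition). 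In each case the $2$-simplex on the far side of $E$ is then \emph{forced}: its apex is the unique lattice point of $\Delta_4$ on that side of $E$ forming a unimodular triangle with $E$, namely $(1,2)$ for the first and third edges and $(2,1)$ for the second. This produces two $2$-simplices sharing the edge $E$, one containing $(1,1)$ and the other containing $(1,2)$ or $(2,1)$, so Lemma \ref{1112} concludes. Whichever way you organize the cases, this enumeration must actually be carried out; as submitted, your argument stops exactly where the work begins.
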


\begin{proof}
Let $p$ and $q$ be two of the three points $(1, 1)$, $(1, 2)$ and $(2, 1)$.
Here, the edge $pq$ is not in $K_{**}$ by Lemma \ref{1112}.
For example, let $p=(1, 1)$ and $q=(1, 2)$.
Since $p$ is an interior point, $K_{**}$ has a triangle that contains $p$ as its vertex and $(1, 1+\epsilon)$ in its interior for a sufficiently small $\epsilon >0$.
Note that this triangle cannot contain the point $(2, 1)$ by Lemma \ref{1112} and Lemma \ref{1/2}.
There are three candidates for the edge $E$ opposing to $p$: $(0, 1)$-$(2, 2)$, $(0, 2)$-$(3, 0)$ and $(0, 3)$-$(2, 0)$ (See Figure \ref{3sennbunn}).
Note that the edge $(0, 2)$-$(3, 1)$ is not a candidate for $E$ by Lemma \ref{1/2}.
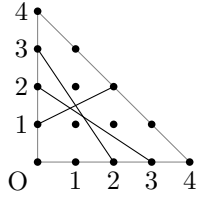
\begin{figure}[H]
\centering
\begin{tikzpicture}
 \coordinate[label=below left:O] (C1) at (6,0);
 \coordinate [label=below:1] (C2) at (6.5,0);
 \coordinate [label=below:2] (C3) at (7,0);
 \coordinate [label=below:3] (C4) at (7.5,0);
 \coordinate [label=below:4] (C5) at (8,0);
 \coordinate [label=left:1] (C6) at (6,0.5);
 \coordinate (C7) at (6.5,0.5);
 \coordinate (C8) at (7,0.5);
 \coordinate (C9) at (7.5,0.5);
 \coordinate [label=left:2] (C10) at (6,1);
 \coordinate (C11) at (6.5,1);
 \coordinate (C12) at (7,1);
 \coordinate [label=left:3] (C13) at (6,1.5);
 \coordinate (C14) at (6.5,1.5);
 \coordinate [label=left:4] (C15) at (6,2);
 \draw [very thin, gray] (C1)--(C5)--(C15)--cycle; 
 \draw (C3)--(C13);
 \draw (C4)--(C10);
 \draw (C6)--(C12);
 \foreach \t in {1,2,...,15} \fill[black] (C\t) circle (0.05);
\end{tikzpicture}
\caption{The three candidates for the edge $E$ discussed in the proof of Lemma \ref{3in1}.}
\label{3sennbunn}
\end{figure}
If $E$ is $(0, 1)$-$(2, 2)$, it is one of the edges of the triangle in $K_{**}$ containing $(1, 2-\epsilon)$.
The point $(1, 2)$ is one of the vertices of this triangle, and hence by Lemma \ref{1112}, the skeleton of $C$ is not the lollipop graph in this case.
We can treat the case where $E$ is $(0, 3)$-$(2, 0)$ in the same way.
If $E$ is $(0, 2)$-$(3, 0)$, it is one of the edges of the triangle in $K_{**}$ containing $(2, 1-\epsilon)$.
The point $(2, 1)$ is one of the vertices of this triangle, and hence by Lemma \ref{1112}, the skeleton of $C$ is not the lollipop graph in this case.
\end{proof}

\begin{Lem}\label{Case2.6}
If the complex $K_{**}$ contains the edge $(2, 0)$-$(1, 3)$ as its $1$-simplex and the points $(1, 1)$ and $(1, 2)$ are in the interior of $\lvert K_{**} \rvert$, the skeleton of $C$ is not the lollipop graph of genus $3$.
\end{Lem}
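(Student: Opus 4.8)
The plan is to force, from the two hypotheses, a configuration covered by Lemma \ref{1112}, with the two interior $0$-simplices taken to be $p=(1,1)$ and $q=(1,2)$. Recall first that by Lemma \ref{1/2} the complex $K_{**}$ is a unimodular triangulation, each of whose $2$-cells has area $1/2$, and that $K_{**}$ contains every lattice point of its support; in particular $(1,1)$ and $(1,2)$ are $0$-simplices of $K_{**}$. Recall also that since $(1,1)$ and $(1,2)$ lie in the interior of $|K_{**}|$, a neighbourhood of each is contained in $|K_{**}|$, so \emph{every} $2$-cell of the ambient triangulation (of which $K_{**}$ is a subcomplex) that is incident to one of these two points already lies in $K_{**}$.

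First I would identify the $2$-cell of $K_{**}$ on the side of the edge $(2,0)$-$(1,3)$ that contains the origin. Since this edge is a $1$-simplex of $K_{**}$ it is an edge of the ambient triangulation, and because its relative interior lies in the interior of $\Delta_m$ (here $m=4$) it bounds a $2$-cell on each side. A unimodular triangle contained in $\Delta_4$ with base $(2,0)$-$(1,3)$ and apex on the origin side must have its apex on the line $3x+y=5$, hence at $(0,5)$ or at $(1,2)$; as $(0,5)\notin\Delta_4$, this cell is $T:=\conv(\{(2,0),(1,3),(1,2)\})$, and $T\in K_{**}$ because $T$ is incident to the interior point $(1,2)$.

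Next I would run the same step from the edge $e:=(2,0)$-$(1,2)$ of $T$: its relative interior again lies in the interior of $\Delta_4$, so on the side of $e$ away from $(1,3)$ (the side that contains the origin and $(1,1)$) there is a $2$-cell of the ambient triangulation whose apex lies on the line $2x+y=3$, hence at $(0,3)$ or at $(1,1)$. If the apex is $(1,1)$, then $\Delta_1:=\conv(\{(2,0),(1,2),(1,1)\})$ is incident to the interior point $(1,1)$, so $\Delta_1\in K_{**}$, and $\Delta_1$ (which contains $p=(1,1)$) and $T$ (which contains $q=(1,2)$) share the edge $e$; by Lemma \ref{1112} the skeleton of $C$ is not the lollipop graph. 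If instead the apex is $(0,3)$, put $T':=\conv(\{(2,0),(1,2),(0,3)\})$, which lies in $K_{**}$ since it is incident to $(1,2)$, and repeat once more with the edge $e':=(2,0)$-$(0,3)$ of $T'$: the $2$-cell of the ambient triangulation on the origin side of $e'$ has its apex on the line $3x+2y=5$, which forces that apex to be $(1,1)$, so $\Delta_1:=\conv(\{(2,0),(0,3),(1,1)\})$ is incident to $(1,1)$ and lies in $K_{**}$; now $\Delta_1$ (containing $p=(1,1)$) and $T'$ (containing $q=(1,2)$) share the edge $e'$, and Lemma \ref{1112} applies again.

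The arithmetic above amounts to listing the lattice points of $\Delta_4$ on three explicit affine lines, so it is routine. The point that needs care is the repeated use of the principle that an interior lattice point of $|K_{**}|$ pulls all of its incident $2$-cells of the ambient triangulation into $K_{**}$; this is what guarantees that $T$, $T'$ and $\Delta_1$ are honestly cells of $K_{**}$ and not merely of the surrounding unimodular triangulation. I expect no obstacle beyond bookkeeping the two cases.
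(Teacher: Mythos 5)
Your argument is correct in substance and follows essentially the same route as the paper: starting from the edge $(2,0)$--$(1,3)$ you force the chain of unimodular triangles $\conv(\{(2,0),(1,3),(1,2)\})$, then $\conv(\{(2,0),(1,2),(0,3)\})$ (or the short-circuit through $(1,1)$), then $\conv(\{(2,0),(0,3),(1,1)\})$, and finish with Lemma \ref{1112}; these are exactly the cells of Figure \ref{lemma5.14nozu} (b)--(d), and your observation that an interior point of $\lvert K_{**}\rvert$ pulls all of its incident ambient $2$-cells into $K_{**}$ is valid and is the right engine. The one step whose justification is circular as written is the identification of each successive triangle: you determine the $2$-cell adjacent to a given edge on the origin side by assuming it is unimodular, but Lemma \ref{1/2} gives area $1/2$ only for cells already known to lie in $K_{**}$, and you establish membership of that adjacent cell in $K_{**}$ only afterwards, via its incidence to $(1,1)$ or $(1,2)$ --- which presupposes the identification. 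This is repairable with tools you already cite, in either of two ways: (i) argue as the paper does, starting instead from the cell of $K_{**}$ containing $(1+\epsilon,2)$ (this point lies in $\lvert K_{**}\rvert$ because $(1,2)$ is interior, so its cell is in $K_{**}$, has area $1/2$ by Lemma \ref{1/2}, has $(1,2)$ and --- by the blocking edge $(2,0)$--$(1,3)$ --- also $(2,0)$ as vertices, and is therefore $\conv(\{(2,0),(1,3),(1,2)\})$), and similarly with $(1,2-\epsilon)$ and $(1,1+\epsilon)$ for the later steps; or (ii) use the second assertion of Lemma \ref{1/2}, that $(1,1)$ and $(1,2)$ are $0$-simplices of $K_{**}$ and hence vertices of the ambient subdivision, to rule out every non-unimodular candidate for the adjacent cell directly, since each such candidate contains $(1,1)$ or $(1,2)$ in the relative interior of one of its faces. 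With that repair your two-case analysis is complete and the application of Lemma \ref{1112} goes through as you describe.
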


\begin{proof}
Under the assumption as in Figure \ref{lemma5.14nozu} (a), $K_{**}$ contains a $2$-simplex $\Delta$ containing the point $(1+\epsilon, 2)$ for a sufficiently small $\epsilon >0$.
At least one of the $x$-coordinates of the vertices of $\Delta$ is greater than $1$.
This vertex must be the point $(2, 0)$.
By Lemma \ref{1/2}, the edge of $\Delta$ opposing to $(2, 0)$ must be $(1, 2)$-$(1, 3)$ as in Figure \ref{lemma5.14nozu} (b).
The complex $K_{**}$ does not contain the edge $(1, 1)$-$(1, 2)$ as its $1$-simplex by Lemma \ref{1112}.
Thus the $2$-simplex of $K_{**}$ containing the point $(1, 2-\epsilon)$ is as in Figure \ref{lemma5.14nozu} (c) by Lemma \ref{1/2}.
Again by Lemma \ref{1/2}, $K_{**}$ contains the $2$-simplex containing the point $(1, 1+\epsilon)$ as in Figure \ref{lemma5.14nozu} (d), and hence the skeleton of $C$ is not the lollipop graph by Lemma \ref{1112}.
\begin{figure}[H]
\centering
\begin{tikzpicture}
 \coordinate[label=below left:O] (A1) at (0,0);
 \coordinate [label=below:1] (A2) at (0.5,0);
 \coordinate [label=below:2] (A3) at (1,0);
 \coordinate [label=below:3] (A4) at (1.5,0);
 \coordinate [label=below:4] (A5) at (2,0);
 \coordinate [label=left:1] (A6) at (0,0.5);
 \coordinate (A7) at (0.5,0.5);
 \coordinate (A8) at (1,0.5);
 \coordinate (A9) at (1.5,0.5);
 \coordinate [label=left:2] (A10) at (0,1);
 \coordinate (A11) at (0.5,1);
 \coordinate (A12) at (1,1);
 \coordinate [label=left:3] (A13) at (0,1.5);
 \coordinate (A14) at (0.5,1.5);
 \coordinate [label=left:4] (A15) at (0,2);
 \draw [very thin, gray] (A1)--(A5)--(A15)--cycle; 
 \draw (A3)--(A14);
 \foreach \t in {1,2,...,15} \fill[black] (A\t) circle (0.05);
 \coordinate[label=below left:O] (B1) at (3,0);
 \coordinate [label=below:1] (B2) at (3.5,0);
 \coordinate [label=below:2] (B3) at (4,0);
 \coordinate [label=below:3] (B4) at (4.5,0);
 \coordinate [label=below:4] (B5) at (5,0);
 \coordinate [label=left:1] (B6) at (3,0.5);
 \coordinate (B7) at (3.5,0.5);
 \coordinate (B8) at (4,0.5);
 \coordinate (B9) at (4.5,0.5);
 \coordinate [label=left:2] (B10) at (3,1);
 \coordinate (B11) at (3.5,1);
 \coordinate (B12) at (4,1);
 \coordinate [label=left:3] (B13) at (3,1.5);
 \coordinate (B14) at (3.5,1.5);
 \coordinate [label=left:4] (B15) at (3,2);
 \draw [very thin, gray] (B1)--(B5)--(B15)--cycle; 
 \draw (B3)--(B14);
 \foreach \t in {1,2,...,15} \fill[black] (B\t) circle (0.05);
 \draw (B14)--(B11)--(B3);
 \coordinate[label=below left:O] (C1) at (6,0);
 \coordinate [label=below:1] (C2) at (6.5,0);
 \coordinate [label=below:2] (C3) at (7,0);
 \coordinate [label=below:3] (C4) at (7.5,0);
 \coordinate [label=below:4] (C5) at (8,0);
 \coordinate [label=left:1] (C6) at (6,0.5);
 \coordinate (C7) at (6.5,0.5);
 \coordinate (C8) at (7,0.5);
 \coordinate (C9) at (7.5,0.5);
 \coordinate [label=left:2] (C10) at (6,1);
 \coordinate (C11) at (6.5,1);
 \coordinate (C12) at (7,1);
 \coordinate [label=left:3] (C13) at (6,1.5);
 \coordinate (C14) at (6.5,1.5);
 \coordinate [label=left:4] (C15) at (6,2);
 \draw [very thin, gray] (C1)--(C5)--(C15)--cycle; 
 \draw (C3)--(C14)--(C11)--cycle;
 \draw (C11)--(C13)--(C3);
 \foreach \t in {1,2,...,15} \fill[black] (C\t) circle (0.05); 
 \coordinate[label=below left:O] (D1) at (9,0);
 \coordinate [label=below:1] (D2) at (9.5,0);
 \coordinate [label=below:2] (D3) at (10,0);
 \coordinate [label=below:3] (D4) at (10.5,0);
 \coordinate [label=below:4] (D5) at (11,0);
 \coordinate [label=left:1] (D6) at (9,0.5);
 \coordinate (D7) at (9.5,0.5);
 \coordinate (D8) at (10,0.5);
 \coordinate (D9) at (10.5,0.5);
 \coordinate [label=left:2] (D10) at (9,1);
 \coordinate (D11) at (9.5,1);
 \coordinate (D12) at (10,1);
 \coordinate [label=left:3] (D13) at (9,1.5);
 \coordinate (D14) at (9.5,1.5);
 \coordinate [label=left:4] (D15) at (9,2);
 \draw [very thin, gray] (D1)--(D5)--(D15)--cycle; 
 \draw (D3)--(D14)--(D11)--cycle;
 \draw (D11)--(D13)--(D3);
 \draw (D3)--(D7)--(D13);
 \foreach \t in {1,2,...,15} \fill[black] (D\t) circle (0.05);
 \coordinate [label=below:(a)] (a1) at (1,-0.5);
 \coordinate [label=below:(b)] (a2) at (4,-0.5);
 \coordinate [label=below:(c)] (a3) at (7,-0.5);
 \coordinate [label=below:(d)] (a4) at (10,-0.5);
 \end{tikzpicture}
\caption{The cells of $K_{**}$ discussed in the proof of Lemma \ref{Case2.6}.}
\label{lemma5.14nozu}
\end{figure}
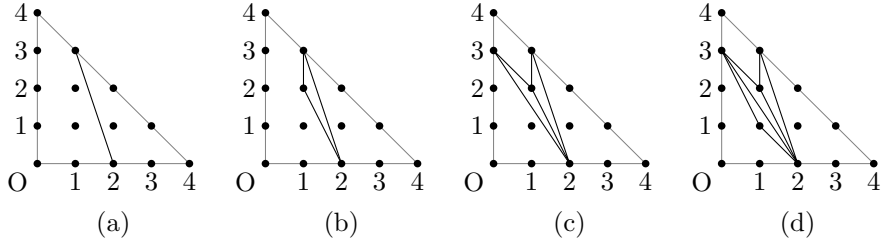
\end{proof}

\begin{Lem}\label{12}
\begin{enumerate}[(1)]
\item Assume that $K_{**}$ contains the points $(1, 1)$ and $(1, 2)$ as its interior points and the line segment $(1, 3)$-$(2, 1)$ as its $1$-simplex as in Figure \ref{lemma5.15nozu} \textup{(a)}.
If the skeleton of $C$ is the lollipop graph of genus $3$, the complex contains the $2$-simplices as in Figure \ref{lemma5.15nozu} \textup{(b)}.
\item Assume that $K_{**}$ contains the points $(1, 1)$ and $(1, 2)$ as its interior points and the line segment $(2, 1)$-$(2, 2)$ as its $1$-simplex as in Figure \ref{lemma5.15nozu} \textup{(c)}.
If the skeleton of $C$ is the lollipop graph of genus $3$, the complex contains the $2$-simplices as in Figure \ref{lemma5.15nozu} \textup{(d)}.
\item Assume that $K_{**}$ contains the points $(1, 1)$ and $(2, 1)$ as its interior points and the line segment $(1, 2)$-$(2, 2)$ as its $1$-simplex as in Figure \ref{lemma5.15nozu} \textup{(e)}.
If the skeleton of $C$ is the lollipop graph of genus $3$, the complex contains the $2$-simplices as in Figure \ref{lemma5.15nozu} \textup{(f)}.
\end{enumerate}
\end{Lem}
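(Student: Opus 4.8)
The plan is to assume throughout that the skeleton of $C$ is the lollipop graph of genus $3$ and to determine the local shape of $K_{**}$ near the prescribed data by the forcing technique already used in the proofs of Lemmas \ref{3in1} and \ref{Case2.6}: one repeatedly applies Lemma \ref{1/2} (every $2$-cell of $K_{**}$ is a unimodular triangle and $K_{**}$ contains all lattice points of its support) to pin down the $2$-cell on a prescribed side of a prescribed edge, and applies Lemma \ref{1112} to discard any branch in which two interior $0$-simplices of $K_{**}$ end up in a common $1$-simplex or in edge-adjacent $2$-simplices. A first observation common to the three parts: the only interior lattice points of $\Delta_m=\Delta_4$ are $(1,1)$, $(1,2)$ and $(2,1)$, so by Lemma \ref{3in1} at most two of them lie in the interior of $|K_{**}|$; hence in parts~(1) and~(2) the point $(2,1)$, and in part~(3) the point $(1,2)$, lies on $\partial|K_{**}|$, which is why those points may appear as endpoints of the prescribed $1$-simplices.

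For part~(1), I would first examine the $2$-cell $\Delta$ of $K_{**}$ on the side of the edge $(1,3)$-$(2,1)$ containing the origin (equivalently, containing $(1,1)$ and $(1,2)$). By Lemma \ref{1/2} its third vertex is a lattice point of $|K_{**}|$ on the line $2x+y=4$, hence is $(1,2)$, $(2,0)$ or $(0,4)$. If it were $(2,0)$, Lemma \ref{1/2} would force the $2$-cell across the edge $(2,0)$-$(1,3)$ to be $\conv(\{(1,2),(2,0),(1,3)\})$ and then the $2$-cell across $(1,2)$-$(2,0)$ on the side of $(1,1)$ to be $\conv(\{(1,1),(1,2),(2,0)\})$ or $\conv(\{(0,3),(1,2),(2,0)\})$; in either case a further application of Lemma \ref{1/2} yields a $2$-cell containing $(1,1)$ that shares an edge with a $2$-cell containing $(1,2)$, contradicting Lemma \ref{1112}. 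The possibility $(0,4)$ is excluded by an entirely similar chase. Hence $\Delta=\conv(\{(1,2),(1,3),(2,1)\})$, so the edges $(1,2)$-$(1,3)$ and $(1,2)$-$(2,1)$ lie in $K_{**}$. Since $(1,2)$ is an interior vertex, its star in $K_{**}$ is a fan of unimodular triangles, and since $(1,1)$-$(1,2)$ is not an edge of $K_{**}$ by Lemma \ref{1112}, going around $(1,2)$ from the edge $(1,2)$-$(2,1)$ and applying Lemma \ref{1/2} at each step determines all of them; the analogous fan around the interior vertex $(1,1)$ completes the picture, and the triangles so obtained are exactly those of Figure \ref{lemma5.15nozu}~(b).

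Parts~(2) and~(3) go the same way. In part~(2) one starts from the $2$-cell of $K_{**}$ on the side of the edge $(2,1)$-$(2,2)$ that contains $(1,1)$ and $(1,2)$; by Lemma \ref{1/2} its apex is a lattice point of $|K_{**}|$ in the column $x=1$, and discarding those apices that lead, by a short chase, to a configuration forbidden by Lemma \ref{1112} leaves a single possibility, after which fanning around $(1,1)$ and $(1,2)$ as in part~(1) produces Figure \ref{lemma5.15nozu}~(d). Part~(3) is obtained from part~(2) by interchanging $x$ and $y$: its hypotheses are the images of those of~(2) under this involution, the description of $|K_{**}|$ in Proposition \ref{K'nokatachi} and Lemmas \ref{1/2}, \ref{1112}, \ref{3in1} are all symmetric in $x$ and $y$, and so the same argument yields Figure \ref{lemma5.15nozu}~(f).

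The step I expect to be the main obstacle is the case bookkeeping inside these forced chases: for each admissible apex of the first triangle one must trace several further unimodular triangles inside the small fixed triangle $\Delta_4$ and verify that a forbidden pattern --- the edge joining two interior points, two interior points in edge-adjacent cells, or all of $(1,1)$, $(1,2)$, $(2,1)$ interior --- inevitably appears, without overlooking a sub-branch. The geometry is rigid because $\Delta_4$ is small and the two interior points cluster near one of its corners, but converting that rigidity into a clean finite check, organized so that the two moves ``apply Lemma \ref{1/2}, then apply Lemma \ref{1112}'' close every branch, is delicate, and is best carried out by drawing each intermediate lattice picture as in Figures \ref{3sennbunn} and \ref{lemma5.14nozu}.
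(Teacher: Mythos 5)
Your overall strategy (force the local picture with Lemma \ref{1/2} and kill bad branches with Lemma \ref{1112}) is the paper's, but your first step in part (1) contains a genuine error. You claim that the $2$-cell $\Delta$ on the origin side of the edge $(1,3)$--$(2,1)$ must have third vertex $(1,2)$, ruling out $(2,0)$ and $(0,4)$. The exclusion of $(2,0)$ does work by the chase you describe, but $(0,4)$ is \emph{not} excluded, and no ``entirely similar chase'' will exclude it: if $\Delta=\conv(\{(1,3),(2,1),(0,4)\})$, then the cell across $(0,4)$--$(2,1)$ is forced (the only lattice point with $3a+2b=7$ in $\Delta_4$ is $(1,2)$) to be $\conv(\{(0,4),(2,1),(1,2)\})$, and from there one can continue with $\conv(\{(1,2),(2,1),(0,2)\})$, $\conv(\{(0,2),(2,1),(3,0)\})$, $\conv(\{(1,1),(0,2),(3,0)\})$, $\conv(\{(0,4),(1,2),(0,3)\})$, \dots, which violates neither Lemma \ref{1/2} nor Lemma \ref{1112}. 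Indeed $(0,4)$ can genuinely lie in $|K_{**}|$ (e.g.\ in Case $2.1$ with $u(0)=4$), and this configuration is compatible with the conclusion of the lemma --- which is precisely why Figure \ref{lemma5.15nozu}~(b) does \emph{not} specify the cell adjacent to $(1,3)$--$(2,1)$. Since your subsequent argument builds the fan around $(1,2)$ starting from the edge $(1,2)$--$(1,3)$, which need not exist, the proof does not go through; moreover, even granting $\Delta$, the fan around $(1,2)$ is not ``determined at each step'' by Lemma \ref{1/2} alone (across $(1,2)$--$(2,1)$ the apex could a priori be $(0,2)$ or $(2,0)$, and eliminating $(2,0)$ requires comparing with the fan around $(1,1)$).

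The paper's proof sidesteps this by never looking at the cell adjacent to the given $1$-simplex. It considers the triangle $\Delta_1$ with vertex $(1,1)$ containing $(1,1+\epsilon)$ and the triangle $\Delta_2$ with vertex $(1,2)$ containing $(1,2-\epsilon)$, and observes that their opposite edges $E$, $F$ must each be one of the three primitive segments $(0,2)$--$(2,1)$, $(0,2)$--$(3,0)$, $(0,3)$--$(2,0)$ separating $(1,1)$ from $(1,2)$ (the fourth candidate $(0,1)$--$(2,2)$ is excluded because it crosses the given edge $(1,3)$--$(2,1)$, and $(0,2)$--$(3,0)$ cannot be $F$ by unimodularity). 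Lemma \ref{1112} gives $E\neq F$, and since the remaining candidates pairwise cross except for $(0,2)$--$(3,0)$ and $(0,2)$--$(2,1)$, the pair $(E,F)$ is forced, yielding Figure \ref{lemma5.15nozu}~(b) directly. You should reorganize your argument around the separating edges $E$ and $F$ rather than around the cell adjacent to the prescribed $1$-simplex.
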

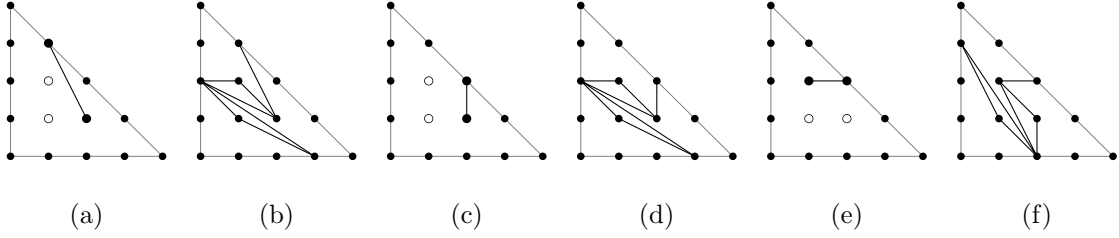
\begin{figure}[H]
\centering
\begin{tikzpicture}
 \coordinate (A1) at (0,0);
 \coordinate (A2) at (0.5,0);
 \coordinate (A3) at (1,0);
 \coordinate (A4) at (1.5,0);
 \coordinate (A5) at (2,0);
 \coordinate (A6) at (0,0.5);
 \coordinate (A7) at (0.5,0.5);
 \coordinate (A8) at (1,0.5);
 \coordinate (A9) at (1.5,0.5);
 \coordinate (A10) at (0,1);
 \coordinate (A11) at (0.5,1);
 \coordinate (A12) at (1,1);
 \coordinate (A13) at (0,1.5);
 \coordinate (A14) at (0.5,1.5);
 \coordinate (A15) at (0,2);
 \draw [very thin, gray] (A1)--(A5)--(A15)--cycle;
 \draw (A8)--(A14);
 \foreach \t in {1,2,...,15} \fill[black] (A\t) circle (0.05);
 \foreach \t in {7,8,11,14} \fill[black] (A\t) circle (0.06);
 \foreach \t in {7,11} \fill[white] (A\t) circle (0.05);
 \coordinate (B1) at (2.5,0);
 \coordinate (B2) at (3,0);
 \coordinate (B3) at (3.5,0);
 \coordinate (B4) at (4,0);
 \coordinate (B5) at (4.5,0);
 \coordinate (B6) at (2.5,0.5);
 \coordinate (B7) at (3,0.5);
 \coordinate (B8) at (3.5,0.5);
 \coordinate (B9) at (4,0.5);
 \coordinate (B10) at (2.5,1);
 \coordinate (B11) at (3,1);
 \coordinate (B12) at (3.5,1);
 \coordinate (B13) at (2.5,1.5);
 \coordinate (B14) at (3,1.5);
 \coordinate (B15) at (2.5,2);
 \draw [very thin, gray] (B1)--(B5)--(B15)--cycle; 
 \draw (B8)--(B14);
 \draw (B4)--(B10)--(B8);
 \draw (B4)--(B7)--(B10);
 \draw (B10)--(B11)--(B8);
 \foreach \t in {1,2,...,15} \fill[black] (B\t) circle (0.05);
 \coordinate (C1) at (5,0);
 \coordinate (C2) at (5.5,0);
 \coordinate (C3) at (6,0);
 \coordinate (C4) at (6.5,0);
 \coordinate (C5) at (7,0);
 \coordinate (C6) at (5,0.5);
 \coordinate (C7) at (5.5,0.5);
 \coordinate (C8) at (6,0.5);
 \coordinate (C9) at (6.5,0.5);
 \coordinate (C10) at (5,1);
 \coordinate (C11) at (5.5,1);
 \coordinate (C12) at (6,1);
 \coordinate (C13) at (5,1.5);
 \coordinate (C14) at (5.5,1.5);
 \coordinate (C15) at (5,2);
 \draw [very thin, gray] (C1)--(C5)--(C15)--cycle;
 \draw (C8)--(C12);
 \foreach \t in {1,2,...,15} \fill[black] (C\t) circle (0.05);
 \foreach \t in {7,8,11,12} \fill[black] (C\t) circle (0.06);
 \foreach \t in {7,11} \fill[white] (C\t) circle (0.05);
 \coordinate (A1) at (7.5,0);
 \coordinate (A2) at (8,0);
 \coordinate (A3) at (8.5,0);
 \coordinate (A4) at (9,0);
 \coordinate (A5) at (9.5,0);
 \coordinate (A6) at (7.5,0.5);
 \coordinate (A7) at (8,0.5);
 \coordinate (A8) at (8.5,0.5);
 \coordinate (A9) at (9,0.5);
 \coordinate (A10) at (7.5,1);
 \coordinate (A11) at (8,1);
 \coordinate (A12) at (8.5,1);
 \coordinate (A13) at (7.5,1.5);
 \coordinate (A14) at (8,1.5);
 \coordinate (A15) at (7.5,2);
 \draw [very thin, gray] (A1)--(A5)--(A15)--cycle;
 \draw (A8)--(A12);
 \draw (A8)--(A10)--(A4);
 \draw (A4)--(A7)--(A10);
 \draw (A10)--(A11)--(A8);
 \foreach \t in {1,2,...,15} \fill[black] (A\t) circle (0.05); 
 \coordinate (B1) at (10,0);
 \coordinate (B2) at (10.5,0);
 \coordinate (B3) at (11,0);
 \coordinate (B4) at (11.5,0);
 \coordinate (B5) at (12,0);
 \coordinate (B6) at (10,0.5);
 \coordinate (B7) at (10.5,0.5);
 \coordinate (B8) at (11,0.5);
 \coordinate (B9) at (11.5,0.5);
 \coordinate (B10) at (10,1);
 \coordinate (B11) at (10.5,1);
 \coordinate (B12) at (11,1);
 \coordinate (B13) at (10,1.5);
 \coordinate (B14) at (10.5,1.5);
 \coordinate (B15) at (10,2);
 \draw [very thin, gray] (B1)--(B5)--(B15)--cycle;
 \draw (B11)--(B12);
 \foreach \t in {1,2,...,15} \fill[black] (B\t) circle (0.05);
 \foreach \t in {7,8,11,12} \fill[black] (B\t) circle (0.06);
 \foreach \t in {7,8} \fill[white] (B\t) circle (0.05);
 \coordinate (C1) at (12.5,0);
 \coordinate (C2) at (13,0);
 \coordinate (C3) at (13.5,0);
 \coordinate (C4) at (14,0);
 \coordinate (C5) at (14.5,0);
 \coordinate (C6) at (12.5,0.5);
 \coordinate (C7) at (13,0.5);
 \coordinate (C8) at (13.5,0.5);
 \coordinate (C9) at (14,0.5);
 \coordinate (C10) at (12.5,1);
 \coordinate (C11) at (13,1);
 \coordinate (C12) at (13.5,1);
 \coordinate (C13) at (12.5,1.5);
 \coordinate (C14) at (13,1.5);
 \coordinate (C15) at (12.5,2);
 \draw [very thin, gray] (C1)--(C5)--(C15)--cycle;
 \draw (C11)--(C12);
 \draw (C13)--(C3)--(C11);
 \draw (C3)--(C7)--(C13);
 \draw (C3)--(C8)--(C11);
 \foreach \t in {1,2,...,15} \fill[black] (C\t) circle (0.05);
 \coordinate [label=below:(a)] (a1) at (1,-0.5);
 \coordinate [label=below:(b)] (a2) at (3.5,-0.5);
 \coordinate [label=below:(c)] (a3) at (6,-0.5);
 \coordinate [label=below:(d)] (a4) at (8.5,-0.5);
 \coordinate [label=below:(e)] (a5) at (11,-0.5);
 \coordinate [label=below:(f)] (a6) at (13.5,-0.5);
\end{tikzpicture}
\caption{The cells of $K_{**}$ discussed in Lemma \ref{12}.}
\label{lemma5.15nozu}
\end{figure}

\begin{proof}
$(1)$ By Lemma \ref{1112}, the two points $(1, 1)$ and $(1, 2)$ are not connected directly.
The complex $K_{**}$ has a triangle $\Delta_1$ that contains $(1, 1)$ as its vertex and $(1, 1+\epsilon)$ in its interior for a sufficiently small positive real number $\epsilon$.
Let $E$ be its edge opposing to $(1, 1)$.
Also, $K_{**}$ has a triangle $\Delta_2$ that contains $(1, 2)$ as its vertex and $(1, 2-\epsilon)$ in its interior.
Let $F$ be its edge opposing to $(1, 2)$.
There are three candidates for $E$ and $F$ as in Figure \ref{proofoflemma5.15nozu} (g).
Here, by Lemma \ref{1112}, we have $E\neq F$.
Therefore, $\Delta_1$ and $\Delta_2$ are as in Figure \ref{lemma5.15nozu} (b).
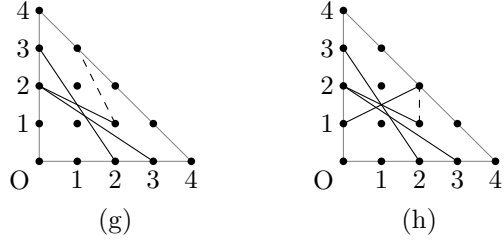
\begin{figure}[H]
\centering
\begin{tikzpicture}
 \coordinate[label=below left:O] (A1) at (0,0);
 \coordinate [label=below:1] (A2) at (0.5,0);
 \coordinate [label=below:2] (A3) at (1,0);
 \coordinate [label=below:3] (A4) at (1.5,0);
 \coordinate [label=below:4] (A5) at (2,0);
 \coordinate [label=left:1] (A6) at (0,0.5);
 \coordinate (A7) at (0.5,0.5);
 \coordinate (A8) at (1,0.5);
 \coordinate (A9) at (1.5,0.5);
 \coordinate [label=left:2] (A10) at (0,1);
 \coordinate (A11) at (0.5,1);
 \coordinate (A12) at (1,1);
 \coordinate [label=left:3] (A13) at (0,1.5);
 \coordinate (A14) at (0.5,1.5);
 \coordinate [label=left:4] (A15) at (0,2);
 \draw [very thin, gray] (A1)--(A5)--(A15)--cycle; 
 \draw [dashed] (A8)--(A14);
 \draw (A3)--(A13);
 \draw (A4)--(A10);
 \draw (A8)--(A10);
 \foreach \t in {1,2,...,15} \fill[black] (A\t) circle (0.05);
 \coordinate[label=below left:O] (B1) at (4,0);
 \coordinate [label=below:1] (B2) at (4.5,0);
 \coordinate [label=below:2] (B3) at (5,0);
 \coordinate [label=below:3] (B4) at (5.5,0);
 \coordinate [label=below:4] (B5) at (6,0);
 \coordinate [label=left:1] (B6) at (4,0.5);
 \coordinate (B7) at (4.5,0.5);
 \coordinate (B8) at (5,0.5);
 \coordinate (B9) at (5.5,0.5);
 \coordinate [label=left:2] (B10) at (4,1);
 \coordinate (B11) at (4.5,1);
 \coordinate (B12) at (5,1);
 \coordinate [label=left:3] (B13) at (4,1.5);
 \coordinate (B14) at (4.5,1.5);
 \coordinate [label=left:4] (B15) at (4,2);
 \draw [very thin, gray] (B1)--(B5)--(B15)--cycle;
 \draw [dashed] (B8)--(B12);
 \draw (B6)--(B12);
 \draw (B3)--(B13);
 \draw (B4)--(B10);
 \draw (B8)--(B10);
 \foreach \t in {1,2,...,15} \fill[black] (B\t) circle (0.05);
 \coordinate [label=below:(g)] (a1) at (1,-0.5);
 \coordinate [label=below:(h)] (a2) at (5,-0.5);
\end{tikzpicture}
\caption{The cells of $K_{**}$ discussed in the proof of Lemma \ref{12}.}
\label{proofoflemma5.15nozu}
\end{figure}
$(2)$ is shown in the same way (see Figure \ref{proofoflemma5.15nozu} (h)).
$(3)$ follows from $(2)$ by symmetry.
\end{proof}

Now we prove Theorem \ref{mainth} by a case-by-case analysis.
Along the way, we also see how the shapes of complete intersection curves, in particular their skeletons, can be described.
As we saw in the previous section, there is a symmetry between $X$, $Y$, $Z$ and $W$.
Therefore, it is sufficient to consider the following $5$ cases as the exponent $(a_0, b_0, c_0, d_0)$ of the monomial of $g^h$ which corresponds to the element of $\pi_0 (\mathbf{D}(g))$ which contains the origin, i.e. the exponent of the term with the largest coefficient.
\begin{eqnarray*}
\text{Case $1$: }(a_0, b_0, c_0, d_0)=(1, 1, 1, 1), \\
\text{Case $2$: }(a_0, b_0, c_0, d_0)=(2, 1, 1, 0), \\
\text{Case $3$: }(a_0, b_0, c_0, d_0)=(2, 2, 0, 0), \\
\text{Case $4$: }(a_0, b_0, c_0, d_0)=(3, 1, 0, 0), \\
\text{Case $5$: }(a_0, b_0, c_0, d_0)=(4, 0, 0, 0).\hspace{0.5mm}
\end{eqnarray*}

\begin{notation}\label{CcapX}
If $|C\cap X|=n$, we write $C\cap X=\{x_1, \dots, x_n\}$.
Here, we assume that for each $i$, the point $x_i$ is closer to the origin than $x_{i+1}$.
If $|C\cap X|=1$, we also write $C\cap X=\{x\}$.
We write the intersection of $C$ and the other half lines in the same way.
\end{notation}

\subsection{Case $1$: $(a_0, b_0, c_0, d_0)=(1, 1, 1, 1)$} 

We look at $\MM_X(g^h)$ and $\MM_Y(g^h)$.
Projecting to the first two components, they are as follows.
\begin{equation*}
\begin{array}{lccrr}
(0, u(0))\leftarrow&(1, 1)&\\
&\downarrow&\\
&(r(0), 0)&
\end{array}
\end{equation*}

These two paths are selected from the dashed and dotted lines in Figure \ref{fc1} (a).
Thus the skeleton of $C_{XY}$ is as in (b).
\begin{figure}[H]
\centering
\begin{tikzpicture}
 \coordinate[label=below left:O] (A1) at (0,0);
 \coordinate [label=below:1] (A2) at (0.5,0);
 \coordinate [label=below:2] (A3) at (1,0);
 \coordinate [label=below:3] (A4) at (1.5,0);
 \coordinate [label=below:4] (A5) at (2,0);
 \coordinate [label=left:1] (A6) at (0,0.5);
 \coordinate (A7) at (0.5,0.5);
 \coordinate (A8) at (1,0.5);
 \coordinate (A9) at (1.5,0.5);
 \coordinate [label=left:2] (A10) at (0,1);
 \coordinate (A11) at (0.5,1);
 \coordinate (A12) at (1,1);
 \coordinate [label=left:3] (A13) at (0,1.5);
 \coordinate (A14) at (0.5,1.5);
 \coordinate [label=left:4] (A15) at (0,2);
 \draw [very thin, gray] (A1)--(A5)--(A15)--cycle; 
 \foreach \P in {1,6,10,13,15} \draw[dashed, blue] (A7)--(A\P);
 \foreach \P in {1,2,3,4,5} \draw[thick, dotted, red] (A7)--(A\P);
 \foreach \t in {1,2,...,15} \fill[black] (A\t) circle (0.05);
 \coordinate[label=above right:O] (O1) at (5.6,2);
 \coordinate (Y1) at (3.6,2);
 \coordinate (Z1) at (5.6,0);
 \coordinate (Y'1) at (4.6,2);
 \coordinate (Z'1) at (5.6,1);
 \draw[thin,->,>=stealth] (O1)--(Y1) node[left] {$X$};
 \draw[thin,->,>=stealth] (O1)--(Z1) node[below] {$Y$};
 \fill[black] (Y'1) circle (0.06);
 \fill[black] (Z'1) circle (0.06);
 \draw (Y'1) to [out=280, in=170] (Z'1);
 \draw (7.2,1.5)--(8.8,1.5)--(8,0.2)--cycle;
 \draw (7.2,1.5)--(8,1)--(8.8,1.5);
 \draw (8,1)--(8,0.2); 
 \coordinate [label=below:(a)] (a1) at (1,-0.5);
 \coordinate [label=below:(b)] (a2) at (4.6,-0.5);
 \coordinate [label=below:(c)] (a3) at (8,-0.5);
\end{tikzpicture}
\caption{Case $1$.}
\label{fc1}
\end{figure}
Thus none of the supports of the complexes $K_{XY}, \dots, K_{YW}$ and $K_{ZW}$ contain a lattice point in its interior, i.e., $b_1(\overline{C'})=0$.
Hence the skeleton of $C$ is not the lollipop graph by Lemma \ref{saikurunashi}.
In fact, it is as in Figure \ref{fc1} (c).

\subsection{Case $2$: $(a_0, b_0, c_0, d_0)=(2, 1, 1, 0)$}  

Since $(b_0, c_0)=(1, 1)$, the skeleton of $C_{YZ}$ is as in Case $1$.
The boundaries of the complexes $K_{YW}, K_{ZW}$ are obtained by connecting any of the dashed lines in Figure \ref{fc2} (a), which correspond to the intersections $C\cap Y$ and $C\cap Z$. 
Thus the skeletons of $C_{YZ}$, $C_{YW}$ and $C_{ZW}$ are as in (b).
Note that they do not contain cycles.
\begin{figure}[H]
\centering
\begin{tikzpicture}
 \coordinate[label=below left:O] (A1) at (0,-3);
 \coordinate [label=below:1] (A2) at (0.5,-3);
 \coordinate [label=below:2] (A3) at (1,-3);
 \coordinate [label=below:3] (A4) at (1.5,-3);
 \coordinate [label=below:4] (A5) at (2,-3);
 \coordinate [label=left:1] (A6) at (0,-2.5);
 \coordinate (A7) at (0.5,-2.5);
 \coordinate (A8) at (1,-2.5);
 \coordinate (A9) at (1.5,-2.5);
 \coordinate [label=left:2] (A10) at (0,-2);
 \coordinate (A11) at (0.5,-2);
 \coordinate (A12) at (1,-2);
 \coordinate [label=left:3] (A13) at (0,-1.5);
 \coordinate (A14) at (0.5,-1.5);
 \coordinate [label=left:4] (A15) at (0,-1);
 \draw [very thin, gray] (A1)--(A5)--(A15)--cycle; 
 \foreach \t in {1,2,...,15} \fill[black] (A\t) circle (0.05);
 \foreach \P in {1,6,10,13,15} \draw[dashed] (A2)--(A\P);
 \coordinate[label=above right:O] (O1) at (5.6,-1.5);
 \coordinate (Y1) at (4.2,-1.5);
 \coordinate (Z1) at (5.6,-2.9);
 \coordinate (Y'1) at (4.9,-1.5);
 \coordinate (Z'1) at (5.6,-2.2);
 \draw[thin,->,>=stealth] (O1)--(Y1) node[left] {$Y$};
 \draw[thin,->,>=stealth] (O1)--(Z1) node[below] {$Z$};
 \fill[black] (Y'1) circle (0.06);
 \fill[black] (Z'1) circle (0.06);
 \draw (Y'1) to [out=280, in=170] (Z'1);
 \coordinate[label=below:O] (O4) at (8,-2.6);
 \coordinate (Y4) at (6.6,-2.6);
 \coordinate (W4) at (8.8,-1.5);
 \coordinate (Y'4) at (7.3,-2.6);
 \draw[thin,->,>=stealth] (O4)--(Y4) node[left] {$Y$};
 \draw[thin,->,>=stealth] (O4)--(W4) node[above right] {$W$};
 \fill[black] (Y'4) circle (0.06);
 \coordinate[label=below:O] (O5) at (11.2,-2.6);
 \coordinate (Z5) at (9.8,-2.6);
 \coordinate (W5) at (12,-1.5);
 \coordinate (Z'5) at (10.5,-2.6);
 \draw[thin,->,>=stealth] (O5)--(Z5) node[left] {$Z$};
 \draw[thin,->,>=stealth] (O5)--(W5) node[above right] {$W$};
 \fill[black] (Z'5) circle (0.06);
 \coordinate [label=below:(a)] (a1) at (1,-3.5);
 \coordinate [label=below:(b)] (a2) at (7.5,-3.5);
\end{tikzpicture}
\caption{Case $2$.}
\label{fc2}
\end{figure}

We look at $\MM_X(g^h)$.
It is sufficient to consider the $6$ cases below by the symmetry between $Y$ and $Z$.
\begin{eqnarray*}
\text{Case $2.1$: }(2, 1, 1, 0)\rightarrow (1, 3, 0, 0), \quad \text{Case $2.2$: }(2, 1, 1, 0)\rightarrow (1, 2, 1, 0), \\
\text{Case $2.3$: }(2, 1, 1, 0)\rightarrow (1, 2, 0, 1), \quad \text{Case $2.4$: }(2, 1, 1, 0)\rightarrow (1, 1, 1, 1), \\
\text{Case $2.5$: }(2, 1, 1, 0)\rightarrow (1, 1, 0, 2), \quad \text{Case $2.6$: }(2, 1, 1, 0)\rightarrow (1, 0, 0, 3).\hspace{0.5mm}
\end{eqnarray*}

\begin{Lem}\label{x1y1}
Let $P=(a, b, c, d)\in \MM_{X}(g^h)$, $a>0$.
Assume that $|K_{XY}|\setminus \{P\}$ is connected.
If $a=a_0$ and $b_0>0$, the points $x_1$ and $y_1$ are connected by a path in $C_{XY}$.
If $a<a_0$, the points $x_{a_0-a}$ and $x_{a_0-a+1}$ are connected by a path in $C_{XY}$.
\end{Lem}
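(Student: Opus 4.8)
The plan is to translate the statement into the combinatorics of $K_{XY}$ via the Duality Theorem~\ref{dualitytheorem} and to exhibit the required path as a boundary arc of a single complementary domain of $C_{XY}$. I first set up the dictionary. Under $\iota_{XY}$ the curve $C_{XY}$ is identified with $\mathbf{V}_{\!-}(g|_{z=0})$, and $K_{XY}$ is the subcomplex of $\Subdiv_{g|_{z=0}}$ whose $(2-i)$-simplices correspond to the elements of $\mathcal{P}_i(g|_{z=0})$. The point $P=(a,b,c,d)\in\MM_X(g^h)$, projected to its first two coordinates, is a vertex of $K_{XY}$; since $a>0$ this projection lies on the path $G$ of Section~4, so we may write it as $(a,u(a))$, and by Theorem~\ref{dualitytheorem} it corresponds to the domain $D_P\in\pi_0(\mathbf{D}(g|_{z=0}))$ of $x$-exponent $a$, with $\overline{D_P}$ convex and $\overline{D_P}\cap X$ a nondegenerate segment. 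The edges of $\partial\overline{D_P}$ lying on $C_{XY}$ are dual to the $1$-simplices of $K_{XY}$ through $P$, and the vertices of $C_{XY}$ on $\partial\overline{D_P}$ are dual to the $2$-simplices of $K_{XY}$ through $P$. Using the description of $\mathbf{V}(g|_{z=0})\cap X$ from Lemma~\ref{uenitotsu} together with Notation~\ref{CcapX}, I would identify the two edges of $G$ at $(a,u(a))$: for $a<a_0$ the edges $(a,u(a))$-$(a-1,u(a-1))$ and $(a,u(a))$-$(a+1,u(a+1))$ are dual to the edges of $C_{XY}$ carrying the points $x_{a_0-a+1}$ and $x_{a_0-a}$ respectively; for $a=a_0$ (so $D_P=D_0$ contains the origin and $b_0>0$) the edge $(a_0,b_0)$-$(a_0-1,u(a_0-1))$ carries $x_1$ and the edge $(a_0,b_0)$-$(r(b_0-1),b_0-1)$ carries $y_1$.

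The crux is to use the hypothesis that $|K_{XY}|\setminus\{P\}$ is connected. Combined with $a>0$ and Proposition~\ref{K'nokatachi} (which forces $P$ off the edge $\{(0,l)\mid 0\le l\le m\}$ of $\Delta_m$ and describes $|K_{XY}|$ as the staircase region $\Delta_m\setminus(U\cup R)$), this gives that $\overline{D_P}\cap\mathbb{R}^2_{\le 0}$ is a bounded convex polygon: $D_P$ is bounded unless its recession direction is $(1,1)$ or $(0,-1)$, the first contributing nothing to $\mathbb{R}^2_{\le 0}$, and the second forcing $|K_{XY}|$ to have width zero at the abscissa of $P$, contradicting the hypothesis. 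Its boundary is then a single polygonal cycle, from which the part lying on $C_{XY}$ is obtained by deleting the segments on the $X$- and $Y$-axes. These axis-segments are adjacent (they meet at the origin) exactly when $P$ corresponds to $D_0$, i.e. when $a=a_0$; and the hypothesis rules out the only remaining bad case, in which $\overline{D_P}$ meets both axes in two disjoint segments (which, by Lemma~\ref{sakaime} and Proposition~\ref{K'nokatachi}, is again a pinch of $|K_{XY}|$ at $P$). Hence the $C_{XY}$-part of $\partial(\overline{D_P}\cap\mathbb{R}^2_{\le 0})$ is a single arc.

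Dually, the $2$-simplices of $K_{XY}$ through $P$ then form a single fan $\Delta_1,\dots,\Delta_k$, and this arc is the path $v_1$-$\cdots$-$v_k$ through the corresponding vertices of $C_{XY}$, its two extreme edges at $P$ being precisely the two edges of $G$ identified in the first paragraph. Concatenating the edge of $C_{XY}$ issuing from $v_1$ and the one from $v_k$ with this arc yields, when $a<a_0$, a path in $C_{XY}$ from $x_{a_0-a}$ to $x_{a_0-a+1}$, and when $a=a_0$ a path from $x_1$ to $y_1$, which is what we want. (When $k=0$ the fan is empty, $\overline{D_P}$ is bounded by the axis segment and two edges of $C_{XY}$ meeting in a single vertex $v$, and the path is $x_{a_0-a}$-$v$-$x_{a_0-a+1}$ or $x_1$-$v$-$y_1$.)

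The step I expect to be the main obstacle is making rigorous the equivalence "$|K_{XY}|\setminus\{P\}$ connected'' $\iff$ "the $C_{XY}$-boundary of $\overline{D_P}\cap\mathbb{R}^2_{\le 0}$ is a single arc'', since a priori two fans of $2$-simplices around $P$ could be reconnected through the rest of $|K_{XY}|$. Pinning this down requires the explicit shape of $|K_{XY}|$ from Proposition~\ref{K'nokatachi}, so that the only local pinches at $P$ arise from $P$ lying on $G\cap G'$ or on the bottom edge of $\Delta_m$, together with a little care for the degenerate configurations noted in the remark before Lemma~\ref{uenitotsu} (coinciding slope-$1$ edges of $G$ and $G'$, and $\overline{D_P}$ meeting $Y$ in a single point), in all of which the deletion argument still produces one arc.
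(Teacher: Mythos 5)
Your proposal matches the paper's own proof: both realize the required path as the non-axis part of the boundary of the bounded convex polygon $\overline{D_P}\cap\mathbb{R}^2_{\leq 0}$, and both invoke the connectedness of $|K_{XY}|\setminus\{P\}$ exactly where the paper does, namely to force $b>0$ and to rule out $D_P$ meeting the second axis (the "pinch''). Your write-up is more explicit about the duality bookkeeping, the recession-cone reason for boundedness, and the degenerate configurations (your $k=0$ case is really $k=1$, since a vertex of $C_{XY}$ on $\partial D_P\cap\mathbb{R}^2_{<0}$ already gives a $2$-simplex of $K_{XY}$ through $P$), but the argument is essentially identical.
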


\begin{proof}
Assume that $|K_{XY}|\setminus \{P\}$ is connected, $a=a_0$ and $b_0>0$.
Let $D_P$ be the closure of the domain corresponding to $P$, $E_{x_1}$ the edge of $C_{XY}$ containing $x_1$, and $E_{y_1}$ the edge of $C_{XY}$ containing $y_1$.
Since $a=a_0>0$ and $b=b_0>0$, there is a $t<0$ such that for all $(x, y)\in D_P$, we have $t<x$ and $t<y$.
Thus, $D_P\cap XY$ is bounded.
Then the intersection of $XY$ and the boundary of $D_P$ is a union of line segments since $D_P$ is a convex polyhedral set.
$E_{x_1}$ and $E_{y_1}$ are contained in the above line segments since the corresponding $1$-simplices contain $P$.
Therefore $x_1$ and $y_1$ are connected by a path in $C_{XY}$.

Next, assume that $|K_{XY}|\setminus \{P\}$ is connected and $0<a<a_0$.
Note that since $|K_{XY}|\setminus \{P\}$ is connected and $a>0$, we have $b>0$.
Let $D_P$ be the closure of the domain corresponding to $P$, $E_{a_0-a}$ the edge of $C_{XY}$ containing $x_{a_0-a}$, and $E_{a_0-a+1}$ the edge of $C_{XY}$ containing $x_{a_0-a+1}$.
Since $0<a$ and $0<b$, there is a $t_1<0$ such that for all $(x, y)\in D_P$, we have $t_1<x$ and $t_1<y$.
Assume $D_P\cap Y\neq \emptyset$ and let $y\in D_P\cap Y$.
Then the convex set $D_P$ contains the line segment $y$-$x_{a_0-a}$ and this contradicts the assumption that $|K_{XY}|\setminus \{P\}$ is connected.
Thus, $D_P\cap Y=\emptyset$, and hence there is a $t_2<0$ such that for all $(x, y)\in D_P$, we have $x<t_2$.
Then the intersection of $XY$ and the boundary of $D_P$ is a union of line segments since $D_P$ is a convex polyhedral set.
$E_{a_0-a}$ and $E_{a_0-a+1}$ are contained in the above line segments since the corresponding $1$-simplices contain $P$.
Therefore $x_{a_0-a}$ and $x_{a_0-a+1}$ are connected by a path in $C_{XY}$.
\end{proof}

\begin{Lem}\label{case2hodai}
If $(a_0, b_0, c_0, d_0)=(2, 1, 1, 0)$ and $b_1(\overline{C'})=1$, the skeleton of $C$ is not the lollipop graph.
\end{Lem}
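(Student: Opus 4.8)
The plan is to pin down the unique cycle of $C'$ and then to show that the two further independent cycles of $C$ demanded by the lollipop graph cannot be taken disjoint from it and from one another, so that by the remark preceding the case analysis the skeleton of $C$ is not the lollipop graph. \emph{Step $1$ (locating the cycle).} Since $b_1(\overline{C'})=1$, exactly one of the complexes $K_{**}$ carries a single lattice point in the interior of its support and all the others none; call that complex $K_\star$ and that point $P_0$. By the analysis of Case $2$ (Figure \ref{fc2}) the supports of $K_{YZ}$, $K_{YW}$ and $K_{ZW}$ have no interior lattice point, so $K_\star\in\{K_{XY},K_{XZ},K_{XW}\}$; using the $Y\leftrightarrow Z$ symmetry available in Case $2$ we may assume $K_\star$ is $K_{XY}$ or $K_{XW}$. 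As the interior lattice points of $\Delta_4$ are exactly $(1,1)$, $(1,2)$ and $(2,1)$, the point $P_0$ is one of these. Let $\gamma_0$ be the cycle of $C$ enclosing $P_0$ provided by Lemma \ref{cycle}; since $\gamma_0\subset C'$, it is disjoint from the four points $x_1,x_2,y,z$ of $C\cap(X\cup Y\cup Z\cup W)$.

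\emph{Step $2$ (the two $X$-intersections are welded together).} The middle entry of $\MM_X(g^h)$ is $(1,b_1,c_1,d_1)$, whose first coordinate $1$ is strictly smaller than $a_0=2$, so Lemma \ref{x1y1}, applied inside each of $C_{XY}$, $C_{XZ}$ and $C_{XW}$, shows that $x_1$ and $x_2$ lie in a single connected component of $C_{**}$ whenever $|K_{**}|\setminus\{P\}$ is connected, where $P$ is the corresponding projection of $(1,b_1,c_1,d_1)$. If all three of these connectivity hypotheses hold we obtain three internally disjoint paths in $C$ joining $x_1$ to $x_2$; their union is a theta graph, each of whose three cycles passes through $x_1$, and hence $C$ contains two distinct cycles with nonempty intersection, so its skeleton is not the lollipop graph.

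\emph{Step $3$ (the remaining configurations).} It remains to treat the instances in which some $|K_{**}|\setminus\{P\}$ is disconnected, that is, in which $P$ is a cut point of one of $|K_{XY}|$, $|K_{XZ}|$, $|K_{XW}|$. Using the explicit description of $|K_{**}|$ in Proposition \ref{K'nokatachi} together with the list of possibilities \textup{2.1}--\textup{2.6} for $(1,b_1,c_1,d_1)$, such a cut point can occur only for a short list of local pictures near the line $x=1$. In each of them one still produces a cycle through $\{x_1,x_2\}$ lying in two of the quarter planes, and, tracking how this cycle, the cycle $\gamma_0$ of Step $1$, and the three edges of $C$ at $x_1$ and at $x_2$ fit together, one checks directly that $C$ again contains two distinct cycles with nonempty intersection. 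I expect this bookkeeping --- matching the position of $P_0$ and of the cut point against the six subcases and pairing up the resulting cycles --- to be the main obstacle; the reason the argument closes is the rigidity forced by $a_0=2$, which ties the two $X$-intersections together and leaves no room for two further disjoint cycles once $\gamma_0$ has been spent.
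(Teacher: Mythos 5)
There is a genuine gap. Your Step $2$ is logically sound but, under the hypothesis of the lemma, it never applies. In Case $2$ the middle entry of $\MM_X(g^h)$ is $(1,b_1,c_1,d_1)$ with $b_1+c_1+d_1=3$; the only way all three of $b_1$, $c_1$, $d_1$ are nonzero is $(1,1,1,1)$, which is Case $2.4$, where none of the supports $|K_{**}|$ has an interior lattice point and hence $b_1(\overline{C'})=0$. In every configuration with $b_1(\overline{C'})=1$ (Cases $2.2$, $2.3$, $2.5$ and their $Y\leftrightarrow Z$ mirrors) at least one of $b_1,c_1,d_1$ vanishes, and then the corresponding projection $P=(1,0)$ is a cut point of that $|K_{**}|$: since $u(1)=0$ there, the slice of $|K_{**}|$ at $x=1$ degenerates to the single point $(1,0)$, so $|K_{**}|\setminus\{P\}$ is disconnected and Lemma \ref{x1y1} does not give you the third path. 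Consequently the theta-graph conclusion of Step $2$ is vacuous here, and the entire burden falls on Step $3$, which you do not carry out: producing the second intersecting cycle from only two $x_1$--$x_2$ paths plus $\gamma_0$ is exactly the nontrivial part (note that $\gamma_0$ need not meet the $x_1$--$x_2$ cycle at all, so a third cycle must be located and shown to intersect one of the other two), and you explicitly defer it as ``bookkeeping'' and ``the main obstacle.''

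For comparison, the paper's proof avoids this case analysis entirely. It uses the \emph{first} part of Lemma \ref{x1y1} (with $a=a_0=2$ and $b_0=c_0=1>0$) to connect $x_1$ to $y$ inside $C_{XY}$ and to $z$ inside $C_{XZ}$, observes that gluing $\overline{C'}$ at $y$ and at $z$ creates no new cycles, and concludes from Lemma \ref{st} that the two remaining independent cycles must both be created by the gluings at $x_1$ and $x_2$. Any cycle created at $x_2$ must pass through $x_2$ and at least one of $x_1,y,z$, while the cycle created at $x_1$ passes through $x_1$, $y$ and $z$ (or two cycles already share $x_1$); either way two cycles meet. If you want to salvage your route, you would need to replace Step $3$ by an argument of this counting type rather than a picture-by-picture check.
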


\begin{proof}
Assume $b_1(\overline{C'})=1$.
Then $C'=C\setminus (X\cup Y\cup Z\cup W)$ has exactly $1$ cycle.
We have $C\cap X=\{x_1, x_2\}$, $C\cap Y=\{y\}$ and $C\cap Z=\{z\}$ (see Notation \ref{CcapX}).
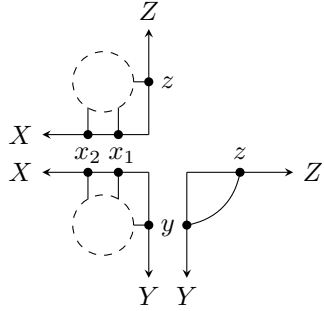
\begin{figure}[H]
\centering
\begin{tikzpicture}
 \coordinate (O1) at (5.6,-1.5);
 \coordinate (Y1) at (4.2,-1.5);
 \coordinate (Z1) at (5.6,-2.9);
 \coordinate (Y'1) at (5.2,-1.5);
 \coordinate (Y2) at (4.8,-1.5);
 \coordinate (Z'1) at (5.6,-2.2);
 \draw[thin,->,>=stealth] (O1)--(Y1) node[left] {$X$};
 \draw[thin,->,>=stealth] (O1)--(Z1) node[below] {$Y$};
 \draw[dashed] (5,-2.2) circle (0.4);
 \draw (Z'1)--(5.4,-2.2);
 \draw (Y2)--(4.8,-1.86);
 \draw (Y'1)--(5.2,-1.86);
 \fill[black] (Y'1) circle (0.06);
 \fill[black] (Z'1) circle (0.06);
 \fill[black] (Y2) circle (0.06);
 \coordinate (O4) at (5.6,-1);
 \coordinate (Y4) at (4.2,-1);
 \coordinate (Y5) at (4.8,-1);
 \coordinate (W5) at (5.6,-0.3);
 \coordinate (W4) at (5.6,0.4);
 \coordinate (Y'4) at (5.2,-1);
 \draw[thin,->,>=stealth] (O4)--(Y4) node[left] {$X$};
 \draw[thin,->,>=stealth] (O4)--(W4) node[above] {$Z$};
 \draw[dashed] (5,-0.3) circle (0.4); 
 \draw (W5)--(5.4,-0.3);
 \draw (Y5)--(4.8,-0.64);
 \draw (Y'4)--(5.2,-0.64);
 \fill[black] (Y'4) circle (0.06);
 \fill[black] (Y5) circle (0.06);
 \fill[black] (W5) circle (0.06);
 \coordinate (O5) at (6.1,-1.5);
 \coordinate (Z5) at (7.5,-1.5);
 \coordinate (W5) at (6.1,-2.9);
 \coordinate (W6) at (6.1,-2.2);
 \coordinate (Z'5) at (6.8,-1.5);
 \draw[thin,->,>=stealth] (O5)--(Z5) node[right] {$Z$};
 \draw[thin,->,>=stealth] (O5)--(W5) node[below] {$Y$};
 \draw (W6) to [out=10, in=260] (Z'5);
 \fill[black] (Z'5) circle (0.06);
 \fill[black] (W6) circle (0.06);
 \coordinate [label=below:$x_1$] (a1) at (5.25,-1.05);
 \coordinate [label=below:$x_2$] (a2) at (4.8,-1.05);
 \coordinate [label=below:$y$] (a1) at (5.85,-2);
 \coordinate [label=below:$z$] (a2) at (6.8,-1.05);
 \coordinate [label=below:$z$] (a1) at (5.85,-0.1);
\end{tikzpicture}
\caption{The skeletons discussed in the proof of Lem \ref{case2hodai}.}
\label{proofoflemma5.18nozu}
\end{figure}
Note that the point $x_1$ is connected with $y$ and $z$ in $XY$ and $XZ$, respectively, by Lemma \ref{x1y1}.
When we connect $C'$ at $y$ and $z$, the number of cycles does not change (see Figure \ref{proofoflemma5.18nozu}).
Therefore, glueing at $X$ adds $2$ homologically independent cycles.
First, when we glue at $x_1$, we have at least $1$ cycle passing through the points $x_1$, $y$ and $z$.
Note that if glueing at $x_1$ adds $2$ cycles, these cycles share the point $x_1$.
Thus we may assume that glueing at $x_1$ adds $1$ cycle.
Next, glueing at $x_2$ adds at least $1$ cycle, and this cycle passes through at least one of the points $x_1$, $y$ and $z$.
Thus, two cycles share one point.
Therefore, the skeleton of $C$ is not the lollipop graph.
\end{proof}

\subsubsection{Case $2. 1$: $(2, 1, 1, 0)\rightarrow (1, 3, 0, 0)$}

The complexes $K_{XY}$, $K_{XZ}$ and $K_{XW}$ are as in Figure \ref{fc2.1} (a).
Assume that the skeleton of $C$ is the lollipop graph of genus $3$.
Then by Lemma \ref{12}, $K_{XY}$ contains the $2$-simplices as in (b).

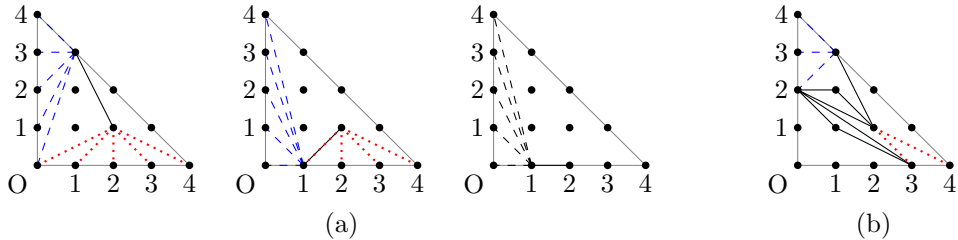
\begin{figure}[H]
\centering
\begin{tikzpicture}
 \coordinate[label=below left:O] (A1) at (0,0);
 \coordinate [label=below:1] (A2) at (0.5,0);
 \coordinate [label=below:2] (A3) at (1,0);
 \coordinate [label=below:3] (A4) at (1.5,0);
 \coordinate [label=below:4] (A5) at (2,0);
 \coordinate [label=left:1] (A6) at (0,0.5);
 \coordinate (A7) at (0.5,0.5);
 \coordinate (A8) at (1,0.5);
 \coordinate (A9) at (1.5,0.5);
 \coordinate [label=left:2] (A10) at (0,1);
 \coordinate (A11) at (0.5,1);
 \coordinate (A12) at (1,1);
 \coordinate [label=left:3] (A13) at (0,1.5);
 \coordinate (A14) at (0.5,1.5);
 \coordinate [label=left:4] (A15) at (0,2);
 \draw [very thin, gray] (A1)--(A5)--(A15)--cycle; 
 \draw (A8)--(A14);
 \foreach \P in {1,6,10,13,15}  \draw[dashed, blue] (A14)--(A\P);
 \foreach \P in {1,2,3,4,5}  \draw[thick, dotted, red] (A8)--(A\P);
 \foreach \t in {1,2,...,15} \fill[black] (A\t) circle (0.05);
 \coordinate[label=below left:O] (B1) at (3,0);
 \coordinate [label=below:1] (B2) at (3.5,0);
 \coordinate [label=below:2] (B3) at (4,0);
 \coordinate [label=below:3] (B4) at (4.5,0);
 \coordinate [label=below:4] (B5) at (5,0);
 \coordinate [label=left:1] (B6) at (3,0.5);
 \coordinate (B7) at (3.5,0.5);
 \coordinate (B8) at (4,0.5);
 \coordinate (B9) at (4.5,0.5);
 \coordinate [label=left:2] (B10) at (3,1);
 \coordinate (B11) at (3.5,1);
 \coordinate (B12) at (4,1);
 \coordinate [label=left:3] (B13) at (3,1.5);
 \coordinate (B14) at (3.5,1.5);
 \coordinate [label=left:4] (B15) at (3,2);
 \draw [very thin, gray] (B1)--(B5)--(B15)--cycle; 
 \draw (B8)--(B2);
 \foreach \P in {1,6,10,13,15}  \draw[dashed, blue] (B2)--(B\P);
 \foreach \P in {2,3,4,5}  \draw[thick, dotted, red] (B8)--(B\P);
 \foreach \t in {1,2,...,15} \fill[black] (B\t) circle (0.05);
 \coordinate[label=below left:O] (C1) at (6,0);
 \coordinate [label=below:1] (C2) at (6.5,0);
 \coordinate [label=below:2] (C3) at (7,0);
 \coordinate [label=below:3] (C4) at (7.5,0);
 \coordinate [label=below:4] (C5) at (8,0);
 \coordinate [label=left:1] (C6) at (6,0.5);
 \coordinate (C7) at (6.5,0.5);
 \coordinate (C8) at (7,0.5);
 \coordinate (C9) at (7.5,0.5);
 \coordinate [label=left:2] (C10) at (6,1);
 \coordinate (C11) at (6.5,1);
 \coordinate (C12) at (7,1);
 \coordinate [label=left:3] (C13) at (6,1.5);
 \coordinate (C14) at (6.5,1.5);
 \coordinate [label=left:4] (C15) at (6,2);
 \draw [very thin, gray] (C1)--(C5)--(C15)--cycle; 
 \draw (C3)--(C2);
 \foreach \t in {1,2,...,15} \fill[black] (C\t) circle (0.05);
 \foreach \P in {1,6,10,13,15}  \draw[dashed] (C2)--(C\P);
 \coordinate[label=below left:O] (D1) at (10,0);
 \coordinate [label=below:1] (D2) at (10.5,0);
 \coordinate [label=below:2] (D3) at (11,0);
 \coordinate [label=below:3] (D4) at (11.5,0);
 \coordinate [label=below:4] (D5) at (12,0);
 \coordinate [label=left:1] (D6) at (10,0.5);
 \coordinate (D7) at (10.5,0.5);
 \coordinate (D8) at (11,0.5);
 \coordinate (D9) at (11.5,0.5);
 \coordinate [label=left:2] (D10) at (10,1);
 \coordinate (D11) at (10.5,1);
 \coordinate (D12) at (11,1);
 \coordinate [label=left:3] (D13) at (10,1.5);
 \coordinate (D14) at (10.5,1.5);
 \coordinate [label=left:4] (D15) at (10,2);
 \draw [very thin, gray] (D1)--(D5)--(D15)--cycle;
 \draw (D8)--(D14);
 \draw (D4)--(D10)--(D8);
 \draw (D4)--(D7)--(D10);
 \draw (D10)--(D11)--(D8);
 \foreach \P in {10,13,15}  \draw[dashed, blue] (D14)--(D\P);
 \foreach \P in {4,5}  \draw[thick, dotted, red] (D8)--(D\P);
 \foreach \t in {1,2,...,15} \fill[black] (D\t) circle (0.05);
 \coordinate [label=below:(a)] (a1) at (4,-0.5);
 \coordinate [label=below:(b)] (a2) at (11,-0.5);
\end{tikzpicture}
\caption{The complexes in Case $2. 1$.}
\label{fc2.1}
\end{figure}

Hence the skeletons of $C_{XY}$, $C_{XZ}$ and $C_{YZ}$ are as in Figure \ref{fc2.1s}.
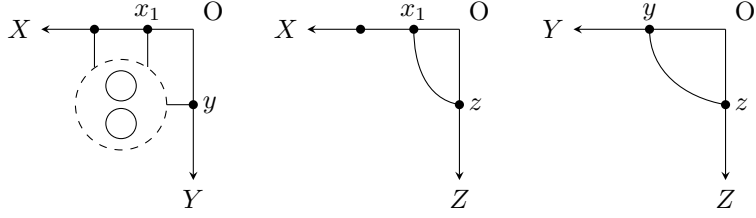
\begin{figure}[H]
\centering
\begin{tikzpicture}
 \coordinate[label=above right:O] (O1) at (-4,-1);
 \coordinate (X1) at (-6,-1);
 \coordinate (Y1) at (-4,-3);
 \coordinate (X2) at (-5.3,-1);
 \coordinate (X3) at (-4.6,-1);
 \coordinate (Y2) at (-4,-2);
 \draw[thin,->,>=stealth] (O1)--(X1) node[left] {$X$};
 \draw[thin,->,>=stealth] (O1)--(Y1) node[below] {$Y$};
 \fill[black] (X2) circle (0.06);
 \fill[black] (X3) circle (0.06);
 \fill[black] (Y2) circle (0.06);
 \draw (X2)--(-5.3,-1.5);
 \draw (X3)--(-4.6,-1.5);
 \draw (Y2)--(-4.35,-2);
 \draw[dashed] (-4.95,-2) circle (0.6);
 \draw (-4.95,-1.75) circle (0.2);
 \draw (-4.95,-2.25) circle (0.2);
 \coordinate[label=above right:O] (O1) at (-0.5,-1);
 \coordinate (X1) at (-2.5,-1);
 \coordinate (Y1) at (-0.5,-3);
 \coordinate (X2) at (-1.8,-1);
 \coordinate (X3) at (-1.1,-1);
 \coordinate (Y2) at (-0.5,-2);
 \draw[thin,->,>=stealth] (O1)--(X1) node[left] {$X$};
 \draw[thin,->,>=stealth] (O1)--(Y1) node[below] {$Z$};
 \fill[black] (X2) circle (0.06);
 \fill[black] (X3) circle (0.06);
 \fill[black] (Y2) circle (0.06);
 \draw (X3) to [out=270, in=170] (Y2);
 \coordinate[label=above right:O] (O2) at (3,-1);
 \coordinate (X4) at (1,-1);
 \coordinate (Z1) at (3,-3);
 \coordinate (X5) at (2,-1);
 \coordinate (Z2) at (3,-2);
 \draw[thin,->,>=stealth] (O2)--(X4) node[left] {$Y$};
 \draw[thin,->,>=stealth] (O2)--(Z1) node[below] {$Z$};
 \fill[black] (X5) circle (0.06);
 \fill[black] (Z2) circle (0.06);
 \draw (X5) to [out=270, in=170] (Z2);
 \coordinate [label=above:$x_1$] (u1) at (-4.6,-1);
 \coordinate [label=above:$x_1$] (u1) at (-1.1,-1);
 \coordinate [label=above:$y$] (u1) at (2,-1);
 \coordinate [label=right:$y$] (u1) at (-4,-2);
 \coordinate [label=right:$z$] (u1) at (-0.5,-2);
 \coordinate [label=right:$z$] (u1) at (3,-2); 
\end{tikzpicture}
\caption{The skeletons of $C_{XY}$, $C_{XZ}$ and $C_{YZ}$ in Case $2. 1$.}
\label{fc2.1s}
\end{figure}

The $1$-simplex $(1, 3)$-$(2, 1)$ of $K_{XY}$ corresponds to the edge of $C_{XY}$ which contains the point $x_1\in C\cap X$.
Any path from $x_1$ to $y$ in $C_{XY}$ intersects the cycle which corresponds to the point $(1, 2)$.
When we glue at $X$, $Y$ and $Z$, we have a cycle which contains the points $x_1$ and $y$.
Thus the two cycles intersect and the skeleton of $C$ is not the lollipop graph.

\subsubsection{Cases $2. 2$ to $2. 6$}
The complexes $K_{XY}$, $K_{XZ}$ and $K_{XW}$ are as in Figure \ref{fc2.2to2.6}.
In Cases $2. 2$ to $2. 5$, $b_1(\overline{C'})$ is at most $1$ and the skeleton of $C$ is not the lollipop graph by Lemma \ref{saikurunashi} and \ref{case2hodai}.
In Case $2. 6$, $K_{XW}$ contains the edge $(1, 3)$-$(2, 0)$ as its $1$-simplex and the skeleton of $C$ is not the lollipop graph by Lemma \ref{Case2.6}.
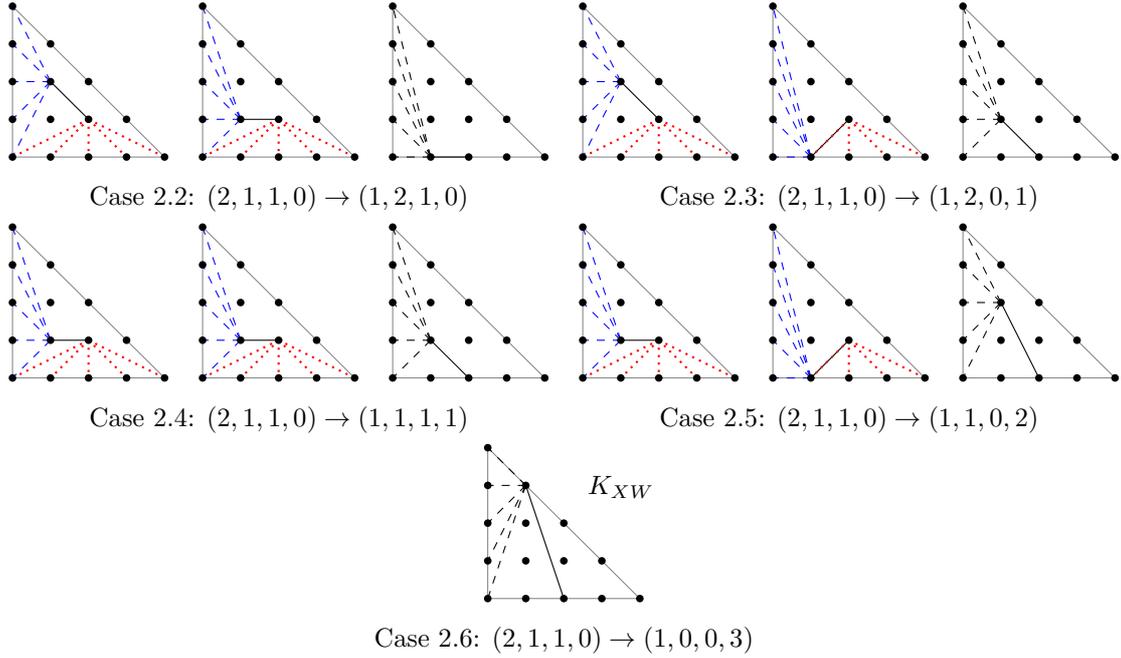
\begin{figure}[H]
\centering
\begin{tikzpicture}
 \coordinate (A1) at (0,0);
 \coordinate (A2) at (0.5,0);
 \coordinate (A3) at (1,0);
 \coordinate (A4) at (1.5,0);
 \coordinate (A5) at (2,0);
 \coordinate (A6) at (0,0.5);
 \coordinate (A7) at (0.5,0.5);
 \coordinate (A8) at (1,0.5);
 \coordinate (A9) at (1.5,0.5);
 \coordinate (A10) at (0,1);
 \coordinate (A11) at (0.5,1);
 \coordinate (A12) at (1,1);
 \coordinate (A13) at (0,1.5);
 \coordinate (A14) at (0.5,1.5);
 \coordinate (A15) at (0,2);
 \draw [very thin, gray] (A1)--(A5)--(A15)--cycle; 
 \draw (A8)--(A11);
 \foreach \P in {1,6,10,13,15}  \draw[dashed, blue] (A11)--(A\P);
 \foreach \P in {1,2,3,4,5}  \draw[thick, dotted, red] (A8)--(A\P);
 \foreach \t in {1,2,...,15} \fill[black] (A\t) circle (0.05);
 \coordinate (B1) at (2.5,0);
 \coordinate (B2) at (3,0);
 \coordinate (B3) at (3.5,0);
 \coordinate (B4) at (4,0);
 \coordinate (B5) at (4.5,0);
 \coordinate (B6) at (2.5,0.5);
 \coordinate (B7) at (3,0.5);
 \coordinate (B8) at (3.5,0.5);
 \coordinate (B9) at (4,0.5);
 \coordinate (B10) at (2.5,1);
 \coordinate (B11) at (3,1);
 \coordinate (B12) at (3.5,1);
 \coordinate (B13) at (2.5,1.5);
 \coordinate (B14) at (3,1.5);
 \coordinate (B15) at (2.5,2);
 \draw [very thin, gray] (B1)--(B5)--(B15)--cycle; 
 \draw (B8)--(B7);
 \foreach \P in {1,6,10,13,15}  \draw[dashed, blue] (B7)--(B\P);
 \foreach \P in {1,2,3,4,5}  \draw[thick, dotted, red] (B8)--(B\P);
 \foreach \t in {1,2,...,15} \fill[black] (B\t) circle (0.05);
 \coordinate (C1) at (5,0);
 \coordinate (C2) at (5.5,0);
 \coordinate (C3) at (6,0);
 \coordinate (C4) at (6.5,0);
 \coordinate (C5) at (7,0);
 \coordinate (C6) at (5,0.5);
 \coordinate (C7) at (5.5,0.5);
 \coordinate (C8) at (6,0.5);
 \coordinate (C9) at (6.5,0.5);
 \coordinate (C10) at (5,1);
 \coordinate (C11) at (5.5,1);
 \coordinate (C12) at (6,1);
 \coordinate (C13) at (5,1.5);
 \coordinate (C14) at (5.5,1.5);
 \coordinate (C15) at (5,2);
 \draw [very thin, gray] (C1)--(C5)--(C15)--cycle; 
 \draw (C3)--(C2);
 \foreach \t in {1,2,...,15} \fill[black] (C\t) circle (0.05);
 \foreach \P in {1,6,10,13,15}  \draw[dashed] (C2)--(C\P);
 \coordinate (A1) at (7.5,0);
 \coordinate (A2) at (8,0);
 \coordinate (A3) at (8.5,0);
 \coordinate (A4) at (9,0);
 \coordinate (A5) at (9.5,0);
 \coordinate (A6) at (7.5,0.5);
 \coordinate (A7) at (8,0.5);
 \coordinate (A8) at (8.5,0.5);
 \coordinate (A9) at (9,0.5);
 \coordinate (A10) at (7.5,1);
 \coordinate (A11) at (8,1);
 \coordinate (A12) at (8.5,1);
 \coordinate (A13) at (7.5,1.5);
 \coordinate (A14) at (8,1.5);
 \coordinate (A15) at (7.5,2);
 \draw [very thin, gray] (A1)--(A5)--(A15)--cycle; 
 \draw (A8)--(A11);
 \foreach \P in {1,6,10,13,15}  \draw[dashed, blue] (A11)--(A\P);
 \foreach \P in {1,2,3,4,5}  \draw[thick, dotted, red] (A8)--(A\P);
 \foreach \t in {1,2,...,15} \fill[black] (A\t) circle (0.05);
 \coordinate (B1) at (10,0);
 \coordinate (B2) at (10.5,0);
 \coordinate (B3) at (11,0);
 \coordinate (B4) at (11.5,0);
 \coordinate (B5) at (12,0);
 \coordinate (B6) at (10,0.5);
 \coordinate (B7) at (10.5,0.5);
 \coordinate (B8) at (11,0.5);
 \coordinate (B9) at (11.5,0.5);
 \coordinate (B10) at (10,1);
 \coordinate (B11) at (10.5,1);
 \coordinate (B12) at (11,1);
 \coordinate (B13) at (10,1.5);
 \coordinate (B14) at (10.5,1.5);
 \coordinate (B15) at (10,2);
 \draw [very thin, gray] (B1)--(B5)--(B15)--cycle; 
 \draw (B8)--(B2);
 \foreach \P in {1,6,10,13,15}  \draw[dashed, blue] (B2)--(B\P);
 \foreach \P in {2,3,4,5}  \draw[thick, dotted, red] (B8)--(B\P);
 \foreach \t in {1,2,...,15} \fill[black] (B\t) circle (0.05);
 \coordinate (C1) at (12.5,0);
 \coordinate (C2) at (13,0);
 \coordinate (C3) at (13.5,0);
 \coordinate (C4) at (14,0);
 \coordinate (C5) at (14.5,0);
 \coordinate (C6) at (12.5,0.5);
 \coordinate (C7) at (13,0.5);
 \coordinate (C8) at (13.5,0.5);
 \coordinate (C9) at (14,0.5);
 \coordinate (C10) at (12.5,1);
 \coordinate (C11) at (13,1);
 \coordinate (C12) at (13.5,1);
 \coordinate (C13) at (12.5,1.5);
 \coordinate (C14) at (13,1.5);
 \coordinate (C15) at (12.5,2);
 \draw [very thin, gray] (C1)--(C5)--(C15)--cycle; 
 \draw (C3)--(C7);
 \foreach \t in {1,2,...,15} \fill[black] (C\t) circle (0.05);
 \foreach \P in {1,6,10,13,15}  \draw[dashed] (C7)--(C\P);
 \coordinate [label=below:$\text{Case $2. 2$: $(2, 1, 1, 0)\rightarrow (1, 2, 1, 0)$}$] (a1) at (3.5,-0.25);
 \coordinate [label=below:$\text{Case $2. 3$: $(2, 1, 1, 0)\rightarrow (1, 2, 0, 1)$}$] (a2) at (11,-0.25);
\end{tikzpicture}
\begin{tikzpicture}
 \coordinate (A1) at (0,0);
 \coordinate (A2) at (0.5,0);
 \coordinate (A3) at (1,0);
 \coordinate (A4) at (1.5,0);
 \coordinate (A5) at (2,0);
 \coordinate (A6) at (0,0.5);
 \coordinate (A7) at (0.5,0.5);
 \coordinate (A8) at (1,0.5);
 \coordinate (A9) at (1.5,0.5);
 \coordinate (A10) at (0,1);
 \coordinate (A11) at (0.5,1);
 \coordinate (A12) at (1,1);
 \coordinate (A13) at (0,1.5);
 \coordinate (A14) at (0.5,1.5);
 \coordinate (A15) at (0,2);
 \draw [very thin, gray] (A1)--(A5)--(A15)--cycle; 
 \draw (A8)--(A7);
 \foreach \P in {1,6,10,13,15}  \draw[dashed, blue] (A7)--(A\P);
 \foreach \P in {1,2,3,4,5}  \draw[thick, dotted, red] (A8)--(A\P);
 \foreach \t in {1,2,...,15} \fill[black] (A\t) circle (0.05);
 \coordinate (B1) at (2.5,0);
 \coordinate (B2) at (3,0);
 \coordinate (B3) at (3.5,0);
 \coordinate (B4) at (4,0);
 \coordinate (B5) at (4.5,0);
 \coordinate (B6) at (2.5,0.5);
 \coordinate (B7) at (3,0.5);
 \coordinate (B8) at (3.5,0.5);
 \coordinate (B9) at (4,0.5);
 \coordinate (B10) at (2.5,1);
 \coordinate (B11) at (3,1);
 \coordinate (B12) at (3.5,1);
 \coordinate (B13) at (2.5,1.5);
 \coordinate (B14) at (3,1.5);
 \coordinate (B15) at (2.5,2);
 \draw [very thin, gray] (B1)--(B5)--(B15)--cycle; 
 \draw (B8)--(B7);
 \foreach \P in {1,6,10,13,15}  \draw[dashed, blue] (B7)--(B\P);
 \foreach \P in {1,2,3,4,5}  \draw[thick, dotted, red] (B8)--(B\P);
 \foreach \t in {1,2,...,15} \fill[black] (B\t) circle (0.05);
 \coordinate (C1) at (5,0);
 \coordinate (C2) at (5.5,0);
 \coordinate (C3) at (6,0);
 \coordinate (C4) at (6.5,0);
 \coordinate (C5) at (7,0);
 \coordinate (C6) at (5,0.5);
 \coordinate (C7) at (5.5,0.5);
 \coordinate (C8) at (6,0.5);
 \coordinate (C9) at (6.5,0.5);
 \coordinate (C10) at (5,1);
 \coordinate (C11) at (5.5,1);
 \coordinate (C12) at (6,1);
 \coordinate (C13) at (5,1.5);
 \coordinate (C14) at (5.5,1.5);
 \coordinate (C15) at (5,2);
 \draw [very thin, gray] (C1)--(C5)--(C15)--cycle; 
 \draw (C3)--(C7);
 \foreach \t in {1,2,...,15} \fill[black] (C\t) circle (0.05);
 \foreach \P in {1,6,10,13,15}  \draw[dashed] (C7)--(C\P);
 \coordinate (A1) at (7.5,0);
 \coordinate (A2) at (8,0);
 \coordinate (A3) at (8.5,0);
 \coordinate (A4) at (9,0);
 \coordinate (A5) at (9.5,0);
 \coordinate (A6) at (7.5,0.5);
 \coordinate (A7) at (8,0.5);
 \coordinate (A8) at (8.5,0.5);
 \coordinate (A9) at (9,0.5);
 \coordinate (A10) at (7.5,1);
 \coordinate (A11) at (8,1);
 \coordinate (A12) at (8.5,1);
 \coordinate (A13) at (7.5,1.5);
 \coordinate (A14) at (8,1.5);
 \coordinate (A15) at (7.5,2);
 \draw [very thin, gray] (A1)--(A5)--(A15)--cycle; 
 \draw (A8)--(A7);
 \foreach \P in {1,6,10,13,15}  \draw[dashed, blue] (A7)--(A\P);
 \foreach \P in {1,2,3,4,5}  \draw[thick, dotted, red] (A8)--(A\P);
 \foreach \t in {1,2,...,15} \fill[black] (A\t) circle (0.05);
 \coordinate (B1) at (10,0);
 \coordinate (B2) at (10.5,0);
 \coordinate (B3) at (11,0);
 \coordinate (B4) at (11.5,0);
 \coordinate (B5) at (12,0);
 \coordinate (B6) at (10,0.5);
 \coordinate (B7) at (10.5,0.5);
 \coordinate (B8) at (11,0.5);
 \coordinate (B9) at (11.5,0.5);
 \coordinate (B10) at (10,1);
 \coordinate (B11) at (10.5,1);
 \coordinate (B12) at (11,1);
 \coordinate (B13) at (10,1.5);
 \coordinate (B14) at (10.5,1.5);
 \coordinate (B15) at (10,2);
 \draw [very thin, gray] (B1)--(B5)--(B15)--cycle; 
 \draw (B8)--(B2);
 \foreach \P in {1,6,10,13,15}  \draw[dashed, blue] (B2)--(B\P);
 \foreach \P in {2,3,4,5}  \draw[thick, dotted, red] (B8)--(B\P);
 \foreach \t in {1,2,...,15} \fill[black] (B\t) circle (0.05);
 \coordinate (C1) at (12.5,0);
 \coordinate (C2) at (13,0);
 \coordinate (C3) at (13.5,0);
 \coordinate (C4) at (14,0);
 \coordinate (C5) at (14.5,0);
 \coordinate (C6) at (12.5,0.5);
 \coordinate (C7) at (13,0.5);
 \coordinate (C8) at (13.5,0.5);
 \coordinate (C9) at (14,0.5);
 \coordinate (C10) at (12.5,1);
 \coordinate (C11) at (13,1);
 \coordinate (C12) at (13.5,1);
 \coordinate (C13) at (12.5,1.5);
 \coordinate (C14) at (13,1.5);
 \coordinate (C15) at (12.5,2);
 \draw [very thin, gray] (C1)--(C5)--(C15)--cycle; 
 \draw (C3)--(C11);
 \foreach \t in {1,2,...,15} \fill[black] (C\t) circle (0.05);
 \foreach \P in {1,6,10,13,15}  \draw[dashed] (C11)--(C\P);
 \coordinate [label=below:$\text{Case $2. 4$: $(2, 1, 1, 0)\rightarrow (1, 1, 1, 1)$}$] (a1) at (3.5,-0.25);
 \coordinate [label=below:$\text{Case $2. 5$: $(2, 1, 1, 0)\rightarrow (1, 1, 0, 2)$}$] (a2) at (11,-0.25);
\end{tikzpicture}
\begin{tikzpicture}
 \coordinate (B1) at (2.5,0);
 \coordinate (B2) at (3,0);
 \coordinate (B3) at (3.5,0);
 \coordinate (B4) at (4,0);
 \coordinate (B5) at (4.5,0);
 \coordinate (B6) at (2.5,0.5);
 \coordinate (B7) at (3,0.5);
 \coordinate (B8) at (3.5,0.5);
 \coordinate (B9) at (4,0.5);
 \coordinate (B10) at (2.5,1);
 \coordinate (B11) at (3,1);
 \coordinate (B12) at (3.5,1);
 \coordinate (B13) at (2.5,1.5);
 \coordinate (B14) at (3,1.5);
 \coordinate (B15) at (2.5,2);
 \draw [very thin, gray] (B1)--(B5)--(B15)--cycle; 
 \draw (B3)--(B14);
 \foreach \P in {1,6,10,13,15}  \draw[dashed] (B14)--(B\P);
 \foreach \t in {1,2,...,15} \fill[black] (B\t) circle (0.05);
 \coordinate [label=below:$\text{Case $2. 6$: $(2, 1, 1, 0)\rightarrow (1, 0, 0, 3)$}$] (a1) at (3.5,-0.25);
 \coordinate[label=below:$K_{XW}$] (a3) at (4.25,1.75);
\end{tikzpicture}
\caption{The complexes in Cases $2. 2$ to $2. 6$.}
\label{fc2.2to2.6}
\end{figure}

Thus in Case $2$, the skeletons of $C$ are not the lollipop graph.

\subsection{Case $3$: $(a_0, b_0, c_0, d_0)=(2, 2, 0, 0)$}

We look at $\MM_X(g^h)$ and $\MM_Y(g^h)$.
Projecting to the first two components, they are as follows.
\begin{equation*}
\begin{array}{lccrr}
(0, u(0))\leftarrow (1, u(1))\leftarrow&(2, 2)&\\
&\downarrow&\\
&(r(1), 1)&\\
&\downarrow&\\
&(r(0), 0)&
\end{array}
\end{equation*}

Since $c_0=d_0=0$, we do not have to consider the glueing along $Z$ and $W$.
The paths which connect the points $(2, 2)$ and $(1, u(1))$, $(r(1), 1)$ are selected from the dashed and dotted lines in Figure \ref{fc3}, and the former is ``above'' or ``equal to'' the latter by Lemma \ref{sakaime}.
\begin{figure}[H]
\centering
\begin{tikzpicture}
 \coordinate[label=below left:O] (A1) at (0,0);
 \coordinate [label=below:1] (A2) at (0.5,0);
 \coordinate [label=below:2] (A3) at (1,0);
 \coordinate [label=below:3] (A4) at (1.5,0);
 \coordinate [label=below:4] (A5) at (2,0);
 \coordinate [label=left:1] (A6) at (0,0.5);
 \coordinate (A7) at (0.5,0.5);
 \coordinate (A8) at (1,0.5);
 \coordinate (A9) at (1.5,0.5);
 \coordinate [label=left:2] (A10) at (0,1);
 \coordinate (A11) at (0.5,1);
 \coordinate (A12) at (1,1);
 \coordinate [label=left:3] (A13) at (0,1.5);
 \coordinate (A14) at (0.5,1.5);
 \coordinate [label=left:4] (A15) at (0,2);
 \draw [very thin, gray] (A1)--(A5)--(A15)--cycle; 
 \foreach \P in {2,7,11,14}  \draw[dashed, blue] (A12)--(A\P);
 \foreach \P in {6,7,8,9}  \draw[thick, dotted, red] (A12)--(A\P);
 \foreach \t in {1,2,...,15} \fill[black] (A\t) circle (0.05);
\end{tikzpicture}
\caption{Case $3$.}
\label{fc3}
\end{figure}

It is sufficient to consider the $8$ cases below as the pairs $(u(1), r(1))$ from the symmetry between $X$ and $Y$.
\begin{eqnarray*}
\text{Case $3. 1$: }(u(1), r(1))=(0, 2), \quad \text{Case $3. 2$: }(u(1), r(1))=(0, 3), \\
\text{Case $3. 3$: }(u(1), r(1))=(1, 1), \quad \text{Case $3. 4$: }(u(1), r(1))=(1, 2), \\
\text{Case $3. 5$: }(u(1), r(1))=(1, 3), \quad \text{Case $3. 6$: }(u(1), r(1))=(2, 2), \\
\text{Case $3. 7$: }(u(1), r(1))=(2, 3), \quad \text{Case $3. 8$: }(u(1), r(1))=(3, 3).\hspace{0.5mm}
\end{eqnarray*}

\begin{Lem}\label{2011}
Let $A_1$, $A_2$ and $A_3$ be three of $X$, $Y$, $Z$ and $W$.
Assume that the projection of $\MM_{A_1}(g^h)$ to the coordinates $A_1$, $A_2$ and $A_3$ is $(2, 0, 0)\rightarrow (1, 1, 0)\rightarrow \cdots$.
Then the skeleton of $C_{A_1A_2}\cup C_{A_1A_3}$ is as in Figure \ref{lemma5.19nozu}.
\begin{figure}[H]
\centering
\begin{tikzpicture}
 \coordinate[label=right:$O$] (O1) at (8.5,2);
 \coordinate (X1) at (6.4,2);
 \coordinate (Z1) at (8.5,1);
 \coordinate (X2) at (7.1,2);
 \coordinate (X3) at (7.8,2);
 \draw[thin,->,>=stealth] (O1)--(X1) node[left] {$A_1$};
 \draw[thin,->,>=stealth] (O1)--(Z1);
 \fill[black] (X2) circle (0.06);
 \fill[black] (X3) circle (0.06);
 \draw (X2) arc (180:360:0.35);
\end{tikzpicture}
\caption{The skeleton of $C_{A_1A_2}\cup C_{A_1A_3}$ in Lemma \ref{2011}.}
\label{lemma5.19nozu}
\end{figure}
\end{Lem}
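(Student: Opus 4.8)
The plan is to read off the skeletons of $C_{A_1A_2}$ and of $C_{A_1A_3}$ separately from the shapes of the complexes $K_{A_1A_2}$ and $K_{A_1A_3}$, and then to glue along $A_1$. After relabelling so that $A_1,A_2,A_3$ are the first three coordinates and $A_4$ the fourth, the hypothesis says that the domain $D_0\in\pi_0(\mathbf{D}(g))$ containing the origin has exponent $(2,0,0,2)$ and that the next domain $D_1$ along $A_1$ has exponent $(1,1,0,2)$. By Corollary~\ref{XYZW} and Lemma~\ref{genten} this forces $C\cap A_1=\{x_1,x_2\}$ with $x_1$ nearer the origin, $C\cap A_2=C\cap A_3=\emptyset$, and exactly three domains $D_0,D_1,D_2$ meeting $A_1$, separated in order by $x_1$ and $x_2$; write $D_2=(0,b_2,c_2,d_2)$. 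Projecting $\MM_{A_1}(g^h)$ to the $(A_1,A_2)$- and $(A_1,A_3)$-planes gives the lattice paths $(2,0)\to(1,1)\to(0,b_2)$ and $(2,0)\to(1,0)\to(0,c_2)$.

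First I would handle $C_{A_1A_2}$. By Proposition~\ref{K'nokatachi}, $|K_{A_1A_2}|=\{(x,y):0\le x\le 2,\ 0\le y\le u(x)\}$ with $u(2)=0$ and $u(1)=1$, so $|K_{A_1A_2}|$ has no interior lattice point; by Lemma~\ref{cycle} (together with Lemma~\ref{1/2}) this means $C_{A_1A_2}$ has no cycle. Since $(1,1)$ is a non-cut boundary point of the two-dimensional region $|K_{A_1A_2}|$, the set $|K_{A_1A_2}|\setminus\{(1,1)\}$ is connected, so Lemma~\ref{x1y1} applied to $P=D_1$ (with $a=1<a_0=2$) produces a path in $C_{A_1A_2}$ joining $x_1$ and $x_2$. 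As $C_{A_1A_2}$ is then a forest and any component avoiding $x_1$ and $x_2$ contributes nothing after taking skeletons, the skeleton of $C_{A_1A_2}$ relative to $A_1$ is the single edge $x_1x_2$.

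The more delicate half is $C_{A_1A_3}$, where the analogous computation gives $|K_{A_1A_3}|=\{(x,y):0\le x\le 2,\ 0\le y\le u'(x)\}$ with $u'(1)=u'(2)=0$; thus the region degenerates along the $x$-axis, the portion $1\le x\le 2$ being just the segment $(1,0)$--$(2,0)$, which carries no $2$-simplex of $K_{A_1A_3}$. Again there is no interior lattice point, so $C_{A_1A_3}$ has no cycle. Now $(1,0)$--$(2,0)$ lies on $\partial\Delta_4$, so by Theorem~\ref{dualitytheorem} it is dual to an unbounded edge $E$ of the plane curve; $E$ is the common boundary of $D_0$ and $D_1$, hence contains $x_1$, and since $(1,0)$--$(2,0)$ is a face of no $2$-simplex of $K_{A_1A_3}$, the edge $E$ meets no vertex of $C_{A_1A_3}$ inside $\mathbb{R}^2_{\le 0}$. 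So the component of $C_{A_1A_3}$ through $x_1$ is a single ray carrying only the marked point $x_1$, while $x_2$ lies on the edge dual to the $1$-simplex of $K_{A_1A_3}$ joining $D_1$ and $D_2$ and hence in a different component, which is a tree whose only marked point is $x_2$ (one checks the cases $c_2=0$ and $c_2\ge1$ separately). In either component the skeleton reduces to the single marked point, so the skeleton of $C_{A_1A_3}$ relative to $A_1$ is the pair of isolated points $x_1,x_2$.

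Finally, gluing $C_{A_1A_2}$ and $C_{A_1A_3}$ along $A_1$ and passing to the skeleton, the contribution of $C_{A_1A_3}$ collapses onto the already-present endpoints $x_1$ and $x_2$, so the skeleton of $C_{A_1A_2}\cup C_{A_1A_3}$ is exactly the arc $x_1x_2$ of Figure~\ref{lemma5.19nozu}. I expect the main obstacle to be precisely this last description: verifying, uniformly in $D_2$, that no vertex of $C_{A_1A_3}$ lying in $\mathbb{R}^2_{\le 0}$ reconnects $x_1$ with $x_2$ and that each of their components has trivial skeleton. This is where one must combine the precise shape of $|K_{A_1A_3}|$ coming from Proposition~\ref{K'nokatachi} with the boundary-edge/ray clause of the duality theorem.
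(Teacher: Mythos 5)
Your proposal is correct and follows essentially the same route as the paper: read off the shapes of $K_{A_1A_2}$ and $K_{A_1A_3}$ from the hypothesis on $\MM_{A_1}(g^h)$, deduce the skeleton of each quarter-plane piece, and glue along $A_1$. The paper's own proof simply exhibits these complexes and skeletons in a figure; your argument supplies the justifications (absence of interior lattice points via Proposition \ref{K'nokatachi} and Lemma \ref{cycle}, the $x_1$--$x_2$ path via Lemma \ref{x1y1}, and the collapse of $C_{A_1A_3}$ via the boundary-edge clause of the duality theorem) that the figure leaves implicit.
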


\begin{proof}
The complexes $K_{A_1A_2}$ and $K_{A_1A_3}$ are as in Figure \ref{proofoflemma5.19nozu} (a).
The skeletons of $C_{A_1A_2}$ and $C_{A_1A_3}$ are as in (b).
Thus the skeleton of $C_{A_1A_2}\cup C_{A_1A_3}$ is the same as that of $C_{A_1A_2}$.
\begin{figure}[H]
\centering
\begin{tikzpicture}
 \coordinate (A1) at (2.5,0);
 \coordinate (A2) at (3,0);
 \coordinate (A3) at (3.5,0);
 \coordinate (A4) at (4,0);
 \coordinate (A5) at (4.5,0);
 \coordinate (A6) at (2.5,0.5);
 \coordinate (A7) at (3,0.5);
 \coordinate (A8) at (3.5,0.5);
 \coordinate (A9) at (4,0.5);
 \coordinate (A10) at (2.5,1);
 \coordinate (A11) at (3,1);
 \coordinate (A12) at (3.5,1);
 \coordinate (A13) at (2.5,1.5);
 \coordinate (A14) at (3,1.5);
 \coordinate (A15) at (2.5,2);
 \draw [very thin, gray] (A1)--(A5)--(A15)--cycle; 
 \draw (A3)--(A7);
 \foreach \P in {1,6,10,13,15} \draw[dashed] (A7)--(A\P);
 \foreach \t in {1,2,...,15} \fill[black] (A\t) circle (0.05);
 \coordinate (A1) at (5,0);
 \coordinate (A2) at (5.5,0);
 \coordinate (A3) at (6,0);
 \coordinate (A4) at (6.5,0);
 \coordinate (A5) at (7,0);
 \coordinate (A6) at (5,0.5);
 \coordinate (A7) at (5.5,0.5);
 \coordinate (A8) at (6,0.5);
 \coordinate (A9) at (6.5,0.5);
 \coordinate (A10) at (5,1);
 \coordinate (A11) at (5.5,1);
 \coordinate (A12) at (6,1);
 \coordinate (A13) at (5,1.5);
 \coordinate (A14) at (5.5,1.5);
 \coordinate (A15) at (5,2);
 \draw [very thin, gray] (A1)--(A5)--(A15)--cycle; 
 \draw (A3)--(A2);
 \foreach \P in {1,6,10,13,15} \draw[dashed] (A2)--(A\P);
 \foreach \t in {1,2,...,15} \fill[black] (A\t) circle (0.05);
 \coordinate (O1) at (9.5,2);
 \coordinate (X1) at (7.5,2);
 \coordinate (Y1) at (9.5,0);
 \coordinate (X2) at (8.1,2);
 \coordinate (X3) at (8.8,2);
 \coordinate (Y2) at (9.5,1.3);
 \coordinate (Y3) at (9.5,0.6);
 \draw[thin,->,>=stealth] (O1)--(X1) node[above] {$A_1$};
 \draw[thin,->,>=stealth] (O1)--(Y1) node[below] {$A_2$};
 \fill[black] (X2) circle (0.06);
 \fill[black] (X3) circle (0.06);
 \draw (X2) arc (180:360:0.35);
 \coordinate (O1) at (12,2);
 \coordinate (X1) at (10,2);
 \coordinate (Y1) at (12,0);
 \coordinate (X2) at (10.6,2);
 \coordinate (X3) at (11.3,2);
 \coordinate (Y2) at (12,1.3);
 \coordinate (Y3) at (12,0.6);
 \draw[thin,->,>=stealth] (O1)--(X1) node[above] {$A_1$};
 \draw[thin,->,>=stealth] (O1)--(Y1) node[below] {$A_3$};
 \fill[black] (X2) circle (0.06);
 \fill[black] (X3) circle (0.06);
 \coordinate[label=below:(a)] (a2) at (4.75,-0.5);
 \coordinate[label=below:(b)] (a4) at (10,-0.5);
\end{tikzpicture}
\caption{The complexes and skeletons in the proof of Lemma \ref{2011}.}
\label{proofoflemma5.19nozu}
\end{figure}
\end{proof}

\subsubsection{Case $3. 1$: $(u(1), r(1))=(0, 2)$ and Case $3. 2$: $(u(1), r(1))=(0, 3)$}

In Case $3. 1$, $K_{XY}$ is as in Figure \ref{fc3.1and3.2} (a) and the skeleton of $C_{XY}$ is as in (b).
The set $\MM_Y(g^h)$ has to contain $(2, 2, 0, 0)\rightarrow (2, 1, 1, 0)$ or $(2, 2, 0, 0)\rightarrow (2, 1, 0, 1)$.
In any case, we have $(2, 0)\rightarrow (1, 1)$ and $(2, 0)\rightarrow (1, 0)$ on $K_{YZ}$ and $K_{YW}$.
Then by Lemma \ref{2011}, the skeleton (c) appears when we glue $C_{XY}$, $C_{YZ}$ and $C_{YW}$ along $Y$.
In Case $3. 2$, $K_{XY}$ is as in (d) and the skeleton of $C_{XY}$ is as in (e).

In Cases $3. 1$ and $3. 2$, we can assume that $\MM_X(g^h)$ contains $(2, 2, 0, 0)\rightarrow (1, 0, 3, 0)$ or $(2, 2, 0, 0)\rightarrow (1, 0, 2, 1)$ by the symmetry between $Z$ and $W$.
In the case $(2, 2, 0, 0)\rightarrow (1, 0, 3, 0)$, $K_{XZ}$ is as in (f) and the skeleton of $C$ is not the lollipop graph by Lemma \ref{Case2.6}.
Hence we assume that $\MM_X(g^h)$ contains $(2, 2, 0, 0)\rightarrow (1, 0, 2, 1)$.
Then $K_{XZ}$ and $K_{XW}$ are as in (g).
By connecting the skeletons of $C_{XZ}$ and others at $X$, we obtain the skeleton as in (h).

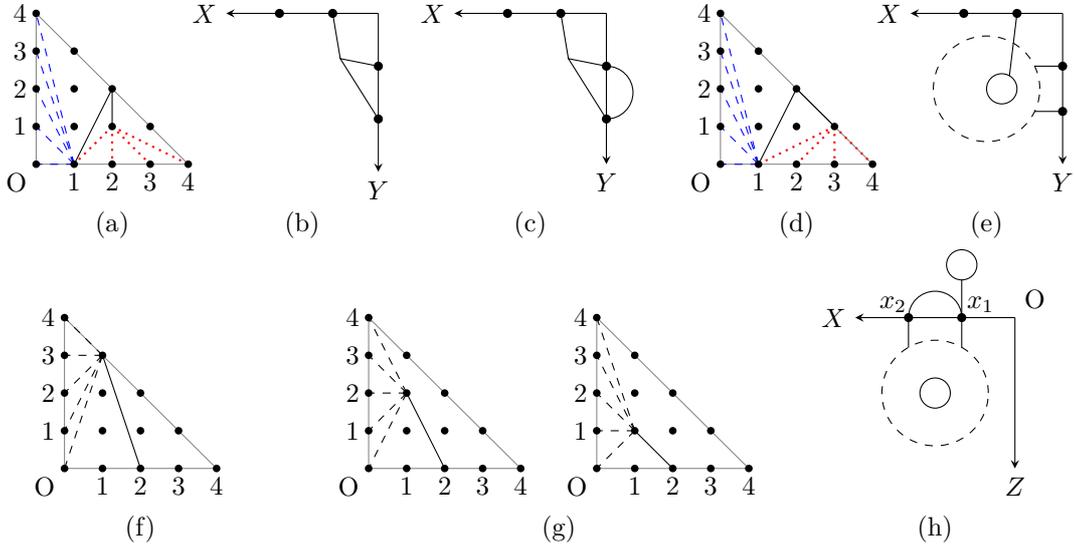
\begin{figure}[H]
\centering
\begin{tikzpicture}
 \coordinate[label=below left:O] (A1) at (0,0);
 \coordinate [label=below:1] (A2) at (0.5,0);
 \coordinate [label=below:2] (A3) at (1,0);
 \coordinate [label=below:3] (A4) at (1.5,0);
 \coordinate [label=below:4] (A5) at (2,0);
 \coordinate [label=left:1] (A6) at (0,0.5);
 \coordinate (A7) at (0.5,0.5);
 \coordinate (A8) at (1,0.5);
 \coordinate (A9) at (1.5,0.5);
 \coordinate [label=left:2] (A10) at (0,1);
 \coordinate (A11) at (0.5,1);
 \coordinate (A12) at (1,1);
 \coordinate [label=left:3] (A13) at (0,1.5);
 \coordinate (A14) at (0.5,1.5);
 \coordinate [label=left:4] (A15) at (0,2);
 \draw [very thin, gray] (A1)--(A5)--(A15)--cycle; 
 \draw (A12)--(A2);
 \draw (A12)--(A8);
 \foreach \P in {1,6,10,13,15}  \draw[dashed, blue] (A2)--(A\P);
 \foreach \P in {2,3,4,5}  \draw[thick, dotted, red] (A8)--(A\P);
 \foreach \t in {1,2,...,15} \fill[black] (A\t) circle (0.05);
 \coordinate (O1) at (4.5,2);
 \coordinate (X1) at (2.5,2);
 \coordinate (Y1) at (4.5,-0.1);
 \coordinate (X2) at (3.2,2);
 \coordinate (X3) at (3.9,2);
 \coordinate (Y2) at (4.5,1.3);
 \coordinate (Y3) at (4.5,0.6);
 \coordinate (P) at (4,1.4);
 \draw[thin,->,>=stealth] (O1)--(X1) node[left] {$X$};
 \draw[thin,->,>=stealth] (O1)--(Y1) node[below] {$Y$};
 \fill[black] (X2) circle (0.06);
 \fill[black] (X3) circle (0.06);
 \fill[black] (Y2) circle (0.06);
 \fill[black] (Y3) circle (0.06);
 \draw (Y2)--(P)--(Y3);
 \draw (X3)--(P);
 \coordinate (O3) at (7.5,2);
 \coordinate (Q1) at (5.5,2);
 \coordinate (W1) at (7.5,0);
 \coordinate (Q2) at (6.2,2);
 \coordinate (Q3) at (6.9,2);
 \coordinate (W2) at (7.5,1.3);
 \coordinate (W3) at (7.5,0.6);
 \coordinate (E) at (7,1.4);
 \draw[thin,->,>=stealth] (O3)--(Q1) node[left] {$X$};
 \draw[thin,->,>=stealth] (O3)--(W1) node[below] {$Y$};
 \fill[black] (Q2) circle (0.06);
 \fill[black] (Q3) circle (0.06);
 \fill[black] (W2) circle (0.06);
 \fill[black] (W3) circle (0.06);
 \draw (W2)--(E)--(W3);
 \draw (Q3)--(E);
 \draw (W2) arc (90:-90:0.35);
 \coordinate [label=below:(a)] (a1) at (1,-0.5);
 \coordinate [label=below:(b)] (a2) at (3.5,-0.5);
 \coordinate [label=below:(c)] (a3) at (6.5,-0.5);
 \coordinate[label=below left:O] (A1) at (9,0);
 \coordinate [label=below:1] (A2) at (9.5,0);
 \coordinate [label=below:2] (A3) at (10,0);
 \coordinate [label=below:3] (A4) at (10.5,0);
 \coordinate [label=below:4] (A5) at (11,0);
 \coordinate [label=left:1] (A6) at (9,0.5);
 \coordinate (A7) at (9.5,0.5);
 \coordinate (A8) at (10,0.5);
 \coordinate (A9) at (10.5,0.5);
 \coordinate [label=left:2] (A10) at (9,1);
 \coordinate (A11) at (9.5,1);
 \coordinate (A12) at (10,1);
 \coordinate [label=left:3] (A13) at (9,1.5);
 \coordinate (A14) at (9.5,1.5);
 \coordinate [label=left:4] (A15) at (9,2);
 \draw [very thin, gray] (A1)--(A5)--(A15)--cycle; 
 \draw (A12)--(A2);
 \draw (A12)--(A9);
 \foreach \P in {1,6,10,13,15}  \draw[dashed, blue] (A2)--(A\P);
 \foreach \P in {2,3,4,5}  \draw[thick, dotted, red] (A9)--(A\P);
 \foreach \t in {1,2,...,15} \fill[black] (A\t) circle (0.05);
 \coordinate (O1) at (13.5,2);
 \coordinate (X1) at (11.5,2);
 \coordinate (Y1) at (13.5,0);
 \coordinate (X2) at (12.2,2);
 \coordinate (X3) at (12.9,2);
 \coordinate (Y2) at (13.5,1.3);
 \coordinate (Y3) at (13.5,0.7);
 \draw[thin,->,>=stealth] (O1)--(X1) node[left] {$X$};
 \draw[thin,->,>=stealth] (O1)--(Y1) node[below] {$Y$};
 \fill[black] (X2) circle (0.06);
 \fill[black] (X3) circle (0.06);
 \fill[black] (Y2) circle (0.06);
 \fill[black] (Y3) circle (0.06);
 \draw (X3)--(12.8,1.17);
 \draw (Y2)--(13.13,1.3);
 \draw (Y3)--(13.13,0.7);
 \draw (12.7,1) circle (0.2);
 \draw[dashed] (12.5,1) circle (0.7);
 \coordinate [label=below:(d)] (a1) at (10,-0.5);
 \coordinate [label=below:(e)] (a2) at (12.5,-0.5);
\end{tikzpicture}
\begin{tikzpicture}
 \coordinate[label=below left:O] (Q1) at (-4,0);
 \coordinate [label=below:1] (Q2) at (-3.5,0);
 \coordinate [label=below:2] (Q3) at (-3,0);
 \coordinate [label=below:3] (Q4) at (-2.5,0);
 \coordinate [label=below:4] (Q5) at (-2,0);
 \coordinate [label=left:1] (Q6) at (-4,0.5);
 \coordinate (Q7) at (-3.5,0.5);
 \coordinate (Q8) at (-3,0.5);
 \coordinate (Q9) at (-2.5,0.5);
 \coordinate [label=left:2] (Q10) at (-4,1);
 \coordinate (Q11) at (-3.5,1);
 \coordinate (Q12) at (-3,1);
 \coordinate [label=left:3] (Q13) at (-4,1.5);
 \coordinate (Q14) at (-3.5,1.5);
 \coordinate [label=left:4] (Q15) at (-4,2);
 \draw [very thin, gray] (Q1)--(Q5)--(Q15)--cycle; 
 \draw (Q3)--(Q14);
 \foreach \t in {1,2,...,15} \fill[black] (Q\t) circle (0.05);
 \foreach \P in {1,6,10,13,15}  \draw[dashed] (Q14)--(Q\P);
 \coordinate[label=below left:O] (A1) at (0,0);
 \coordinate [label=below:1] (A2) at (0.5,0);
 \coordinate [label=below:2] (A3) at (1,0);
 \coordinate [label=below:3] (A4) at (1.5,0);
 \coordinate [label=below:4] (A5) at (2,0);
 \coordinate [label=left:1] (A6) at (0,0.5);
 \coordinate (A7) at (0.5,0.5);
 \coordinate (A8) at (1,0.5);
 \coordinate (A9) at (1.5,0.5);
 \coordinate [label=left:2] (A10) at (0,1);
 \coordinate (A11) at (0.5,1);
 \coordinate (A12) at (1,1);
 \coordinate [label=left:3] (A13) at (0,1.5);
 \coordinate (A14) at (0.5,1.5);
 \coordinate [label=left:4] (A15) at (0,2);
 \draw [very thin, gray] (A1)--(A5)--(A15)--cycle; 
 \draw (A3)--(A11);
 \foreach \t in {1,2,...,15} \fill[black] (A\t) circle (0.05);
 \foreach \P in {1,6,10,13,15}  \draw[dashed] (A11)--(A\P);
 \coordinate[label=below left:O] (B1) at (3,0);
 \coordinate [label=below:1] (B2) at (3.5,0);
 \coordinate [label=below:2] (B3) at (4,0);
 \coordinate [label=below:3] (B4) at (4.5,0);
 \coordinate [label=below:4] (B5) at (5,0);
 \coordinate [label=left:1] (B6) at (3,0.5);
 \coordinate (B7) at (3.5,0.5);
 \coordinate (B8) at (4,0.5);
 \coordinate (B9) at (4.5,0.5);
 \coordinate [label=left:2] (B10) at (3,1);
 \coordinate (B11) at (3.5,1);
 \coordinate (B12) at (4,1);
 \coordinate [label=left:3] (B13) at (3,1.5);
 \coordinate (B14) at (3.5,1.5);
 \coordinate [label=left:4] (B15) at (3,2);
 \draw [very thin, gray] (B1)--(B5)--(B15)--cycle; 
 \draw (B3)--(B7);
 \foreach \t in {1,2,...,15} \fill[black] (B\t) circle (0.05);
 \foreach \P in {1,6,10,13,15}  \draw[dashed] (B7)--(B\P);
 \coordinate[label=above right:O] (O1) at (8.5,2);
 \coordinate (X1) at (6.4,2);
 \coordinate (Z1) at (8.5,0);
 \coordinate (X2) at (7.1,2);
 \coordinate (X3) at (7.8,2);
 \draw[thin,->,>=stealth] (O1)--(X1) node[left] {$X$};
 \draw[thin,->,>=stealth] (O1)--(Z1) node[below] {$Z$};
 \fill[black] (X2) circle (0.06);
 \fill[black] (X3) circle (0.06);
 \draw (X2)--(7.1,1.6);
 \draw(X3)--(7.8,1.6);
 \draw[dashed] (7.45,1) circle (0.7);
 \draw (7.45,1) circle (0.2);
 \draw (X3)--(7.8,2.5);
 \draw (7.8,2.7) circle (0.2);
 \draw (X2) arc (180:0:0.35);
 \coordinate [label=below:(f)] (a1) at (-3,-0.5);
 \coordinate [label=below:(g)] (a2) at (2.5,-0.5);
 \coordinate [label=below:(h)] (a3) at (7.45,-0.5);
 \coordinate [label=below:$x_1$] (u1) at (8.05,2.4);
 \coordinate [label=below:$x_2$] (u2) at (6.9,2.4); 
\end{tikzpicture}
\caption{The complexes and skeletons in Cases $3. 1$ and $3. 2$.}
\label{fc3.1and3.2}
\end{figure}

Since $C_{XZ}$ is connected, we have the third cycle by glueing along $X$: glue reduced paths in $XZ$ and $XW$ from $x_1$ to $x_2$. 
Then any path connecting the upper cycle to the cycle in $XZ$ intersects the third cycle, so the skeleton of $C$ is not the lollipop graph.

Thus in Cases $3. 1$ and $3. 2$, the skeleton of $C$ is not the lollipop graph.

\subsubsection{Case $3. 3$: $(u(1), r(1))=(1, 1)$}

We may assume that $\MM_X(g^h)$ contains $(2, 2, 0, 0)\rightarrow (1, 1, 2, 0)$ or $(2, 2, 0, 0)\rightarrow (1, 1, 1, 1)$ by the symmetry between $Z$ and $W$.
We can consider $\MM_Y(g^h)$ in the same way (but without symmetry).

Case $1$: If $\MM_X(g^h)$ contains $(2, 2, 0, 0)\rightarrow (1, 1, 2, 0)$, $K_{XZ}$ and $K_{XW}$ are as in Figure \ref{fc3.3} (a) and (b).
Then when we glue the skeletons of $C_{XZ}\cup C_{XW}$ to $C_{XY}$, it is as in (c).

Case $2$: If $\MM_X(g^h)$ contains $(2, 2, 0, 0)\rightarrow (1, 1, 1, 1)$, $K_{XZ}$ and $K_{XW}$ are as in (d).
Then when we glue the skeletons of $C_{XZ}\cup C_{XW}$ to $C_{XY}$, it is as in (e).
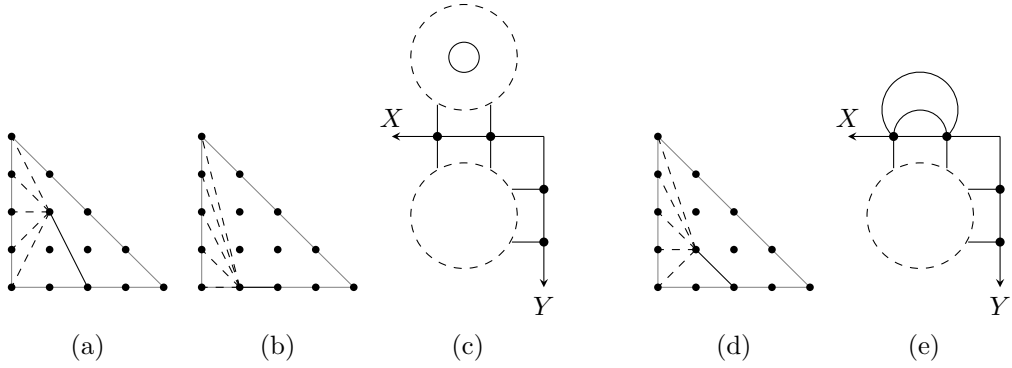
\begin{figure}[H]
\centering
\begin{tikzpicture}
 \coordinate (A1) at (2.5,0);
 \coordinate (A2) at (3,0);
 \coordinate (A3) at (3.5,0);
 \coordinate (A4) at (4,0);
 \coordinate (A5) at (4.5,0);
 \coordinate (A6) at (2.5,0.5);
 \coordinate (A7) at (3,0.5);
 \coordinate (A8) at (3.5,0.5);
 \coordinate (A9) at (4,0.5);
 \coordinate (A10) at (2.5,1);
 \coordinate (A11) at (3,1);
 \coordinate (A12) at (3.5,1);
 \coordinate (A13) at (2.5,1.5);
 \coordinate (A14) at (3,1.5);
 \coordinate (A15) at (2.5,2);
 \draw [very thin, gray] (A1)--(A5)--(A15)--cycle; 
 \draw (A3)--(A11);
 \foreach \P in {1,6,10,13,15} \draw[dashed] (A11)--(A\P);
 \foreach \t in {1,2,...,15} \fill[black] (A\t) circle (0.05);
 \coordinate (A1) at (5,0);
 \coordinate (A2) at (5.5,0);
 \coordinate (A3) at (6,0);
 \coordinate (A4) at (6.5,0);
 \coordinate (A5) at (7,0);
 \coordinate (A6) at (5,0.5);
 \coordinate (A7) at (5.5,0.5);
 \coordinate (A8) at (6,0.5);
 \coordinate (A9) at (6.5,0.5);
 \coordinate (A10) at (5,1);
 \coordinate (A11) at (5.5,1);
 \coordinate (A12) at (6,1);
 \coordinate (A13) at (5,1.5);
 \coordinate (A14) at (5.5,1.5);
 \coordinate (A15) at (5,2);
 \draw [very thin, gray] (A1)--(A5)--(A15)--cycle; 
 \draw (A3)--(A2);
 \foreach \P in {1,6,10,13,15} \draw[dashed] (A2)--(A\P);
 \foreach \t in {1,2,...,15} \fill[black] (A\t) circle (0.05);
 \coordinate (O1) at (9.5,2);
 \coordinate (X1) at (7.5,2);
 \coordinate (Y1) at (9.5,0);
 \coordinate (X2) at (8.1,2);
 \coordinate (X3) at (8.8,2);
 \coordinate (Y2) at (9.5,1.3);
 \coordinate (Y3) at (9.5,0.6);
 \draw[thin,->,>=stealth] (O1)--(X1) node[above] {$X$};
 \draw[thin,->,>=stealth] (O1)--(Y1) node[below] {$Y$};
 \fill[black] (X2) circle (0.06);
 \fill[black] (X3) circle (0.06);
 \fill[black] (Y2) circle (0.06);
 \fill[black] (Y3) circle (0.06);
 \draw (8.1,2.42)--(8.1,1.58);
 \draw (8.8,2.42)--(8.8,1.58);
 \draw[dashed] (8.45,0.95) circle (0.7);
 \draw[dashed] (8.45,3.05) circle (0.7);
 \draw (8.45,3.05) circle (0.2);
 \draw (Y2)--(9.08,1.3);
 \draw (Y3)--(9.08,0.6);
 \coordinate[label=below:(a)] (a2) at (3.5,-0.5);
 \coordinate[label=below:(b)] (a3) at (6,-0.5);
 \coordinate[label=below:(c)] (a4) at (8.5,-0.5);
  \coordinate (A1) at (11,0);
 \coordinate (A2) at (11.5,0);
 \coordinate (A3) at (12,0);
 \coordinate (A4) at (12.5,0);
 \coordinate (A5) at (13,0);
 \coordinate (A6) at (11,0.5);
 \coordinate (A7) at (11.5,0.5);
 \coordinate (A8) at (12,0.5);
 \coordinate (A9) at (12.5,0.5);
 \coordinate (A10) at (11,1);
 \coordinate (A11) at (11.5,1);
 \coordinate (A12) at (12,1);
 \coordinate (A13) at (11,1.5);
 \coordinate (A14) at (11.5,1.5);
 \coordinate (A15) at (11,2);
 \draw [very thin, gray] (A1)--(A5)--(A15)--cycle; 
 \draw (A3)--(A7);
 \foreach \P in {1,6,10,13,15} \draw[dashed] (A7)--(A\P);
 \foreach \t in {1,2,...,15} \fill[black] (A\t) circle (0.05);
 \coordinate (O1) at (15.5,2);
 \coordinate (X1) at (13.5,2);
 \coordinate (Y1) at (15.5,0);
 \coordinate (X2) at (14.1,2);
 \coordinate (X3) at (14.8,2);
 \coordinate (Y2) at (15.5,1.3);
 \coordinate (Y3) at (15.5,0.6);
 \draw[thin,->,>=stealth] (O1)--(X1) node[above] {$X$};
 \draw[thin,->,>=stealth] (O1)--(Y1) node[below] {$Y$};
 \fill[black] (X2) circle (0.06);
 \fill[black] (X3) circle (0.06);
 \fill[black] (Y2) circle (0.06);
 \fill[black] (Y3) circle (0.06);
 \draw (X2) arc (180:0:0.35);
 \draw (X3) arc (-45:225:0.5);
 \draw (X2)--(14.1,1.58);
 \draw (X3)--(14.8,1.58);
 \draw[dashed] (14.45,0.95) circle (0.7);
 \draw (Y2)--(15.08,1.3);
 \draw (Y3)--(15.08,0.6); 
 \coordinate[label=below:(d)] (a3) at (12,-0.5);
 \coordinate[label=below:(e)] (a4) at (14.5,-0.5); 
\end{tikzpicture}
\caption{The complexes and skeletons in Case $3. 3$.}
\label{fc3.3}
\end{figure}
Figure \ref{fc3.3} (c) and (e) will be also used in Case $3. 4$ and $3. 5$.

The complex $K_{XY}$ is as in Figure \ref{fc3.3s} (f) and the skeleton of $C_{XY}$ is as in (g).
We have the third cycle when glue along $X$ and $Y$.
This cycle passes through the points $x_1$ and $x_2$.
Any path connecting the cycle in $C_{XZ}\cup C_{XW}$ and the cycle in $C_{YZ}\cup C_{YW}$ passes through $x_1$ or $x_2$.
Thus, the skeleton of $C$ is not the lollipop graph.
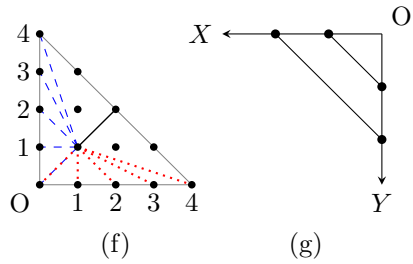
\begin{figure}[H]
\centering
\begin{tikzpicture}
 \coordinate[label=below left:O] (A1) at (1,0);
 \coordinate [label=below:1] (A2) at (1.5,0);
 \coordinate [label=below:2] (A3) at (2,0);
 \coordinate [label=below:3] (A4) at (2.5,0);
 \coordinate [label=below:4] (A5) at (3,0);
 \coordinate [label=left:1] (A6) at (1,0.5);
 \coordinate (A7) at (1.5,0.5);
 \coordinate (A8) at (2,0.5);
 \coordinate (A9) at (2.5,0.5);
 \coordinate [label=left:2] (A10) at (1,1);
 \coordinate (A11) at (1.5,1);
 \coordinate (A12) at (2,1);
 \coordinate [label=left:3] (A13) at (1,1.5);
 \coordinate (A14) at (1.5,1.5);
 \coordinate [label=left:4] (A15) at (1,2);
 \draw [very thin, gray] (A1)--(A5)--(A15)--cycle; 
 \draw (A12)--(A7);
 \draw (A12)--(A7);
 \foreach \P in {1,6,10,13,15}  \draw[dashed, blue] (A7)--(A\P);
 \foreach \P in {1,2,3,4,5}  \draw[thick, dotted, red] (A7)--(A\P);
 \foreach \t in {1,2,...,15} \fill[black] (A\t) circle (0.05);
 \coordinate[label=above right:O] (O1) at (5.5,2);
 \coordinate (X1) at (3.4,2);
 \coordinate (Y1) at (5.5,0);
 \coordinate (X2) at (4.1,2);
 \coordinate (X3) at (4.8,2);
 \coordinate (Y2) at (5.5,1.3);
 \coordinate (Y3) at (5.5,0.6);
 \draw[thin,->,>=stealth] (O1)--(X1) node[left] {$X$};
 \draw[thin,->,>=stealth] (O1)--(Y1) node[below] {$Y$};
 \fill[black] (X2) circle (0.06);
 \fill[black] (X3) circle (0.06);
 \fill[black] (Y2) circle (0.06);
 \fill[black] (Y3) circle (0.06);
 \draw (X2)--(Y3);
 \draw (X3)--(Y2);
 \coordinate[label=below:(f)] (a1) at (2,-0.5);
 \coordinate[label=below:(g)] (a2) at (4.5,-0.5);
\end{tikzpicture}
\caption{The complex $K_{XY}$ and the skeleton of $C_{XY}$ in Case $3. 3$.}
\label{fc3.3s}
\end{figure}

\subsubsection{Case $3. 4$: $(u(1), r(1))=(1, 2)$ and Case $3. 5$: $(u(1), r(1))=(1, 3)$}

In Case $3. 4$, $K_{XY}$ is as in Figure \ref{fc3.4} (a).
The $2$-simplex containing the line segment $(1, 1)$-$(2, 2)$ must be as in (b) or (c) since it has area $1/2$.
The set $\MM_Y(g^h)$ has to contain $(2, 2, 0, 0)\rightarrow (2, 1, 1, 0)$ or $(2, 2, 0, 0)\rightarrow (2, 1, 0, 1)$.
Using Lemma \ref{2011}, we see that the skeleton of $C_{XY}\cup C_{YZ}\cup C_{YW}$ is as in Figure \ref{fc3.4} (d) or (e).
\begin{figure}[H]
\centering
\begin{tikzpicture}
 \coordinate[label=below left:O] (A1) at (0,0);
 \coordinate [label=below:1] (A2) at (0.5,0);
 \coordinate [label=below:2] (A3) at (1,0);
 \coordinate [label=below:3] (A4) at (1.5,0);
 \coordinate [label=below:4] (A5) at (2,0);
 \coordinate [label=left:1] (A6) at (0,0.5);
 \coordinate (A7) at (0.5,0.5);
 \coordinate (A8) at (1,0.5);
 \coordinate (A9) at (1.5,0.5);
 \coordinate [label=left:2] (A10) at (0,1);
 \coordinate (A11) at (0.5,1);
 \coordinate (A12) at (1,1);
 \coordinate [label=left:3] (A13) at (0,1.5);
 \coordinate (A14) at (0.5,1.5);
 \coordinate [label=left:4] (A15) at (0,2);
 \draw [very thin, gray] (A1)--(A5)--(A15)--cycle; 
 \draw (A7)--(A12)--(A8);
 \foreach \P in {1,6,10,13,15}  \draw[dashed, blue] (A7)--(A\P);
 \foreach \P in {1,2,3,4,5}  \draw[thick, dotted, red] (A8)--(A\P);
 \foreach \t in {1,2,...,15} \fill[black] (A\t) circle (0.05); 
 \coordinate[label=below left:O] (B1) at (3,0);
 \coordinate [label=below:1] (B2) at (3.5,0);
 \coordinate [label=below:2] (B3) at (4,0);
 \coordinate [label=below:3] (B4) at (4.5,0);
 \coordinate [label=below:4] (B5) at (5,0);
 \coordinate [label=left:1] (B6) at (3,0.5);
 \coordinate (B7) at (3.5,0.5);
 \coordinate (B8) at (4,0.5);
 \coordinate (B9) at (4.5,0.5);
 \coordinate [label=left:2] (B10) at (3,1);
 \coordinate (B11) at (3.5,1);
 \coordinate (B12) at (4,1);
 \coordinate [label=left:3] (B13) at (3,1.5);
 \coordinate (B14) at (3.5,1.5);
 \coordinate [label=left:4] (B15) at (3,2);
 \draw [very thin, gray] (B1)--(B5)--(B15)--cycle; 
 \draw (B8)--(B12)--(B7);
 \draw (B7)--(B2)--(B12);
 \foreach \P in {1,6,10,13,15}  \draw[dashed, blue] (B7)--(B\P);
 \foreach \P in {2,3,4,5}  \draw[thick, dotted, red] (B8)--(B\P);
 \foreach \t in {1,2,...,15} \fill[black] (B\t) circle (0.05);
 \coordinate[label=below left:O] (C1) at (6,0);
 \coordinate [label=below:1] (C2) at (6.5,0);
 \coordinate [label=below:2] (C3) at (7,0);
 \coordinate [label=below:3] (C4) at (7.5,0);
 \coordinate [label=below:4] (C5) at (8,0);
 \coordinate [label=left:1] (C6) at (6,0.5);
 \coordinate (C7) at (6.5,0.5);
 \coordinate (C8) at (7,0.5);
 \coordinate (C9) at (7.5,0.5);
 \coordinate [label=left:2] (C10) at (6,1);
 \coordinate (C11) at (6.5,1);
 \coordinate (C12) at (7,1);
 \coordinate [label=left:3] (C13) at (6,1.5);
 \coordinate (C14) at (6.5,1.5);
 \coordinate [label=left:4] (C15) at (6,2);
 \draw [very thin, gray] (C1)--(C5)--(C15)--cycle; 
 \draw (C8)--(C12)--(C7)--cycle;
 \foreach \P in {1,6,10,13,15}  \draw[dashed, blue] (C7)--(C\P);
 \foreach \P in {1,2,3,4,5}  \draw[thick, dotted, red] (C8)--(C\P);
 \foreach \t in {1,2,...,15} \fill[black] (C\t) circle (0.05);
 \coordinate[label=below:(a)] (a1) at (1,-0.5);
 \coordinate[label=below:(b)] (a2) at (4,-0.5);
 \coordinate[label=below:(c)] (a3) at (7,-0.5);
 \coordinate[label=below:(d)] (a4) at (10,-0.5);
 \coordinate[label=below:(e)] (a5) at (13,-0.5);
 \coordinate (O1) at (11,2);
 \coordinate (X1) at (9,2);
 \coordinate (Y1) at (11,0);
 \coordinate (X2) at (10.3,2);
 \coordinate (X3) at (9.6,2);
 \coordinate (Y2) at (11,1.3);
 \coordinate (Y3) at (11,0.6);
 \draw[thin,->,>=stealth] (O1)--(X1) node[left] {$X$};
 \draw[thin,->,>=stealth] (O1)--(Y1) node[below] {$Y$};
 \fill[black] (X2) circle (0.06);
 \fill[black] (X3) circle (0.06);
 \fill[black] (Y2) circle (0.06);
 \fill[black] (Y3) circle (0.06); 
 \draw (X3)--(9.95,1.6)--(X2);
 \draw (9.95,1.6) to [out=270, in=180] (10.6,0.95);
 \draw (Y3)--(10.6,0.95)--(Y2);
 \draw (Y2) arc (90:-90:0.35);
 \coordinate (O11) at (14,2);
 \coordinate (X11) at (12,2);
 \coordinate (Y11) at (14,0);
 \coordinate (X22) at (13.3,2);
 \coordinate (X33) at (12.6,2);
 \coordinate (Y22) at (14,1.3);
 \coordinate (Y33) at (14,0.6);
 \draw[thin,->,>=stealth] (O11)--(X11) node[left] {$X$};
 \draw[thin,->,>=stealth] (O11)--(Y11) node[below] {$Y$};
 \fill[black] (X22) circle (0.06);
 \fill[black] (X33) circle (0.06);
 \fill[black] (Y22) circle (0.06);
 \fill[black] (Y33) circle (0.06);
 \draw (X33)--(12.6,0.6)--(Y33);
 \draw (X22)--(13.3,1.3)--(Y22);
 \draw (12.6,0.6)--(13.3,1.3);
 \draw (Y22) arc (90:-90:0.35);
\end{tikzpicture}
\caption{The complexes and skeletons in Case $3. 4$.}
\label{fc3.4}
\end{figure}
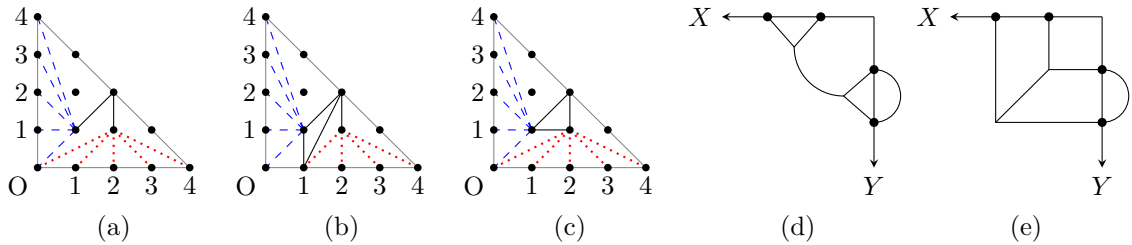

In Case $3. 5$, $K_{XY}$ and the skeleton of $C_{XY}$ are as in Figure \ref{fc3.5}.
\begin{figure}[H]
\centering
\begin{tikzpicture}
 \coordinate[label=below left:O] (A1) at (0,0);
 \coordinate [label=below:1] (A2) at (0.5,0);
 \coordinate [label=below:2] (A3) at (1,0);
 \coordinate [label=below:3] (A4) at (1.5,0);
 \coordinate [label=below:4] (A5) at (2,0);
 \coordinate [label=left:1] (A6) at (0,0.5);
 \coordinate (A7) at (0.5,0.5);
 \coordinate (A8) at (1,0.5);
 \coordinate (A9) at (1.5,0.5);
 \coordinate [label=left:2] (A10) at (0,1);
 \coordinate (A11) at (0.5,1);
 \coordinate (A12) at (1,1);
 \coordinate [label=left:3] (A13) at (0,1.5);
 \coordinate (A14) at (0.5,1.5);
 \coordinate [label=left:4] (A15) at (0,2);
 \draw [very thin, gray] (A1)--(A5)--(A15)--cycle; 
 \draw (A12)--(A7);
 \draw (A12)--(A9);
 \foreach \P in {1,6,10,13,15}  \draw[dashed, blue] (A7)--(A\P);
 \foreach \P in {1,2,3,4,5}  \draw[thick, dotted, red] (A9)--(A\P);
 \foreach \t in {1,2,...,15} \fill[black] (A\t) circle (0.05);
 \coordinate (O11) at (5,2);
 \coordinate (X11) at (3,2);
 \coordinate (Y11) at (5,0);
 \coordinate (X22) at (4.3,2);
 \coordinate (X33) at (3.6,2);
 \coordinate (Y22) at (5,1.3);
 \coordinate (Y33) at (5,0.6);
 \draw[thin,->,>=stealth] (O11)--(X11) node[left] {$X$};
 \draw[thin,->,>=stealth] (O11)--(Y11) node[below] {$Y$};
 \fill[black] (X22) circle (0.06);
 \fill[black] (X33) circle (0.06);
 \fill[black] (Y22) circle (0.06);
 \fill[black] (Y33) circle (0.06);
 \draw (X33)--(3.6,1.58);
 \draw (X22)--(4.3,1.58);
 \draw (Y33)--(4.55,0.6);
 \draw (Y22)--(4.55,1.3);
 \draw (3.95,0.95) circle (0.3);
 \draw[dashed] (3.95,0.95) circle (0.7);
\end{tikzpicture}
\caption{The complex $K_{XY}$ and the skeleton of $C_{XY}$ in Case $3. 5$.}
\label{fc3.5}
\end{figure}
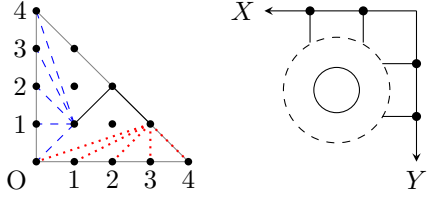

In Case $3. 4$ and $3. 5$, the skeleton of $C_{XY}\cup C_{YZ}\cup C_{YW}$ has $1$ cycle.
On the other hand, since $u(1)=1$, the skeleton of $C_{XZ}\cup C_{XW}$ is as in Case $3. 3$ and has $1$ cycle (see Figure \ref{fc3.3} (c) and (e)).
When we glue these along $X$, we have another cycle passing through $x_1$ and $x_2$.
Any path connecting the above cycles in $C_{XY}\cup C_{YZ}\cup C_{YW}$ and $C_{XZ}\cup C_{XW}$ passes through $x_1$ or $x_2$.
Therefore in Case $3. 4$ and $3. 5$, the skeleton of $C$ is not the lollipop graph.

\subsubsection{Case $3. 6$: $(u(1), r(1))=(2, 2)$}

The complex $K_{XY}$ is as in Figure \ref{fc3.6} (a).
The set $\MM_X(g^h)$ contains $(2, 2, 0, 0)\rightarrow (1, 2, 1, 0)$ or $(2, 2, 0, 0)\rightarrow (1, 2, 0, 1)$, and similarly for $\MM_Y(g^h)$.
By Lemma \ref{2011}, when we glue along $X$ and $Y$, we have the shape as in (b).
There is one cycle in the dashed circle part.
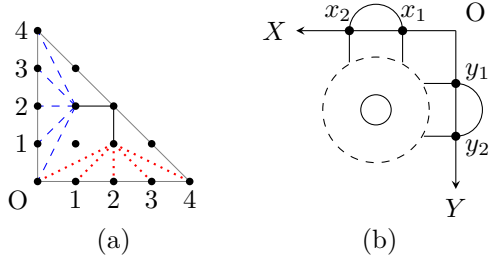
\begin{figure}[H]
\centering
\begin{tikzpicture}
 \coordinate[label=below left:O] (A1) at (-6,-3);
 \coordinate [label=below:1] (A2) at (-5.5,-3);
 \coordinate [label=below:2] (A3) at (-5,-3);
 \coordinate [label=below:3] (A4) at (-4.5,-3);
 \coordinate [label=below:4] (A5) at (-4,-3);
 \coordinate [label=left:1] (A6) at (-6,-2.5);
 \coordinate (A7) at (-5.5,-2.5);
 \coordinate (A8) at (-5,-2.5);
 \coordinate (A9) at (-4.5,-2.5);
 \coordinate [label=left:2] (A10) at (-6,-2);
 \coordinate (A11) at (-5.5,-2);
 \coordinate (A12) at (-5,-2);
 \coordinate [label=left:3] (A13) at (-6,-1.5);
 \coordinate (A14) at (-5.5,-1.5);
 \coordinate [label=left:4] (A15) at (-6,-1);
 \draw [very thin, gray] (A1)--(A5)--(A15)--cycle; 
 \draw (A12)--(A8);
 \draw (A12)--(A11);
 \foreach \P in {1,6,10,13,15}  \draw[dashed, blue] (A11)--(A\P);
 \foreach \P in {1,2,3,4,5}  \draw[thick, dotted, red] (A8)--(A\P);
 \foreach \t in {1,2,...,15} \fill[black] (A\t) circle (0.05);
 \coordinate[label=above right:O] (O1) at (-0.5,-1);
 \coordinate (X1) at (-2.6,-1);
 \coordinate (Y1) at (-0.5,-3.1);
 \coordinate (X2) at (-1.9,-1);
 \coordinate (X3) at (-1.2,-1);
 \coordinate (Y2) at (-0.5,-1.7);
 \coordinate (Y3) at (-0.5,-2.4);
 \draw[thin,->,>=stealth] (O1)--(X1) node[left] {$X$};
 \draw[thin,->,>=stealth] (O1)--(Y1) node[below] {$Y$};
 \fill[black] (X2) circle (0.06);
 \fill[black] (X3) circle (0.06);
 \fill[black] (Y2) circle (0.06);
 \fill[black] (Y3) circle (0.06);
 \draw (X2)--(-1.9,-1.42);
 \draw (X3)--(-1.2,-1.42);
 \draw[dashed] (-1.55,-2.05) circle (0.7);
 \draw (-1.55,-2.05) circle (0.2);
 \draw (Y2)--(-0.92,-1.7);
 \draw (Y3)--(-0.92,-2.4); 
 \draw (Y2) arc (90:-90:0.35);
 \draw (X2) arc (180:0:0.35);
 \coordinate[label=below:(a)] (a1) at (-5,-3.5);
 \coordinate[label=below:(b)] (a2) at (-1.5,-3.5);
 \coordinate [label=above:$x_1$] (u1) at (-1.05,-1);
 \coordinate [label=above:$x_2$] (u2) at (-2.05,-1); 
 \coordinate [label=right:$y_1$] (u1) at (-0.5,-1.55);
 \coordinate [label=right:$y_2$] (u2) at (-0.5,-2.55); 
\end{tikzpicture}
\caption{The complex $K_{XY}$ and skeleton in Case $3. 6$.}
\label{fc3.6}
\end{figure}
Note that the edge $(2, 2)$-$(1, 2)$ of $K_{XY}$ corresponds to the edge of $C_{XY}$ which passes through the point $x_1$ and that the edge $(2, 2)$-$(2, 1)$ of $K_{XY}$ corresponds to the edge of $C_{XY}$ which passes through the point $y_1$.
As the third vertex of the $2$-simplex containing the line segment $(1, 2)$-$(2, 2)$, there are $3$ cases: $(2, 1)$, $(1, 1)$ and $(0, 1)$.
It is sufficient to consider the following $3$ cases from Figure \ref{fc3.6s} (c) to (e).
In any case, the figure (b) shows that the vertices $(1, 2)$ and $(2, 1)$ of $K_{XY}$ correspond to $2$ cycles of $C$ (i.e. the corresponding domains are enclosed by a cycle in $C$).
In the case (c), the vertices $(1, 2)$ and $(2, 1)$ are connected directly.
In the case (d), the vertices $(1, 1)$ and $(1, 2)$ are connected directly.
In the case (e), there are two $2$-simplices $\Delta_1$ containing $(1, 2)$ and $\Delta_2$ containing $(1, 1)$ that have a common edge $(0, 1)$-$(2, 2)$.
In any case, the skeleton of $C$ is not the lollipop graph as in Lemma \ref{1112}.
\begin{figure}[H]
\centering
\begin{tikzpicture}
 \coordinate[label=below left:O] (C1) at (6,0);
 \coordinate [label=below:1] (C2) at (6.5,0);
 \coordinate [label=below:2] (C3) at (7,0);
 \coordinate [label=below:3] (C4) at (7.5,0);
 \coordinate [label=below:4] (C5) at (8,0);
 \coordinate [label=left:1] (C6) at (6,0.5);
 \coordinate (C7) at (6.5,0.5);
 \coordinate (C8) at (7,0.5);
 \coordinate (C9) at (7.5,0.5);
 \coordinate [label=left:2] (C10) at (6,1);
 \coordinate (C11) at (6.5,1);
 \coordinate (C12) at (7,1);
 \coordinate [label=left:3] (C13) at (6,1.5);
 \coordinate (C14) at (6.5,1.5);
 \coordinate [label=left:4] (C15) at (6,2);
 \draw [very thin, gray] (C1)--(C5)--(C15)--cycle; 
 \draw (C12)--(C8);
 \draw (C12)--(C11);
 \draw[very thick] (C11)--(C8);
 \foreach \P in {1,6,10,13,15}  \draw[dashed, blue] (C11)--(C\P);
 \foreach \P in {1,2,3,4,5}  \draw[thick, dotted, red] (C8)--(C\P);
 \foreach \t in {1,2,...,15} \fill[black] (C\t) circle (0.05);
 \coordinate[label=below left:O] (D1) at (9,0);
 \coordinate [label=below:1] (D2) at (9.5,0);
 \coordinate [label=below:2] (D3) at (10,0);
 \coordinate [label=below:3] (D4) at (10.5,0);
 \coordinate [label=below:4] (D5) at (11,0);
 \coordinate [label=left:1] (D6) at (9,0.5);
 \coordinate (D7) at (9.5,0.5);
 \coordinate (D8) at (10,0.5);
 \coordinate (D9) at (10.5,0.5);
 \coordinate [label=left:2] (D10) at (9,1);
 \coordinate (D11) at (9.5,1);
 \coordinate (D12) at (10,1);
 \coordinate [label=left:3] (D13) at (9,1.5);
 \coordinate (D14) at (9.5,1.5);
 \coordinate [label=left:4] (D15) at (9,2);
 \draw [very thin, gray] (D1)--(D5)--(D15)--cycle; 
 \draw (D12)--(D8);
 \draw (D12)--(D11);
 \draw (D12)--(D7)--(D11);
 \draw[very thick] (D7)--(D11);
 \foreach \P in {1,6,10,13,15}  \draw[dashed, blue] (D11)--(D\P);
 \foreach \P in {1,2,3,4,5}  \draw[thick, dotted, red] (D8)--(D\P);
 \foreach \t in {1,2,...,15} \fill[black] (D\t) circle (0.05);
 \coordinate[label=below left:O] (E1) at (12,0);
 \coordinate [label=below:1] (E2) at (12.5,0);
 \coordinate [label=below:2] (E3) at (13,0);
 \coordinate [label=below:3] (E4) at (13.5,0);
 \coordinate [label=below:4] (E5) at (14,0);
 \coordinate [label=left:1] (E6) at (12,0.5);
 \coordinate (E7) at (12.5,0.5);
 \coordinate (E8) at (13,0.5);
 \coordinate (E9) at (13.5,0.5);
 \coordinate [label=left:2] (E10) at (12,1);
 \coordinate (E11) at (12.5,1);
 \coordinate (E12) at (13,1);
 \coordinate [label=left:3] (E13) at (12,1.5);
 \coordinate (E14) at (12.5,1.5);
 \coordinate [label=left:4] (E15) at (12,2);
 \draw [very thin, gray] (E1)--(E5)--(E15)--cycle; 
 \draw (E12)--(E8);
 \draw (E12)--(E11);
 \draw (E12)--(E6)--(E11);
 \draw (E12)--(E7)--(E6);
 \draw[very thick] (E6)--(E12);
 \foreach \P in {6,10,13,15} \draw[dashed, blue] (E11)--(E\P);
 \foreach \P in {1,2,3,4,5} \draw[thick, dotted, red] (E8)--(E\P);
 \foreach \t in {1,2,...,15} \fill[black] (E\t) circle (0.05);
 \coordinate[label=below:(c)] (a3) at (7,-0.5);
 \coordinate[label=below:(d)] (a4) at (10,-0.5);
 \coordinate[label=below:(e)] (a5) at (13,-0.5);
\end{tikzpicture}
\caption{The complexes in Case $3. 6$.}
\label{fc3.6s}
\end{figure}
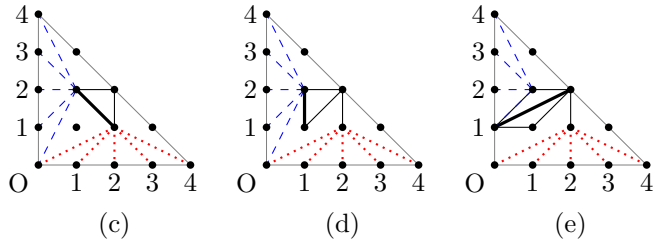

\subsubsection{Case $3. 7$: $(u(1), r(1))=(2, 3)$}

The complex $K_{XY}$ is as in Figure \ref{fc3.7} (a).
Assume that the skeleton of $C$ is the lollipop graph of genus $3$.
Then by Lemma \ref{12}, $K_{XY}$ contains the $2$-simplices as in (b).
The set $\MM_X(g^h)$ contains $(2, 2, 0, 0)\rightarrow (1, 2, 1, 0)$ or $(2, 2, 0, 0)\rightarrow (1, 2, 0, 1)$.
The glueing of the skeletons of $C_{XY}$, $C_{XZ}$ and $C_{XW}$ is as in (c).
\begin{figure}[H]
\centering
\begin{tikzpicture}
 \coordinate[label=below left:O] (A1) at (0,0);
 \coordinate [label=below:1] (A2) at (0.5,0);
 \coordinate [label=below:2] (A3) at (1,0);
 \coordinate [label=below:3] (A4) at (1.5,0);
 \coordinate [label=below:4] (A5) at (2,0);
 \coordinate [label=left:1] (A6) at (0,0.5);
 \coordinate (A7) at (0.5,0.5);
 \coordinate (A8) at (1,0.5);
 \coordinate (A9) at (1.5,0.5);
 \coordinate [label=left:2] (A10) at (0,1);
 \coordinate (A11) at (0.5,1);
 \coordinate (A12) at (1,1);
 \coordinate [label=left:3] (A13) at (0,1.5);
 \coordinate (A14) at (0.5,1.5);
 \coordinate [label=left:4] (A15) at (0,2);
 \draw [very thin, gray] (A1)--(A5)--(A15)--cycle; 
 \draw (A12)--(A11);
 \draw (A12)--(A9);
 \foreach \P in {1,6,10,13,15}  \draw[dashed, blue] (A11)--(A\P);
 \foreach \P in {1,2,3,4,5}  \draw[thick, dotted, red] (A9)--(A\P);
 \foreach \t in {1,2,...,15} \fill[black] (A\t) circle (0.05);
 \coordinate[label=below left:O] (D1) at (4,0);
 \coordinate [label=below:1] (D2) at (4.5,0);
 \coordinate [label=below:2] (D3) at (5,0);
 \coordinate [label=below:3] (D4) at (5.5,0);
 \coordinate [label=below:4] (D5) at (6,0);
 \coordinate [label=left:1] (D6) at (4,0.5);
 \coordinate (D7) at (4.5,0.5);
 \coordinate (D8) at (5,0.5);
 \coordinate (D9) at (5.5,0.5);
 \coordinate [label=left:2] (D10) at (4,1);
 \coordinate (D11) at (4.5,1);
 \coordinate (D12) at (5,1);
 \coordinate [label=left:3] (D13) at (4,1.5);
 \coordinate (D14) at (4.5,1.5);
 \coordinate [label=left:4] (D15) at (4,2);
 \draw [very thin, gray] (D1)--(D5)--(D15)--cycle;
 \foreach \P in {13,15}  \draw[dashed, blue] (D11)--(D\P);
 \foreach \P in {3,4,5}  \draw[thick, dotted, red] (D9)--(D\P);
 \draw (D9)--(D12);
 \draw (D11)--(D12);
 \draw (D13)--(D3)--(D11);
 \draw (D3)--(D7)--(D13);
 \draw (D3)--(D8)--(D11);
 \draw[very thick] (D8)--(D11);
 \foreach \t in {1,2,...,15} \fill[black] (D\t) circle (0.05);
 \coordinate[label=above right:O] (O1) at (10,2);
 \coordinate (X1) at (8,2);
 \coordinate (Y1) at (10,0);
 \coordinate (X2) at (8.6,2);
 \coordinate (X3) at (9.3,2);
 \coordinate (Y2) at (10,1.3);
 \coordinate (Y3) at (10,0.7);
 \draw[thin,->,>=stealth] (O1)--(X1) node[left] {$X$};
 \draw[thin,->,>=stealth] (O1)--(Y1) node[below] {$Y$};
 \fill[black] (X2) circle (0.06);
 \fill[black] (X3) circle (0.06);
 \fill[black] (Y2) circle (0.06);
 \fill[black] (Y3) circle (0.06);
 \draw (X2) arc (180:0:0.35);
 \draw (8.65,1) circle (0.2);
 \draw (9.25,1) circle (0.2);
 \draw [dashed] (8.95,1) circle (0.6);
 \draw (X2)--(8.6,1.5)--(9.08,1.12);
 \draw (X3)--(9.3,1.5)--(9.25,1.2);
 \draw (Y2)--(9.5,1.3);
 \draw (Y3)--(9.5,0.7);
 \draw[very thick] (9.25,1.2) arc (90:145:0.2);
 \coordinate [label=below:(a)] (a1) at (1,-0.5);
 \coordinate [label=below:(b)] (a2) at (5,-0.5); 
 \coordinate [label=below:(c)] (a3) at (9,-0.5); 
\end{tikzpicture}
\caption{The complexes and skeletons in Case $3. 7$.}
\label{fc3.7}
\end{figure}
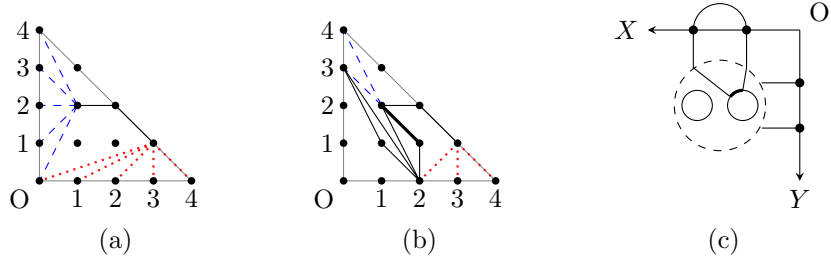

The $1$-simplex $(1, 2)$-$(2, 2)$ of $K_{XY}$ corresponds to the edge of $C_{XY}$ which contains the point $x_1\in C\cap X$.
The path from $x_1$ to $x_2$ in $C_{XY}$, corresponding to edges in $K_{XY}$ adjacent to $(1, 2)$, intersects the cycle which corresponds to the point $(2, 1)$ in the edge corresponding to $(1, 2)$-$(2, 1)$.
When we glue at $X$, we have a cycle which contains the points $x_1$ and $x_2$.
Thus the two cycles intersect and the skeleton of $C$ is not the lollipop graph.

\subsubsection{Case $3. 8$: $(u(1), r(1))=(3, 3)$}

The complex $K_{XY}$ is as in Figure \ref{fc3.8} and its support contains three lattice points in its interior.
Hence the skeleton of $C$ is not the lollipop graph by Lemma \ref{3in1}.

\begin{figure}[H]
\centering
\begin{tikzpicture}
 \coordinate[label=below left:O] (A1) at (0,0);
 \coordinate [label=below:1] (A2) at (0.5,0);
 \coordinate [label=below:2] (A3) at (1,0);
 \coordinate [label=below:3] (A4) at (1.5,0);
 \coordinate [label=below:4] (A5) at (2,0);
 \coordinate [label=left:1] (A6) at (0,0.5);
 \coordinate (A7) at (0.5,0.5);
 \coordinate (A8) at (1,0.5);
 \coordinate (A9) at (1.5,0.5);
 \coordinate [label=left:2] (A10) at (0,1);
 \coordinate (A11) at (0.5,1);
 \coordinate (A12) at (1,1);
 \coordinate [label=left:3] (A13) at (0,1.5);
 \coordinate (A14) at (0.5,1.5);
 \coordinate [label=left:4] (A15) at (0,2);
 \draw [very thin, gray] (A1)--(A5)--(A15)--cycle;
 \draw (A12)--(A14);
 \draw (A12)--(A9);
 \foreach \P in {1,6,10,13,15}  \draw[dashed, blue] (A14)--(A\P);
 \foreach \P in {1,2,3,4,5}  \draw[thick, dotted, red] (A9)--(A\P);
 \foreach \t in {1,2,...,15} \fill[black] (A\t) circle (0.05);
\end{tikzpicture}
\caption{The complex $K_{XY}$ in Case $3. 8$.}
\label{fc3.8}
\end{figure}
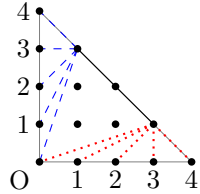

From the above, in  Case $3$, the skeleton of $C$ is not the lollipop graph.

\subsection{Case $4$: $(a_0, b_0, c_0, d_0)=(3, 1, 0, 0)$ and Case $5$: $(a_0, b_0, c_0, d_0)=(4, 0, 0, 0)$}

In Case $5$, it is sufficient to glue the skeletons of $C_{XY}$, $C_{XZ}$ and $C_{XW}$ to obtain the skeleton of $C$.
We can assume that $\MM_X(g^h)$ contains $(4, 0, 0, 0)\rightarrow (3, 1, 0, 0)$ from the symmetry between $Y$, $Z$ and $W$.
The complexes $K_{XY}$, $K_{XZ}$ and $K_{XW}$ are as in Figure \ref{fc5}.
\begin{figure}[H]
\centering
\begin{tikzpicture}
 \coordinate[label=below left:O] (A1) at (0,0);
 \coordinate [label=below:1] (A2) at (0.5,0);
 \coordinate [label=below:2] (A3) at (1,0);
 \coordinate [label=below:3] (A4) at (1.5,0);
 \coordinate [label=below:4] (A5) at (2,0);
 \coordinate [label=left:1] (A6) at (0,0.5);
 \coordinate (A7) at (0.5,0.5);
 \coordinate (A8) at (1,0.5);
 \coordinate (A9) at (1.5,0.5);
 \coordinate [label=left:2] (A10) at (0,1);
 \coordinate (A11) at (0.5,1);
 \coordinate (A12) at (1,1);
 \coordinate [label=left:3] (A13) at (0,1.5);
 \coordinate (A14) at (0.5,1.5);
 \coordinate [label=left:4] (A15) at (0,2);
 \draw [very thin, gray] (A1)--(A5)--(A15)--cycle; 
 \draw (A5)--(A9);
 \foreach \t in {1,2,...,15} \fill[black] (A\t) circle (0.05);
 \foreach \P in {3,8,12}  \draw[dashed] (A9)--(A\P);
 \coordinate[label=below left:O] (B1) at (3,0);
 \coordinate [label=below:1] (B2) at (3.5,0);
 \coordinate [label=below:2] (B3) at (4,0);
 \coordinate [label=below:3] (B4) at (4.5,0);
 \coordinate [label=below:4] (B5) at (5,0);
 \coordinate [label=left:1] (B6) at (3,0.5);
 \coordinate (B7) at (3.5,0.5);
 \coordinate (B8) at (4,0.5);
 \coordinate (B9) at (4.5,0.5);
 \coordinate [label=left:2] (B10) at (3,1);
 \coordinate (B11) at (3.5,1);
 \coordinate (B12) at (4,1);
 \coordinate [label=left:3] (B13) at (3,1.5);
 \coordinate (B14) at (3.5,1.5);
 \coordinate [label=left:4] (B15) at (3,2);
 \draw [very thin, gray] (B1)--(B5)--(B15)--cycle; 
 \draw (B5)--(B4);
 \foreach \t in {1,2,...,15} \fill[black] (B\t) circle (0.05);
 \foreach \P in {3,8,12} \draw[dashed] (B4)--(B\P);
 \coordinate[label=below left:O] (C1) at (6,0);
 \coordinate [label=below:1] (C2) at (6.5,0);
 \coordinate [label=below:2] (C3) at (7,0);
 \coordinate [label=below:3] (C4) at (7.5,0);
 \coordinate [label=below:4] (C5) at (8,0);
 \coordinate [label=left:1] (C6) at (6,0.5);
 \coordinate (C7) at (6.5,0.5);
 \coordinate (C8) at (7,0.5);
 \coordinate (C9) at (7.5,0.5);
 \coordinate [label=left:2] (C10) at (6,1);
 \coordinate (C11) at (6.5,1);
 \coordinate (C12) at (7,1);
 \coordinate [label=left:3] (C13) at (6,1.5);
 \coordinate (C14) at (6.5,1.5);
 \coordinate [label=left:4] (C15) at (6,2);
 \draw [very thin, gray] (C1)--(C5)--(C15)--cycle; 
 \draw (C5)--(C4);
 \foreach \t in {1,2,...,15} \fill[black] (C\t) circle (0.05);
 \foreach \P in {3,8,12}  \draw[dashed] (C4)--(C\P);
\end{tikzpicture}
\caption{The complexes $K_{XY}$, $K_{XZ}$ and $K_{XW}$ in Case $5$.}
\label{fc5}
\end{figure}
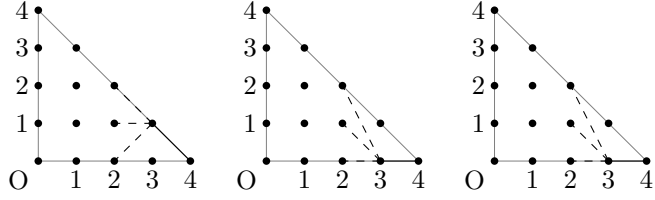

If we have the lollipop graph of genus $3$ in Case $5$, so do we also in Case $4$ with $\MM_Y(g^h)$ being $(3, 1, 0, 0)\rightarrow (4, 0, 0, 0)$ as in Figure \ref{fccomp4to5}.
\begin{figure}[H]
\centering
\begin{tikzpicture}
 \coordinate (A1) at (0,0);
 \coordinate (A2) at (0.5,0);
 \coordinate (A3) at (1,0);
 \coordinate (A4) at (1.5,0);
 \coordinate (A5) at (2,0);
 \coordinate (A6) at (0,0.5);
 \coordinate (A7) at (0.5,0.5);
 \coordinate (A8) at (1,0.5);
 \coordinate (A9) at (1.5,0.5);
 \coordinate (A10) at (0,1);
 \coordinate (A11) at (0.5,1);
 \coordinate (A12) at (1,1);
 \coordinate (A13) at (0,1.5);
 \coordinate (A14) at (0.5,1.5);
 \coordinate (A15) at (0,2);
 \draw [very thin, gray] (A1)--(A5)--(A15)--cycle; 
 \draw (A9)--(A5);
 \foreach \P in {3,8,12}  \draw[dashed] (A9)--(A\P);
 \foreach \t in {1,2,...,15} \fill[black] (A\t) circle (0.05);
 \coordinate (B1) at (2.5,0);
 \coordinate (B2) at (3,0);
 \coordinate (B3) at (3.5,0);
 \coordinate (B4) at (4,0);
 \coordinate (B5) at (4.5,0);
 \coordinate (B6) at (2.5,0.5);
 \coordinate (B7) at (3,0.5);
 \coordinate (B8) at (3.5,0.5);
 \coordinate (B9) at (4,0.5);
 \coordinate (B10) at (2.5,1);
 \coordinate (B11) at (3,1);
 \coordinate (B12) at (3.5,1);
 \coordinate (B13) at (2.5,1.5);
 \coordinate (B14) at (3,1.5);
 \coordinate (B15) at (2.5,2);
 \draw [very thin, gray] (B1)--(B5)--(B15)--cycle; 
 \foreach \P in {3,8,12} \draw[dashed] (B4)--(B\P);
 \foreach \t in {1,2,...,15} \fill[black] (B\t) circle (0.05);
 \coordinate (C1) at (5,0);
 \coordinate (C2) at (5.5,0);
 \coordinate (C3) at (6,0);
 \coordinate (C4) at (6.5,0);
 \coordinate (C5) at (7,0);
 \coordinate (C6) at (5,0.5);
 \coordinate (C7) at (5.5,0.5);
 \coordinate (C8) at (6,0.5);
 \coordinate (C9) at (6.5,0.5);
 \coordinate (C10) at (5,1);
 \coordinate (C11) at (5.5,1);
 \coordinate (C12) at (6,1);
 \coordinate (C13) at (5,1.5);
 \coordinate (C14) at (5.5,1.5);
 \coordinate (C15) at (5,2);
 \draw [very thin, gray] (C1)--(C5)--(C15)--cycle; 
 \foreach \t in {1,2,...,15} \fill[black] (C\t) circle (0.05);
 \foreach \P in {3,8,12} \draw[dashed] (C4)--(C\P);
 \coordinate (A1) at (7.5,0);
 \coordinate (A2) at (8,0);
 \coordinate (A3) at (8.5,0);
 \coordinate (A4) at (9,0);
 \coordinate (A5) at (9.5,0);
 \coordinate (A6) at (7.5,0.5);
 \coordinate (A7) at (8,0.5);
 \coordinate (A8) at (8.5,0.5);
 \coordinate (A9) at (9,0.5);
 \coordinate (A10) at (7.5,1);
 \coordinate (A11) at (8,1);
 \coordinate (A12) at (8.5,1);
 \coordinate (A13) at (7.5,1.5);
 \coordinate (A14) at (8,1.5);
 \coordinate (A15) at (7.5,2);
 \draw [very thin, gray] (A1)--(A5)--(A15)--cycle; 
 \draw (A2)--(A1);
 \foreach \t in {1,2,...,15} \fill[black] (A\t) circle (0.05);
 \coordinate (B1) at (10,0);
 \coordinate (B2) at (10.5,0);
 \coordinate (B3) at (11,0);
 \coordinate (B4) at (11.5,0);
 \coordinate (B5) at (12,0);
 \coordinate (B6) at (10,0.5);
 \coordinate (B7) at (10.5,0.5);
 \coordinate (B8) at (11,0.5);
 \coordinate (B9) at (11.5,0.5);
 \coordinate (B10) at (10,1);
 \coordinate (B11) at (10.5,1);
 \coordinate (B12) at (11,1);
 \coordinate (B13) at (10,1.5);
 \coordinate (B14) at (10.5,1.5);
 \coordinate (B15) at (10,2);
 \draw [very thin, gray] (B1)--(B5)--(B15)--cycle; 
 \draw (B2)--(B1);
 \foreach \t in {1,2,...,15} \fill[black] (B\t) circle (0.05);
 \coordinate[label=below:$K_{XY}$] (a3) at (1.65,1.75);
 \coordinate[label=below:$K_{XZ}$] (a3) at (4.15,1.75);
 \coordinate[label=below:$K_{XW}$] (a3) at (6.65,1.75);
 \coordinate[label=below:$K_{YZ}$] (a3) at (9.15,1.75);
 \coordinate[label=below:$K_{YW}$] (a3) at (11.65,1.75);
\end{tikzpicture}
\caption{Case $4$ with $\MM_Y(g^h)$ being $(3, 1, 0, 0)\rightarrow (4, 0, 0, 0)$.}
\label{fccomp4to5}
\end{figure}
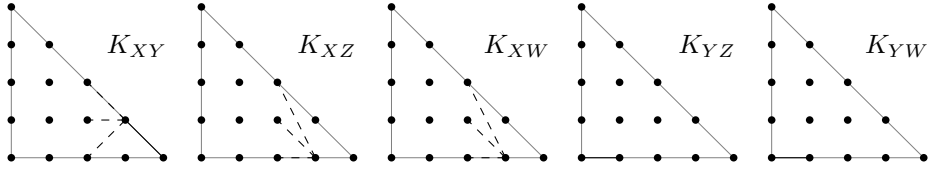
Thus, the tropical curves are as in Figure \ref{fcred4to5}.
Here, the intersection points $x_1$, $x_2$, $x_3$ and $x_4$ are shifted to $y$, $x_1$, $x_2$ and $x_3$, but the skeletons are the same.
\begin{figure}[H]
\centering
\begin{tikzpicture}
 \coordinate (O1) at (8,2);
 \coordinate (X1) at (5.25,2);
 \coordinate (Y1) at (8,0);
 \coordinate (X2) at (6.5,2);
 \coordinate (X4) at (7.5,2);
 \draw[thin,->,>=stealth] (O1)--(X1) node[left] {$X$};
 \draw[thin,->,>=stealth] (O1)--(Y1) node[below] {$Y$};
 \fill[black] (X2) circle (0.06);
 \fill[black] (X4) circle (0.06);
 \draw[dashed] (6.75,0.85) circle (0.7);
 \draw[dashed] (6.1,2.4) circle (0.2);
 \draw[dashed] (6.65,2.55) circle (0.2);
 \draw (X2)--(6.5,1.5);
 \draw (X4)--(7,1.5);
 \draw (X2)--(6.25,2.26);
 \draw (X2)--(6.6,2.36);
 \draw (X4)--(7.4,2.36);
 \draw (X4)--(7.7,2.36);
 \draw[dotted] (5.75,2.25)--(5.25,2.25);
 \draw[dotted] (5.75,1.75)--(5.25,1.75);
 \coordinate [label=below:$x_2$] (a1) at (6.75,2);
 \coordinate [label=below:$x_1$] (a2) at (7.6,2);
 \coordinate [label=below:$\leadsto$] (a2) at (8.75,1.25);
 \coordinate [label=below:$\text{Case $5$}$] (a2) at (6.5,-0.5);
 \coordinate (O1) at (12,2);
 \coordinate (X1) at (9.75,2);
 \coordinate (Y1) at (12,0);
 \coordinate (X2) at (11,2);
 \coordinate (X4) at (12,1.5);
 \draw[thin,->,>=stealth] (O1)--(X1) node[left] {$X$};
 \draw[thin,->,>=stealth] (O1)--(Y1) node[below] {$Y$};
 \fill[black] (X2) circle (0.06);
 \fill[black] (X4) circle (0.06);
 \draw[dashed] (10.95,0.85) circle (0.7);
 \draw[dashed] (10.6,2.4) circle (0.2);
 \draw[dashed] (11.15,2.55) circle (0.2);
 \draw (X2)--(11,1.53);
 \draw (X4)--(11.6,1.1);
 \draw (X2)--(10.75,2.26);
 \draw (X2)--(11.1,2.36);
 \draw (X4)--(12.5,1.75);
 \draw (X4)--(12.5,1.25);
 \draw[dotted] (10.25,2.25)--(9.75,2.25);
 \draw[dotted] (10.25,1.75)--(9.75,1.75);
 \coordinate [label=below:$x_1$] (a1) at (11.25,2);
 \coordinate [label=above:$y$] (a2) at (11.8,1.5);
 \coordinate [label=below:$\text{Case $4$}$] (a2) at (11,-0.5);
\end{tikzpicture}
\caption{The reduction from Case $5$ to Case $4$.}
\label{fcred4to5}
\end{figure}
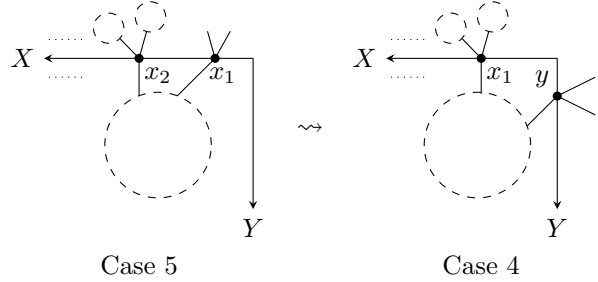
Hence it is sufficient to look at Case $4$.

In the following, we consider Case $4$: $(a_0, b_0, c_0, d_0)=(3, 1, 0, 0)$.
Since the skeletons of the restrictions of $C$ to $YZ$, $YW$ and $ZW$ are as in Figure \ref{fc4}, it is sufficient to consider the skeletons of $C_{XY}$, $C_{XZ}$ and $C_{XW}$ when studying the skeleton of $C$.
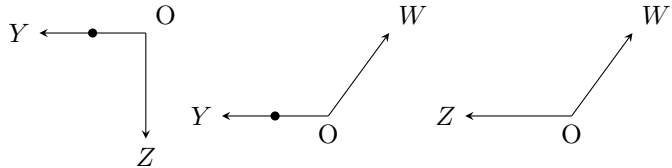
\begin{figure}[H]
\centering
\begin{tikzpicture}
 \coordinate[label=above right:O] (O1) at (5,0);
 \coordinate (Y1) at (3.6,0);
 \coordinate (Z1) at (5,-1.4);
 \coordinate (Y'1) at (4.3,0);
 \draw[thin,->,>=stealth] (O1)--(Y1) node[left] {$Y$};
 \draw[thin,->,>=stealth] (O1)--(Z1) node[below] {$Z$};
 \fill[black] (Y'1) circle (0.06);
 \coordinate[label=below:O] (O4) at (7.4,-1.1);
 \coordinate (Y4) at (6,-1.1);
 \coordinate (W4) at (8.2,0);
 \coordinate (Y'4) at (6.7,-1.1);
 \draw[thin,->,>=stealth] (O4)--(Y4) node[left] {$Y$};
 \draw[thin,->,>=stealth] (O4)--(W4) node[above right] {$W$};
 \fill[black] (Y'4) circle (0.06);
 \coordinate[label=below:O] (O5) at (10.6,-1.1);
 \coordinate (Z5) at (9.2,-1.1);
 \coordinate (W5) at (11.4,0);
 \draw[thin,->,>=stealth] (O5)--(Z5) node[left] {$Z$};
 \draw[thin,->,>=stealth] (O5)--(W5) node[above right] {$W$};
\end{tikzpicture}
\caption{The skeletons of the restrictions of $C$ to $YZ$, $YW$ and $ZW$ in Case $4$.}
\label{fc4}
\end{figure}

We look at $\MM_X(g^h)$.
It is sufficient to consider the following $4$ cases by the symmetry between $Z$ and $W$.
\begin{eqnarray*}
\text{Case $4.1$: }(3, 1, 0, 0)\rightarrow (2, 2, 0, 0), \quad \text{Case $4.2$: }(3, 1, 0, 0)\rightarrow (2, 1, 1, 0), \\
\text{Case $4.3$: }(3, 1, 0, 0)\rightarrow (2, 0, 2, 0), \quad \text{Case $4.4$: }(3, 1, 0, 0)\rightarrow (2, 0, 1, 1).\hspace{0.5mm}
\end{eqnarray*}

\begin{Lem}\label{case4hodai}
If $(a_0, b_0, c_0, d_0)=(3, 1, 0, 0)$ and $b_1(\overline{C'})=1$, the skeleton of $C$ is not the lollipop graph.
\end{Lem}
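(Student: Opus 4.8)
The plan is to reduce the statement to a pigeonhole count on the points where $C$ crosses the rays, using that the exponent data for Case~$4$ leaves only three such points available for the ``extra'' cycles. Assume $b_1(\overline{C'})=1$. Then $C'=C\setminus(X\cup Y\cup Z\cup W)$ has a unique cycle, which I will call $\gamma_0$; note $\gamma_0$ avoids $C\setminus C'$. By Corollary~\ref{XYZW}, since $(a_0,b_0,c_0,d_0)=(3,1,0,0)$ we have $|C\cap X|=3$, $|C\cap Y|=1$ and $C\cap Z=C\cap W=\emptyset$, so $C\setminus C'=\{x_1,x_2,x_3,y\}$ with $x_i\in X$ and $y\in Y$, and these are exactly the four points along which the pieces $C_{**}$ are glued to form $C$.

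The geometric facts I would establish first are: (i) at each of $x_1,x_2,x_3,y$ the curve $C$ is trivalent, with exactly one edge running into each of the three quarter planes meeting along the corresponding ray; (ii) every connected component of $C'$ lies in the interior of a single quarter plane, since a path in $C$ leaving one quarter plane for another must pass through one of $X,Y,Z,W$; (iii) by Figure~\ref{fc4}, in Case~$4$ the restrictions $C_{YZ}$ and $C_{YW}$ are trees attached to the remainder of $C$ only at $y$, and $C_{ZW}$ is empty. Facts (i) and (ii) give that any cycle of $C$ other than $\gamma_0$ meets at least two of $x_1,x_2,x_3,y$: a cycle meeting only one point $p$ among them would be a single arc of $C'$ joining two of the three edges at $p$, but those two edges lie in two different quarter planes, contradicting (ii). Facts (i) and (iii) give that no cycle of $C$ passes through $y$, since two of the three edges at $y$ enter the dangling trees $C_{YZ}$ and $C_{YW}$ and hence lie on no cycle, leaving just one usable edge at $y$.

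To finish, I would suppose the skeleton of $C$ is the lollipop graph of genus~$3$; its three loops are pairwise disjoint, so $C$ contains three pairwise disjoint cycles, of which at least two — say $\gamma_1,\gamma_2$ — differ from $\gamma_0$. By the previous paragraph each of $\gamma_1,\gamma_2$ meets at least two of $x_1,x_2,x_3,y$ and neither equals $y$, so each meets at least two of $x_1,x_2,x_3$; since $\gamma_1\cap\gamma_2=\emptyset$ this is impossible, as $\{x_1,x_2,x_3\}$ has only three elements. Hence two distinct cycles of $C$ must intersect, and the skeleton of $C$ is not the lollipop graph.

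I expect the main obstacle to be the rigorous verification of facts (i) and (iii). Fact (i) — transversality of $C$ to each ray of the tropical plane, with one branch per adjacent quarter plane — should come from the balancing condition (Proposition~\ref{balancing_condition}) together with the duality theorem, essentially as in the reasoning behind Lemmas~\ref{koutenyonko} and~\ref{1/2}. Fact (iii) uses $c_0=d_0=0$ (so that $|C\cap Z|=|C\cap W|=0$ by Corollary~\ref{XYZW}) together with $b_1(\overline{C'})=1$, which forces the unique interior lattice point into one of $K_{XY},K_{XZ},K_{XW}$ and hence makes $C_{YZ}$ and $C_{YW}$ acyclic, as already recorded in Figure~\ref{fc4}. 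Once (i)--(iii) are in hand, the remainder is the short combinatorial argument above.
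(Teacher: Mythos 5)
Your proof is correct and follows essentially the same route as the paper's: rule out cycles through $y$ using $C\cap Z=C\cap W=\emptyset$, observe that every cycle of $C$ not contained in $C'$ must pass through at least two of the three points of $C\cap X$, and conclude by pigeonhole that two of the three pairwise disjoint cycles required for the lollipop graph would have to share a point of $C\cap X$. Your facts (i)--(iii) simply make explicit the justifications (trivalence at the ray points, confinement of each component of $C'$ to a single quarter plane, and the bridge property of the edges at $y$ into $YZ$ and $YW$) that the paper's terser proof leaves implicit.
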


\begin{proof}
Assume $b_1(\overline{C'})=1$.
When we glue $\overline{C'}$ along $Y$, the number of cycles does not change.
Therefore, glueing along $X$ adds $2$ homologically independent cycles.
We need at least two points on $X$ to make a cycle.
Therefore, when we add two cycles, at least one of the three points on $X$ is contained in the two cycles.
Thus, two cycles share one point, and the skeleton of $C$ is not the lollipop graph.
\end{proof}

\subsubsection{Case $4. 1$: $(3, 1, 0, 0)\rightarrow (2, 2, 0, 0)$}

We divide into the following cases.
\begin{eqnarray*}
\text{Case $4. 1. 1$: }(2, 2, 0, 0)\rightarrow (1, 3, 0, 0), \quad \text{Case $4. 1. 2$: }(2, 2, 0, 0)\rightarrow (1, 2, 1, 0), \\
\text{Case $4. 1. 3$: }(2, 2, 0, 0)\rightarrow (1, 1, 2, 0), \quad \text{Case $4. 1. 4$: }(2, 2, 0, 0)\rightarrow (1, 1, 1, 1), \\
\text{Case $4. 1. 5$: }(2, 2, 0, 0)\rightarrow (1, 0, 3, 0), \quad \text{Case $4. 1. 6$: }(2, 2, 0, 0)\rightarrow (1, 0, 2, 1). \hspace{0.5mm}
\end{eqnarray*}

Case $4. 1. 1$: $(3, 1, 0, 0)\rightarrow (2, 2, 0, 0)\rightarrow (1, 3, 0, 0)$

The complex $K_{XY}$ is as in Figure \ref{fc4.1.1} and the the skeleton of $C$ is not the lollipop graph by Lemma \ref{3in1}.
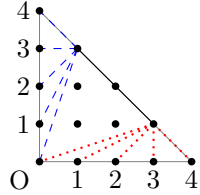
\begin{figure}[H]
\centering
\begin{tikzpicture}
 \coordinate[label=below left:O] (A1) at (0,0);
 \coordinate [label=below:1] (A2) at (0.5,0);
 \coordinate [label=below:2] (A3) at (1,0);
 \coordinate [label=below:3] (A4) at (1.5,0);
 \coordinate [label=below:4] (A5) at (2,0);
 \coordinate [label=left:1] (A6) at (0,0.5);
 \coordinate (A7) at (0.5,0.5);
 \coordinate (A8) at (1,0.5);
 \coordinate (A9) at (1.5,0.5);
 \coordinate [label=left:2] (A10) at (0,1);
 \coordinate (A11) at (0.5,1);
 \coordinate (A12) at (1,1);
 \coordinate [label=left:3] (A13) at (0,1.5);
 \coordinate (A14) at (0.5,1.5);
 \coordinate [label=left:4] (A15) at (0,2);
 \draw [very thin, gray] (A1)--(A5)--(A15)--cycle;
 \draw (A12)--(A14);
 \draw (A12)--(A9);
 \foreach \P in {1,6,10,13,15}  \draw[dashed, blue] (A14)--(A\P);
 \foreach \P in {1,2,3,4,5}  \draw[thick, dotted, red] (A9)--(A\P);
 \foreach \t in {1,2,...,15} \fill[black] (A\t) circle (0.05);
\end{tikzpicture}
\caption{The complex $K_{XY}$ in Case $4. 1. 1$.}
\label{fc4.1.1}
\end{figure}

Case $4. 1. 2$: $(3, 1, 0, 0)\rightarrow (2, 2, 0, 0)\rightarrow (1, 2, 1, 0)$

The complexes $K_{XY}$ and $K_{XZ}$ are as in Figure \ref{fc4.1.2} (a).
Assume that the skeleton of $C$ is the lollipop graph of genus $3$.
Then by Lemma \ref{12}, $K_{XY}$ contains the $2$-simplices as in (b), and the skeletons of $C_{XY}$ and $C_{XZ}$ are as in (c).
\begin{figure}[H]
\centering
\begin{tikzpicture}
 \coordinate[label=below left:O] (B1) at (3,0);
 \coordinate [label=below:1] (B2) at (3.5,0);
 \coordinate [label=below:2] (B3) at (4,0);
 \coordinate [label=below:3] (B4) at (4.5,0);
 \coordinate [label=below:4] (B5) at (5,0);
 \coordinate [label=left:1] (B6) at (3,0.5);
 \coordinate (B7) at (3.5,0.5);
 \coordinate (B8) at (4,0.5);
 \coordinate (B9) at (4.5,0.5);
 \coordinate [label=left:2] (B10) at (3,1);
 \coordinate (B11) at (3.5,1);
 \coordinate (B12) at (4,1);
 \coordinate [label=left:3] (B13) at (3,1.5);
 \coordinate (B14) at (3.5,1.5);
 \coordinate [label=left:4] (B15) at (3,2);
 \draw [very thin, gray] (B1)--(B5)--(B15)--cycle;
 \draw (B9)--(B12)--(B11);
 \foreach \P in {1,6,10,13,15}  \draw[dashed, blue] (B11)--(B\P);
 \foreach \P in {1,2,3,4,5}  \draw[thick, dotted, red] (B9)--(B\P);
 \foreach \t in {1,2,...,15} \fill[black] (B\t) circle (0.05);
 \coordinate[label=below left:O] (C1) at (6,0);
 \coordinate [label=below:1] (C2) at (6.5,0);
 \coordinate [label=below:2] (C3) at (7,0);
 \coordinate [label=below:3] (C4) at (7.5,0);
 \coordinate [label=below:4] (C5) at (8,0);
 \coordinate [label=left:1] (C6) at (6,0.5);
 \coordinate (C7) at (6.5,0.5);
 \coordinate (C8) at (7,0.5);
 \coordinate (C9) at (7.5,0.5);
 \coordinate [label=left:2] (C10) at (6,1);
 \coordinate (C11) at (6.5,1);
 \coordinate (C12) at (7,1);
 \coordinate [label=left:3] (C13) at (6,1.5);
 \coordinate (C14) at (6.5,1.5);
 \coordinate [label=left:4] (C15) at (6,2);
 \draw [very thin, gray] (C1)--(C5)--(C15)--cycle;
 \draw (C4)--(C3)--(C7);
 \foreach \t in {1,2,...,15} \fill[black] (C\t) circle (0.05);
 \foreach \P in {1,6,10,13,15}  \draw[dashed] (C7)--(C\P);
 \coordinate[label=below left:O] (D1) at (9.25,0);
 \coordinate [label=below:1] (D2) at (9.75,0);
 \coordinate [label=below:2] (D3) at (10.25,0);
 \coordinate [label=below:3] (D4) at (10.75,0);
 \coordinate [label=below:4] (D5) at (11.25,0);
 \coordinate [label=left:1] (D6) at (9.25,0.5);
 \coordinate (D7) at (9.75,0.5);
 \coordinate (D8) at (10.25,0.5);
 \coordinate (D9) at (10.75,0.5);
 \coordinate [label=left:2] (D10) at (9.25,1);
 \coordinate (D11) at (9.75,1);
 \coordinate (D12) at (10.25,1);
 \coordinate [label=left:3] (D13) at (9.25,1.5);
 \coordinate (D14) at (9.75,1.5);
 \coordinate [label=left:4] (D15) at (9.25,2);
 \draw [very thin, gray] (D1)--(D5)--(D15)--cycle;
 \foreach \P in {13,15}  \draw[dashed, blue] (D11)--(D\P);
 \foreach \P in {3,4,5}  \draw[thick, dotted, red] (D9)--(D\P);
 \draw (D9)--(D12);
 \draw (D11)--(D12);
 \draw (D13)--(D3)--(D11);
 \draw (D3)--(D7)--(D13);
 \draw (D3)--(D8)--(D11);
 \draw[very thick] (D8)--(D11);
 \foreach \t in {1,2,...,15} \fill[black] (D\t) circle (0.05);
 \coordinate [label=below:(a)] (a1) at (5.5,-0.5);
 \coordinate [label=below:(b)] (a2) at (10.25,-0.5); 
 \coordinate (O1) at (14,2);
 \coordinate (X1) at (12,2);
 \coordinate (Y1) at (14,0);
 \coordinate (X2) at (12.5,2);
 \coordinate (X3) at (13,2);
 \coordinate (X4) at (13.5,2);
 \coordinate (Y2) at (14,1);
 \draw[thin,->,>=stealth] (O1)--(X1) node[left] {$X$};
 \draw[thin,->,>=stealth] (O1)--(Y1) node[below] {$Y$};
 \fill[black] (X2) circle (0.06);
 \fill[black] (X3) circle (0.06);
 \fill[black] (X4) circle (0.06);
 \fill[black] (Y2) circle (0.06);
 \draw (12.7,1) circle (0.2);
 \draw (13.3,1) circle (0.2);
 \draw [dashed] (13,1) circle (0.7);
 \draw (X2)--(12.5,1.48)--(13.158,1.142);
 \draw (X3)--(13,1.7)--(13.442,1.142);
 \draw (X4)--(13.5,1.48);
 \draw (Y2)--(13.7,1);
 \draw[very thick] (13.158,1.142) arc (135:45:0.2);
 \coordinate (O2) at (17,2);
 \coordinate (X5) at (15,2);
 \coordinate (Z1) at (17,0);
 \coordinate (X6) at (15.5,2);
 \coordinate (X7) at (16,2);
 \coordinate (X8) at (16.5,2);
 \draw[thin,->,>=stealth] (O2)--(X5) node[left] {$X$};
 \draw[thin,->,>=stealth] (O2)--(Z1) node[below] {$Z$};
 \fill[black] (X6) circle (0.06);
 \fill[black] (X7) circle (0.06);
 \fill[black] (X8) circle (0.06);
 \draw (X6) arc (180:360:0.25);
 \coordinate [label=above:$x_2$] (a1) at (13,2);
 \coordinate [label=above:$x_3$] (a2) at (12.5,2); 
 \coordinate [label=above:$x_2$] (a1) at (16,2);
 \coordinate [label=above:$x_3$] (a2) at (15.5,2);
 \coordinate [label=below:(c)] (a3) at (14.75,-0.5); 
\end{tikzpicture}
\caption{The complexes and skeletons in Case $4. 1. 2$.}
\label{fc4.1.2}
\end{figure}
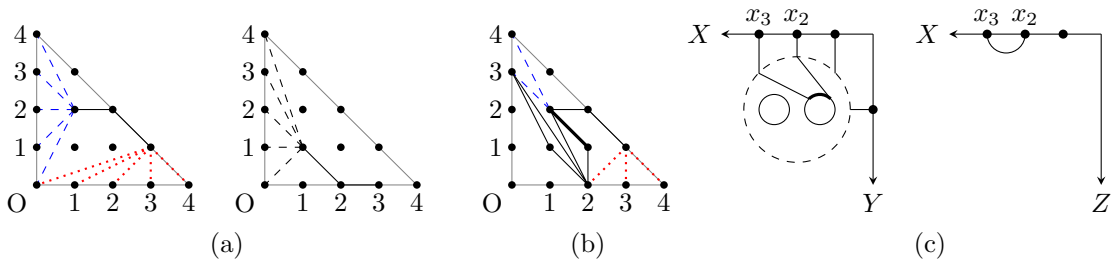

The $1$-simplex $(1, 2)$-$(2, 2)$ of $K_{XY}$ corresponds to the edge of $C_{XY}$ which contains the point $x_2\in C\cap X$.
The path from $x_2$ to $x_3$ in $C_{XY}$ intersects the cycle which corresponds to the point $(2, 1)$.
When we glue at $X$, we have a cycle which contains the points $x_2$ and $x_3$.
Thus the two cycles intersect and the skeleton of $C$ is not the lollipop graph.

\medbreak
Case $4. 1. 3$: $(3, 1, 0, 0)\rightarrow (2, 2, 0, 0)\rightarrow (1, 1, 2, 0)$

The complexes $K_{XY}$, $K_{XZ}$ and the skeletons $C_{XY}$ and $C_{XZ}$ are as in Figure \ref{fc4.1.3}.
\begin{figure}[H]
\centering
\begin{tikzpicture}
 \coordinate[label=below left:O] (A1) at (0,0);
 \coordinate [label=below:1] (A2) at (0.5,0);
 \coordinate [label=below:2] (A3) at (1,0);
 \coordinate [label=below:3] (A4) at (1.5,0);
 \coordinate [label=below:4] (A5) at (2,0);
 \coordinate [label=left:1] (A6) at (0,0.5);
 \coordinate (A7) at (0.5,0.5);
 \coordinate (A8) at (1,0.5);
 \coordinate (A9) at (1.5,0.5);
 \coordinate [label=left:2] (A10) at (0,1);
 \coordinate (A11) at (0.5,1);
 \coordinate (A12) at (1,1);
 \coordinate [label=left:3] (A13) at (0,1.5);
 \coordinate (A14) at (0.5,1.5);
 \coordinate [label=left:4] (A15) at (0,2);
 \draw [very thin, gray] (A1)--(A5)--(A15)--cycle; 
 \draw (A9)--(A12)--(A7);
 \foreach \P in {1,6,10,13,15}  \draw[dashed, blue] (A7)--(A\P);
 \foreach \P in {1,2,3,4,5}  \draw[thick, dotted, red] (A9)--(A\P);
 \foreach \t in {1,2,...,15} \fill[black] (A\t) circle (0.05);
 \coordinate[label=below left:O] (B1) at (3,0);
 \coordinate [label=below:1] (B2) at (3.5,0);
 \coordinate [label=below:2] (B3) at (4,0);
 \coordinate [label=below:3] (B4) at (4.5,0);
 \coordinate [label=below:4] (B5) at (5,0);
 \coordinate [label=left:1] (B6) at (3,0.5);
 \coordinate (B7) at (3.5,0.5);
 \coordinate (B8) at (4,0.5);
 \coordinate (B9) at (4.5,0.5);
 \coordinate [label=left:2] (B10) at (3,1);
 \coordinate (B11) at (3.5,1);
 \coordinate (B12) at (4,1);
 \coordinate [label=left:3] (B13) at (3,1.5);
 \coordinate (B14) at (3.5,1.5);
 \coordinate [label=left:4] (B15) at (3,2);
 \draw [very thin, gray] (B1)--(B5)--(B15)--cycle; 
 \draw (B4)--(B3)--(B11);
 \foreach \t in {1,2,...,15} \fill[black] (B\t) circle (0.05);
 \foreach \P in {1,6,10,13,15}  \draw[dashed] (B11)--(B\P);
 \coordinate[label=above right:O] (O1) at (8,2);
 \coordinate (X1) at (6,2);
 \coordinate (Y1) at (8,0);
 \coordinate (X2) at (6.5,2);
 \coordinate (X3) at (7,2);
 \coordinate (X4) at (7.5,2);
 \coordinate (Y2) at (8,1);
 \draw[thin,->,>=stealth] (O1)--(X1) node[left] {$X$};
 \draw[thin,->,>=stealth] (O1)--(Y1) node[below] {$Y$};
 \fill[black] (X2) circle (0.06);
 \fill[black] (X3) circle (0.06);
 \fill[black] (X4) circle (0.06);
 \fill[black] (Y2) circle (0.06);
 \draw[dashed] (7,1) circle (0.7);
 \draw (7,1) circle (0.2);
 \draw (X2)--(6.5,1.48);
 \draw (X3)--(7,1.7);
 \draw (X4)--(7.5,1.48);
 \draw (Y2)--(7.7,1); 
 \coordinate[label=above right:O] (O2) at (11.5,2);
 \coordinate (X5) at (9.5,2);
 \coordinate (Z1) at (11.5,0);
 \coordinate (X6) at (10,2);
 \coordinate (X7) at (10.5,2);
 \coordinate (X8) at (11,2);
 \draw[thin,->,>=stealth] (O2)--(X5) node[left] {$X$};
 \draw[thin,->,>=stealth] (O2)--(Z1) node[below] {$Z$};
 \fill[black] (X6) circle (0.06);
 \fill[black] (X7) circle (0.06);
 \fill[black] (X8) circle (0.06);
 \draw[dashed] (10.25,1) circle (0.7);
 \draw (10.25,1) circle (0.2);
 \draw (X6)--(10,1.65);
 \draw (X7)--(10.5,1.65);
 \coordinate [label=above:$x_3$] (a1) at (6.5,2);
 \coordinate [label=above:$x_2$] (a2) at (7,2);
 \coordinate [label=above:$x_3$] (a2) at (10,2);
 \coordinate [label=above:$x_2$] (a2) at (10.5,2);   
\end{tikzpicture}
\caption{The complexes and skeletons in Case $4. 1. 3$.}
\label{fc4.1.3}
\end{figure}
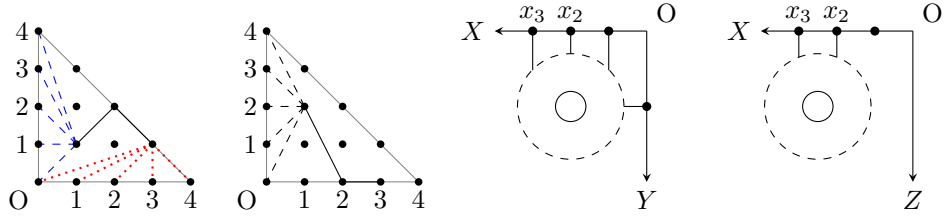

When we glue at $X$, we have the third cycle passing through the points $x_2$ and $x_3$.
Then any path connecting the cycle in $XY$ to the cycle in $XZ$ intersects the third cycle, so the skeleton of $C$ is not the lollipop graph.

\medbreak
Cases $4. 1. 4$ and $4. 1. 5$:

The complexes $K_{XY}$, $K_{XZ}$ and $K_{XW}$ are as in Figure \ref{fc4.1.4and4.1.5}.
In Case $4. 1. 4$, $b_1(\overline{C'})=1$.
In Case $4. 1. 5$, $K_{XZ}$ contains the edge $(1, 3)$-$(2, 0)$ as its $1$-simplex.
In each case, the skeleton of $C$ is not the lollipop graph by Lemma \ref{Case2.6} and \ref{case4hodai}.

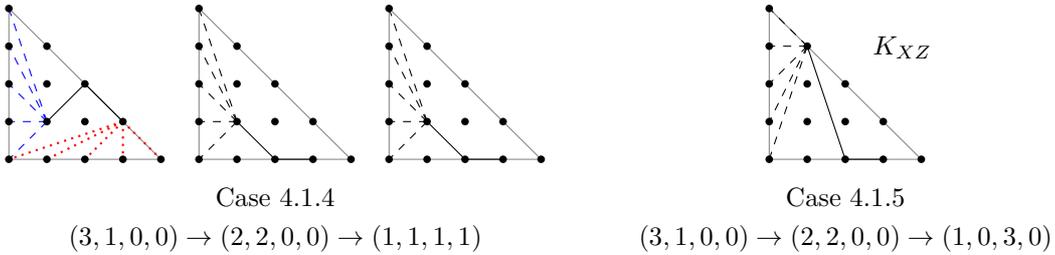
\begin{figure}[H]
\center
\begin{tikzpicture}
 \coordinate (A1) at (0,0);
 \coordinate (A2) at (0.5,0);
 \coordinate (A3) at (1,0);
 \coordinate (A4) at (1.5,0);
 \coordinate (A5) at (2,0);
 \coordinate (A6) at (0,0.5);
 \coordinate (A7) at (0.5,0.5);
 \coordinate (A8) at (1,0.5);
 \coordinate (A9) at (1.5,0.5);
 \coordinate (A10) at (0,1);
 \coordinate (A11) at (0.5,1);
 \coordinate (A12) at (1,1);
 \coordinate (A13) at (0,1.5);
 \coordinate (A14) at (0.5,1.5);
 \coordinate (A15) at (0,2);
 \draw [very thin, gray] (A1)--(A5)--(A15)--cycle;
 \draw (A9)--(A12)--(A7);
 \foreach \P in {1,6,10,13,15}  \draw[dashed, blue] (A7)--(A\P);
 \foreach \P in {1,2,3,4,5}  \draw[thick, dotted, red] (A9)--(A\P);
 \foreach \t in {1,2,...,15} \fill[black] (A\t) circle (0.05); 
 \coordinate (B1) at (2.5,0);
 \coordinate (B2) at (3,0);
 \coordinate (B3) at (3.5,0);
 \coordinate (B4) at (4,0);
 \coordinate (B5) at (4.5,0);
 \coordinate (B6) at (2.5,0.5);
 \coordinate (B7) at (3,0.5);
 \coordinate (B8) at (3.5,0.5);
 \coordinate (B9) at (4,0.5);
 \coordinate (B10) at (2.5,1);
 \coordinate (B11) at (3,1);
 \coordinate (B12) at (3.5,1);
 \coordinate (B13) at (2.5,1.5);
 \coordinate (B14) at (3,1.5);
 \coordinate (B15) at (2.5,2);
 \draw [very thin, gray] (B1)--(B5)--(B15)--cycle;
 \draw (B4)--(B3)--(B7);
 \foreach \t in {1,2,...,15} \fill[black] (B\t) circle (0.05);
 \foreach \P in {1,6,10,13,15}  \draw[dashed] (B7)--(B\P);
 \coordinate (C1) at (5,0);
 \coordinate (C2) at (5.5,0);
 \coordinate (C3) at (6,0);
 \coordinate (C4) at (6.5,0);
 \coordinate (C5) at (7,0);
 \coordinate (C6) at (5,0.5);
 \coordinate (C7) at (5.5,0.5);
 \coordinate (C8) at (6,0.5);
 \coordinate (C9) at (6.5,0.5);
 \coordinate (C10) at (5,1);
 \coordinate (C11) at (5.5,1);
 \coordinate (C12) at (6,1);
 \coordinate (C13) at (5,1.5);
 \coordinate (C14) at (5.5,1.5);
 \coordinate (C15) at (5,2);
 \draw [very thin, gray] (C1)--(C5)--(C15)--cycle;
 \draw (C4)--(C3)--(C7);
 \foreach \t in {1,2,...,15} \fill[black] (C\t) circle (0.05);
 \foreach \P in {1,6,10,13,15}  \draw[dashed] (C7)--(C\P);
 \coordinate (B1) at (10,0);
 \coordinate (B2) at (10.5,0);
 \coordinate (B3) at (11,0);
 \coordinate (B4) at (11.5,0);
 \coordinate (B5) at (12,0);
 \coordinate (B6) at (10,0.5);
 \coordinate (B7) at (10.5,0.5);
 \coordinate (B8) at (11,0.5);
 \coordinate (B9) at (11.5,0.5);
 \coordinate (B10) at (10,1);
 \coordinate (B11) at (10.5,1);
 \coordinate (B12) at (11,1);
 \coordinate (B13) at (10,1.5);
 \coordinate (B14) at (10.5,1.5);
 \coordinate (B15) at (10,2);
 \draw [very thin, gray] (B1)--(B5)--(B15)--cycle;
 \draw (B4)--(B3)--(B14);
 \foreach \t in {1,2,...,15} \fill[black] (B\t) circle (0.05);
 \foreach \P in {1,6,10,13,15}  \draw[dashed] (B14)--(B\P);
 \coordinate [label=below:$\text{Case $4. 1. 4$}$] (a1) at (3.5,-0.25);
 \coordinate [label=below:$\text{$(3, 1, 0, 0)\rightarrow (2, 2, 0, 0)\rightarrow (1, 1, 1, 1)$}$] (a1) at (3.5,-0.75);
 \coordinate [label=below:$\text{Case $4. 1. 5$}$] (a2) at (11,-0.25);
 \coordinate [label=below:$\text{$(3, 1, 0, 0)\rightarrow (2, 2, 0, 0)\rightarrow (1, 0, 3, 0)$}$] (a2) at (11,-0.75);
 \coordinate[label=below:$K_{XZ}$] (a3) at (11.75,1.75);
\end{tikzpicture}
\caption{The complexes in Cases $4. 1. 4$ and $4. 1. 5$.}
\label{fc4.1.4and4.1.5}
\end{figure}

Case $4. 1. 6$: $(3, 1, 0, 0)\rightarrow (2, 2, 0, 0)\rightarrow (1, 0, 2, 1)$

The complexes $K_{XY}$, $K_{XZ}$ and $K_{XW}$ are as in Figure \ref{fc4.1.6}.
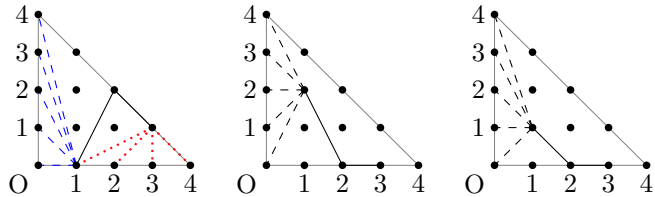
\begin{figure}[H]
\centering
\begin{tikzpicture}
 \coordinate[label=below left:O] (A1) at (0,0);
 \coordinate [label=below:1] (A2) at (0.5,0);
 \coordinate [label=below:2] (A3) at (1,0);
 \coordinate [label=below:3] (A4) at (1.5,0);
 \coordinate [label=below:4] (A5) at (2,0);
 \coordinate [label=left:1] (A6) at (0,0.5);
 \coordinate (A7) at (0.5,0.5);
 \coordinate (A8) at (1,0.5);
 \coordinate (A9) at (1.5,0.5);
 \coordinate [label=left:2] (A10) at (0,1);
 \coordinate (A11) at (0.5,1);
 \coordinate (A12) at (1,1);
 \coordinate [label=left:3] (A13) at (0,1.5);
 \coordinate (A14) at (0.5,1.5);
 \coordinate [label=left:4] (A15) at (0,2);
 \draw [very thin, gray] (A1)--(A5)--(A15)--cycle; 
 \draw (A9)--(A12)--(A2);
 \foreach \P in {1,6,10,13,15} \draw[dashed, blue] (A2)--(A\P);
 \foreach \P in {2,3,4,5} \draw[thick, dotted, red] (A9)--(A\P);
 \foreach \t in {1,2,...,15} \fill[black] (A\t) circle (0.05);
 \coordinate[label=below left:O] (B1) at (3,0);
 \coordinate [label=below:1] (B2) at (3.5,0);
 \coordinate [label=below:2] (B3) at (4,0);
 \coordinate [label=below:3] (B4) at (4.5,0);
 \coordinate [label=below:4] (B5) at (5,0);
 \coordinate [label=left:1] (B6) at (3,0.5);
 \coordinate (B7) at (3.5,0.5);
 \coordinate (B8) at (4,0.5);
 \coordinate (B9) at (4.5,0.5);
 \coordinate [label=left:2] (B10) at (3,1);
 \coordinate (B11) at (3.5,1);
 \coordinate (B12) at (4,1);
 \coordinate [label=left:3] (B13) at (3,1.5);
 \coordinate (B14) at (3.5,1.5);
 \coordinate [label=left:4] (B15) at (3,2);
 \draw [very thin, gray] (B1)--(B5)--(B15)--cycle; 
 \draw (B4)--(B3)--(B11);
 \foreach \t in {1,2,...,15} \fill[black] (B\t) circle (0.05);
 \foreach \P in {1,6,10,13,15}  \draw[dashed] (B11)--(B\P);
 \coordinate[label=below left:O] (C1) at (6,0);
 \coordinate [label=below:1] (C2) at (6.5,0);
 \coordinate [label=below:2] (C3) at (7,0);
 \coordinate [label=below:3] (C4) at (7.5,0);
 \coordinate [label=below:4] (C5) at (8,0);
 \coordinate [label=left:1] (C6) at (6,0.5);
 \coordinate (C7) at (6.5,0.5);
 \coordinate (C8) at (7,0.5);
 \coordinate (C9) at (7.5,0.5);
 \coordinate [label=left:2] (C10) at (6,1);
 \coordinate (C11) at (6.5,1);
 \coordinate (C12) at (7,1);
 \coordinate [label=left:3] (C13) at (6,1.5);
 \coordinate (C14) at (6.5,1.5);
 \coordinate [label=left:4] (C15) at (6,2);
 \draw [very thin, gray] (C1)--(C5)--(C15)--cycle; 
 \draw (C4)--(C3)--(C7);
 \foreach \t in {1,2,...,15} \fill[black] (C\t) circle (0.05);
 \foreach \P in {1,6,10,13,15}  \draw[dashed] (C7)--(C\P);
\end{tikzpicture}
\caption{The complexes in Case $4. 1. 6$.}
\label{fc4.1.6}
\end{figure}

The skeletons of $C_{XY}$, $C_{XZ}$ and $C_{XW}$ are as in Figure \ref{fc4.1.6s}.
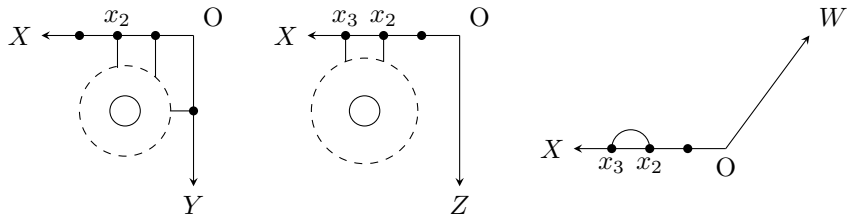
\begin{figure}[H]
\centering
\begin{tikzpicture}
 \coordinate[label=above right:O] (O1) at (-0.5,-1);
 \coordinate (X1) at (-2.5,-1);
 \coordinate (Y1) at (-0.5,-3);
 \coordinate (X2) at (-2,-1);
 \coordinate (X3) at (-1.5,-1);
 \coordinate (X4) at (-1,-1);
 \coordinate (Y2) at (-0.5,-2);
 \draw[thin,->,>=stealth] (O1)--(X1) node[left] {$X$};
 \draw[thin,->,>=stealth] (O1)--(Y1) node[below] {$Y$};
 \fill[black] (X2) circle (0.06);
 \fill[black] (X3) circle (0.06);
 \fill[black] (X4) circle (0.06);
 \fill[black] (Y2) circle (0.06);
 \draw[dashed] (-1.4,-2) circle (0.6);
 \draw (-1.4,-2) circle (0.2);
 \draw (X3)--(-1.5,-1.43);
 \draw (X4)--(-1,-1.55);
 \draw (Y2)--(-0.81,-2); 
 \coordinate[label=above right:O] (O2) at (3,-1);
 \coordinate (X5) at (1,-1);
 \coordinate (Z1) at (3,-3);
 \coordinate (X6) at (1.5,-1);
 \coordinate (X7) at (2,-1);
 \coordinate (X8) at (2.5,-1);
 \draw[thin,->,>=stealth] (O2)--(X5) node[left] {$X$};
 \draw[thin,->,>=stealth] (O2)--(Z1) node[below] {$Z$};
 \fill[black] (X6) circle (0.06);
 \fill[black] (X7) circle (0.06);
 \fill[black] (X8) circle (0.06);
 \draw[dashed] (1.75,-2) circle (0.7);
 \draw (1.75,-2) circle (0.2);
 \draw (X6)--(1.5,-1.35);
 \draw (X7)--(2,-1.35);
 \coordinate[label=below:O] (O3) at (6.5,-2.5);
 \coordinate (X9) at (4.5,-2.5);
 \coordinate (W1) at (7.6,-1);
 \coordinate (X10) at (5,-2.5);
 \coordinate (X11) at (5.5,-2.5);
 \coordinate (X12) at (6,-2.5);
 \draw[thin,->,>=stealth] (O3)--(X9) node[left] {$X$};
 \draw[thin,->,>=stealth] (O3)--(W1) node[above right] {$W$};
 \fill[black] (X10) circle (0.06);
 \fill[black] (X11) circle (0.06);
 \fill[black] (X12) circle (0.06); 
 \draw (X10) arc (180:0:0.25);
 \coordinate [label=above:$x_3$] (a1) at (1.5,-1);
 \coordinate [label=above:$x_2$] (a2) at (-1.5,-1);
 \coordinate [label=above:$x_2$] (a2) at (2,-1);
 \coordinate [label=below:$x_3$] (a1) at (5,-2.5);
 \coordinate [label=below:$x_2$] (a2) at (5.5,-2.5);   
\end{tikzpicture}
\caption{The skeletons of $C_{XY}$, $C_{XZ}$ and $C_{XW}$ in Case $4. 1. 6$.}
\label{fc4.1.6s}
\end{figure}

When we glue at $X$, we have the third cycle passing through the points $x_2$ and $x_3$.
Then any path connecting the cycle in $XY$ to the cycle in $XZ$ passes through the point $x_2$ and intersects the third cycle, so the skeleton of $C$ is not the lollipop graph.

From the above, in Case $4. 1$, the skeleton of $C$ is not the lollipop graph. 

\subsubsection{Case $4. 2$: $(3, 1, 0, 0)\rightarrow (2, 1, 1, 0)$}

We divide into the following cases.
\begin{eqnarray*}
&\text{Case $4. 2. 1$: }(2, 1, 1, 0)\rightarrow (1, 3, 0, 0),& \quad \text{Case $4. 2. 2$: }(2, 1, 1, 0)\rightarrow (1, 2, 1, 0),\\
&\text{Case $4. 2. 3$: }(2, 1, 1, 0)\rightarrow (1, 2, 0, 1),& \quad \text{Case $4. 2. 4$: }(2, 1, 1, 0)\rightarrow (1, 1, 2, 0),\\
&\text{Case $4. 2. 5$: }(2, 1, 1, 0)\rightarrow (1, 1, 1, 1),& \quad \text{Case $4. 2. 6$: }(2, 1, 1, 0)\rightarrow (1, 1, 0, 2),\\
&\text{Case $4. 2. 7$: }(2, 1, 1, 0)\rightarrow (1, 0, 3, 0),& \quad \text{Case $4. 2. 8$: }(2, 1, 1, 0)\rightarrow (1, 0, 2, 1),\\
&\text{Case $4. 2. 9$: }(2, 1, 1, 0)\rightarrow (1, 0, 1, 2),& \quad \text{Case $4. 2. 10$: }(2, 1, 1, 0)\rightarrow (1, 0, 0, 3).
\end{eqnarray*}

Case $4. 2. 1$: $(3, 1, 0, 0)\rightarrow (2, 1, 1, 0)\rightarrow (1, 3, 0, 0)$

The complexes $K_{XY}$ and $K_{XZ}$ are as in Figure \ref{fc4.2.1} (a).
Assume that the skeleton of $C$ is the lollipop graph of genus $3$.
Then by Lemma \ref{12}, $K_{XY}$ contains the $2$-simplices as in (b), and the skeletons of $C_{XY}$ and $C_{XZ}$ are as in (c).
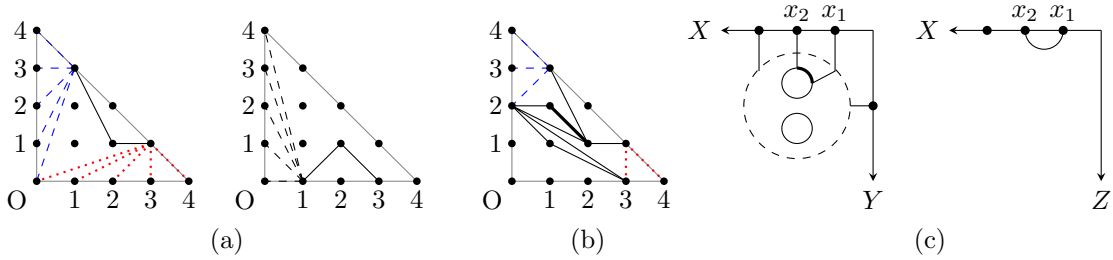
\begin{figure}[H]
\centering
\begin{tikzpicture}
 \coordinate[label=below left:O] (B1) at (3,0);
 \coordinate [label=below:1] (B2) at (3.5,0);
 \coordinate [label=below:2] (B3) at (4,0);
 \coordinate [label=below:3] (B4) at (4.5,0);
 \coordinate [label=below:4] (B5) at (5,0);
 \coordinate [label=left:1] (B6) at (3,0.5);
 \coordinate (B7) at (3.5,0.5);
 \coordinate (B8) at (4,0.5);
 \coordinate (B9) at (4.5,0.5);
 \coordinate [label=left:2] (B10) at (3,1);
 \coordinate (B11) at (3.5,1);
 \coordinate (B12) at (4,1);
 \coordinate [label=left:3] (B13) at (3,1.5);
 \coordinate (B14) at (3.5,1.5);
 \coordinate [label=left:4] (B15) at (3,2);
 \draw [very thin, gray] (B1)--(B5)--(B15)--cycle;
 \draw (B9)--(B8)--(B14);
 \foreach \P in {1,6,10,13,15} \draw[dashed, blue] (B14)--(B\P);
 \foreach \P in {1,2,3,4,5} \draw[thick, dotted, red] (B9)--(B\P);
 \foreach \t in {1,2,...,15} \fill[black] (B\t) circle (0.05);
 \coordinate[label=below left:O] (C1) at (6,0);
 \coordinate [label=below:1] (C2) at (6.5,0);
 \coordinate [label=below:2] (C3) at (7,0);
 \coordinate [label=below:3] (C4) at (7.5,0);
 \coordinate [label=below:4] (C5) at (8,0);
 \coordinate [label=left:1] (C6) at (6,0.5);
 \coordinate (C7) at (6.5,0.5);
 \coordinate (C8) at (7,0.5);
 \coordinate (C9) at (7.5,0.5);
 \coordinate [label=left:2] (C10) at (6,1);
 \coordinate (C11) at (6.5,1);
 \coordinate (C12) at (7,1);
 \coordinate [label=left:3] (C13) at (6,1.5);
 \coordinate (C14) at (6.5,1.5);
 \coordinate [label=left:4] (C15) at (6,2);
 \draw [very thin, gray] (C1)--(C5)--(C15)--cycle; 
 \draw (C4)--(C8)--(C2);
 \foreach \t in {1,2,...,15} \fill[black] (C\t) circle (0.05);
 \foreach \P in {1,6,10,13,15}  \draw[dashed] (C2)--(C\P);
 \coordinate[label=below left:O] (D1) at (9.25,0);
 \coordinate [label=below:1] (D2) at (9.75,0);
 \coordinate [label=below:2] (D3) at (10.25,0);
 \coordinate [label=below:3] (D4) at (10.75,0);
 \coordinate [label=below:4] (D5) at (11.25,0);
 \coordinate [label=left:1] (D6) at (9.25,0.5);
 \coordinate (D7) at (9.75,0.5);
 \coordinate (D8) at (10.25,0.5);
 \coordinate (D9) at (10.75,0.5);
 \coordinate [label=left:2] (D10) at (9.25,1);
 \coordinate (D11) at (9.75,1);
 \coordinate (D12) at (10.25,1);
 \coordinate [label=left:3] (D13) at (9.25,1.5);
 \coordinate (D14) at (9.75,1.5);
 \coordinate [label=left:4] (D15) at (9.25,2);
 \draw [very thin, gray] (D1)--(D5)--(D15)--cycle;
 \foreach \P in {10,13,15}  \draw[dashed, blue] (D14)--(D\P);
 \foreach \P in {4,5}  \draw[thick, dotted, red] (D9)--(D\P);
 \draw (D8)--(D14);
 \draw (D4)--(D10)--(D8);
 \draw (D4)--(D7)--(D10);
 \draw (D10)--(D11)--(D8);
 \draw (D8)--(D9);
 \draw[very thick] (D8)--(D11);
 \foreach \t in {1,2,...,15} \fill[black] (D\t) circle (0.05);
 \coordinate [label=below:(a)] (a1) at (5.5,-0.5);
 \coordinate [label=below:(b)] (a2) at (10.25,-0.5);
 \coordinate (O1) at (14,2);
 \coordinate (X1) at (12,2);
 \coordinate (Y1) at (14,0);
 \coordinate (X2) at (12.5,2);
 \coordinate (X3) at (13,2);
 \coordinate (X4) at (13.5,2);
 \coordinate (Y2) at (14,1);
 \draw[thin,->,>=stealth] (O1)--(X1) node[left] {$X$};
 \draw[thin,->,>=stealth] (O1)--(Y1) node[below] {$Y$};
 \fill[black] (X2) circle (0.06);
 \fill[black] (X3) circle (0.06);
 \fill[black] (X4) circle (0.06);
 \fill[black] (Y2) circle (0.06);
 \draw[dashed] (13,1) circle (0.7);
 \draw (13,0.7) circle (0.2);
 \draw (13,1.3) circle (0.2);
 \draw (X2)--(12.5,1.46);
 \draw (X3)--(13,1.5);
 \draw (X4)--(13.5,1.46)--(13.2,1.3);
 \draw (Y2)--(13.7,1);
 \draw[very thick] (13,1.5) arc (90:0:0.2);
 \coordinate (O2) at (17,2);
 \coordinate (X5) at (15,2);
 \coordinate (Z1) at (17,0);
 \coordinate (X6) at (15.5,2);
 \coordinate (X7) at (16,2);
 \coordinate (X8) at (16.5,2);
 \draw[thin,->,>=stealth] (O2)--(X5) node[left] {$X$};
 \draw[thin,->,>=stealth] (O2)--(Z1) node[below] {$Z$};
 \fill[black] (X6) circle (0.06);
 \fill[black] (X7) circle (0.06);
 \fill[black] (X8) circle (0.06);
 \draw (X7) arc (180:360:0.25);
 \coordinate [label=above:$x_2$] (a1) at (13,2);
 \coordinate [label=above:$x_1$] (a2) at (13.5,2); 
 \coordinate [label=above:$x_2$] (a1) at (16,2);
 \coordinate [label=above:$x_1$] (a2) at (16.5,2);
 \coordinate [label=below:(c)] (a3) at (14.75,-0.5); 
\end{tikzpicture}
\caption{The complexes and skeletons in Case $4. 2. 1$.}
\label{fc4.2.1}
\end{figure}

The path from $x_1$ to $x_2$ in $C_{XY}$ corresponding to the union of simplices of $K_{XY}$ which orbits around the point $(2, 1)$ intersects the cycle corresponding to the point $(1, 2)$.
When we glue at $X$, we have a cycle which contains the points $x_1$ and $x_2$.
Thus the two cycles intersect and the skeleton of $C$ is not the lollipop graph.

\medbreak
Cases $4. 2. 2$ to $4. 2. 6$:

The complexes $K_{XY}$, $K_{XZ}$ and $K_{XW}$ are as in Figure \ref{fc4.2.2to4.2.6}.
In each case, $b_1(\overline{C'})$ is at most $1$.
Hence the skeleton of $C$ is not the lollipop graph by Lemma \ref{saikurunashi} and \ref{case4hodai}.

\begin{figure}[H]
\centering
\begin{tikzpicture}
 \coordinate (A1) at (0,0);
 \coordinate (A2) at (0.5,0);
 \coordinate (A3) at (1,0);
 \coordinate (A4) at (1.5,0);
 \coordinate (A5) at (2,0);
 \coordinate (A6) at (0,0.5);
 \coordinate (A7) at (0.5,0.5);
 \coordinate (A8) at (1,0.5);
 \coordinate (A9) at (1.5,0.5);
 \coordinate (A10) at (0,1);
 \coordinate (A11) at (0.5,1);
 \coordinate (A12) at (1,1);
 \coordinate (A13) at (0,1.5);
 \coordinate (A14) at (0.5,1.5);
 \coordinate (A15) at (0,2);
 \draw [very thin, gray] (A1)--(A5)--(A15)--cycle;
 \draw (A9)--(A8)--(A11);
 \foreach \P in {1,6,10,13,15}  \draw[dashed, blue] (A11)--(A\P);
 \foreach \P in {1,2,3,4,5}  \draw[thick, dotted, red] (A9)--(A\P);
 \foreach \t in {1,2,...,15} \fill[black] (A\t) circle (0.05);
 \coordinate (B1) at (2.5,0);
 \coordinate (B2) at (3,0);
 \coordinate (B3) at (3.5,0);
 \coordinate (B4) at (4,0);
 \coordinate (B5) at (4.5,0);
 \coordinate (B6) at (2.5,0.5);
 \coordinate (B7) at (3,0.5);
 \coordinate (B8) at (3.5,0.5);
 \coordinate (B9) at (4,0.5);
 \coordinate (B10) at (2.5,1);
 \coordinate (B11) at (3,1);
 \coordinate (B12) at (3.5,1);
 \coordinate (B13) at (2.5,1.5);
 \coordinate (B14) at (3,1.5);
 \coordinate (B15) at (2.5,2);
 \draw [very thin, gray] (B1)--(B5)--(B15)--cycle;
 \draw (B4)--(B8)--(B7);
 \foreach \t in {1,2,...,15} \fill[black] (B\t) circle (0.05);
 \foreach \P in {1,6,10,13,15}  \draw[dashed] (B7)--(B\P);
 \coordinate (C1) at (5,0);
 \coordinate (C2) at (5.5,0);
 \coordinate (C3) at (6,0);
 \coordinate (C4) at (6.5,0);
 \coordinate (C5) at (7,0);
 \coordinate (C6) at (5,0.5);
 \coordinate (C7) at (5.5,0.5);
 \coordinate (C8) at (6,0.5);
 \coordinate (C9) at (6.5,0.5);
 \coordinate (C10) at (5,1);
 \coordinate (C11) at (5.5,1);
 \coordinate (C12) at (6,1);
 \coordinate (C13) at (5,1.5);
 \coordinate (C14) at (5.5,1.5);
 \coordinate (C15) at (5,2);
 \draw [very thin, gray] (C1)--(C5)--(C15)--cycle;
 \draw (C4)--(C3)--(C2);
 \foreach \t in {1,2,...,15} \fill[black] (C\t) circle (0.05);
 \foreach \P in {1,6,10,13,15}  \draw[dashed] (C2)--(C\P);
 \coordinate (A1) at (7.5,0);
 \coordinate (A2) at (8,0);
 \coordinate (A3) at (8.5,0);
 \coordinate (A4) at (9,0);
 \coordinate (A5) at (9.5,0);
 \coordinate (A6) at (7.5,0.5);
 \coordinate (A7) at (8,0.5);
 \coordinate (A8) at (8.5,0.5);
 \coordinate (A9) at (9,0.5);
 \coordinate (A10) at (7.5,1);
 \coordinate (A11) at (8,1);
 \coordinate (A12) at (8.5,1);
 \coordinate (A13) at (7.5,1.5);
 \coordinate (A14) at (8,1.5);
 \coordinate (A15) at (7.5,2);
 \draw [very thin, gray] (A1)--(A5)--(A15)--cycle;
 \draw (A9)--(A8)--(A11);
 \foreach \P in {1,6,10,13,15}  \draw[dashed, blue] (A11)--(A\P);
 \foreach \P in {1,2,3,4,5}  \draw[thick, dotted, red] (A9)--(A\P);
 \foreach \t in {1,2,...,15} \fill[black] (A\t) circle (0.05);
 \coordinate (B1) at (10,0);
 \coordinate (B2) at (10.5,0);
 \coordinate (B3) at (11,0);
 \coordinate (B4) at (11.5,0);
 \coordinate (B5) at (12,0);
 \coordinate (B6) at (10,0.5);
 \coordinate (B7) at (10.5,0.5);
 \coordinate (B8) at (11,0.5);
 \coordinate (B9) at (11.5,0.5);
 \coordinate (B10) at (10,1);
 \coordinate (B11) at (10.5,1);
 \coordinate (B12) at (11,1);
 \coordinate (B13) at (10,1.5);
 \coordinate (B14) at (10.5,1.5);
 \coordinate (B15) at (10,2);
 \draw [very thin, gray] (B1)--(B5)--(B15)--cycle;
 \draw (B4)--(B8)--(B2);
 \foreach \t in {1,2,...,15} \fill[black] (B\t) circle (0.05);
 \foreach \P in {1,6,10,13,15}  \draw[dashed] (B2)--(B\P);
 \coordinate (C1) at (12.5,0);
 \coordinate (C2) at (13,0);
 \coordinate (C3) at (13.5,0);
 \coordinate (C4) at (14,0);
 \coordinate (C5) at (14.5,0);
 \coordinate (C6) at (12.5,0.5);
 \coordinate (C7) at (13,0.5);
 \coordinate (C8) at (13.5,0.5);
 \coordinate (C9) at (14,0.5);
 \coordinate (C10) at (12.5,1);
 \coordinate (C11) at (13,1);
 \coordinate (C12) at (13.5,1);
 \coordinate (C13) at (12.5,1.5);
 \coordinate (C14) at (13,1.5);
 \coordinate (C15) at (12.5,2);
 \draw [very thin, gray] (C1)--(C5)--(C15)--cycle;
 \draw (C4)--(C3)--(C7);
 \foreach \t in {1,2,...,15} \fill[black] (C\t) circle (0.05);
 \foreach \P in {1,6,10,13,15}  \draw[dashed] (C7)--(C\P);
 \coordinate [label=below:$\text{Case $4. 2. 2$}$] (a1) at (3.5,-0.25);
 \coordinate [label=below:$\text{$(3, 1, 0, 0)\rightarrow (2, 1, 1, 0)\rightarrow (1, 2, 1, 0)$}$] (a1) at (3.5,-0.75);
 \coordinate [label=below:$\text{Case $4. 2. 3$}$] (a2) at (11,-0.25);
 \coordinate [label=below:$\text{$(3, 1, 0, 0)\rightarrow (2, 1, 1, 0)\rightarrow (1, 2, 0, 1)$}$] (a2) at (11,-0.75);
\end{tikzpicture}
\begin{tikzpicture}
 \coordinate (A1) at (0,0);
 \coordinate (A2) at (0.5,0);
 \coordinate (A3) at (1,0);
 \coordinate (A4) at (1.5,0);
 \coordinate (A5) at (2,0);
 \coordinate (A6) at (0,0.5);
 \coordinate (A7) at (0.5,0.5);
 \coordinate (A8) at (1,0.5);
 \coordinate (A9) at (1.5,0.5);
 \coordinate (A10) at (0,1);
 \coordinate (A11) at (0.5,1);
 \coordinate (A12) at (1,1);
 \coordinate (A13) at (0,1.5);
 \coordinate (A14) at (0.5,1.5);
 \coordinate (A15) at (0,2);
 \draw [very thin, gray] (A1)--(A5)--(A15)--cycle;
 \draw (A9)--(A8)--(A7);
 \foreach \P in {1,6,10,13,15}  \draw[dashed, blue] (A7)--(A\P);
 \foreach \P in {1,2,3,4,5}  \draw[thick, dotted, red] (A9)--(A\P);
 \foreach \t in {1,2,...,15} \fill[black] (A\t) circle (0.05);
 \coordinate (B1) at (2.5,0);
 \coordinate (B2) at (3,0);
 \coordinate (B3) at (3.5,0);
 \coordinate (B4) at (4,0);
 \coordinate (B5) at (4.5,0);
 \coordinate (B6) at (2.5,0.5);
 \coordinate (B7) at (3,0.5);
 \coordinate (B8) at (3.5,0.5);
 \coordinate (B9) at (4,0.5);
 \coordinate (B10) at (2.5,1);
 \coordinate (B11) at (3,1);
 \coordinate (B12) at (3.5,1);
 \coordinate (B13) at (2.5,1.5);
 \coordinate (B14) at (3,1.5);
 \coordinate (B15) at (2.5,2);
 \draw [very thin, gray] (B1)--(B5)--(B15)--cycle;
 \draw (B4)--(B8)--(B11);
 \foreach \t in {1,2,...,15} \fill[black] (B\t) circle (0.05);
 \foreach \P in {1,6,10,13,15}  \draw[dashed] (B11)--(B\P);
 \coordinate (C1) at (5,0);
 \coordinate (C2) at (5.5,0);
 \coordinate (C3) at (6,0);
 \coordinate (C4) at (6.5,0);
 \coordinate (C5) at (7,0);
 \coordinate (C6) at (5,0.5);
 \coordinate (C7) at (5.5,0.5);
 \coordinate (C8) at (6,0.5);
 \coordinate (C9) at (6.5,0.5);
 \coordinate (C10) at (5,1);
 \coordinate (C11) at (5.5,1);
 \coordinate (C12) at (6,1);
 \coordinate (C13) at (5,1.5);
 \coordinate (C14) at (5.5,1.5);
 \coordinate (C15) at (5,2);
 \draw [very thin, gray] (C1)--(C5)--(C15)--cycle;
 \draw (C4)--(C3)--(C2);
 \foreach \t in {1,2,...,15} \fill[black] (C\t) circle (0.05);
 \foreach \P in {1,6,10,13,15}  \draw[dashed] (C2)--(C\P);
 \coordinate (A1) at (7.5,0);
 \coordinate (A2) at (8,0);
 \coordinate (A3) at (8.5,0);
 \coordinate (A4) at (9,0);
 \coordinate (A5) at (9.5,0);
 \coordinate (A6) at (7.5,0.5);
 \coordinate (A7) at (8,0.5);
 \coordinate (A8) at (8.5,0.5);
 \coordinate (A9) at (9,0.5);
 \coordinate (A10) at (7.5,1);
 \coordinate (A11) at (8,1);
 \coordinate (A12) at (8.5,1);
 \coordinate (A13) at (7.5,1.5);
 \coordinate (A14) at (8,1.5);
 \coordinate (A15) at (7.5,2);
 \draw [very thin, gray] (A1)--(A5)--(A15)--cycle;
 \draw (A9)--(A8)--(A7);
 \foreach \P in {1,6,10,13,15}  \draw[dashed, blue] (A7)--(A\P);
 \foreach \P in {1,2,3,4,5}  \draw[thick, dotted, red] (A9)--(A\P);
 \foreach \t in {1,2,...,15} \fill[black] (A\t) circle (0.05);
 \coordinate (B1) at (10,0);
 \coordinate (B2) at (10.5,0);
 \coordinate (B3) at (11,0);
 \coordinate (B4) at (11.5,0);
 \coordinate (B5) at (12,0);
 \coordinate (B6) at (10,0.5);
 \coordinate (B7) at (10.5,0.5);
 \coordinate (B8) at (11,0.5);
 \coordinate (B9) at (11.5,0.5);
 \coordinate (B10) at (10,1);
 \coordinate (B11) at (10.5,1);
 \coordinate (B12) at (11,1);
 \coordinate (B13) at (10,1.5);
 \coordinate (B14) at (10.5,1.5);
 \coordinate (B15) at (10,2);
 \draw [very thin, gray] (B1)--(B5)--(B15)--cycle;
 \draw (B4)--(B8)--(B7);
 \foreach \t in {1,2,...,15} \fill[black] (B\t) circle (0.05);
 \foreach \P in {1,6,10,13,15}  \draw[dashed] (B7)--(B\P);
 \coordinate (C1) at (12.5,0);
 \coordinate (C2) at (13,0);
 \coordinate (C3) at (13.5,0);
 \coordinate (C4) at (14,0);
 \coordinate (C5) at (14.5,0);
 \coordinate (C6) at (12.5,0.5);
 \coordinate (C7) at (13,0.5);
 \coordinate (C8) at (13.5,0.5);
 \coordinate (C9) at (14,0.5);
 \coordinate (C10) at (12.5,1);
 \coordinate (C11) at (13,1);
 \coordinate (C12) at (13.5,1);
 \coordinate (C13) at (12.5,1.5);
 \coordinate (C14) at (13,1.5);
 \coordinate (C15) at (12.5,2);
 \draw [very thin, gray] (C1)--(C5)--(C15)--cycle;
 \draw (C4)--(C3)--(C7);
 \foreach \t in {1,2,...,15} \fill[black] (C\t) circle (0.05);
 \foreach \P in {1,6,10,13,15}  \draw[dashed] (C7)--(C\P);
 \coordinate [label=below:$\text{Case $4. 2. 4$}$] (a1) at (3.5,-0.25);
 \coordinate [label=below:$\text{$(3, 1, 0, 0)\rightarrow (2, 1, 1, 0)\rightarrow (1, 1, 2, 0)$}$] (a1) at (3.5,-0.75);
 \coordinate [label=below:$\text{Case $4. 2. 5$}$] (a2) at (11,-0.25);
 \coordinate [label=below:$\text{$(3, 1, 0, 0)\rightarrow (2, 1, 1, 0)\rightarrow (1, 1, 1, 1)$}$] (a2) at (11,-0.75);
\end{tikzpicture}
\begin{tikzpicture}
 \coordinate (A1) at (0,0);
 \coordinate (A2) at (0.5,0);
 \coordinate (A3) at (1,0);
 \coordinate (A4) at (1.5,0);
 \coordinate (A5) at (2,0);
 \coordinate (A6) at (0,0.5);
 \coordinate (A7) at (0.5,0.5);
 \coordinate (A8) at (1,0.5);
 \coordinate (A9) at (1.5,0.5);
 \coordinate (A10) at (0,1);
 \coordinate (A11) at (0.5,1);
 \coordinate (A12) at (1,1);
 \coordinate (A13) at (0,1.5);
 \coordinate (A14) at (0.5,1.5);
 \coordinate (A15) at (0,2);
 \draw [very thin, gray] (A1)--(A5)--(A15)--cycle;
 \draw (A9)--(A8)--(A7);
 \foreach \P in {1,6,10,13,15}  \draw[dashed, blue] (A7)--(A\P);
 \foreach \P in {1,2,3,4,5}  \draw[thick, dotted, red] (A9)--(A\P);
 \foreach \t in {1,2,...,15} \fill[black] (A\t) circle (0.05);
 \coordinate (B1) at (2.5,0);
 \coordinate (B2) at (3,0);
 \coordinate (B3) at (3.5,0);
 \coordinate (B4) at (4,0);
 \coordinate (B5) at (4.5,0);
 \coordinate (B6) at (2.5,0.5);
 \coordinate (B7) at (3,0.5);
 \coordinate (B8) at (3.5,0.5);
 \coordinate (B9) at (4,0.5);
 \coordinate (B10) at (2.5,1);
 \coordinate (B11) at (3,1);
 \coordinate (B12) at (3.5,1);
 \coordinate (B13) at (2.5,1.5);
 \coordinate (B14) at (3,1.5);
 \coordinate (B15) at (2.5,2);
 \draw [very thin, gray] (B1)--(B5)--(B15)--cycle;
 \draw (B4)--(B8)--(B2);
 \foreach \t in {1,2,...,15} \fill[black] (B\t) circle (0.05);
 \foreach \P in {1,6,10,13,15}  \draw[dashed] (B2)--(B\P);
 \coordinate (C1) at (5,0);
 \coordinate (C2) at (5.5,0);
 \coordinate (C3) at (6,0);
 \coordinate (C4) at (6.5,0);
 \coordinate (C5) at (7,0);
 \coordinate (C6) at (5,0.5);
 \coordinate (C7) at (5.5,0.5);
 \coordinate (C8) at (6,0.5);
 \coordinate (C9) at (6.5,0.5);
 \coordinate (C10) at (5,1);
 \coordinate (C11) at (5.5,1);
 \coordinate (C12) at (6,1);
 \coordinate (C13) at (5,1.5);
 \coordinate (C14) at (5.5,1.5);
 \coordinate (C15) at (5,2);
 \draw [very thin, gray] (C1)--(C5)--(C15)--cycle;
 \draw (C4)--(C3)--(C11);
 \foreach \t in {1,2,...,15} \fill[black] (C\t) circle (0.05);
 \foreach \P in {1,6,10,13,15}  \draw[dashed] (C11)--(C\P);
 \coordinate [label=below:$\text{Case $4. 2. 6$: $(3, 1, 0, 0)\rightarrow (2, 1, 1, 0)\rightarrow (1, 1, 0, 2)$}$] (a1) at (3.5,-0.25);
\end{tikzpicture}
\caption{The complexes in Cases $4. 2. 2$ to $4. 2. 6$.}
\label{fc4.2.2to4.2.6}
\end{figure}
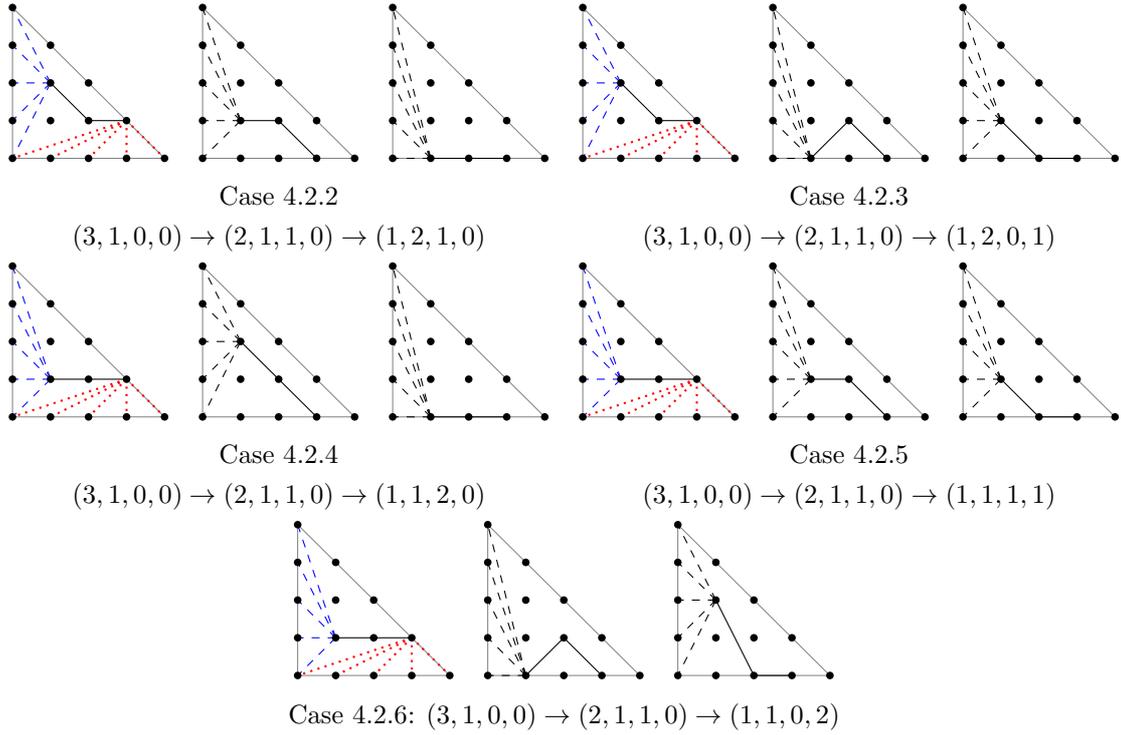

Case $4. 2. 7$: $(3, 1, 0, 0)\rightarrow (2, 1, 1, 0)\rightarrow (1, 0, 3, 0)$

The complexes $K_{XY}$ and $K_{XZ}$ are as in Figure \ref{fc4.2.7} (a).
Assume that the skeleton of $C$ is the lollipop graph of genus $3$.
Then by Lemma \ref{12}, $K_{XZ}$ contains the $2$-simplices as in (b), and the skeletons of $C_{XY}$ and $C_{XZ}$ are as in (c).
\begin{figure}[H]
\centering
\begin{tikzpicture}
 \coordinate[label=below left:O] (B1) at (3,0);
 \coordinate [label=below:1] (B2) at (3.5,0);
 \coordinate [label=below:2] (B3) at (4,0);
 \coordinate [label=below:3] (B4) at (4.5,0);
 \coordinate [label=below:4] (B5) at (5,0);
 \coordinate [label=left:1] (B6) at (3,0.5);
 \coordinate (B7) at (3.5,0.5);
 \coordinate (B8) at (4,0.5);
 \coordinate (B9) at (4.5,0.5);
 \coordinate [label=left:2] (B10) at (3,1);
 \coordinate (B11) at (3.5,1);
 \coordinate (B12) at (4,1);
 \coordinate [label=left:3] (B13) at (3,1.5);
 \coordinate (B14) at (3.5,1.5);
 \coordinate [label=left:4] (B15) at (3,2);
 \draw [very thin, gray] (B1)--(B5)--(B15)--cycle;
 \draw (B9)--(B8)--(B2);
 \foreach \P in {1,6,10,13,15}  \draw[dashed, blue] (B2)--(B\P);
 \foreach \P in {2,3,4,5}  \draw[thick, dotted, red] (B9)--(B\P);
 \foreach \t in {1,2,...,15} \fill[black] (B\t) circle (0.05);
 \coordinate[label=below left:O] (C1) at (6,0);
 \coordinate [label=below:1] (C2) at (6.5,0);
 \coordinate [label=below:2] (C3) at (7,0);
 \coordinate [label=below:3] (C4) at (7.5,0);
 \coordinate [label=below:4] (C5) at (8,0);
 \coordinate [label=left:1] (C6) at (6,0.5);
 \coordinate (C7) at (6.5,0.5);
 \coordinate (C8) at (7,0.5);
 \coordinate (C9) at (7.5,0.5);
 \coordinate [label=left:2] (C10) at (6,1);
 \coordinate (C11) at (6.5,1);
 \coordinate (C12) at (7,1);
 \coordinate [label=left:3] (C13) at (6,1.5);
 \coordinate (C14) at (6.5,1.5);
 \coordinate [label=left:4] (C15) at (6,2);
 \draw [very thin, gray] (C1)--(C5)--(C15)--cycle;
 \draw (C4)--(C8)--(C14);
 \foreach \t in {1,2,...,15} \fill[black] (C\t) circle (0.05);
 \foreach \P in {1,6,10,13,15}  \draw[dashed] (C14)--(C\P);
 \coordinate[label=below left:O] (D1) at (9.25,0);
 \coordinate [label=below:1] (D2) at (9.75,0);
 \coordinate [label=below:2] (D3) at (10.25,0);
 \coordinate [label=below:3] (D4) at (10.75,0);
 \coordinate [label=below:4] (D5) at (11.25,0);
 \coordinate [label=left:1] (D6) at (9.25,0.5);
 \coordinate (D7) at (9.75,0.5);
 \coordinate (D8) at (10.25,0.5);
 \coordinate (D9) at (10.75,0.5);
 \coordinate [label=left:2] (D10) at (9.25,1);
 \coordinate (D11) at (9.75,1);
 \coordinate (D12) at (10.25,1);
 \coordinate [label=left:3] (D13) at (9.25,1.5);
 \coordinate (D14) at (9.75,1.5);
 \coordinate [label=left:4] (D15) at (9.25,2);
 \draw [very thin, gray] (D1)--(D5)--(D15)--cycle;
 \foreach \P in {10,13,15}  \draw[dashed] (D14)--(D\P);
 \draw (D8)--(D14);
 \draw (D4)--(D10)--(D8)--cycle;
 \draw (D4)--(D7)--(D10);
 \draw (D10)--(D11)--(D8);
 \draw[very thick] (D8)--(D11);
 \foreach \t in {1,2,...,15} \fill[black] (D\t) circle (0.05);
 \coordinate [label=below:(a)] (a1) at (5.5,-0.5);
 \coordinate [label=below:(b)] (a2) at (10.25,-0.5);
 \coordinate (O1) at (17,2);
 \coordinate (X1) at (15,2);
 \coordinate (Y1) at (17,0);
 \coordinate (X2) at (15.5,2);
 \coordinate (X3) at (16,2);
 \coordinate (X4) at (16.5,2);
 \coordinate (Y2) at (17,1);
 \draw[thin,->,>=stealth] (O1)--(X1) node[left] {$X$};
 \draw[thin,->,>=stealth] (O1)--(Y1) node[below] {$Z$};
 \fill[black] (X2) circle (0.06);
 \fill[black] (X3) circle (0.06);
 \fill[black] (X4) circle (0.06);
 \draw[dashed] (16,1) circle (0.6);
 \draw (16,0.75) circle (0.2);
 \draw (16,1.25) circle (0.2);
 \draw (X2)--(15.5,1.34);
 \draw (X3)--(16,1.45);
 \draw (16.18,1.34)--(16.5,1.34)--(X4);
 \coordinate (O2) at (14,2);
 \coordinate (X5) at (12,2);
 \coordinate (Z1) at (14,0);
 \coordinate (Z2) at (14,1);
 \coordinate (X6) at (12.5,2);
 \coordinate (X7) at (13,2);
 \coordinate (X8) at (13.5,2);
 \draw[thin,->,>=stealth] (O2)--(X5) node[left] {$X$};
 \draw[thin,->,>=stealth] (O2)--(Z1) node[below] {$Y$};
 \fill[black] (X6) circle (0.06);
 \fill[black] (X7) circle (0.06);
 \fill[black] (X8) circle (0.06);
 \fill[black] (Z2) circle (0.06);
 \draw (X7)--(13.25,1.5)--(X8);
 \draw (Z2)--(13.25,1.5);
 \draw[very thick] (16,1.45) arc (90:25:0.2);
 \coordinate [label=above:$x_1$] (a1) at (16.5,2);
 \coordinate [label=above:$x_2$] (a2) at (16,2); 
 \coordinate [label=above:$x_1$] (a1) at (13.5,2);
 \coordinate [label=above:$x_2$] (a2) at (13,2); 
 \coordinate [label=below:(c)] (a2) at (14.75,-0.5); 
\end{tikzpicture}
\caption{The complexes and skeletons in Case $4. 2. 7$.}
\label{fc4.2.7}
\end{figure}
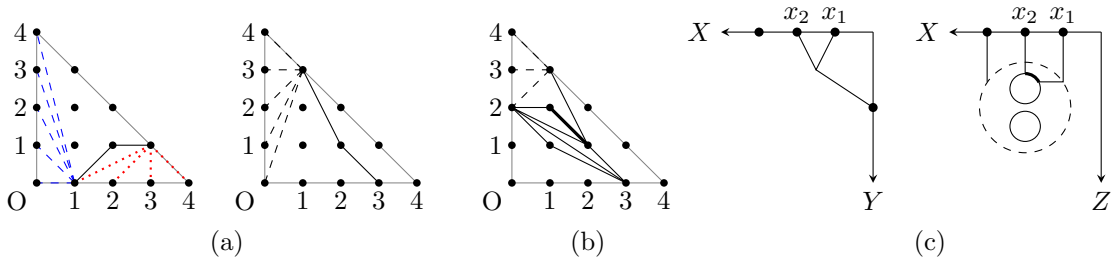

The path from $x_1$ to $x_2$ in $C_{XZ}$ corresponding to the union of simplices of $K_{XY}$ which orbits around the point $(2, 1)$ intersects the cycle corresponding to the point $(1, 2)$.
When we glue at $X$, we have a cycle which contains the points $x_1$ and $x_2$.
Thus the two cycles intersect and the skeleton of $C$ is not the lollipop graph.

\medbreak
Cases $4. 2. 8$ to $4. 2. 10$:

The complexes $K_{XY}$, $K_{XZ}$ and $K_{XW}$ are as in Figure \ref{fc4.2.8to4.2.10}.
In Cases $4. 2. 8$ and $4. 2. 9$, $b_1(\overline{C'})=1$ and the skeleton of $C$ is not the lollipop graph by Lemma \ref{case4hodai}.
In Case $4. 2. 10$, $K_{XW}$ contains the edge $(1, 3)$-$(2, 0)$ as its $1$-simplex and the skeleton of $C$ is not the lollipop graph by Lemma \ref{Case2.6}.
\begin{figure}[H]
\centering
\begin{tikzpicture}
 \coordinate (A1) at (0,0);
 \coordinate (A2) at (0.5,0);
 \coordinate (A3) at (1,0);
 \coordinate (A4) at (1.5,0);
 \coordinate (A5) at (2,0);
 \coordinate (A6) at (0,0.5);
 \coordinate (A7) at (0.5,0.5);
 \coordinate (A8) at (1,0.5);
 \coordinate (A9) at (1.5,0.5);
 \coordinate (A10) at (0,1);
 \coordinate (A11) at (0.5,1);
 \coordinate (A12) at (1,1);
 \coordinate (A13) at (0,1.5);
 \coordinate (A14) at (0.5,1.5);
 \coordinate (A15) at (0,2);
 \draw [very thin, gray] (A1)--(A5)--(A15)--cycle;
 \draw (A9)--(A8)--(A2);
 \foreach \P in {1,6,10,13,15}  \draw[dashed, blue] (A2)--(A\P);
 \foreach \P in {2,3,4,5}  \draw[thick, dotted, red] (A9)--(A\P);
 \foreach \t in {1,2,...,15} \fill[black] (A\t) circle (0.05);
 \coordinate (B1) at (2.5,0);
 \coordinate (B2) at (3,0);
 \coordinate (B3) at (3.5,0);
 \coordinate (B4) at (4,0);
 \coordinate (B5) at (4.5,0);
 \coordinate (B6) at (2.5,0.5);
 \coordinate (B7) at (3,0.5);
 \coordinate (B8) at (3.5,0.5);
 \coordinate (B9) at (4,0.5);
 \coordinate (B10) at (2.5,1);
 \coordinate (B11) at (3,1);
 \coordinate (B12) at (3.5,1);
 \coordinate (B13) at (2.5,1.5);
 \coordinate (B14) at (3,1.5);
 \coordinate (B15) at (2.5,2);
 \draw [very thin, gray] (B1)--(B5)--(B15)--cycle;
 \draw (B4)--(B8)--(B11);
 \foreach \t in {1,2,...,15} \fill[black] (B\t) circle (0.05);
 \foreach \P in {1,6,10,13,15}  \draw[dashed] (B11)--(B\P);
 \coordinate (C1) at (5,0);
 \coordinate (C2) at (5.5,0);
 \coordinate (C3) at (6,0);
 \coordinate (C4) at (6.5,0);
 \coordinate (C5) at (7,0);
 \coordinate (C6) at (5,0.5);
 \coordinate (C7) at (5.5,0.5);
 \coordinate (C8) at (6,0.5);
 \coordinate (C9) at (6.5,0.5);
 \coordinate (C10) at (5,1);
 \coordinate (C11) at (5.5,1);
 \coordinate (C12) at (6,1);
 \coordinate (C13) at (5,1.5);
 \coordinate (C14) at (5.5,1.5);
 \coordinate (C15) at (5,2);
 \draw [very thin, gray] (C1)--(C5)--(C15)--cycle;
 \draw (C4)--(C3)--(C7);
 \foreach \t in {1,2,...,15} \fill[black] (C\t) circle (0.05);
 \foreach \P in {1,6,10,13,15}  \draw[dashed] (C7)--(C\P);
 \coordinate (A1) at (7.5,0);
 \coordinate (A2) at (8,0);
 \coordinate (A3) at (8.5,0);
 \coordinate (A4) at (9,0);
 \coordinate (A5) at (9.5,0);
 \coordinate (A6) at (7.5,0.5);
 \coordinate (A7) at (8,0.5);
 \coordinate (A8) at (8.5,0.5);
 \coordinate (A9) at (9,0.5);
 \coordinate (A10) at (7.5,1);
 \coordinate (A11) at (8,1);
 \coordinate (A12) at (8.5,1);
 \coordinate (A13) at (7.5,1.5);
 \coordinate (A14) at (8,1.5);
 \coordinate (A15) at (7.5,2);
 \draw [very thin, gray] (A1)--(A5)--(A15)--cycle;
 \draw (A9)--(A8)--(A2);
 \foreach \P in {1,6,10,13,15}  \draw[dashed, blue] (A2)--(A\P);
 \foreach \P in {2,3,4,5}  \draw[thick, dotted, red] (A9)--(A\P);
 \foreach \t in {1,2,...,15} \fill[black] (A\t) circle (0.05);
 \coordinate (B1) at (10,0);
 \coordinate (B2) at (10.5,0);
 \coordinate (B3) at (11,0);
 \coordinate (B4) at (11.5,0);
 \coordinate (B5) at (12,0);
 \coordinate (B6) at (10,0.5);
 \coordinate (B7) at (10.5,0.5);
 \coordinate (B8) at (11,0.5);
 \coordinate (B9) at (11.5,0.5);
 \coordinate (B10) at (10,1);
 \coordinate (B11) at (10.5,1);
 \coordinate (B12) at (11,1);
 \coordinate (B13) at (10,1.5);
 \coordinate (B14) at (10.5,1.5);
 \coordinate (B15) at (10,2);
 \draw [very thin, gray] (B1)--(B5)--(B15)--cycle;
 \draw (B4)--(B8)--(B7);
 \foreach \t in {1,2,...,15} \fill[black] (B\t) circle (0.05);
 \foreach \P in {1,6,10,13,15}  \draw[dashed] (B7)--(B\P);
 \coordinate (C1) at (12.5,0);
 \coordinate (C2) at (13,0);
 \coordinate (C3) at (13.5,0);
 \coordinate (C4) at (14,0);
 \coordinate (C5) at (14.5,0);
 \coordinate (C6) at (12.5,0.5);
 \coordinate (C7) at (13,0.5);
 \coordinate (C8) at (13.5,0.5);
 \coordinate (C9) at (14,0.5);
 \coordinate (C10) at (12.5,1);
 \coordinate (C11) at (13,1);
 \coordinate (C12) at (13.5,1);
 \coordinate (C13) at (12.5,1.5);
 \coordinate (C14) at (13,1.5);
 \coordinate (C15) at (12.5,2);
 \draw [very thin, gray] (C1)--(C5)--(C15)--cycle;
 \draw (C4)--(C3)--(C11);
 \foreach \t in {1,2,...,15} \fill[black] (C\t) circle (0.05);
 \foreach \P in {1,6,10,13,15}  \draw[dashed] (C11)--(C\P);
 \coordinate [label=below:$\text{Case $4. 2. 8$}$] (a1) at (3.5,-0.25);
 \coordinate [label=below:$\text{$(3, 1, 0, 0)\rightarrow (2, 1, 1, 0)\rightarrow (1, 0, 2, 1)$}$] (a1) at (3.5,-0.75);
 \coordinate [label=below:$\text{Case $4. 2. 9$}$] (a2) at (11,-0.25);
 \coordinate [label=below:$\text{$(3, 1, 0, 0)\rightarrow (2, 1, 1, 0)\rightarrow (1, 0, 1, 2)$}$] (a2) at (11,-0.75);
\end{tikzpicture}
\begin{tikzpicture}
 \coordinate (B1) at (2.5,0);
 \coordinate (B2) at (3,0);
 \coordinate (B3) at (3.5,0);
 \coordinate (B4) at (4,0);
 \coordinate (B5) at (4.5,0);
 \coordinate (B6) at (2.5,0.5);
 \coordinate (B7) at (3,0.5);
 \coordinate (B8) at (3.5,0.5);
 \coordinate (B9) at (4,0.5);
 \coordinate (B10) at (2.5,1);
 \coordinate (B11) at (3,1);
 \coordinate (B12) at (3.5,1);
 \coordinate (B13) at (2.5,1.5);
 \coordinate (B14) at (3,1.5);
 \coordinate (B15) at (2.5,2);
 \draw [very thin, gray] (B1)--(B5)--(B15)--cycle;
 \draw (B4)--(B3)--(B14);
 \foreach \P in {1,6,10,13,15}  \draw[dashed] (B14)--(B\P);
 \foreach \t in {1,2,...,15} \fill[black] (B\t) circle (0.05);
 \coordinate [label=below:$\text{Case $4. 2. 10$: $(3, 1, 0, 0)\rightarrow (2, 1, 1, 0)\rightarrow (1, 0, 0, 3)$}$] (a1) at (3.5,-0.25);
 \coordinate[label=below:$K_{XW}$] (a3) at (4.25,1.75);
\end{tikzpicture}
\caption{The complexes in Cases $4. 2. 8$ to $4. 2. 10$.}
\label{fc4.2.8to4.2.10}
\end{figure}
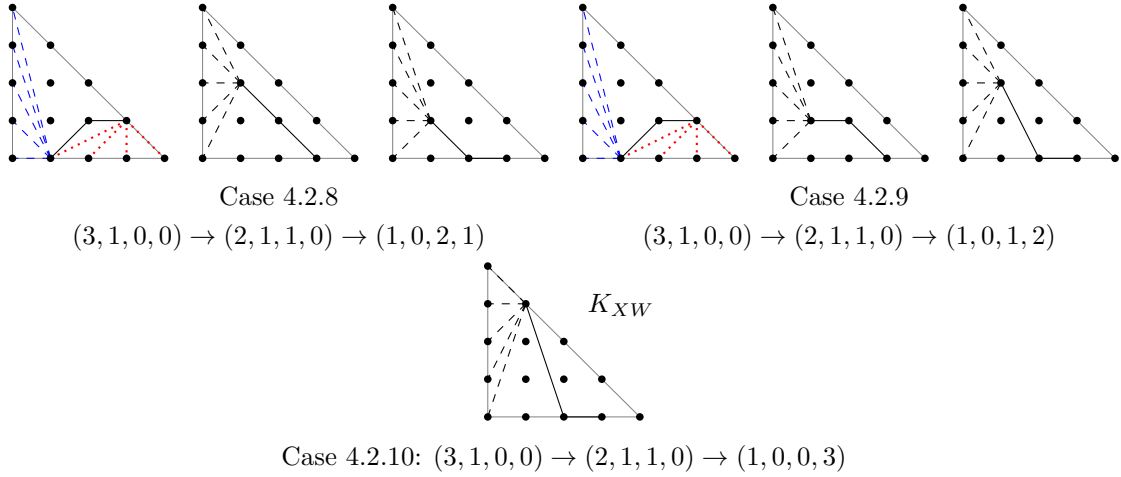

From the above, in Case $4. 2$, the skeleton of $C$ is not the lollipop graph.

\subsubsection{Case $4. 3$: $(3, 1, 0, 0)\rightarrow (2, 0, 2, 0)$}

We divide into the following cases.
\begin{eqnarray*}
&\text{Case $4. 3. 1$: }(2, 0, 2, 0)\rightarrow (1, 3, 0, 0),& \quad \text{Case $4. 3. 2$: }(2, 0, 2, 0)\rightarrow (1, 2, 1, 0),\\
&\text{Case $4. 3. 3$: }(2, 0, 2, 0)\rightarrow (1, 2, 0, 1),& \quad \text{Case $4. 3. 4$: }(2, 0, 2, 0)\rightarrow (1, 1, 2, 0),\\
&\text{Case $4. 3. 5$: }(2, 0, 2, 0)\rightarrow (1, 1, 1, 1),& \quad \text{Case $4. 3. 6$: }(2, 0, 2, 0)\rightarrow (1, 1, 0, 2),\\
&\text{Case $4. 3. 7$: }(2, 0, 2, 0)\rightarrow (1, 0, 3, 0),& \quad \text{Case $4. 3. 8$: }(2, 0, 2, 0)\rightarrow (1, 0, 2, 1),\\
&\text{Case $4. 3. 9$: }(2, 0, 2, 0)\rightarrow (1, 0, 1, 2),& \quad \text{Case $4. 3. 10$: }(2, 0, 2, 0)\rightarrow (1, 0, 0, 3).
\end{eqnarray*}

Case $4. 3. 1$: $(3, 1, 0, 0)\rightarrow (2, 0, 2, 0)\rightarrow (1, 3, 0, 0)$

Since the complex $K_{XY}$ contains the edge $(2, 0)$-$(1, 3)$ as its $1$-simplex and the points $(1, 1)$ and $(1, 2)$ in its interior, the skeleton of $C$ is not the lollipop graph by Lemma \ref{Case2.6}.
\medbreak
Case $4. 3. 2$: $(3, 1, 0, 0)\rightarrow (2, 0, 2, 0)\rightarrow (1, 2, 1, 0)$

The complexes $K_{XY}$, $K_{XZ}$ and the skeletons of $C_{XY}$ and $C_{XZ}$ are as in Figure \ref{fc4.3.2}.
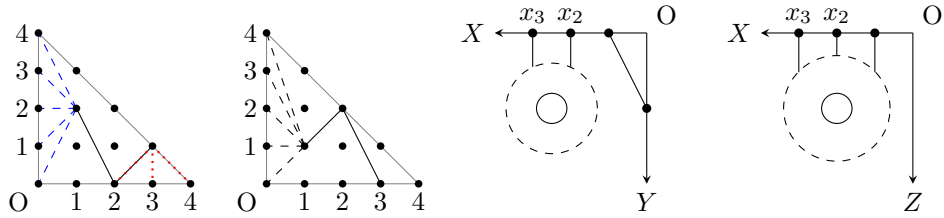
\begin{figure}[H]
\centering
\begin{tikzpicture}
 \coordinate[label=below left:O] (A1) at (0,0);
 \coordinate [label=below:1] (A2) at (0.5,0);
 \coordinate [label=below:2] (A3) at (1,0);
 \coordinate [label=below:3] (A4) at (1.5,0);
 \coordinate [label=below:4] (A5) at (2,0);
 \coordinate [label=left:1] (A6) at (0,0.5);
 \coordinate (A7) at (0.5,0.5);
 \coordinate (A8) at (1,0.5);
 \coordinate (A9) at (1.5,0.5);
 \coordinate [label=left:2] (A10) at (0,1);
 \coordinate (A11) at (0.5,1);
 \coordinate (A12) at (1,1);
 \coordinate [label=left:3] (A13) at (0,1.5);
 \coordinate (A14) at (0.5,1.5);
 \coordinate [label=left:4] (A15) at (0,2);
 \draw [very thin, gray] (A1)--(A5)--(A15)--cycle; 
 \draw (A9)--(A3)--(A11);
 \foreach \P in {1,6,10,13,15}  \draw[dashed, blue] (A11)--(A\P);
 \foreach \P in {3,4,5}  \draw[thick, dotted, red] (A9)--(A\P);
 \foreach \t in {1,2,...,15} \fill[black] (A\t) circle (0.05);
 \coordinate[label=below left:O] (B1) at (3,0);
 \coordinate [label=below:1] (B2) at (3.5,0);
 \coordinate [label=below:2] (B3) at (4,0);
 \coordinate [label=below:3] (B4) at (4.5,0);
 \coordinate [label=below:4] (B5) at (5,0);
 \coordinate [label=left:1] (B6) at (3,0.5);
 \coordinate (B7) at (3.5,0.5);
 \coordinate (B8) at (4,0.5);
 \coordinate (B9) at (4.5,0.5);
 \coordinate [label=left:2] (B10) at (3,1);
 \coordinate (B11) at (3.5,1);
 \coordinate (B12) at (4,1);
 \coordinate [label=left:3] (B13) at (3,1.5);
 \coordinate (B14) at (3.5,1.5);
 \coordinate [label=left:4] (B15) at (3,2);
 \draw [very thin, gray] (B1)--(B5)--(B15)--cycle; 
 \draw (B4)--(B12)--(B7);
 \foreach \t in {1,2,...,15} \fill[black] (B\t) circle (0.05);
 \foreach \P in {1,6,10,13,15}  \draw[dashed] (B7)--(B\P);
 \coordinate[label=above right:O] (O1) at (8,2);
 \coordinate (X1) at (6,2);
 \coordinate (Y1) at (8,0);
 \coordinate (X2) at (6.5,2);
 \coordinate (X3) at (7,2);
 \coordinate (X4) at (7.5,2);
 \coordinate (Y2) at (8,1);
 \draw[thin,->,>=stealth] (O1)--(X1) node[left] {$X$};
 \draw[thin,->,>=stealth] (O1)--(Y1) node[below] {$Y$};
 \fill[black] (X2) circle (0.06);
 \fill[black] (X3) circle (0.06);
 \fill[black] (X4) circle (0.06);
 \fill[black] (Y2) circle (0.06);
 \draw (X3)--(7,1.55);
 \draw (X2)--(6.5,1.55);
 \draw (X4)--(Y2);
 \draw[dashed] (6.75,1) circle (0.6);
 \draw (6.75,1) circle (0.2);
 \coordinate[label=above right:O] (O2) at (11.5,2);
 \coordinate (X5) at (9.5,2);
 \coordinate (Z1) at (11.5,0);
 \coordinate (X6) at (10,2);
 \coordinate (X7) at (10.5,2);
 \coordinate (X8) at (11,2);
 \draw[thin,->,>=stealth] (O2)--(X5) node[left] {$X$};
 \draw[thin,->,>=stealth] (O2)--(Z1) node[below] {$Z$};
 \fill[black] (X6) circle (0.06);
 \fill[black] (X7) circle (0.06);
 \fill[black] (X8) circle (0.06);
 \draw (X6)--(10,1.48);
 \draw (X7)--(10.5,1.7);
 \draw (X8)--(11,1.48);
 \draw[dashed] (10.5,1) circle (0.7);
 \draw (10.5,1) circle (0.2);
 \coordinate [label=above:$x_3$] (a1) at (6.5,2);
 \coordinate [label=above:$x_2$] (a2) at (7,2);
 \coordinate [label=above:$x_3$] (a2) at (10,2);
 \coordinate [label=above:$x_2$] (a2) at (10.5,2);   
\end{tikzpicture}
\caption{The complexes $K_{XY}$, $K_{XZ}$ and the skeletons of $C_{XY}$ and $C_{XZ}$ in Case $4. 3. 2$.}
\label{fc4.3.2}
\end{figure}

When we glue at $X$, we have the third cycle passing through the points $x_2$ and $x_3$.
Then any path connecting the cycle in $XY$ to the cycle in $XZ$ intersects the third cycle, so the skeleton of $C$ is not the lollipop graph.

\medbreak
Case $4. 3. 3$: $(3, 1, 0, 0)\rightarrow (2, 0, 2, 0)\rightarrow (1, 2, 0, 1)$

The complexes $K_{XY}$, $K_{XZ}$ and $K_{XW}$ are as in Figure \ref{fc4.3.3}.
\begin{figure}[H]
\center
\begin{tikzpicture}
 \coordinate[label=below left:O] (A1) at (0,0);
 \coordinate [label=below:1] (A2) at (0.5,0);
 \coordinate [label=below:2] (A3) at (1,0);
 \coordinate [label=below:3] (A4) at (1.5,0);
 \coordinate [label=below:4] (A5) at (2,0);
 \coordinate [label=left:1] (A6) at (0,0.5);
 \coordinate (A7) at (0.5,0.5);
 \coordinate (A8) at (1,0.5);
 \coordinate (A9) at (1.5,0.5);
 \coordinate [label=left:2] (A10) at (0,1);
 \coordinate (A11) at (0.5,1);
 \coordinate (A12) at (1,1);
 \coordinate [label=left:3] (A13) at (0,1.5);
 \coordinate (A14) at (0.5,1.5);
 \coordinate [label=left:4] (A15) at (0,2);
 \draw [very thin, gray] (A1)--(A5)--(A15)--cycle; 
 \draw (A9)--(A3)--(A11);
 \foreach \P in {1,6,10,13,15}  \draw[dashed, blue] (A11)--(A\P);
 \foreach \P in {3,4,5}  \draw[thick, dotted, red] (A9)--(A\P);
 \foreach \t in {1,2,...,15} \fill[black] (A\t) circle (0.05);
 \coordinate[label=below left:O] (B1) at (3,0);
 \coordinate [label=below:1] (B2) at (3.5,0);
 \coordinate [label=below:2] (B3) at (4,0);
 \coordinate [label=below:3] (B4) at (4.5,0);
 \coordinate [label=below:4] (B5) at (5,0);
 \coordinate [label=left:1] (B6) at (3,0.5);
 \coordinate (B7) at (3.5,0.5);
 \coordinate (B8) at (4,0.5);
 \coordinate (B9) at (4.5,0.5);
 \coordinate [label=left:2] (B10) at (3,1);
 \coordinate (B11) at (3.5,1);
 \coordinate (B12) at (4,1);
 \coordinate [label=left:3] (B13) at (3,1.5);
 \coordinate (B14) at (3.5,1.5);
 \coordinate [label=left:4] (B15) at (3,2);
 \draw [very thin, gray] (B1)--(B5)--(B15)--cycle; 
 \draw (B4)--(B12)--(B2);
 \foreach \t in {1,2,...,15} \fill[black] (B\t) circle (0.05);
 \foreach \P in {1,6,10,13,15}  \draw[dashed] (B2)--(B\P);
 \coordinate[label=below left:O] (C1) at (6,0);
 \coordinate [label=below:1] (C2) at (6.5,0);
 \coordinate [label=below:2] (C3) at (7,0);
 \coordinate [label=below:3] (C4) at (7.5,0);
 \coordinate [label=below:4] (C5) at (8,0);
 \coordinate [label=left:1] (C6) at (6,0.5);
 \coordinate (C7) at (6.5,0.5);
 \coordinate (C8) at (7,0.5);
 \coordinate (C9) at (7.5,0.5);
 \coordinate [label=left:2] (C10) at (6,1);
 \coordinate (C11) at (6.5,1);
 \coordinate (C12) at (7,1);
 \coordinate [label=left:3] (C13) at (6,1.5);
 \coordinate (C14) at (6.5,1.5);
 \coordinate [label=left:4] (C15) at (6,2);
 \draw [very thin, gray] (C1)--(C5)--(C15)--cycle; 
 \draw (C4)--(C3)--(C7);
 \foreach \t in {1,2,...,15} \fill[black] (C\t) circle (0.05);
 \foreach \P in {1,6,10,13,15}  \draw[dashed] (C7)--(C\P);
\end{tikzpicture}
\caption{The complexes $K_{XY}$, $K_{XZ}$ and $K_{XW}$ in Case $4. 3. 3$.}
\label{fc4.3.3}
\end{figure}

The skeletons of $C_{XY}$, $C_{XZ}$ and $C_{XW}$ are as in Figure \ref{fc4.3.3s}.
\begin{figure}[H]
\centering
\begin{tikzpicture}
 \coordinate[label=above right:O] (O1) at (-0.5,-1);
 \coordinate (X1) at (-2.5,-1);
 \coordinate (Y1) at (-0.5,-3);
 \coordinate (X2) at (-2,-1);
 \coordinate (X3) at (-1.5,-1);
 \coordinate (X4) at (-1,-1);
 \coordinate (Y2) at (-0.5,-2);
 \draw[thin,->,>=stealth] (O1)--(X1) node[left] {$X$};
 \draw[thin,->,>=stealth] (O1)--(Y1) node[below] {$Y$};
 \fill[black] (X2) circle (0.06);
 \fill[black] (X3) circle (0.06);
 \fill[black] (X4) circle (0.06);
 \fill[black] (Y2) circle (0.06);
 \draw (X3)--(-1.5,-1.45);
 \draw (X2)--(-2,-1.45);
 \draw (X4)--(Y2);
 \draw[dashed] (-1.75,-2) circle (0.6);
 \draw (-1.75,-2) circle (0.2);
 \coordinate[label=above right:O] (O2) at (3,-1);
 \coordinate (X5) at (1,-1);
 \coordinate (Z1) at (3,-3);
 \coordinate (X6) at (1.5,-1);
 \coordinate (X7) at (2,-1);
 \coordinate (X8) at (2.5,-1);
 \draw[thin,->,>=stealth] (O2)--(X5) node[left] {$X$};
 \draw[thin,->,>=stealth] (O2)--(Z1) node[below] {$Z$};
 \fill[black] (X6) circle (0.06);
 \fill[black] (X7) circle (0.06);
 \fill[black] (X8) circle (0.06);
 \draw (X7)--(2,-1.43);
 \draw (X8)--(2.5,-1.43);
 \draw (2.25,-2) circle (0.2);
 \draw[dashed] (2.25,-2) circle (0.6);
 \coordinate[label=below:O] (O3) at (6.5,-2.5);
 \coordinate (X9) at (4.5,-2.5);
 \coordinate (W1) at (7.6,-1);
 \coordinate (X10) at (5,-2.5);
 \coordinate (X11) at (5.5,-2.5);
 \coordinate (X12) at (6,-2.5);
 \draw[thin,->,>=stealth] (O3)--(X9) node[left] {$X$};
 \draw[thin,->,>=stealth] (O3)--(W1) node[above right] {$W$};
 \fill[black] (X10) circle (0.06);
 \fill[black] (X11) circle (0.06);
 \fill[black] (X12) circle (0.06);
 \draw (X10) arc (180:0:0.25);
 \coordinate [label=above:$x_3$] (a1) at (-2,-1);
 \coordinate [label=above:$x_2$] (a2) at (-1.5,-1);
 \coordinate [label=above:$x_2$] (a2) at (2,-1);
 \coordinate [label=below:$x_3$] (a1) at (5,-2.5);
 \coordinate [label=below:$x_2$] (a2) at (5.5,-2.5);   
\end{tikzpicture}
\caption{The skeletons of $C_{XY}$, $C_{XZ}$ and $C_{XW}$ in Case $4. 3. 3$.}
\label{fc4.3.3s}
\end{figure}
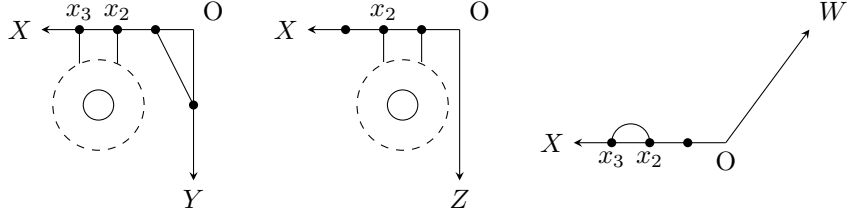

When we glue at $X$, we have the third cycle passing through the points $x_2$ and $x_3$.
Then any path connecting the cycle in $XY$ to the cycle in $XZ$ passes through the point $x_2$ and intersects the third cycle, so the skeleton of $C$ is not the lollipop graph.

\medbreak
Case $4. 3. 4$: $(3, 1, 0, 0)\rightarrow (2, 0, 2, 0)\rightarrow (1, 1, 2, 0)$

The complexes $K_{XY}$ and $K_{XZ}$ are as in Figure \ref{fc4.3.4} (a).
Assume that the skeleton of $C$ is the lollipop graph of genus $3$.
Then by Lemma \ref{12}, $K_{XZ}$ contains the $2$-simplices as in (b).
The skeletons of $C_{XY}$ and $C_{XZ}$ are as in (c).
When we glue these at $x_2$ and $x_3$, we have the shape as in (d) and the skeleton of $C$ is not the lollipop graph.

\begin{figure}[H]
\center
\begin{tikzpicture}
 \coordinate[label=below left:O] (B1) at (3,0);
 \coordinate [label=below:1] (B2) at (3.5,0);
 \coordinate [label=below:2] (B3) at (4,0);
 \coordinate [label=below:3] (B4) at (4.5,0);
 \coordinate [label=below:4] (B5) at (5,0);
 \coordinate [label=left:1] (B6) at (3,0.5);
 \coordinate (B7) at (3.5,0.5);
 \coordinate (B8) at (4,0.5);
 \coordinate (B9) at (4.5,0.5);
 \coordinate [label=left:2] (B10) at (3,1);
 \coordinate (B11) at (3.5,1);
 \coordinate (B12) at (4,1);
 \coordinate [label=left:3] (B13) at (3,1.5);
 \coordinate (B14) at (3.5,1.5);
 \coordinate [label=left:4] (B15) at (3,2);
 \draw [very thin, gray] (B1)--(B5)--(B15)--cycle;
 \draw (B9)--(B3)--(B7);
 \foreach \P in {1,6,10,13,15} \draw[dashed, blue] (B7)--(B\P);
 \foreach \P in {3,4,5} \draw[thick, dotted, red] (B9)--(B\P);
 \foreach \t in {1,2,...,15} \fill[black] (B\t) circle (0.05);
 \coordinate[label=below left:O] (C1) at (6,0);
 \coordinate [label=below:1] (C2) at (6.5,0);
 \coordinate [label=below:2] (C3) at (7,0);
 \coordinate [label=below:3] (C4) at (7.5,0);
 \coordinate [label=below:4] (C5) at (8,0);
 \coordinate [label=left:1] (C6) at (6,0.5);
 \coordinate (C7) at (6.5,0.5);
 \coordinate (C8) at (7,0.5);
 \coordinate (C9) at (7.5,0.5);
 \coordinate [label=left:2] (C10) at (6,1);
 \coordinate (C11) at (6.5,1);
 \coordinate (C12) at (7,1);
 \coordinate [label=left:3] (C13) at (6,1.5);
 \coordinate (C14) at (6.5,1.5);
 \coordinate [label=left:4] (C15) at (6,2);
 \draw [very thin, gray] (C1)--(C5)--(C15)--cycle;
 \draw (C4)--(C12)--(C11);
 \foreach \t in {1,2,...,15} \fill[black] (C\t) circle (0.05);
 \foreach \P in {1,6,10,13,15} \draw[dashed] (C11)--(C\P);
 \coordinate[label=below left:O] (D1) at (10,0);
 \coordinate [label=below:1] (D2) at (10.5,0);
 \coordinate [label=below:2] (D3) at (11,0);
 \coordinate [label=below:3] (D4) at (11.5,0);
 \coordinate [label=below:4] (D5) at (12,0);
 \coordinate [label=left:1] (D6) at (10,0.5);
 \coordinate (D7) at (10.5,0.5);
 \coordinate (D8) at (11,0.5);
 \coordinate (D9) at (11.5,0.5);
 \coordinate [label=left:2] (D10) at (10,1);
 \coordinate (D11) at (10.5,1);
 \coordinate (D12) at (11,1);
 \coordinate [label=left:3] (D13) at (10,1.5);
 \coordinate (D14) at (10.5,1.5);
 \coordinate [label=left:4] (D15) at (10,2);
 \draw [very thin, gray] (D1)--(D5)--(D15)--cycle;
 \foreach \P in {13,15} \draw[dashed] (D11)--(D\P);
 \draw (D4)--(D12);
 \draw (D11)--(D12);
 \draw (D13)--(D3)--(D11);
 \draw (D3)--(D7)--(D13);
 \draw (D3)--(D8)--(D11);
 \draw[very thick] (D8)--(D11);
 \foreach \t in {1,2,...,15} \fill[black] (D\t) circle (0.05);
 \coordinate [label=below:(a)] (a1) at (5.5,-0.5);
 \coordinate [label=below:(b)] (a2) at (11,-0.5); 
\end{tikzpicture}
\begin{tikzpicture}
 \coordinate[label=above right:O] (O1) at (-0.5,-1);
 \coordinate (X1) at (-2.5,-1);
 \coordinate (Y1) at (-0.5,-3);
 \coordinate (X2) at (-2,-1);
 \coordinate (X3) at (-1.5,-1);
 \coordinate (X4) at (-1,-1);
 \coordinate (Y2) at (-0.5,-2);
 \draw[thin,->,>=stealth] (O1)--(X1) node[left] {$X$};
 \draw[thin,->,>=stealth] (O1)--(Y1) node[below] {$Y$};
 \fill[black] (X2) circle (0.06);
 \fill[black] (X3) circle (0.06);
 \fill[black] (X4) circle (0.06);
 \fill[black] (Y2) circle (0.06);
 \draw (X3) arc (360:180:0.25);
 \draw (X4) to [out=270, in=160] (Y2);
 \coordinate[label=above right:O] (O2) at (3,-1);
 \coordinate (X5) at (1,-1);
 \coordinate (Z1) at (3,-3);
 \coordinate (X6) at (1.5,-1);
 \coordinate (X7) at (2,-1);
 \coordinate (X8) at (2.5,-1);
 \draw[thin,->,>=stealth] (O2)--(X5) node[left] {$X$};
 \draw[thin,->,>=stealth] (O2)--(Z1) node[below] {$Z$};
 \fill[black] (X6) circle (0.06);
 \fill[black] (X7) circle (0.06);
 \fill[black] (X8) circle (0.06);
 \draw (X6)--(1.5,-1.8);
 \draw (2.05,-1.5)--(1.5,-1.5);
 \draw (X7)--(2.13,-1.34);
 \draw (X8)--(2.37,-1.34);
 \draw (1.5,-2) circle (0.2);
 \draw (2.25,-1.5) circle (0.2);
 \coordinate[label=above right:O] (O2) at (7,-1);
 \coordinate (X5) at (5,-1);
 \coordinate (Z1) at (7,-3);
 \coordinate (X6) at (5.5,-1);
 \coordinate (X7) at (6,-1);
 \coordinate (X8) at (6.5,-1);
 \draw[thin,->,>=stealth] (O2)--(X5) node[left] {$X$};
 \draw[thin,->,>=stealth] (O2)--(Z1) node[below] {$Z$};
 \fill[black] (X6) circle (0.06);
 \fill[black] (X7) circle (0.06);
 \fill[black] (X8) circle (0.06);
 \draw (X6)--(5.5,-1.8);
 \draw (6.05,-1.5)--(5.5,-1.5);
 \draw (X7)--(6.13,-1.34);
 \draw (X8)--(6.37,-1.34);
 \draw (5.5,-2) circle (0.2);
 \draw (6.25,-1.5) circle (0.2);
 \draw (X6) arc (180:0:0.25);
 \draw[very thick] (6.05,-1.5) arc (180:125:0.2);
 \coordinate [label=below:(c)] (a1) at (0.25,-3.5);
 \coordinate [label=below:(d)] (a2) at (6,-3.5);
\end{tikzpicture}
\caption{The complexes and skeletons in Case $4. 3. 4$.}
\label{fc4.3.4}
\end{figure}
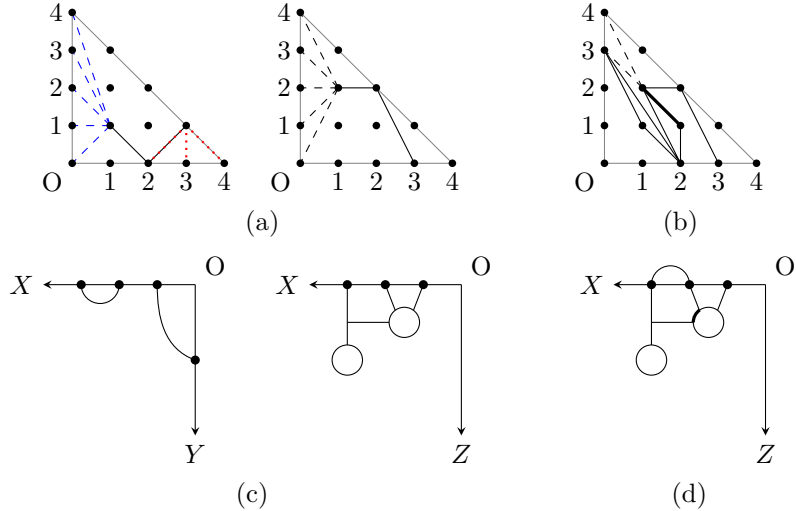

Case $4. 3. 5$: $(3, 1, 0, 0)\rightarrow (2, 0, 2, 0)\rightarrow (1, 1, 1, 1)$

The complexes $K_{XY}$, $K_{XZ}$ and $K_{XW}$ are as in Figure \ref{fc4.3.5}.
Thus $b_1(\overline{C'})=1$ and the skeleton of $C$ is not the lollipop graph by Lemma \ref{case4hodai}.
\begin{figure}[H]
\centering
\begin{tikzpicture}
 \coordinate[label=below left:O] (A1) at (0,0);
 \coordinate [label=below:1] (A2) at (0.5,0);
 \coordinate [label=below:2] (A3) at (1,0);
 \coordinate [label=below:3] (A4) at (1.5,0);
 \coordinate [label=below:4] (A5) at (2,0);
 \coordinate [label=left:1] (A6) at (0,0.5);
 \coordinate (A7) at (0.5,0.5);
 \coordinate (A8) at (1,0.5);
 \coordinate (A9) at (1.5,0.5);
 \coordinate [label=left:2] (A10) at (0,1);
 \coordinate (A11) at (0.5,1);
 \coordinate (A12) at (1,1);
 \coordinate [label=left:3] (A13) at (0,1.5);
 \coordinate (A14) at (0.5,1.5);
 \coordinate [label=left:4] (A15) at (0,2);
 \draw [very thin, gray] (A1)--(A5)--(A15)--cycle; 
 \draw (A9)--(A3)--(A7);
 \foreach \P in {1,6,10,13,15}  \draw[dashed, blue] (A7)--(A\P);
 \foreach \P in {3,4,5}  \draw[thick, dotted, red] (A9)--(A\P);
 \foreach \t in {1,2,...,15} \fill[black] (A\t) circle (0.05);
 \coordinate[label=below left:O] (B1) at (3,0);
 \coordinate [label=below:1] (B2) at (3.5,0);
 \coordinate [label=below:2] (B3) at (4,0);
 \coordinate [label=below:3] (B4) at (4.5,0);
 \coordinate [label=below:4] (B5) at (5,0);
 \coordinate [label=left:1] (B6) at (3,0.5);
 \coordinate (B7) at (3.5,0.5);
 \coordinate (B8) at (4,0.5);
 \coordinate (B9) at (4.5,0.5);
 \coordinate [label=left:2] (B10) at (3,1);
 \coordinate (B11) at (3.5,1);
 \coordinate (B12) at (4,1);
 \coordinate [label=left:3] (B13) at (3,1.5);
 \coordinate (B14) at (3.5,1.5);
 \coordinate [label=left:4] (B15) at (3,2);
 \draw [very thin, gray] (B1)--(B5)--(B15)--cycle; 
 \draw (B4)--(B12)--(B7);
 \foreach \t in {1,2,...,15} \fill[black] (B\t) circle (0.05);
 \foreach \P in {1,6,10,13,15}  \draw[dashed] (B7)--(B\P);
 \coordinate[label=below left:O] (C1) at (6,0);
 \coordinate [label=below:1] (C2) at (6.5,0);
 \coordinate [label=below:2] (C3) at (7,0);
 \coordinate [label=below:3] (C4) at (7.5,0);
 \coordinate [label=below:4] (C5) at (8,0);
 \coordinate [label=left:1] (C6) at (6,0.5);
 \coordinate (C7) at (6.5,0.5);
 \coordinate (C8) at (7,0.5);
 \coordinate (C9) at (7.5,0.5);
 \coordinate [label=left:2] (C10) at (6,1);
 \coordinate (C11) at (6.5,1);
 \coordinate (C12) at (7,1);
 \coordinate [label=left:3] (C13) at (6,1.5);
 \coordinate (C14) at (6.5,1.5);
 \coordinate [label=left:4] (C15) at (6,2);
 \draw [very thin, gray] (C1)--(C5)--(C15)--cycle; 
 \draw (C4)--(C3)--(C7);
 \foreach \t in {1,2,...,15} \fill[black] (C\t) circle (0.05);
 \foreach \P in {1,6,10,13,15}  \draw[dashed] (C7)--(C\P);
\end{tikzpicture}
\caption{The complexes $K_{XY}$, $K_{XZ}$ and $K_{XW}$ in Case $4. 3. 5$.}
\label{fc4.3.5}
\end{figure}
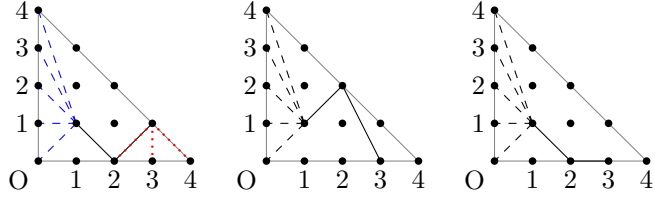

Case $4. 3. 6$: $(3, 1, 0, 0)\rightarrow (2, 0, 2, 0)\rightarrow (1, 1, 0, 2)$

The complexes $K_{XY}$, $K_{XZ}$ and $K_{XW}$ are as in Figure \ref{fc4.3.6}.
\begin{figure}[H]
\centering
\begin{tikzpicture}
 \coordinate[label=below left:O] (A1) at (0,0);
 \coordinate [label=below:1] (A2) at (0.5,0);
 \coordinate [label=below:2] (A3) at (1,0);
 \coordinate [label=below:3] (A4) at (1.5,0);
 \coordinate [label=below:4] (A5) at (2,0);
 \coordinate [label=left:1] (A6) at (0,0.5);
 \coordinate (A7) at (0.5,0.5);
 \coordinate (A8) at (1,0.5);
 \coordinate (A9) at (1.5,0.5);
 \coordinate [label=left:2] (A10) at (0,1);
 \coordinate (A11) at (0.5,1);
 \coordinate (A12) at (1,1);
 \coordinate [label=left:3] (A13) at (0,1.5);
 \coordinate (A14) at (0.5,1.5);
 \coordinate [label=left:4] (A15) at (0,2);
 \draw [very thin, gray] (A1)--(A5)--(A15)--cycle; 
 \draw (A9)--(A3)--(A7);
 \foreach \P in {1,6,10,13,15}  \draw[dashed, blue] (A7)--(A\P);
 \foreach \P in {3,4,5}  \draw[thick, dotted, red] (A9)--(A\P);
 \foreach \t in {1,2,...,15} \fill[black] (A\t) circle (0.05);
 \coordinate[label=below left:O] (B1) at (3,0);
 \coordinate [label=below:1] (B2) at (3.5,0);
 \coordinate [label=below:2] (B3) at (4,0);
 \coordinate [label=below:3] (B4) at (4.5,0);
 \coordinate [label=below:4] (B5) at (5,0);
 \coordinate [label=left:1] (B6) at (3,0.5);
 \coordinate (B7) at (3.5,0.5);
 \coordinate (B8) at (4,0.5);
 \coordinate (B9) at (4.5,0.5);
 \coordinate [label=left:2] (B10) at (3,1);
 \coordinate (B11) at (3.5,1);
 \coordinate (B12) at (4,1);
 \coordinate [label=left:3] (B13) at (3,1.5);
 \coordinate (B14) at (3.5,1.5);
 \coordinate [label=left:4] (B15) at (3,2);
 \draw [very thin, gray] (B1)--(B5)--(B15)--cycle; 
 \draw (B4)--(B12)--(B2);
 \foreach \t in {1,2,...,15} \fill[black] (B\t) circle (0.05);
 \foreach \P in {1,6,10,13,15}  \draw[dashed] (B2)--(B\P);
 \coordinate[label=below left:O] (C1) at (6,0);
 \coordinate [label=below:1] (C2) at (6.5,0);
 \coordinate [label=below:2] (C3) at (7,0);
 \coordinate [label=below:3] (C4) at (7.5,0);
 \coordinate [label=below:4] (C5) at (8,0);
 \coordinate [label=left:1] (C6) at (6,0.5);
 \coordinate (C7) at (6.5,0.5);
 \coordinate (C8) at (7,0.5);
 \coordinate (C9) at (7.5,0.5);
 \coordinate [label=left:2] (C10) at (6,1);
 \coordinate (C11) at (6.5,1);
 \coordinate (C12) at (7,1);
 \coordinate [label=left:3] (C13) at (6,1.5);
 \coordinate (C14) at (6.5,1.5);
 \coordinate [label=left:4] (C15) at (6,2);
 \draw [very thin, gray] (C1)--(C5)--(C15)--cycle; 
 \draw (C4)--(C3)--(C11);
 \foreach \t in {1,2,...,15} \fill[black] (C\t) circle (0.05);
 \foreach \P in {1,6,10,13,15}  \draw[dashed] (C11)--(C\P);
\end{tikzpicture}
\caption{The complexes $K_{XY}$, $K_{XZ}$ and $K_{XW}$ in Case $4. 3. 6$}
\label{fc4.3.6}
\end{figure}

The skeletons of $C_{XY}$, $C_{XZ}$ and $C_{XW}$ are as in Figure \ref{fc4.3.6s}.
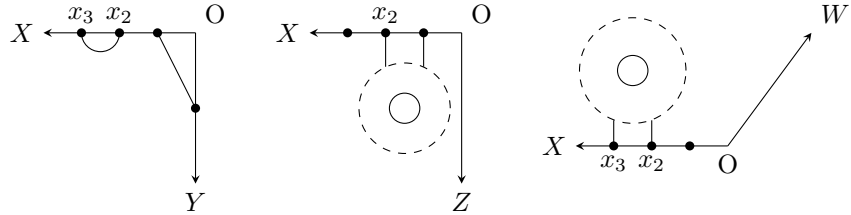
\begin{figure}[H]
\centering
\begin{tikzpicture}
 \coordinate[label=above right:O] (O1) at (-0.5,-1);
 \coordinate (X1) at (-2.5,-1);
 \coordinate (Y1) at (-0.5,-3);
 \coordinate (X2) at (-2,-1);
 \coordinate (X3) at (-1.5,-1);
 \coordinate (X4) at (-1,-1);
 \coordinate (Y2) at (-0.5,-2);
 \draw[thin,->,>=stealth] (O1)--(X1) node[left] {$X$};
 \draw[thin,->,>=stealth] (O1)--(Y1) node[below] {$Y$};
 \fill[black] (X2) circle (0.06);
 \fill[black] (X3) circle (0.06);
 \fill[black] (X4) circle (0.06);
 \fill[black] (Y2) circle (0.06);
 \draw (X3) arc (360:180:0.25);
 \draw (X4)--(Y2);
 \coordinate[label=above right:O] (O2) at (3,-1);
 \coordinate (X5) at (1,-1);
 \coordinate (Z1) at (3,-3);
 \coordinate (X6) at (1.5,-1);
 \coordinate (X7) at (2,-1);
 \coordinate (X8) at (2.5,-1);
 \draw[thin,->,>=stealth] (O2)--(X5) node[left] {$X$};
 \draw[thin,->,>=stealth] (O2)--(Z1) node[below] {$Z$};
 \fill[black] (X6) circle (0.06);
 \fill[black] (X7) circle (0.06);
 \fill[black] (X8) circle (0.06);
 \draw (X7)--(2,-1.45);
 \draw (X8)--(2.5,-1.45);
 \draw (2.25,-2) circle (0.2);
 \draw[dashed] (2.25,-2) circle (0.6);
 \coordinate[label=below:O] (O3) at (6.5,-2.5);
 \coordinate (X9) at (4.5,-2.5);
 \coordinate (W1) at (7.6,-1);
 \coordinate (X10) at (5,-2.5);
 \coordinate (X11) at (5.5,-2.5);
 \coordinate (X12) at (6,-2.5);
 \draw[thin,->,>=stealth] (O3)--(X9) node[left] {$X$};
 \draw[thin,->,>=stealth] (O3)--(W1) node[above right] {$W$};
 \fill[black] (X10) circle (0.06);
 \fill[black] (X11) circle (0.06);
 \fill[black] (X12) circle (0.06);
 \draw (X10)--(5,-2.16);
 \draw (X11)--(5.5,-2.16);
 \draw[dashed] (5.25,-1.5) circle (0.7);
 \draw (5.25,-1.5) circle (0.2);
 \coordinate [label=above:$x_3$] (a1) at (-2,-1);
 \coordinate [label=above:$x_2$] (a2) at (-1.5,-1);
 \coordinate [label=above:$x_2$] (a2) at (2,-1);
 \coordinate [label=below:$x_3$] (a1) at (5,-2.5);
 \coordinate [label=below:$x_2$] (a2) at (5.5,-2.5);  
\end{tikzpicture}
\caption{The skeletons of $C_{XY}$, $C_{XZ}$ and $C_{XW}$ in Case $4. 3. 6$.}
\label{fc4.3.6s}
\end{figure}

When we glue at $X$, we have the third cycle passing through the points $x_2$ and $x_3$.
Then any path connecting the cycle in $XW$ to the cycle in $XZ$ passes through the point $x_2$ and intersects the third cycle, so the skeleton of $C$ is not the lollipop graph.

\medbreak
Case $4. 3. 7$: $(3, 1, 0, 0)\rightarrow (2, 0, 2, 0)\rightarrow (1, 0, 3, 0)$

The complex $K_{XZ}$ is as in Figure \ref{fc4.3.7} and the skeleton of $C$ is not the lollipop graph by Lemma \ref{3in1}.
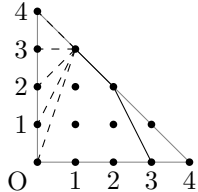
\begin{figure}[H]
\centering
\begin{tikzpicture}
 \coordinate[label=below left:O] (B1) at (3,0);
 \coordinate [label=below:1] (B2) at (3.5,0);
 \coordinate [label=below:2] (B3) at (4,0);
 \coordinate [label=below:3] (B4) at (4.5,0);
 \coordinate [label=below:4] (B5) at (5,0);
 \coordinate [label=left:1] (B6) at (3,0.5);
 \coordinate (B7) at (3.5,0.5);
 \coordinate (B8) at (4,0.5);
 \coordinate (B9) at (4.5,0.5);
 \coordinate [label=left:2] (B10) at (3,1);
 \coordinate (B11) at (3.5,1);
 \coordinate (B12) at (4,1);
 \coordinate [label=left:3] (B13) at (3,1.5);
 \coordinate (B14) at (3.5,1.5);
 \coordinate [label=left:4] (B15) at (3,2);
 \draw [very thin, gray] (B1)--(B5)--(B15)--cycle; 
 \draw (B4)--(B12)--(B14);
 \foreach \t in {1,2,...,15} \fill[black] (B\t) circle (0.05);
 \foreach \P in {1,6,10,13,15}  \draw[dashed] (B14)--(B\P);
\end{tikzpicture}
\caption{The complex $K_{XZ}$ in Case $4. 3. 7$.}
\label{fc4.3.7}
\end{figure}

Case $4. 3. 8$: $(3, 1, 0, 0)\rightarrow (2, 0, 2, 0)\rightarrow (1, 0, 2, 1)$

The complexes $K_{XZ}$ and $K_{XW}$ are as in Figure \ref{fc4.3.8} (a).
Assume that the skeleton of $C$ is the lollipop graph of genus $3$.
Then by Lemma \ref{12}, $K_{XZ}$ contains the $2$-simplices as in (b).
The skeletons of $C_{XZ}$ and $C_{XW}$ are as in (c).
If we glue these together, we have the shape as in (d) and the skeleton of $C$ is not the lollipop graph.
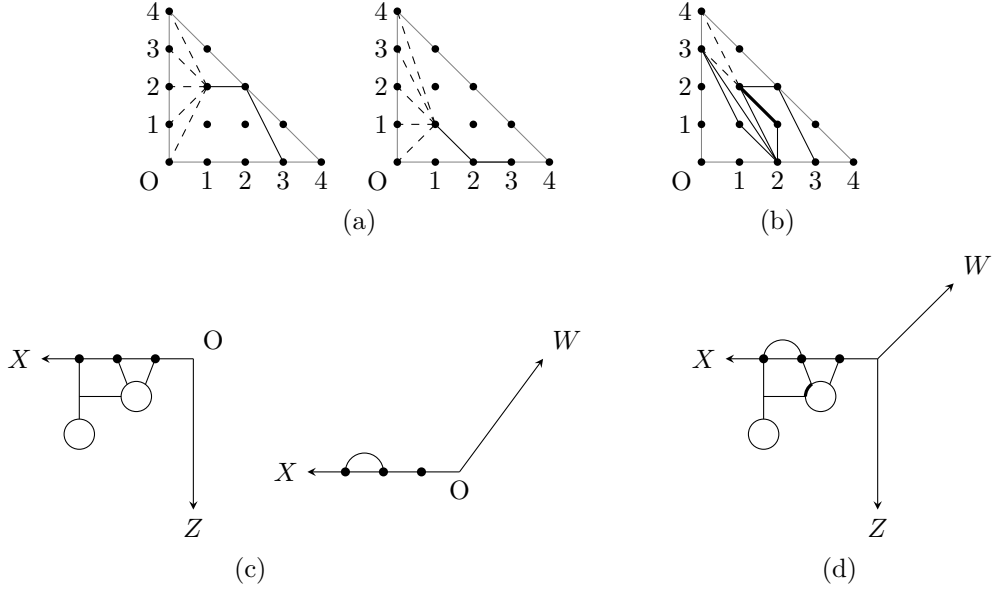
\begin{figure}[H]
\centering
\begin{tikzpicture}
 \coordinate[label=below left:O] (B1) at (3,0);
 \coordinate [label=below:1] (B2) at (3.5,0);
 \coordinate [label=below:2] (B3) at (4,0);
 \coordinate [label=below:3] (B4) at (4.5,0);
 \coordinate [label=below:4] (B5) at (5,0);
 \coordinate [label=left:1] (B6) at (3,0.5);
 \coordinate (B7) at (3.5,0.5);
 \coordinate (B8) at (4,0.5);
 \coordinate (B9) at (4.5,0.5);
 \coordinate [label=left:2] (B10) at (3,1);
 \coordinate (B11) at (3.5,1);
 \coordinate (B12) at (4,1);
 \coordinate [label=left:3] (B13) at (3,1.5);
 \coordinate (B14) at (3.5,1.5);
 \coordinate [label=left:4] (B15) at (3,2);
 \draw [very thin, gray] (B1)--(B5)--(B15)--cycle; 
 \draw (B4)--(B12)--(B11);
 \foreach \t in {1,2,...,15} \fill[black] (B\t) circle (0.05);
 \foreach \P in {1,6,10,13,15}  \draw[dashed] (B11)--(B\P);
 \coordinate[label=below left:O] (C1) at (6,0);
 \coordinate [label=below:1] (C2) at (6.5,0);
 \coordinate [label=below:2] (C3) at (7,0);
 \coordinate [label=below:3] (C4) at (7.5,0);
 \coordinate [label=below:4] (C5) at (8,0);
 \coordinate [label=left:1] (C6) at (6,0.5);
 \coordinate (C7) at (6.5,0.5);
 \coordinate (C8) at (7,0.5);
 \coordinate (C9) at (7.5,0.5);
 \coordinate [label=left:2] (C10) at (6,1);
 \coordinate (C11) at (6.5,1);
 \coordinate (C12) at (7,1);
 \coordinate [label=left:3] (C13) at (6,1.5);
 \coordinate (C14) at (6.5,1.5);
 \coordinate [label=left:4] (C15) at (6,2);
 \draw [very thin, gray] (C1)--(C5)--(C15)--cycle; 
 \draw (C4)--(C3)--(C7);
 \foreach \t in {1,2,...,15} \fill[black] (C\t) circle (0.05);
 \foreach \P in {1,6,10,13,15}  \draw[dashed] (C7)--(C\P);
 \coordinate[label=below left:O] (D1) at (10,0);
 \coordinate [label=below:1] (D2) at (10.5,0);
 \coordinate [label=below:2] (D3) at (11,0);
 \coordinate [label=below:3] (D4) at (11.5,0);
 \coordinate [label=below:4] (D5) at (12,0);
 \coordinate [label=left:1] (D6) at (10,0.5);
 \coordinate (D7) at (10.5,0.5);
 \coordinate (D8) at (11,0.5);
 \coordinate (D9) at (11.5,0.5);
 \coordinate [label=left:2] (D10) at (10,1);
 \coordinate (D11) at (10.5,1);
 \coordinate (D12) at (11,1);
 \coordinate [label=left:3] (D13) at (10,1.5);
 \coordinate (D14) at (10.5,1.5);
 \coordinate [label=left:4] (D15) at (10,2);
 \draw [very thin, gray] (D1)--(D5)--(D15)--cycle;
 \foreach \P in {13,15} \draw[dashed] (D11)--(D\P);
 \draw (D4)--(D12);
 \draw (D11)--(D12);
 \draw (D13)--(D3)--(D11);
 \draw (D3)--(D7)--(D13);
 \draw (D3)--(D8)--(D11);
 \draw[very thick] (D8)--(D11);
 \foreach \t in {1,2,...,15} \fill[black] (D\t) circle (0.05);
 \coordinate [label=below:(a)] (a1) at (5.5,-0.5);
 \coordinate [label=below:(b)] (a2) at (11,-0.5);
\end{tikzpicture}
\begin{tikzpicture}
 \coordinate[label=above right:O] (O2) at (3,-1);
 \coordinate (X5) at (1,-1);
 \coordinate (Z1) at (3,-3);
 \coordinate (X6) at (1.5,-1);
 \coordinate (X7) at (2,-1);
 \coordinate (X8) at (2.5,-1);
 \draw[thin,->,>=stealth] (O2)--(X5) node[left] {$X$};
 \draw[thin,->,>=stealth] (O2)--(Z1) node[below] {$Z$};
 \fill[black] (X6) circle (0.06);
 \fill[black] (X7) circle (0.06);
 \fill[black] (X8) circle (0.06);
 \draw (X6)--(1.5,-1.8);
 \draw (2.05,-1.5)--(1.5,-1.5);
 \draw (X7)--(2.13,-1.34);
 \draw (X8)--(2.37,-1.34);
 \draw (1.5,-2) circle (0.2);
 \draw (2.25,-1.5) circle (0.2);
 \coordinate[label=below:O] (O3) at (6.5,-2.5);
 \coordinate (X9) at (4.5,-2.5);
 \coordinate (W1) at (7.6,-1);
 \coordinate (X10) at (5,-2.5);
 \coordinate (X11) at (5.5,-2.5);
 \coordinate (X12) at (6,-2.5);
 \draw[thin,->,>=stealth] (O3)--(X9) node[left] {$X$};
 \draw[thin,->,>=stealth] (O3)--(W1) node[above right] {$W$};
 \fill[black] (X10) circle (0.06);
 \fill[black] (X11) circle (0.06);
 \fill[black] (X12) circle (0.06);
 \draw (X10) arc (180:0:0.25);
 \coordinate (O2) at (12,-1);
 \coordinate (X5) at (10,-1);
 \coordinate (Z1) at (12,-3);
 \coordinate (X6) at (10.5,-1);
 \coordinate (X7) at (11,-1);
 \coordinate (X8) at (11.5,-1);
 \coordinate (X1) at (13,0);
 \draw[thin,->,>=stealth] (O2)--(X1) node[above right] {$W$}; 
 \draw[thin,->,>=stealth] (O2)--(X5) node[left] {$X$};
 \draw[thin,->,>=stealth] (O2)--(Z1) node[below] {$Z$};
 \fill[black] (X6) circle (0.06);
 \fill[black] (X7) circle (0.06);
 \fill[black] (X8) circle (0.06);
 \draw (X6)--(10.5,-1.8);
 \draw (11.05,-1.5)--(10.5,-1.5);
 \draw (X7)--(11.13,-1.34);
 \draw (10.5,-2) circle (0.2);
 \draw (11.25,-1.5) circle (0.2);
 \draw (X6) arc (180:0:0.25);
 \draw (X8)--(11.37,-1.34);
 \draw[very thick] (11.05,-1.5) arc (180:125:0.2);
 \coordinate [label=below:(c)] (a1) at (3.75,-3.5);
 \coordinate [label=below:(d)] (a2) at (11.5,-3.5);
\end{tikzpicture}
\caption{The complexes and skeletons in Case $4. 3. 8$.}
\label{fc4.3.8}
\end{figure}

Case $4. 3. 9$: $(3, 1, 0, 0)\rightarrow (2, 0, 2, 0)\rightarrow (1, 0, 1, 2)$

The complexes $K_{XZ}$, $K_{XW}$ and the skeletons of $C_{XZ}$ and $C_{XW}$ are as in Figure \ref{fc4.3.9}.
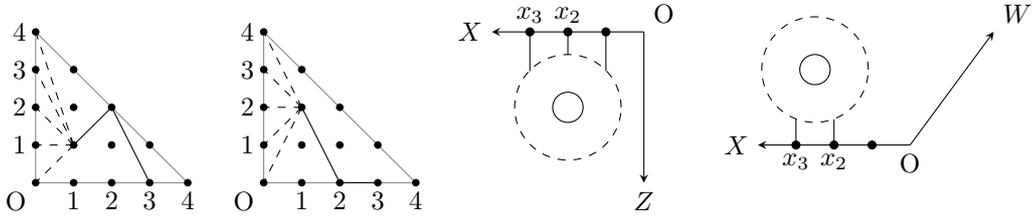
\begin{figure}[H]
\centering
\begin{tikzpicture}
 \coordinate[label=below left:O] (B1) at (3,0);
 \coordinate [label=below:1] (B2) at (3.5,0);
 \coordinate [label=below:2] (B3) at (4,0);
 \coordinate [label=below:3] (B4) at (4.5,0);
 \coordinate [label=below:4] (B5) at (5,0);
 \coordinate [label=left:1] (B6) at (3,0.5);
 \coordinate (B7) at (3.5,0.5);
 \coordinate (B8) at (4,0.5);
 \coordinate (B9) at (4.5,0.5);
 \coordinate [label=left:2] (B10) at (3,1);
 \coordinate (B11) at (3.5,1);
 \coordinate (B12) at (4,1);
 \coordinate [label=left:3] (B13) at (3,1.5);
 \coordinate (B14) at (3.5,1.5);
 \coordinate [label=left:4] (B15) at (3,2);
 \draw [very thin, gray] (B1)--(B5)--(B15)--cycle; 
 \draw (B4)--(B12)--(B7);
 \foreach \t in {1,2,...,15} \fill[black] (B\t) circle (0.05);
 \foreach \P in {1,6,10,13,15}  \draw[dashed] (B7)--(B\P);
 \coordinate[label=below left:O] (C1) at (6,0);
 \coordinate [label=below:1] (C2) at (6.5,0);
 \coordinate [label=below:2] (C3) at (7,0);
 \coordinate [label=below:3] (C4) at (7.5,0);
 \coordinate [label=below:4] (C5) at (8,0);
 \coordinate [label=left:1] (C6) at (6,0.5);
 \coordinate (C7) at (6.5,0.5);
 \coordinate (C8) at (7,0.5);
 \coordinate (C9) at (7.5,0.5);
 \coordinate [label=left:2] (C10) at (6,1);
 \coordinate (C11) at (6.5,1);
 \coordinate (C12) at (7,1);
 \coordinate [label=left:3] (C13) at (6,1.5);
 \coordinate (C14) at (6.5,1.5);
 \coordinate [label=left:4] (C15) at (6,2);
 \draw [very thin, gray] (C1)--(C5)--(C15)--cycle; 
 \draw (C4)--(C3)--(C11);
 \foreach \t in {1,2,...,15} \fill[black] (C\t) circle (0.05);
 \foreach \P in {1,6,10,13,15}  \draw[dashed] (C11)--(C\P);
 \coordinate[label=above right:O] (O2) at (11,2);
 \coordinate (X5) at (9,2);
 \coordinate (Z1) at (11,0);
 \coordinate (X6) at (9.5,2);
 \coordinate (X7) at (10,2);
 \coordinate (X8) at (10.5,2);
 \draw[thin,->,>=stealth] (O2)--(X5) node[left] {$X$};
 \draw[thin,->,>=stealth] (O2)--(Z1) node[below] {$Z$};
 \fill[black] (X6) circle (0.06);
 \fill[black] (X7) circle (0.06);
 \fill[black] (X8) circle (0.06);
 \draw (X6)--(9.5,1.48);
 \draw (X7)--(10,1.7);
 \draw (X8)--(10.5,1.48);
 \draw[dashed] (10,1) circle (0.7);
 \draw (10,1) circle (0.2);
 \coordinate[label=below:O] (O3) at (14.5,0.5);
 \coordinate (X9) at (12.5,0.5);
 \coordinate (W1) at (15.6,2);
 \coordinate (X10) at (13,0.5);
 \coordinate (X11) at (13.5,0.5);
 \coordinate (X12) at (14,0.5);
 \draw[thin,->,>=stealth] (O3)--(X9) node[left] {$X$};
 \draw[thin,->,>=stealth] (O3)--(W1) node[above right] {$W$};
 \fill[black] (X10) circle (0.06);
 \fill[black] (X11) circle (0.06);
 \fill[black] (X12) circle (0.06);
 \draw (X10)--(13,0.84);
 \draw (X11)--(13.5,0.84);
 \draw[dashed] (13.25,1.5) circle (0.7);
 \draw (13.25,1.5) circle (0.2);
 \coordinate [label=above:$x_3$] (a1) at (9.5,2);
 \coordinate [label=above:$x_2$] (a2) at (10,2); 
 \coordinate [label=below:$x_3$] (a1) at (13,0.5);
 \coordinate [label=below:$x_2$] (a2) at (13.5,0.5); 
\end{tikzpicture}
\caption{The complexes $K_{XZ}$, $K_{XW}$ and the skeletons of $C_{XZ}$ and $C_{XW}$ in Case $4. 3. 9$.}
\label{fc4.3.9}
\end{figure}

When we glue at $X$, we have the third cycle passing through the points $x_2$ and $x_3$.
Then any path connecting the cycle in $XW$ to the cycle in $XZ$ intersects the third cycle, so the skeleton of $C$ is not the lollipop graph.

\medbreak
Case $4. 3. 10$: $(3, 1, 0, 0)\rightarrow (2, 0, 2, 0)\rightarrow (1, 0, 0, 3)$

Since $K_{XW}$ contains the edge $(2, 0)$-$(1, 3)$ as its $1$-simplex and the points $(1, 1)$ and $(1, 2)$ in its interior, the skeleton of $C$ is not the lollipop graph by Lemma \ref{Case2.6}.

From the above, in Case $4. 3$, the skeleton of $C$ is not the lollipop graph.

\subsubsection{Case $4. 4$: $(3, 1, 0, 0)\rightarrow (2, 0, 1, 1)$}

We divide into the following cases.
\begin{eqnarray*}
\text{Case $4. 4. 1$: }(2, 0, 1, 1)\rightarrow (1, 3, 0, 0), \quad \text{Case $4. 4. 2$: }(2, 0, 1, 1)\rightarrow (1, 2, 1, 0),\\
\text{Case $4. 4. 3$: }(2, 0, 1, 1)\rightarrow (1, 1, 2, 0), \quad \text{Case $4. 4. 4$: }(2, 0, 1, 1)\rightarrow (1, 1, 1, 1),\\
\text{Case $4. 4. 5$: }(2, 0, 1, 1)\rightarrow (1, 0, 3, 0), \quad \text{Case $4. 4. 6$: }(2, 0, 1, 1)\rightarrow (1, 0, 2, 1).\hspace{0.5mm}
\end{eqnarray*}

Cases $4. 4. 1$ to $4. 4. 4$:

The complexes $K_{XY}$, $K_{XZ}$ and $K_{XW}$ are as in Figure \ref{fc4.4.1to4.4.4}.
In Case $4. 4. 1$, since $K_{XY}$ contains the edge $(1, 3)$-$(2, 0)$ as its $1$-simplex, the skeleton of $C$ is not the lollipop graph by Lemma \ref{Case2.6}.
In the other cases, since $b_1(\overline{C'})$ is at most $1$, the skeleton of $C$ is not the lollipop graph by Lemma \ref{saikurunashi} and \ref{case4hodai}.

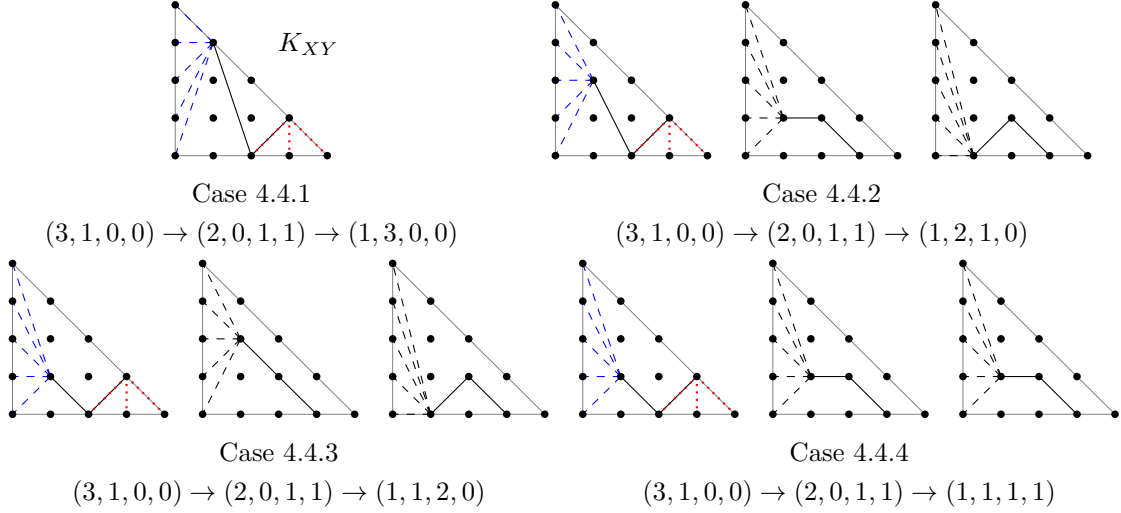
\begin{figure}[H]
\centering
\begin{tikzpicture}
 \coordinate (B1) at (2.5,0);
 \coordinate (B2) at (3,0);
 \coordinate (B3) at (3.5,0);
 \coordinate (B4) at (4,0);
 \coordinate (B5) at (4.5,0);
 \coordinate (B6) at (2.5,0.5);
 \coordinate (B7) at (3,0.5);
 \coordinate (B8) at (3.5,0.5);
 \coordinate (B9) at (4,0.5);
 \coordinate (B10) at (2.5,1);
 \coordinate (B11) at (3,1);
 \coordinate (B12) at (3.5,1);
 \coordinate (B13) at (2.5,1.5);
 \coordinate (B14) at (3,1.5);
 \coordinate (B15) at (2.5,2);
 \draw [very thin, gray] (B1)--(B5)--(B15)--cycle;
 \draw (B9)--(B3)--(B14);
 \foreach \P in {1,6,10,13,15}  \draw[dashed, blue] (B14)--(B\P);
 \foreach \P in {3,4,5}  \draw[thick, dotted, red] (B9)--(B\P);
 \foreach \t in {1,2,...,15} \fill[black] (B\t) circle (0.05);
 \coordinate (A1) at (7.5,0);
 \coordinate (A2) at (8,0);
 \coordinate (A3) at (8.5,0);
 \coordinate (A4) at (9,0);
 \coordinate (A5) at (9.5,0);
 \coordinate (A6) at (7.5,0.5);
 \coordinate (A7) at (8,0.5);
 \coordinate (A8) at (8.5,0.5);
 \coordinate (A9) at (9,0.5);
 \coordinate (A10) at (7.5,1);
 \coordinate (A11) at (8,1);
 \coordinate (A12) at (8.5,1);
 \coordinate (A13) at (7.5,1.5);
 \coordinate (A14) at (8,1.5);
 \coordinate (A15) at (7.5,2);
 \draw [very thin, gray] (A1)--(A5)--(A15)--cycle;
 \draw (A9)--(A3)--(A11);
 \foreach \P in {1,6,10,13,15}  \draw[dashed, blue] (A11)--(A\P);
 \foreach \P in {3,4,5}  \draw[thick, dotted, red] (A9)--(A\P);
 \foreach \t in {1,2,...,15} \fill[black] (A\t) circle (0.05);
 \coordinate (B1) at (10,0);
 \coordinate (B2) at (10.5,0);
 \coordinate (B3) at (11,0);
 \coordinate (B4) at (11.5,0);
 \coordinate (B5) at (12,0);
 \coordinate (B6) at (10,0.5);
 \coordinate (B7) at (10.5,0.5);
 \coordinate (B8) at (11,0.5);
 \coordinate (B9) at (11.5,0.5);
 \coordinate (B10) at (10,1);
 \coordinate (B11) at (10.5,1);
 \coordinate (B12) at (11,1);
 \coordinate (B13) at (10,1.5);
 \coordinate (B14) at (10.5,1.5);
 \coordinate (B15) at (10,2);
 \draw [very thin, gray] (B1)--(B5)--(B15)--cycle;
 \draw (B4)--(B8)--(B7);
 \foreach \t in {1,2,...,15} \fill[black] (B\t) circle (0.05);
 \foreach \P in {1,6,10,13,15}  \draw[dashed] (B7)--(B\P);
 \coordinate (C1) at (12.5,0);
 \coordinate (C2) at (13,0);
 \coordinate (C3) at (13.5,0);
 \coordinate (C4) at (14,0);
 \coordinate (C5) at (14.5,0);
 \coordinate (C6) at (12.5,0.5);
 \coordinate (C7) at (13,0.5);
 \coordinate (C8) at (13.5,0.5);
 \coordinate (C9) at (14,0.5);
 \coordinate (C10) at (12.5,1);
 \coordinate (C11) at (13,1);
 \coordinate (C12) at (13.5,1);
 \coordinate (C13) at (12.5,1.5);
 \coordinate (C14) at (13,1.5);
 \coordinate (C15) at (12.5,2);
 \draw [very thin, gray] (C1)--(C5)--(C15)--cycle;
 \draw (C4)--(C8)--(C2);
 \foreach \t in {1,2,...,15} \fill[black] (C\t) circle (0.05);
 \foreach \P in {1,6,10,13,15}  \draw[dashed] (C2)--(C\P);
 \coordinate [label=below:$\text{Case $4. 4. 1$}$] (a1) at (3.5,-0.25);
 \coordinate [label=below:$\text{$(3, 1, 0, 0)\rightarrow (2, 0, 1, 1)\rightarrow (1, 3, 0, 0)$}$] (a1) at (3.5,-0.75);
 \coordinate [label=below:$\text{Case $4. 4. 2$}$] (a2) at (11,-0.25);
 \coordinate [label=below:$\text{$(3, 1, 0, 0)\rightarrow (2, 0, 1, 1)\rightarrow (1, 2, 1, 0)$}$] (a2) at (11,-0.75);
 \coordinate[label=below:$K_{XY}$] (a3) at (4.25,1.75);
\end{tikzpicture}
\begin{tikzpicture}
 \coordinate (A1) at (0,0);
 \coordinate (A2) at (0.5,0);
 \coordinate (A3) at (1,0);
 \coordinate (A4) at (1.5,0);
 \coordinate (A5) at (2,0);
 \coordinate (A6) at (0,0.5);
 \coordinate (A7) at (0.5,0.5);
 \coordinate (A8) at (1,0.5);
 \coordinate (A9) at (1.5,0.5);
 \coordinate (A10) at (0,1);
 \coordinate (A11) at (0.5,1);
 \coordinate (A12) at (1,1);
 \coordinate (A13) at (0,1.5);
 \coordinate (A14) at (0.5,1.5);
 \coordinate (A15) at (0,2);
 \draw [very thin, gray] (A1)--(A5)--(A15)--cycle;
 \draw (A9)--(A3)--(A7);
 \foreach \P in {1,6,10,13,15}  \draw[dashed, blue] (A7)--(A\P);
 \foreach \P in {3,4,5}  \draw[thick, dotted, red] (A9)--(A\P);
 \foreach \t in {1,2,...,15} \fill[black] (A\t) circle (0.05);
 \coordinate (B1) at (2.5,0);
 \coordinate (B2) at (3,0);
 \coordinate (B3) at (3.5,0);
 \coordinate (B4) at (4,0);
 \coordinate (B5) at (4.5,0);
 \coordinate (B6) at (2.5,0.5);
 \coordinate (B7) at (3,0.5);
 \coordinate (B8) at (3.5,0.5);
 \coordinate (B9) at (4,0.5);
 \coordinate (B10) at (2.5,1);
 \coordinate (B11) at (3,1);
 \coordinate (B12) at (3.5,1);
 \coordinate (B13) at (2.5,1.5);
 \coordinate (B14) at (3,1.5);
 \coordinate (B15) at (2.5,2);
 \draw [very thin, gray] (B1)--(B5)--(B15)--cycle;
 \draw (B4)--(B8)--(B11);
 \foreach \t in {1,2,...,15} \fill[black] (B\t) circle (0.05);
 \foreach \P in {1,6,10,13,15}  \draw[dashed] (B11)--(B\P);
 \coordinate (C1) at (5,0);
 \coordinate (C2) at (5.5,0);
 \coordinate (C3) at (6,0);
 \coordinate (C4) at (6.5,0);
 \coordinate (C5) at (7,0);
 \coordinate (C6) at (5,0.5);
 \coordinate (C7) at (5.5,0.5);
 \coordinate (C8) at (6,0.5);
 \coordinate (C9) at (6.5,0.5);
 \coordinate (C10) at (5,1);
 \coordinate (C11) at (5.5,1);
 \coordinate (C12) at (6,1);
 \coordinate (C13) at (5,1.5);
 \coordinate (C14) at (5.5,1.5);
 \coordinate (C15) at (5,2);
 \draw [very thin, gray] (C1)--(C5)--(C15)--cycle;
 \draw (C4)--(C8)--(C2);
 \foreach \t in {1,2,...,15} \fill[black] (C\t) circle (0.05);
 \foreach \P in {1,6,10,13,15}  \draw[dashed] (C2)--(C\P);
 \coordinate (A1) at (7.5,0);
 \coordinate (A2) at (8,0);
 \coordinate (A3) at (8.5,0);
 \coordinate (A4) at (9,0);
 \coordinate (A5) at (9.5,0);
 \coordinate (A6) at (7.5,0.5);
 \coordinate (A7) at (8,0.5);
 \coordinate (A8) at (8.5,0.5);
 \coordinate (A9) at (9,0.5);
 \coordinate (A10) at (7.5,1);
 \coordinate (A11) at (8,1);
 \coordinate (A12) at (8.5,1);
 \coordinate (A13) at (7.5,1.5);
 \coordinate (A14) at (8,1.5);
 \coordinate (A15) at (7.5,2);
 \draw [very thin, gray] (A1)--(A5)--(A15)--cycle;
 \draw (A9)--(A3)--(A7);
 \foreach \P in {1,6,10,13,15}  \draw[dashed, blue] (A7)--(A\P);
 \foreach \P in {3,4,5}  \draw[thick, dotted, red] (A9)--(A\P);
 \foreach \t in {1,2,...,15} \fill[black] (A\t) circle (0.05);
 \coordinate (B1) at (10,0);
 \coordinate (B2) at (10.5,0);
 \coordinate (B3) at (11,0);
 \coordinate (B4) at (11.5,0);
 \coordinate (B5) at (12,0);
 \coordinate (B6) at (10,0.5);
 \coordinate (B7) at (10.5,0.5);
 \coordinate (B8) at (11,0.5);
 \coordinate (B9) at (11.5,0.5);
 \coordinate (B10) at (10,1);
 \coordinate (B11) at (10.5,1);
 \coordinate (B12) at (11,1);
 \coordinate (B13) at (10,1.5);
 \coordinate (B14) at (10.5,1.5);
 \coordinate (B15) at (10,2);
 \draw [very thin, gray] (B1)--(B5)--(B15)--cycle;
 \draw (B4)--(B8)--(B7);
 \foreach \t in {1,2,...,15} \fill[black] (B\t) circle (0.05);
 \foreach \P in {1,6,10,13,15}  \draw[dashed] (B7)--(B\P);
 \coordinate (C1) at (12.5,0);
 \coordinate (C2) at (13,0);
 \coordinate (C3) at (13.5,0);
 \coordinate (C4) at (14,0);
 \coordinate (C5) at (14.5,0);
 \coordinate (C6) at (12.5,0.5);
 \coordinate (C7) at (13,0.5);
 \coordinate (C8) at (13.5,0.5);
 \coordinate (C9) at (14,0.5);
 \coordinate (C10) at (12.5,1);
 \coordinate (C11) at (13,1);
 \coordinate (C12) at (13.5,1);
 \coordinate (C13) at (12.5,1.5);
 \coordinate (C14) at (13,1.5);
 \coordinate (C15) at (12.5,2);
 \draw [very thin, gray] (C1)--(C5)--(C15)--cycle;
 \draw (C4)--(C8)--(C7);
 \foreach \t in {1,2,...,15} \fill[black] (C\t) circle (0.05);
 \foreach \P in {1,6,10,13,15}  \draw[dashed] (C7)--(C\P);
 \coordinate [label=below:$\text{Case $4. 4. 3$}$] (a1) at (3.5,-0.25);
 \coordinate [label=below:$\text{$(3, 1, 0, 0)\rightarrow (2, 0, 1, 1)\rightarrow (1, 1, 2, 0)$}$] (a1) at (3.5,-0.75);
 \coordinate [label=below:$\text{Case $4. 4. 4$}$] (a2) at (11,-0.25);
 \coordinate [label=below:$\text{$(3, 1, 0, 0)\rightarrow (2, 0, 1, 1)\rightarrow (1, 1, 1, 1)$}$] (a2) at (11,-0.75);
\end{tikzpicture}
\caption{The complexes in Cases $4. 4. 1$ to $4. 4. 4$.}
\label{fc4.4.1to4.4.4}
\end{figure}

Case $4. 4. 5$: $(3, 1, 0, 0)\rightarrow (2, 0, 1, 1)\rightarrow (1, 0, 3, 0)$

The complexes $K_{XZ}$ and $K_{XW}$ are as in Figure \ref{fc4.4.5} (a).
Assume that the skeleton of $C$ is the lollipop graph of genus $3$.
Then by Lemma \ref{12}, $K_{XZ}$ contains the $2$-simplices as in (b), and the skeletons of $C_{XZ}$ and $C_{XW}$ are as in (c).
\begin{figure}[H]
\centering
\begin{tikzpicture}
 \coordinate (B1) at (3,0);
 \coordinate [label=below:1] (B2) at (3.5,0);
 \coordinate [label=below:2] (B3) at (4,0);
 \coordinate [label=below:3] (B4) at (4.5,0);
 \coordinate [label=below:4] (B5) at (5,0);
 \coordinate [label=left:1] (B6) at (3,0.5);
 \coordinate (B7) at (3.5,0.5);
 \coordinate (B8) at (4,0.5);
 \coordinate (B9) at (4.5,0.5);
 \coordinate [label=left:2] (B10) at (3,1);
 \coordinate (B11) at (3.5,1);
 \coordinate (B12) at (4,1);
 \coordinate [label=left:3] (B13) at (3,1.5);
 \coordinate (B14) at (3.5,1.5);
 \coordinate [label=left:4] (B15) at (3,2);
 \draw [very thin, gray] (B1)--(B5)--(B15)--cycle; 
 \draw (B4)--(B8)--(B14);
 \foreach \t in {1,2,...,15} \fill[black] (B\t) circle (0.05);
 \foreach \P in {1,6,10,13,15}  \draw[dashed] (B14)--(B\P);
 \coordinate (C1) at (5.5,0);
 \coordinate [label=below:1] (C2) at (6,0);
 \coordinate [label=below:2] (C3) at (6.5,0);
 \coordinate [label=below:3] (C4) at (7,0);
 \coordinate [label=below:4] (C5) at (7.5,0);
 \coordinate [label=left:1] (C6) at (5.5,0.5);
 \coordinate (C7) at (6,0.5);
 \coordinate (C8) at (6.5,0.5);
 \coordinate (C9) at (7,0.5);
 \coordinate [label=left:2] (C10) at (5.5,1);
 \coordinate (C11) at (6,1);
 \coordinate (C12) at (6.5,1);
 \coordinate [label=left:3] (C13) at (5.5,1.5);
 \coordinate (C14) at (6,1.5);
 \coordinate [label=left:4] (C15) at (5.5,2);
 \draw [very thin, gray] (C1)--(C5)--(C15)--cycle; 
 \draw (C4)--(C8)--(C2);
 \foreach \t in {1,2,...,15} \fill[black] (C\t) circle (0.05);
 \foreach \P in {1,6,10,13,15}  \draw[dashed] (C2)--(C\P);
 \coordinate (D1) at (8.5,0);
 \coordinate [label=below:1] (D2) at (9,0);
 \coordinate [label=below:2] (D3) at (9.5,0);
 \coordinate [label=below:3] (D4) at (10,0);
 \coordinate [label=below:4] (D5) at (10.5,0);
 \coordinate [label=left:1] (D6) at (8.5,0.5);
 \coordinate (D7) at (9,0.5);
 \coordinate (D8) at (9.5,0.5);
 \coordinate (D9) at (10,0.5);
 \coordinate [label=left:2] (D10) at (8.5,1);
 \coordinate (D11) at (9,1);
 \coordinate (D12) at (9.5,1);
 \coordinate [label=left:3] (D13) at (8.5,1.5);
 \coordinate (D14) at (9,1.5);
 \coordinate [label=left:4] (D15) at (8.5,2);
 \draw [very thin, gray] (D1)--(D5)--(D15)--cycle;
 \foreach \P in {10,13,15}  \draw[dashed] (D14)--(D\P);
 \draw (D8)--(D14);
 \draw (D4)--(D8)--(D10)--cycle;
 \draw (D4)--(D7)--(D10);
 \draw (D10)--(D11)--(D8);
 \draw[very thick] (D8)--(D11);
 \foreach \t in {1,2,...,15} \fill[black] (D\t) circle (0.05);
 \coordinate [label=below:(a)] (a1) at (5.25,-0.5);
 \coordinate [label=below:(b)] (a2) at (9.5,-0.5);
 \coordinate[label=above right:O] (O1) at (13,2);
 \coordinate (X1) at (11,2);
 \coordinate (Y1) at (13,0);
 \coordinate (X2) at (11.5,2);
 \coordinate (X3) at (12,2);
 \coordinate (X4) at (12.5,2);
 \coordinate (Y2) at (13,1);
 \draw[thin,->,>=stealth] (O1)--(X1) node[left] {$X$};
 \draw[thin,->,>=stealth] (O1)--(Y1) node[below] {$Z$};
 \fill[black] (X2) circle (0.06);
 \fill[black] (X3) circle (0.06);
 \fill[black] (X4) circle (0.06);
 \draw[dashed] (12,1) circle (0.6);
 \draw (12,0.75) circle (0.2);
 \draw (12,1.25) circle (0.2);
 \draw (X2)--(11.5,1.34);
 \draw (X3)--(12,1.6);
 \draw (12.5,1.34)--(X4);
 \draw (12,1.45)--(12,1.6);
 \draw (12.51,1.34)--(12.18,1.34);
 \draw[very thick] (12,1.45) arc (90:25:0.2);
 \coordinate[label=below:O] (O3) at (16,0.5);
 \coordinate (X9) at (14,0.5);
 \coordinate (W1) at (17,2);
 \coordinate (X10) at (14.5,0.5);
 \coordinate (X11) at (15,0.5);
 \coordinate (X12) at (15.5,0.5);
 \draw[thin,->,>=stealth] (O3)--(X9) node[left] {$X$};
 \draw[thin,->,>=stealth] (O3)--(W1) node[above right] {$W$};
 \fill[black] (X10) circle (0.06);
 \fill[black] (X11) circle (0.06);
 \fill[black] (X12) circle (0.06);
 \draw (X12) arc (0:180:0.25);
 \coordinate [label=above:$x_1$] (a1) at (12.5,2);
 \coordinate [label=above:$x_2$] (a2) at (12,2); 
 \coordinate [label=below:$x_1$] (a1) at (15.5,0.5);
 \coordinate [label=below:$x_2$] (a2) at (15,0.5);
 \coordinate [label=below:(c)] (a3) at (13.75,-0.5);  
\end{tikzpicture}
\caption{The complexes and skeletons in Case $4. 4. 5$.}
\label{fc4.4.5}
\end{figure}
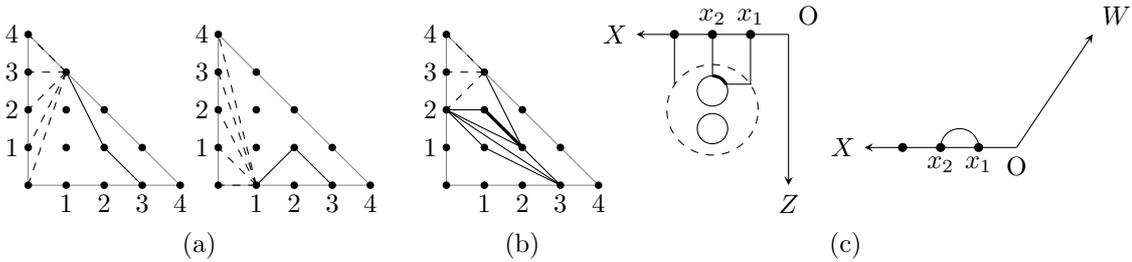

The path from $x_1$ to $x_2$ in $C_{XZ}$ corresponding to the union of simplices of $K_{XY}$ which orbits around the point $(2, 1)$ intersects the cycle corresponding to the point $(1, 2)$.
When we glue at $X$, we have the cycle which contains the points $x_1$ and $x_2$.
Thus the two cycles intersect and the skeleton of $C$ is not the lollipop graph.

\medbreak
Case $4. 4. 6$: $(3, 1, 0, 0)\rightarrow (2, 0, 1, 1)\rightarrow (1, 0, 2, 1)$

The complexes $K_{XY}$, $K_{XZ}$ and $K_{XW}$ are as in Figure \ref{fc4.4.6}.
Thus $b_1(\overline{C'})=1$ and the skeleton of $C$ is not the lollipop graph by Lemma \ref{case4hodai}.
\begin{figure}[H]
\centering
\begin{tikzpicture}
 \coordinate[label=below left:O] (A1) at (0,0);
 \coordinate [label=below:1] (A2) at (0.5,0);
 \coordinate [label=below:2] (A3) at (1,0);
 \coordinate [label=below:3] (A4) at (1.5,0);
 \coordinate [label=below:4] (A5) at (2,0);
 \coordinate [label=left:1] (A6) at (0,0.5);
 \coordinate (A7) at (0.5,0.5);
 \coordinate (A8) at (1,0.5);
 \coordinate (A9) at (1.5,0.5);
 \coordinate [label=left:2] (A10) at (0,1);
 \coordinate (A11) at (0.5,1);
 \coordinate (A12) at (1,1);
 \coordinate [label=left:3] (A13) at (0,1.5);
 \coordinate (A14) at (0.5,1.5);
 \coordinate [label=left:4] (A15) at (0,2);
 \draw [very thin, gray] (A1)--(A5)--(A15)--cycle; 
 \draw (A9)--(A3)--(A2);
 \foreach \P in {1,6,10,13,15}  \draw[dashed, blue] (A2)--(A\P);
 \foreach \P in {3,4,5}  \draw[thick, dotted, red] (A9)--(A\P);
 \foreach \t in {1,2,...,15} \fill[black] (A\t) circle (0.05);
 \coordinate[label=below left:O] (B1) at (3,0);
 \coordinate [label=below:1] (B2) at (3.5,0);
 \coordinate [label=below:2] (B3) at (4,0);
 \coordinate [label=below:3] (B4) at (4.5,0);
 \coordinate [label=below:4] (B5) at (5,0);
 \coordinate [label=left:1] (B6) at (3,0.5);
 \coordinate (B7) at (3.5,0.5);
 \coordinate (B8) at (4,0.5);
 \coordinate (B9) at (4.5,0.5);
 \coordinate [label=left:2] (B10) at (3,1);
 \coordinate (B11) at (3.5,1);
 \coordinate (B12) at (4,1);
 \coordinate [label=left:3] (B13) at (3,1.5);
 \coordinate (B14) at (3.5,1.5);
 \coordinate [label=left:4] (B15) at (3,2);
 \draw [very thin, gray] (B1)--(B5)--(B15)--cycle; 
 \draw (B4)--(B8)--(B11);
 \foreach \t in {1,2,...,15} \fill[black] (B\t) circle (0.05);
 \foreach \P in {1,6,10,13,15}  \draw[dashed] (B11)--(B\P);
 \coordinate[label=below left:O] (C1) at (6,0);
 \coordinate [label=below:1] (C2) at (6.5,0);
 \coordinate [label=below:2] (C3) at (7,0);
 \coordinate [label=below:3] (C4) at (7.5,0);
 \coordinate [label=below:4] (C5) at (8,0);
 \coordinate [label=left:1] (C6) at (6,0.5);
 \coordinate (C7) at (6.5,0.5);
 \coordinate (C8) at (7,0.5);
 \coordinate (C9) at (7.5,0.5);
 \coordinate [label=left:2] (C10) at (6,1);
 \coordinate (C11) at (6.5,1);
 \coordinate (C12) at (7,1);
 \coordinate [label=left:3] (C13) at (6,1.5);
 \coordinate (C14) at (6.5,1.5);
 \coordinate [label=left:4] (C15) at (6,2);
 \draw [very thin, gray] (C1)--(C5)--(C15)--cycle; 
 \draw (C4)--(C8)--(C7);
 \foreach \t in {1,2,...,15} \fill[black] (C\t) circle (0.05);
 \foreach \P in {1,6,10,13,15}  \draw[dashed] (C7)--(C\P);
\end{tikzpicture}
\caption{The complexes $K_{XY}$, $K_{XZ}$ and $K_{XW}$ in Case $4. 4. 6$.}
\label{fc4.4.6}
\end{figure}
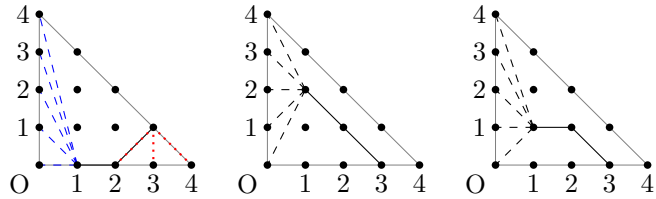

From the above, in Case $4. 4$, the skeleton of $C$ is not the lollipop graph.
Therefore in Case $4$, the skeleton of $C$ is not the lollipop graph.

Thus we conclude the proof of Theorem \ref{mainth}.

\end{document}